\newcommand{\restr}{%
  \,\raisebox{-.127ex}{\reflectbox{\rotatebox[origin=br]{-90}{$\lnot$}}}\,%
}
\newcommand{\R}{\mathbb{R}}
\newcommand{\N}{\mathbb{N}}
\renewcommand{\P}{\mathscr{P}}
\newcommand{\V}{\mathcal{V}}
\newcommand{\Leb}[1][]{\mathscr{L}}
\renewcommand{\O}{\mathcal{O}}
\newcommand{\E}{\mathcal{E}}
\newcommand{\D}{\mathcal{D}}
\renewcommand{\H}{\mathcal{H}}
\newcommand{\W}{\mathcal{W}}
\newcommand{\Lfun}{\mathcal{L}}
\newcommand{\U}{\mathcal{U}}
\newcommand{\eps}{\varepsilon}
\newcommand{\br}{\bm{r}}
\newcommand{\id}{\bm{i}}
\newcommand{\bmu}{\bm{\mu}}
\newcommand{\bv}{\bm{u}}
\newcommand{\bu}{\bm{u}}
\DeclareMathOperator{\supp}{supp}
\DeclareMathOperator{\tr}{tr}
\DeclareMathOperator*{\argmin}{argmin}
\DeclareMathOperator{\Lip}{Lip}
\DeclareMathOperator{\grad}{grad}
\newtheorem{theorem}{Theorem}[section]
\newtheorem{lemma}[theorem]{Lemma}
\newtheorem{proposition}[theorem]{Proposition}
\newtheorem{corollary}[theorem]{Corollary}
\theoremstyle{definition}
\newtheorem{definition}[theorem]{Definition}
\definecolor{byzantine}{rgb}{0.74, 0.2, 0.64}
\definecolor{darkgreen}{rgb}{0.1,0.6,0.1}
\definecolor{darkred}{rgb}{0.6,0,0}
\definecolor{lightgray}{rgb}{0.5,0.5,0.5}
 \newenvironment{listi}
  {\begin{list} 
 {(\roman{broj})}
{ \usecounter{broj}}
  %\setlength{\itemindent}{0pt}
  %\addtolength{\itemsep}{0pt}
     \setlength{\labelwidth}{25pt}
  }
{   \end{list} }
\newcounter{broj}
\def\XXint#1#2#3{{\setbox0=\hbox{$#1{#2#3}{\int}$}
     \vcenter{\hbox{$#2#3$}}\kern-.5\wd0}}
\newenvironment{remark}{\pushQED{\qed}\remarkx}{\popQED\endremarkx}
\numberwithin{equation}{section}
\title[A Variational perspective on the dissipative Hamiltonian structure of VFP]{A variational perspective on the dissipative Hamiltonian structure of the Vlasov-Fokker-Planck equation}
\author{Sangmin Park} 
\address{Department of Mathematical Sciences, Carnegie Mellon University, 5000 Forbes ave., Pittsburgh, PA 15213}
\email{sangminp@andrew.cmu.edu}
\begin{document}

\begin{abstract}
The Vlasov-Fokker-Planck equation describes the evolution of the probability density of the position and velocity of particles under the influence of external confinement, interaction, friction, and stochastic force. It is well-known that this equation can be formally seen as a dissipative Hamiltonian system in the Wasserstein space of probability measures. In order to better understand this geometric formalism, we introduce a time-discrete variational scheme, solutions of which converge to the solution of the Vlasov-Fokker-Planck equation as time step vanishes; in particular, this provides a new proof of the existence of a weak solution to the equation. The variational scheme combines the symplectic Euler scheme and the (degenerate) implicit steepest descent, and updates the probability density at each iteration first in the velocity variable then in the position variable.

The algorithm leverages the geometric structure of the equation, and has several desirable properties. Energy functionals involved in each variational problem are geodesically-convex, which implies the unique solvability of the problem. Furthermore, the correct dissipation of the Hamiltonian is observed at the discrete level up to higher order errors controlled by the second moments.
\end{abstract}

\maketitle

%\vspace{-6mm}
\medskip
\noindent{\small\textbf{Keywords:}
The Vlasov-Fokker-Planck equation, Optimal Transport, Dissipative Hamiltonian Systems in Spaces of Measures, Minimizing Movements}

\noindent{\small \textbf{MSC (2020): 35Q84, 49Q22, 46E27, 35A15 } }
\medskip

%35Q84 Fokker-Planck equations 
%49Q22 Optimal transportation
%46E27 Spaces of measures
%35A15 Variational methods applied to PDEs
%\vspace{-4mm}

\setcounter{tocdepth}{1}
\tableofcontents

\newpage
\subsection*{Notation}
% \noindent\textbf{Notation.}
\begin{itemize} \addtolength{\itemsep}{3pt}
\addtolength{\itemindent}{-35pt}
%\setlength{\labelwidth}{184pt}
%\setlength{\labelindent}{25mm}
%\setlength{\labelsep}{84pt}
%\addtolength{\listparindent}{64pt}
    %\item[] $\pi^\theta:\R^d\rightarrow\R$ is the projection in the angle $\theta\in\S^{d-1}$; see \eqref{def:pitheta}.
\item[ ] $\Leb^k$ -- the $k$-dimensional Lebesgue measure. $\Leb=\Leb^{2d}$.
\item[] $\P_2(\Omega)$ with $\Omega=\R^d$ or $\R^{2d}$ -- sets of Borel probability measures with bounded second moments; see \eqref{def:P2}
\item[] $\P_2^r(\Omega)$ with $\Omega=\R^d$ or $\R^{2d}$ -- sets of absolutely continuous Borel probability measures with bounded second moments; see \eqref{def:P2r}
\item[] $\P_2^x(\R^{2d};\sigma),\P_2^v(\R^{2d};\sigma)$ -- subspace of $\P_2(\R^{2d})$ with marginal in the $x$-variable (resp. $v$-variable) equal to $\sigma\in\P_2(\R^d)$; see \eqref{def:P2x;P2p}
\item[] $\id_x,\id_v$ -- projection operators on $\R^{2d}$, namely $\id_x(x,v)=x$ and $\id_v(x,v)=v$
\item[] $\id$ -- identity operator on $\R^{2d}$, namely $\id(x,v)=(\id_x(x,v),\id_v(x,v))=(x,v)$
\item[] $\Pi^x\mu,\Pi^v\mu$ -- $x$-marginal (resp. $v$-marginal) of $\mu\in\P_2(\R^{2d})$; see \eqref{def:PixPip}
\item[] $\mu^x,\mu^v$ -- disintegration of $\mu\in\P_2(\R^{2d})$ with respect to $\Pi^x\mu$ (resp. $\Pi^v\mu$); see \eqref{def:disintegration}
\item[] $W_2$ -- the $2$-Wasserstein distance; see \eqref{def:Wp}.
\item[] $W_{2,v}$,$W_{2,x}$ --  the $2$-Wasserstein distance with fixed $x$-marginals (resp. $v$-marginals); see \eqref{def:partial_Wass}.
\item[] $\Gamma(\mu,\nu)$ -- set of transport plans between probability measures $\mu,\nu$; see \eqref{def:Gamma}
\item[] $\Gamma^x(\mu,\nu),\Gamma^v(\mu,\nu)$ -- set of transport plans with fixed marginals between probability measures $\mu,\nu\in\P(\R^{2d})$; see \eqref{def:Gammapx}
\item[] $\Gamma_o(\mu,\nu)$ --  set of optimal transport plans $\mu,\nu \in \P_2(\R^{2d})$ for the quadratic cost; see \eqref{def:Gamma}
\item[] $\Gamma_o^x(\mu,\nu)$, $\Gamma_o^v(\mu,\nu)$ --  set of optimal transport plans with fix marginals between $\mu,\nu \in \P_2(\R^{2d})$ for the quadratic cost; see \eqref{def:Gammapxo}
\item[] $T_\mu^\nu$ -- optimal transport map for quadratic cost from $\mu\in\P_2(\Omega)$ to $\nu\in\P_2(\Omega)$ with $\Omega=\R^d$ or $\R^{2d}$
\item[] $|\partial\E|(\mu)$ -- metric slope of the functional $\E:\P_2(\R^{2d})\rightarrow(-\infty,+\infty]$ with respect to $W_{2}$ at $\mu\in \P_2(\R^{2d})$; see \eqref{def:met_slope}
\item[] $|\partial_x\E|(\mu),|\partial_v\E|(\mu)$ -- metric slope of the functional $\E:\P_2(\R^{2d})\rightarrow(-\infty,+\infty]$ with respect to $W_{2,x}$ (resp. $W_{2,v}$) at $\mu\in \P_2(\R^{2d})$; see \eqref{def:partial_slope}
\item[] $D(\E)$ -- effective domain of a functional $\E:X\rightarrow(-\infty,+\infty]$
\item[] $AC^p(I;X,m)$ -- space of $p$-absolutely continuous curves $\bmu:I\rightarrow X$ in the metric space $(X,m)$ given an interval $I$ and $p=[1,\infty)$; see Definition~\ref{def:ACcurve}
\item[] $J,S$ -- the $2d\times2d$ antisymmetric and degenerate negative definite matrices respectively; see \eqref{def:JS}

\end{itemize}

\section{Introduction}\label{sec:intro}

The gradient flow perspective allows one to consider various dissipative partial differential equations (PDEs) as ordinary differential equations (ODEs) in suitable infinite dimensional spaces. This interpretation in the setting of the Wasserstein space of probability measures is understood precisely and rigorously, and has led to fruitful applications. In their seminal work, Jordan, Kinderlehrer, and Otto \cite{JKO98} observed that the Fokker-Planck equation can be obtained as a limit of a minimizing movement scheme in the Wasserstein space. Later Otto \cite{Otto01} developed a Riemannian perspective on the Wasserstein space that enables interpreting as gradient flows a broad class of dissipative equations including the porous medium equation. The celebrated work of Ambrosio, Gigli, and Savar\'e \cite{AGS} established rigorous theory unifying these perspectives: \emph{gradient flows}, formulated in terms of the (sub-)differential structure, coincide with \emph{curves of maximal slope} which solve a metric differential inequality, and more generally corresponds to \emph{minimizing movements} obtained as limits from the time-discrete variational schemes. Furthermore, the theory provides many useful results such as asymptotic behavior and stability of solutions in terms of the geodesic-convexity of the energy functional driving the dissipation.

The Vlasov-Fokker-Planck equation (VFP) describes the evolution of the probability density of the position and velocity of particles under the influence of external confinement, interaction, friction, and stochastic force. When the interaction is prescribed by the Coloumb potential, the equation is called the Vlasov-Poisson-Fokker-Planck equation and is of great importance in plasma physics \cite{Chandra43,Vil02_review}. In case the interaction is absent the equation becomes linear and is often referred to as the kinetic Fokker-Planck equation (KFP), study of which dates back to Kolmogorov \cite{Kolmogoroff34}. The difficulties in analysis of these equations partially lie in that the diffusion is only present in the velocity variable. Indeed, the degeneracy of these PDEs played a large part in motiviating the general theory of hypoelliptic operators developed by H\"ormander \cite{Hor67}, and the hypocoercivity methods due to Villani \cite{Vil09hypo}. Instead of attempting to review the vast history of the developments regarding these equations we refer the readers to the works \cite{Hor67,Vil09hypo,HelfferNier05,AAMN21} and references therein.

It is well-known that the Vlasov-Fokker-Planck equation formally has a dissipative Hamiltonian structure (or equivalently a damped Hamiltonian structure) in the Wasserstein space, understood in the sense of Ambrosio and Gangbo \cite{AmbGan08} and Gangbo, Kim, and Pacini \cite{GanKimPac11}; see for instance \cite[Section 8]{Vil03} or bibliographical notes following \cite[Section 23]{Vil09}. It is natural to ask if this geometric perspective in the Wasserstein setting can be understood as precisely as in the case of gradient flows.

Possible connections between the geometric structure and the rates of convergence to equilibrium provide an additional motivation to rigorously understand the geometric formalism. When the potential function satisfies the Polyak-{\L}ojasiewicz inequality with constant $\lambda>0$, the corresponding damped Hamiltonian system in the Euclidean space with optimal damping converges to the equilibrium at the rate $O(e^{-c\sqrt{\lambda}t})$ with some universal constant $c>0$ \cite{AujDosRon22,ApiGinVil22}. This is often referred to as \emph{the accelerated rate of convergence} as it is much faster than the convergence rate $O(e^{-\lambda t})$ of gradient flows in the difficult case when $\lambda\ll 1$. 

In fact, the solution of the kinetic Fokker-Planck equation converges to a stationary distribution that only depends on the external confinement potential, and the exponential convergence rate has also been extensively studied \cite{Vil09,AAMN21,DR-D20,EbGuiZi19,CLW23,BolGuiMal10,BFL23}. Sharp convergence rates are of significant interest also in statistics and data science, as the underdamped Langevin dynamics, the corresponding stochastic equation to KFP, have been used successfully in sampling problems \cite{CCBJ18,ZCLBE23,LeimMat13}. An exciting recent development in this direction is the first \emph{accelerated convergence rate} established by Cao, Lu, and Wang \cite{CLW23}. They showed that, with the optimal choice of the friction parameter, the $\chi^2$-divergence decays at the accelerated rate $O(e^{-c\sqrt{\lambda}t})$ in terms of the Poincar\'e constant $\lambda$ of the invariant probability measure. Eberle and L\"orler recently observed \cite{EberleLorer24} that this rate cannot be improved in terms of $\lambda$.

While this resembles the accelerated convergence rates of damped Hamiltonian systems in the Euclidean space, the typical $\chi^2$-divergence between probability measures grows exponentially with the dimension. Thus it is desirable to establish a parallel result for the relative entropy functional, which behaves more moderately in high dimensions. On the one hand KFP is formally a damped Hamiltonian flow of the relative entropy functional in the Wasserstein space, whereas the log-Sobolev inequality can be understood as the Polyak-{\L}ojasiewicz inequality of the relative entropy functional in the Wasserstein space. Thus one might hope that making rigorous the formal geometric interpretation can shed light on the asymptotic behavior of VFP and related equations.

This paper focuses on a modest question towards better understanding the dissipative Hamiltonian formalism: 
\emph{can we construct solutions of VFP using a time-discrete scheme that is consistent with the \textbf{geometric} structure of the equation?} To answer this question, we introduce \emph{the coordinate-wise minimizing movement scheme} in the Wasserstein space. This variational scheme combines the symplectic Euler scheme and the implicit steepest descent method respectively accounting for the Hamiltonian dynamics and the (degenerate) dissipation in the velocity variable.

The novelty of this scheme is threefold:
\begin{listi}
    \item The coordinate-wise minimizing movement scheme leverages the geometric structure of the Wasserstein space. Namely, the variational scheme involves the length metric induced by the Wasserstein distance on subspaces with fixed marginals in the velocity or the position variable. Moreover, the conservative dynamics is captured by a pair of functionals arising from the Hamiltonian functional and the Poisson structure of the Wasserstein space.
    
    \item At each iteration, the energy functional involved in the variational problem is \emph{geodesically-convex} with respect to the metric, which leads to the unique solvability of the implicit minimization problem for any time step.
    %see Proposition~\ref{prop:LxJKO;Phih}. \fix{variational characterization naturally selects the minimal element of the subgradient? \cite{AmbGan08}}
    \item The `correct' dissipation of the Hamiltonian is observed at the discrete level up to higher order errors controlled by the second moments.
    %see Proposition~\ref{prop:Hdecay;disc}.
\end{listi}

When the confinement and interaction potentials have Lipschitz gradient, we show that the discrete solution to the coordinate-wise minimizing movement scheme converges to the distributional solution of the Vlasov-Fokker-Planck equation as the step-size vanishes.

\subsection{Setting}\label{ssec:setting}
For $\Omega=\R^{d},\R^{2d}$ we denote by $\P(\Omega)$ the set of all Borel probability measures on $\Omega$. We write $\P_2(\Omega)$ to refer to the space of probability measures with bounded second moments -- i.e.
\begin{equation}\label{def:P2}
    \P_2(\Omega)=\{\sigma\in\P(\Omega):\;\int_{\Omega}|\omega|^2\,d\sigma(\omega)<+\infty\}.
\end{equation}
We denote by $\P_2^r(\Omega)$ the set of absolutely continuous probability measures -- i.e.
\begin{equation}\label{def:P2r}
    \P_2^r(\Omega)=\{\sigma\in\P_2(\Omega):\;\sigma\ll\Leb_\Omega\} \text{ where } \Leb_\Omega \text{ is the Lebesgue measure on }\Omega.
\end{equation}
Recall that the sequence $(\sigma_n)_{n\in\N}$ in $\P(\Omega)$ converges narrowly to $\mu\in\P(\Omega)$ if for each continuously bounded function $\varphi\in C_b(\R^d)$
\begin{equation}\label{def:narrow_conv}
    \lim_{n\rightarrow\infty}\int_{\Omega} \varphi(\omega)\,d\mu_n(\omega) = \int_{\Omega} \varphi(\omega)\,d\mu(\omega).
\end{equation}
Given a functional $\E:\P(\Omega)\rightarrow (-\infty,+\infty]$ we say $\E$ is lower semicontinuous with respect to the narrow convergence in some subset $\P'(\Omega)\subset\P(\Omega)$ (which we sometimes abbreviate as l.s.c. w.r.t. narrow convergence) if for every $(\mu_n)_{n\in\N}$ and $\mu_0$ in $\P'(\Omega)$ such that $\mu_n\rightharpoonup \mu_0$ narrowly,
\begin{equation}\label{def:lsc_narrowconv}
    \E(\mu_0)\leq \liminf_{n\nearrow\infty}\E(\mu_n).
\end{equation}

For probability measures  $\mu,\nu\in \P_2(\Omega)$, the $2$-Wasserstein distance $W_2$, is defined as follows: 
\begin{equation}\label{def:Wp}
        W_2(\mu,\nu) :=  \inf_{\gamma\in\Gamma(\mu,\nu)}\left(\int_{\Omega\times \Omega} |\omega_1-\omega_2|^2 \,d\gamma(\omega_1,\omega_2)\right)^{1/2}
\end{equation}
where $\Gamma(\mu,\nu)$ is set of couplings of $\mu,\nu$
\begin{equation}\label{def:Gamma}
    \Gamma(\mu,\nu)=\{\gamma\in\P(\Omega\times\Omega):\;\pi^1_\#\gamma=\mu \text{ and } \pi^2_\#\gamma=\nu\}.
\end{equation}
Henceforth we refer to $W_2$ simply as the Wasserstein distance. We denote by $\Gamma_o(\mu,\nu)$ the set of optimal couplings of $\mu,\nu$
\begin{equation}\label{def:Gammao}
    \Gamma_o(\mu,\nu)=\{\gamma\in\Gamma(\mu,\nu):\;\iint_{\Omega\times\Omega}|z-\tilde z|^2\,d\gamma(z,\tilde z)=W_2^2(\mu,\nu)\}.
\end{equation}
When there exists $T_{\mu}^{\nu}:\Omega\rightarrow\Omega$ such that $(T_{\mu}^{\nu}\times\id_\Omega)_\#\mu\in\Gamma_o(\mu,\nu)$, where $\id_\Omega:\Omega\rightarrow\Omega$ is the identity map on $\Omega$, we say $T_{\mu}^{\nu}$ is an optimal transport map from $\mu\in\P_2(\Omega)$ to $\nu\in\P_2(\Omega)$. We refer to $(\P_2(\R^{2d}),W_2)$ as the 2-Wasserstein space or simply the Wasserstein space. 

We use $z,\tilde z\in\R^{2d}$ to refer to points in $\R^{2d}$, and write $z=(x,v),\tilde z=(y,w)\in\R^d\times\R^d$ when it is convenient to separate the two $\R^d$-coordinates, with the first $d$-coordinates $x,y\in\R^d$ denoting the positions and the latter $d$-coordinates $v,w\in\R^d$ the velocities. We denote by $\Leb^k$ the $k$-dimensional Lebesgue measure on $\R^k$, and $\Leb=\Leb^{2d}$ for simplicity.

Let $\id:\R^{2d}\rightarrow\R^{2d}$ be an identity map and $\id_x,\id_v:\R^{2d}\rightarrow\R^d$ the projections onto the respective coordinates -- i.e.
\begin{equation}\label{def:id}
    \id_x(x,v)=x, \qquad \id_v(x,v)=v, \text{ and }
    \id(x,v)=(\id_x(x,v),\id_v(x,v))=(x,v).
\end{equation}

For each $\mu\in\P_2(\R^{2d})$ we introduce the following notation for their $x$- and $v$-marginals
\begin{equation}\label{def:PixPip}
    \Pi^x\mu=(\id_x)_\#\mu\in\P_2(\R^d), \qquad \Pi^v\mu=(\id_v)_{\#}\mu\in\P_2(\R^d),
\end{equation}
and we denote by $\{\mu^x\}_{x\in\R^d}$ (resp. $\{\mu^v\}_{v\in\R^d}$) its disintegration with respect to $\Pi^x\mu$ (resp. $\Pi^v\mu$)  -- i.e. the $\Pi^x\mu$-a.e. uniquely determined Borel family of probability measures such that for every Borel map $f:\R^{2d}\rightarrow[0,+\infty]$ \cite[Theorem 5.3.1]{AGS}
\begin{equation}\label{def:disintegration}
    \int_{\R^d}\left(\int_{\R^d} f(x,v)\,d\mu^x(v)\right)\,d\Pi^x\mu(x)=\int_{\R^{2d}} f(x,v)\,d\mu(x,v).
\end{equation}

Given $\sigma\in\P_2(\R^d)$, we introduce the subspace of $\P_2(\R^{2d})$ with fixed marginals
\begin{equation}\label{def:P2x;P2p}
    \P_2^x(\R^{2d};\sigma)=\{\mu\in\P_2(\R^{2d}):\: \Pi^v\mu=\sigma\},\;
    \P_2^v(\R^{2d};\sigma)=\{\mu\in\P_2(\R^{2d}):\: \Pi^x\mu=\sigma\}.
\end{equation}

Now we define the Wasserstein distances with fixed marginals $W_{2,v}$ and $W_{2,x}$ by
\begin{equation}\label{def:partial_Wass}
\begin{split}
    W_{2,v}(\mu,\nu)=
    \begin{cases}
        \left(\int_{\R^d} W_2^2(\mu^x,\nu^x)\,d\Pi^x \mu\right)^{1/2} &\text{ when } \Pi^x \mu=\Pi^x \nu,\\
        +\infty &\text{ otherwise, and }
    \end{cases}\\
    W_{2,x}(\mu,\nu)=
    \begin{cases}
    \left(\int_{\R^d} W_2^2(\mu^v,\nu^v)\,d\Pi^v \mu\right)^{1/2} &\text{ when } \Pi^v \mu=\Pi^v \nu,\\
    +\infty &\text{ otherwise}.
    \end{cases}
\end{split}
\end{equation}
In fact, as we will see in Theorem~\ref{thm:BenBre} that $W_{2,v}(\mu,\nu)$ (resp. $W_{2,x}(\mu,\nu)$) is the length metric induced by $W_2$ on the subspace $\P_2^v(\R^{2d};\Pi^x\mu)$ (resp. $\P_2^x(\R^{2d};\Pi^v\mu)$).

We will consider the energy functional $\H:\P_2(\R^{2d})\rightarrow(-\infty,+\infty]$ of the form
\begin{equation}\label{def:H=V+U+W}
    \H(\mu)=\V(\mu)+\W(\mu)+\U(\mu);
\end{equation}
given $V,W:\R^d\rightarrow(-\infty,+\infty]$ with $W$ additionally satisfying $W(x)=W(-x)$, the potential, interaction, and internal energy functionals $\V,\W,\U$ are defined by
\begin{equation}\label{def:VWU}
\begin{split}
    \V(\mu)&=\int_{\R^d\times\R^d} V(x)+\frac{|v|^2}{2}\,d\mu(x,v),\\
    \W(\mu)&=\frac12\iint_{\R^{2d}\times\R^{2d}} W(x-y)\,d\mu(x,v)\,d\mu(y,w),   \\
    \U(\mu)&= 
    \begin{cases}
    \int_{\R^{2d}} \rho\log\rho\,d\Leb^{2d} &\text{ if } \mu=\rho\Leb^{2d}, \\
    +\infty &\text{ otherwise.}
    \end{cases}
\end{split}
\end{equation}
We restrict our attention to the case where $V,W$ are continuously differentiable, which we write $V,W\in C^1(\R^d)$; note that this does not require $V,W$ to be bounded.
The Vlasov-Fokker-Planck equation (VFP) we consider takes the form
\begin{equation}\label{eq:VFP}
    \partial_t\mu_t + v\cdot \nabla_x \mu_t - (\nabla_x V+\nabla_x W\ast \Pi^x \mu_t) \cdot \nabla_v \mu_t - \alpha(\nabla_v\cdot(v\mu_t)+\Delta_v \mu_t)=0,
\end{equation}
where often we consider a finite time horizon $[0,T]$ for some $T>0$ and for each $t\in[0,T]$ we have $\mu_t\in\P_2(\R^{2d})$. Here $\alpha>0$ is a given friction parameter. It is well-known  that VFP \eqref{eq:VFP} formally has a dissipative Hamiltonian structure \cite[Chapter 8.3.2 ]{Vil03} in terms of the energy functional $\H=\V+\W+\U$ defined in \eqref{def:H=V+U+W}. Indeed, we can rewrite \eqref{eq:VFP} in the form
\begin{equation}\label{eq:VFP;gradform}
        \partial_t \mu_t+\nabla\cdot\left(\mu_t J\grad(\V+\W)(\mu_t)\right)+\nabla\cdot\left(\mu_t S \grad \H(\mu_t)\right)=0.
\end{equation}
where $\grad\E(\mu)$ is the Wasserstein gradient of each functional $\E:\P_2(\R^{2d})\rightarrow(-\infty,+\infty]$ at $\mu\in\P_2(\R^{2d})$ and
\begin{equation}\label{def:JS}
\begin{split}
    J=\begin{pmatrix}
    0 & 1 \\ -1&0
    \end{pmatrix},\quad 
    S=\begin{pmatrix}
    0 & 0 \\ 0& -\alpha    
    \end{pmatrix}.
\end{split}
\end{equation}
Furthermore, noting that $\grad\U(\mu)=\frac{\nabla\rho}{\rho}$ for $\mu=\rho\Leb^{2d}$,
\begin{equation}\label{eq:divJU=0}
    \nabla\cdot\left(\mu J \grad\U(\mu)\right)=\nabla_v\cdot(\mu(\nabla_x\rho/\rho))-\nabla_x\cdot(\mu(\nabla_v\rho/\rho))=0
\end{equation}
we can rewrite \eqref{eq:VFP;gradform} as
\begin{equation}\label{eq:VFP;DisHam}
    \partial_t \mu_t+\nabla\cdot\left(\mu_t (J+S)\grad\H(\mu_t)\right)=0.
    % , \text{ where } J^\ast = -J,\; S^\ast S \text{ and } S\leq 0.
\end{equation}
The dissipative or damped Hamiltonian structure is more apparent in the formulation \eqref{eq:VFP;DisHam}: the antisymmetric matrix $J$ captures the conservative or Hamiltonian dynamics, whereas the positive semidefinite matrix $S$ captures the (degenerate) dissipation. Alternatively we can see $S$ as a damping of the Hamiltonian system, hence we use the terms `damped Hamiltonian system' and `dissipative Hamiltonian system' interchangeably. Formally applying the chain rule in the Wasserstein space, one can readily see that the solution $(\mu_t)_{t\geq 0}$ of \eqref{eq:VFP;DisHam} satisfies
\[\frac{d}{dt}\H(\mu_t)=-\alpha^{-1}\|S\grad\H(\mu_t)\|_{L^2(\mu_t)}^2.\]
Indeed, under suitable assumptions the solution of \eqref{eq:VFP} converges to the minimizer of the Hamiltonian $\H$ as $t\rightarrow\infty$.

\subsection{Summary of results}\label{ssec:prelim}
Inspired by the symplectic Euler algorithm for Hamiltonian systems, we introduce the coordinate-wise minimizing movement scheme in the Wasserstein space, discrete solutions of which converge to the distributional solution of VFP \eqref{eq:VFP}.

Let $\mu_0\in\P_2(\R^{2d})$ and fix a time step $h>0$ and the number of iterations $N\in\N$. Then the discrete solutions $(\mu_{ih}^N)_{i=0}^N$ are defined iteratively via the variational scheme
\begin{equation}\label{def:SIE}
\begin{split}
    \bar\mu_{(i+1)h}^N&\in \argmin_{\nu\in\P_2(\R^{2d})} \frac{W_{2,v}^2(\mu_{ih}^N,\nu)}{2h}+\Lfun_v(\nu)+\alpha\H(\nu),\\
    \mu_{(i+1)h}^N&\in\argmin_{\nu\in\P_2(\R^{2d})} \frac{W_{2,x}^2(\bar\mu_{(i+1)h}^N,\nu)}{2h}-\Lfun_x(\nu)
\end{split}
\end{equation}
where $\mu_0^N=\mu_0$, $\H$ is as defined in \eqref{def:H=V+U+W}, and the linear functionals $\Lfun_x,\Lfun_v:\P_2(\R^{2d})\rightarrow\R$ are defined by
\[
    \Lfun_x(\mu)=\int_{\R^{2d}} x\cdot v\,d\mu(x,v),\qquad \Lfun_v(\mu)=\int_{\R^{2d}} v\cdot (\nabla_x V(x)+\nabla_x W\ast \Pi^x\mu)\,d\mu(x,v).
\]
We note that $\Lfun_v(\mu)-\Lfun_x(\mu)$ is the (formal) Poisson bracket of $\H$ and the second moment functional $\mu\mapsto \frac12 \int_{\R^{2d}} |\id|^2\,d\mu$ in the Wasserstein space, introduced by Lott\cite{Lott08} and Gangbo-Kim-Pacini \cite{GanKimPac11}; see Remark~\ref{rmk:Lp-Lx=poisson} for details.

Throughout this paper, we will refer to \eqref{def:SIE} as \emph{the coordinate-wise minimizing movement scheme in the Wasserstein space}, or simply the minimizing movement scheme. We refer to the curve $(\mu_t^N)_{t\in [0,Nh]}$ defined by
\[
        \mu_t^N=\mu_{ih}^N \text{ for } t\in[ih,(i+1)h)
\]
as the piecewise constant interpolation between discrete solutions. The measure $\bar\mu_{(i+1)h}^N$ in \eqref{def:SIE} is an intermediate configuration between $\mu_{ih}^N$ and $\mu_{(i+1)h}^N$, and is not involved in the interpolation. Letting $T>0$ and $h=h_N=T/N$, we call any pointwise narrow limit of $(\mu_t^N)_{t\in[0,T]}$ as $N\rightarrow\infty$ \emph{a coordinate-wise minimizing movement}, or simply a minimizing movement.

The algorithm can be seen as the symplectic Euler algorithm with dissipation in the Wasserstein space. In fact, when we eliminate interaction force between particles by setting $\W=\U\equiv 0$, one can check that this algorithm is equivalent to flowing each particle according to the the symplectic Euler scheme with dissipation \eqref{def:SIE+damp;Rd;sep;var} which we discuss in Section~\ref{ssec:motivation} as a motivation.

Our main theorem, Theorem~\ref{thm:main}, states that when $V,W$ are in $C^1(\R^d)$ with Lipschitz gradients and $\H(\mu_0)<+\infty$ the piecewise constant interpolation between the discrete solutions \eqref{def:SIE}
\[
        \mu_t^N=\mu_{ih_N}^N \text{ for } t\in[ih_N,(i+1)h_N)
\]
pointwise narrowly converges to the distributional solution of VFP \eqref{eq:VFP}.

The proof of our main theorem is split into four parts: metric and differential structure of the Wasserstein distance with fixed marginals, the well-posedness of the discrete problem, the existence of minimizing movements, and the coincidence of minimizing movements with the distributional solutions of the Vlasov-Fokker-Planck equation. We outline the results below.

\vspace{3mm}
\emph{Metric and differential structure of the Wasserstein distance with fixed marginals.}
Section~\ref{sec:W2p} introduces and examines the Wasserstein distance with fixed marginals, and Sections~\ref{sec:partial_metslope} -~\ref{sec:psubdiff} respectively study metric and differential properties of the distance as well as convexity of relevant energy functionals with respect to this distance. As pointed out earlier, $W_{2,v}$ and $W_{2,x}$ are length metric induced by $W_2$ on the suitable subspace of $\P_2(\R^{2d})$ with fixed marginals. Thus many of the useful properties of the Wasserstein distance are transferred, including lower semicontinuity with respect to the narrow convergence (Proposition~\ref{prop:W2p_lsc}), stability of $W_{2,v}$-optimal transport maps (Proposition~\ref{prop:stability}), geodesic-convexity and (partial-)subdifferentials of common energy functionals, estimates on metric slope along solutions of discrete variational problems (Proposition~\ref{prop:pslope_est}). 

Preliminary results on the metric slopes and subdifferentials form a basis for the analysis of the minimizing movement scheme in later sections. While the arguments in Sections~\ref{sec:partial_metslope} and~\ref{sec:psubdiff} are technical, they are largely adaptations of arguments by Ambrosio, Gigli, and Savar\'e \cite{AGS} to our setting. Therefore, readers primarily interested in VFP and the minimizing movement scheme may skip the two sections and refer back to the statements when necessary.

\vspace{3mm}
\emph{Well-posedness of the time-discrete variational problem.}
In Section~\ref{sec:KFP_JKO} we establish the existence and uniqueness of the solution to the time-discrete variational problem \eqref{def:SIE} for any time step $h>0$. Proposition~\ref{prop:LxJKO;Phih} states that, as a consequence of convexity and linearity of the functionals involved in the minimizing movement scheme, each variational step starting at suitable $\mu\in\P_2(\R^{2d})$ is uniquely solvable under relatively weak assumptions. Furthermore, under stronger assumption of $\nabla_x V,\nabla_x W$ being Lipchitz-continuous, Theorem~\ref{thm:iter_solv} asserts that for any $h>0$ and number of iterations $N\in\N$ the iterative algorithm produces a unique solution $(\mu_{ih}^N)_{i=0}^N$.

\vspace{3mm}
\emph{Existence of minimizing movements.}
We prove in Section~\ref{sec:SIE;cpct} the existence of minimizing movements, that is the limit of the constant interpolation between the discrete problem \eqref{def:SIE}. The main result of the section, Theorem~\ref{thm:disc_sol;cpct2}, states that the limiting curve exists in $AC^2([0,T];\P_2(\R^{2d}))$ whenever the sequence of initial data $(\mu_0^N)_{N\in\N}$ narrowly converges to $\mu_0$ satisfying $\H(\mu_0)<+\infty$. This follows from energy estimates along the discrete solutions, which is largely inherited from the connection of the variational scheme to the symplectic Euler algorithm. To simplify the proof we additionally require the functional $\H$ to be bounded from below, which is satisfied in most physically meaningful cases where $V$ is a confining potential and the interaction potential $W$ is bounded from below.

We also state a compactness result under different assumptions. Theorem~\ref{thm:disc_sol;subseq_conv} asserts the existence of a limiting curve that is Lipschitz continuous in $(\P_2(\R^{2d}),W_2)$ given that the $v$-Fisher information at the initial datum is finite. As noted in Remark~\ref{rmk:mu0_xsingular}, this initial condition allows the initial measure of the product form $\mu_0=\sigma\otimes\upsilon$ with any $\sigma\in\P_2(\R^d)$ as long as $\upsilon\in\P_2(\R^d)$ has finite Fisher information. 
% This result is a simple consequence of growth bounds on the metric slopes that can be easily obtained from the regularity assumption that $\nabla_x V,\nabla_x W$ are Lipschitz.
 
We note that our argument for the existence of minimizing movements depends only on Lipschitz continuity of $\nabla_x V,\nabla_x W$ and the convexity of the internal energy $\U$ along $W_{2,v}$-geodesics. As seen in Section~\ref{ssec:partial_subdiff;U}, $W_{2,v}$-geodesic convexity of $\U$ is a consequence of the same argument that implies their convexity property with respect to the Wasserstein distance. Thus the results in this section can easily generalize to a more general class of energy functionals; see Remark~\ref{rmk:generalU}.

\vspace{3mm}
\emph{Minimizing movements are weak solutions of the Vlasov-Fokker-Planck equation.}
Proposition~\ref{prop:conv_PDE} states that, if $\nabla_x V,\nabla_x W$ are Lipschitz continuous, then limiting curve resulting from the variational scheme \eqref{def:SIE} is a distributional solution of VFP \eqref{eq:VFP} under suitable assumptions. As the distributional solution of the Vlasov-Fokker-Planck equation is unique when $\nabla_x V,\nabla_x W$ are Lipschitz continuous (see for instance \cite[Theorem 2.2]{Meleard96}), Proposition~\ref{prop:conv_PDE} along with the existence of minimizing movements in Section~\ref{sec:SIE;cpct} imply our main result, namely the convergence of discrete solutions to the distributional solution of VFP. 

We note here that when the interaction is absent, continuous differentiability of $V$ suffices for minimizing movements to coincide with weak solutions of VFP (see Remark~\ref{rmk:conv_PDE}), whereas our the existence of minimizing movements relies on Arzel\`a-Ascoli-type theorem hence requires Lipschitz continuity of $\nabla_x V$. This leaves possibility of yielding the same result under weaker assumptions on $V$ once energy estimates of Section~\ref{sec:SIE;cpct} are further refined.

Furthermore, while we have restricted our attention to the case of the uniform time step, with obvious modifications the statements can generalize to any partition of the the time interval $[0,T]$ with vanishing modulus. This justifies our use of the term \emph{minimizing movements} instead of \emph{generalized minimizing movements}.

\vspace{3mm}
\emph{Two-sided bounds on the decay of energy over discrete solutions.}
In Section~\ref{sec:Hdecay;disc} we establish Proposition~\ref{prop:Hdecay;disc}, which states that the discrete solutions $(\mu_{ih}^N)_{i=0}^N$ of \eqref{def:SIE} with time step $h>0$ satisfy
\begin{align*}
    -\frac{\alpha h}{1+\alpha h}\|\id_v+\nabla_v\log\rho_{ih}^N\|_{L^2(\mu_{ih}^N)}^2 + C h^2&\leq \H(\mu_{(i+1)h}^N)-\H(\mu_{ih}^N)\\
    &\leq -\alpha h\|\id_v+\nabla_v\log\rho_{(i+1)h}^N\|_{L^2(\mu_{(i+1)h}^N)}^2+C h^2
\end{align*}
for some $C>0$, where $\rho_{ih}^N$ and $\rho_{(i+1)h}^N$ are the $\Leb^{2d}$-densities of $\mu_{ih}^N$ and $\mu_{(i+1)h}^N$ respectively. Moreover, when $V,W$ have Lipschitz gradients, the constant $C$ are controlled by the second moments of $\mu_{ih}^N,\mu_{(i+1)h}^N$. This resembles the dissipation of the Hamiltonian along the solution of VFP
\[\frac{d}{dt}\H(\mu_t)=-\alpha\|\id_v+\nabla_v\log\rho_t\|_{L^2(\mu_t)}^2.\]
% Bounds on the decay of $\H$ over discrete solutions are a consequence of straightforward calculations following the energy estimates of Section~\ref{sec:SIE;cpct}.

\medskip
\subsection{Related works}\label{ssec:literature}
\emph{Gradient flows in the Wasserstein space.}
The seminar work of Jordan, Kinderlehrer, and Otto~\cite{JKO98} initiated the study of gradient flows in the Wasserstein space, by establishing that the piecewise-constant interpolations between the solution $(\mu_{ih}^N)_{i=0}^N$ of the iterative variational problem
\begin{equation}\label{eq:JKO}
    \mu_{(i+1)h}^N \in\argmin_{\nu\in\P_2(\R^d)} \frac{W_2^2(\mu_{ih}^N,\nu)}{2h}+\int_{\R^d} V(x)\,d\nu(x) + \int_{\R^d}\frac{d\nu}{d\Leb^d}\log\frac{d\nu}{d\Leb^d}\,d\Leb^d
\end{equation}
converge to the distributional solution of the Fokker-Planck equation as $h\searrow 0$. The variational scheme \eqref{eq:JKO}, often referred to as the JKO scheme due to its considerable impact in the community, falls under a broad class of algorithms called \emph{the minimizing movement scheme}, introduced by De Giorgi~\cite{DeGiorgi93}, that can be described as follows: in metric space $(X,m)$, iteratively solve
\begin{equation}\label{eq:MMS}
    \mu_{(i+1)h}^N \in \argmin_{\nu\in X} \frac{m^2(\mu_{ih}^N,\nu)}{2h}+\E(\nu).
\end{equation}
More generally, $m$ need not be a metric and it can be set to be a measure of discrepancy between elements in $X$.

Convexity properties of the potential, interaction, and internal energy functionals along Wasserstein geodesics, also known as displacement convexity, was studied earlier by McCann \cite{McCann97}. Convexity of the energy functional with respect to the metric plays a key role not only in the convergence rate of the gradient flows to equilibrium \cite{CarMcVil03,CarMcVil05}, but also in the convergence of the minimizing movement scheme in metric spaces. Ambrosio, Gigli, and Savar\'e \cite{AGS} thoroughly investigated the convergence of minimizing movement schemes \eqref{eq:MMS} in metric spaces, with particular focus on the Wasserstein space, exploiting the geodesic convexity of the energy functional $\E$ with respect to $m$. In case $\E$ is (semi)-convex along geodesics in $(X,m)$, desirable discrete energy estimates follow naturally, allowing passage to the limit in a general setting.

Our work is largely inspired both in idea and technique by this rich body of literature in gradient flows, seeking analogous description for damped Hamiltonian systems. In fact, many of the crucial general lemmas in Sections~\ref{sec:partial_metslope} and~\ref{sec:psubdiff} are translations of the robust theory of \cite{AGS} to our setting.

We add that in case the energy functional $\E$ and the distance squared $m^2$ are convex along a common connecting curve, generalized geodesics in the case of the Wasserstein space, the minimizing movement scheme \eqref{eq:MMS} leads to generation of the evolution semigroup with several desirable properties such as regularizing effect, exponential contraction, and explicit discrete-to-continuum convergence rates; see \cite[Chapter 4]{AGS}. In fact, we note in Remark~\ref{rmk:Ucvx;gengeo} that functionals involved in our algorithm \eqref{eq:SIE} are in fact convex along generalized $W_{2,x}$- and $W_{2,v}$-geodesics. We leave to future work whether this stronger convexity property can be exploited to yield more refined results for our variational scheme.

\vspace{3mm}
\emph{Hamiltonian flows in the Wasserstein space.}
In their pioneering work \cite{AmbGan08}, Ambrosio and Gangbo  studies a general form of Hamiltonian ODEs in the Wasserstein space
\begin{equation}\label{eq:AmbGan:HamODE}
    \partial_t \mu_t+\nabla\cdot\left(\mu_t J\grad\H(\mu_t)\right) \text{ for } t\in(0,T)
\end{equation}
where $J$ is a $2d\times 2d$ symplectic matrix, $\H:\P_2(\R^{2d})\rightarrow\R$ is a Hamiltonian energy functional, and $\grad\H(\mu)$ is either the minimal element in the Wasserstein subdifferential $\partial\H(\mu)$ or an element in the intersection of tangent space at $\mu$ and $\partial\H(\mu)$. Later work by Gangbo, Kim, and Pacini \cite{GanKimPac11} constructs a general theory of differential forms and symplectic structures in the Wasserstein space, and in particular further justifies viewing the equation \eqref{eq:AmbGan:HamODE} as a Hamiltonian flow in the Wasserstein space. This symplectic structure is formally equivalent to Poisson structures considered earlier by Marsden and Weinstein \cite{MarsWein81}, Lott \cite{Lott08}, and Khesin and Lee \cite{KhesinLee08}.

Ambrosio and Gangbo establish the existence of solutions of \eqref{eq:AmbGan:HamODE} as a limit of two different time-discretized schemes given initial data $\mu_0\in\P_2(\R^{2d})$, and shows that the Hamiltonian remains constant along the solution $(\mu_t)_{t\in(0,T)}$ of \eqref{eq:AmbGan:HamODE}. Roughly speaking, both algorithms can be seen as explicit schemes in the sense that they provide no obvious way to control the size of $\grad\H$ at a step in terms of the gradient at the previous step. Thus, in order to obtain equicontinuity of time-discretized schemes, strong assumptions on the growth bounds of $\grad\H$, such as Lipschitz continuity of $\mu\mapsto \|\grad\H(\mu)\|_{L^2(\mu)}$ with respect to the Wasserstein distance. This condition is not satisfied by the entropy functional; even within each sublevel set of the entropy functional, Fisher information can be made arbitrarily large with small oscillatory perturbations. Of course, in light of \eqref{eq:divJU=0}, there is no need to consider Hamiltonian functionals involving the entropy functional for Hamiltonian ODEs, yet this assumption makes it difficult to apply fully explicit schemes to obtain dissipative Hamiltonian equations involving internal energy functionals.

Our algorithm \eqref{eq:SIE} leverages the structure of internal energy functionals to circumvent this issue. By adopting a coordinate-wise update, inspired by the symplectic Euler scheme, we are able to implement an implicit Euler step in the momentum variable, which allows us to control the $v$-partial metric slope $|\partial_v\U|$. On the other hand, update in the position variable is simply the pushforward by the map $(x,v)\mapsto (x+hv,v)$, the Jacobian of which has determinant 1, hence leaving both the internal energy $\U$ and the $v$-partial metric slope $|\partial_v\U|$ invariant. On the other hand, we still require Lipschitz continuity of $\nabla_x V,\nabla_x W$ to control the growth of $\grad\V,\grad\W$ along the position update.
% , as in the result of \cite[Lemma 7.6]{AmbGan08} corresponding to the Vlasov-Monge-Ampere equation.

We also mention an alternative formalism that allows interpretation of various PDEs as Newtonian equations in the Wasserstein space. Von Renesse \cite{vonRenesse12} used the Madelung transform to cast the Schr\"odinger equation as a Newtonian equation of the sum of a potential energy functional and the Fisher information. Khesin, Miso\l ek, and Schnirelman \cite{KheMisShn23} considered many other important PDEs as Newtonian equations in this context. See also the work of Chow, Li, and Zhou \cite{ChowLiZhou20} for a further list of examples in this perspective.

\vspace{3mm}
\emph{Time-discrete variational formulation of VFP and related equations.}
Jordan and Huang \cite{JorHua00} studied a variational scheme of the form \eqref{eq:MMS} converging to the weak solution of the (regularized) Vlasov-Poisson-Fokker-Planck equation. The energy functional $\E$ is of the sum of potential, interaction, and internal energies as in \eqref{def:H=V+U+W} with suitable choice of $V,W$, whereas the space is $X=\P_2(\R^{2d})$, and the `metric' $m=m_h$ is the optimal transport cost associated to the cost function
$c_h((x,v),(y,w))=|w-v|^2/2+\frac12|(y-x)/h-(v+w)|^2$. 
Observe that $m_h$ depends on the time step $h$ and is not a bona fide metric on $\P_2(\R^{2d})$. This approach can be seen as providing interpretation of VFP as a gradient flow with respect to an adjusted `metric' $m_h$.

Duong, Peletier, and Zimmer \cite{DPZ14} proposed different variational schemes that separate the conservative (Hamiltonian) and dissipative effects. In the simplified case of the kinetic Fokker-Planck equation corresponding to $\W\equiv 0$ in \eqref{eq:VFP}, this algorithm is also of the form \eqref{eq:MMS}, but uses the optimal transport cost associated to a cost function depending on the time step and $\nabla V$ to capture the conservative dynamics, whereas the energy functional $\E(\mu)=\int_{\R^{2d}} V(x)\,d\mu(x,v)+\U(\mu)$ captures the dissipative part. The cost function has close connections to the rate functional appearing in the large deviation principle corresponding to VFP, which is studied in greater generality in a separate work~\cite{DuPeZi13} by the same authors. Yet, incorporating $\nabla V$ in the cost function introduces third order derivatives of $V$ in the estimates and makes the argument difficult. We stress that their goal is to capture the \emph{underlying physics} as accurately as possible in discretization, which is disparate from our motivation of preserving desirable \emph{geometric} properties when discretizing in time.

Adams, Duong, and Dos Reis \cite{ADR22} further considers an operator-splitting scheme which incorporates the conservative effect directly by solving the transport equation and the dissipative effect in the form \eqref{eq:MMS} where $m$ is an optimal transport cost associated to a cost function dependent on the time step. 
% \red $V$ Lipschitz, $W,\in C^{1,1}(\R^d)$ \nc

Carlen and Gangbo \cite{CarGan04} introduced the flow-and-descend algorithm for a nonlinear kinetic-Fokker-Planck equation which is closely connected to the Boltzmann equation. Their algorithm first updates the position (`flows') by taking the push-forward with respect to the map $\Phi_h(x,v)=(x+hv,v)$, then `descends' each conditional density $\mu^x\in\P_2(\R^d)$ for fixed $x\in\R^d$ to dissipate the entropy of $\mu^x$ relative to the Maxwellian density in the Wasserstein space $(\P_2(\R^d),W_2)$. In fact, our position update in \eqref{def:SIE} coincides with the flow step, whereas the implementing the steepest descent with respect to the Wasserstein distance with fixed marginals shares the same philosophy as descending for each conditional density; Remark~\ref{rmk:Jp_alt} observes that we can reformulate our velocity update purely in form of the conditional densities using the chain rule for the entropy functional. However, as the energy functional $\H$ defined by \eqref{def:H=V+U+W} is a Lyapunov functional for VFP \eqref{eq:VFP}, it is more natural to take variational step using the energy functional involving the (full) entropy instead of the conditional entropy, which is made possible in our work by introducing $W_{2,v}$.

We also note that, by adding a new variable, VFP \eqref{eq:VFP} can be reformulated as a GENERIC (General Equations for Non-Equilibrium Reversible-Irreversible Coupling) system. We refer the readers to \cite[Section 3]{DuPeZi13} for this interpretation. J\"ungel, Stefanelli, Trussardi \cite{JST21}, based on their earlier work \cite{JST19}, proposed a minimizing movement scheme for GENERIC systems set in reflexive Banach spaces. Using Fenchel's relations, the authors recast the GENERIC equation as an equivalent inequality, which form the basis of the proposed variational scheme. 

\vspace{3mm}
\emph{The Wasserstein distance with fixed marginals.} 
We introduce the Wasserstein distance with fixed marginals in order to implement the coordinate-wise minimizing movement scheme \eqref{def:SIE}. Upon completion of the first version of the manuscript, Jeremy Sheung-Him Wu kindly pointed out to us that these distances have been previously considered by Peszek and Poyato~\cite{PesPoy23}, who refer to the metrics as the fibered Wasserstein distances. In particular, some of the basic metric properties we establish in Sections~\ref{sec:W2p}-\ref{sec:partial_metslope} were proven in their work. As our proofs rely on standard but different arguments, we have included them 
% in Appendix~\ref{app:W2p_proofs}
for completeness. We also note that a recent work of Kitagawa and Takatsu~\cite{KitTak24} studies the disintegrated optimal transport problem, which includes the fibered Wasserstein distance as a special case.

Moreover, Peszek and Poyato develop a theory of fibered gradient flows based on this metric, and use the framework to study various models with heterogeneous gradient flow structures. On the other hand, we use the fibered Wasserstein distance on $\R^d\times\R^d$ in both fibers in our minimizing movement-type scheme~\eqref{def:SIE} to capture the dissipative Hamiltonian structure of VFP.  

\section{The Wasserstein distance with fixed marginals}\label{sec:W2p}
In this section we introduce and examine basic properties of the Wasserstein distances with fixed marginals, $W_{2,v}$ and $W_{2,x}$. Section~\ref{ssec:motivation} motivates the introduction of $W_{2,v},W_{2,x}$ as the metric with respect to which to perform steepest descent via a variational reformulation of the symplectic Euler scheme in Euclidean spaces. In Section~\ref{ssec:W2p_defn} we define these distances and observe that $W_{2,v}$ (resp. $W_{2,x}$) is the length metric induced by $W_2$ on the subspace of $\P_2(\R^{2d})$ with fixed $x$-marginals (resp. $v$-marginals); see \eqref{eq:W2p_as_length} of Theorem~\ref{thm:BenBre}. Then we will proceed to establish some useful properties such as lower semicontinuity with respect to narrow topology (Proposition~\ref{prop:W2p_lsc}) and stability of $W_{2,v}$-optimal transport maps (Proposition~\ref{prop:stability}).

\subsection{Motivation: symplectic Euler scheme in Euclidean spaces}\label{ssec:motivation}

To motivate our time-discretization, we first start by considering damped Hamiltonian ODEs in the Euclidean space. Given a suitable Hamiltonian $H:\R^{2d}\rightarrow\R$, the corresponding damped Hamiltonian ODE with friction parameter $\alpha>0$ is
\begin{equation}\label{def:HamODE;Rd}
\begin{split}
    \dot v_t &= -\partial_x H(x_t,v_t)-\alpha\partial_v H(x_t,v_t),\\
    \dot x_t &= \partial_v H(x_t,v_t)
\end{split}
\end{equation}
In the purely Hamiltonian case with $\alpha=0$, the corresponding symplectic Euler scheme with time step $h>0$ is
\begin{equation}\label{def:SIE;Rd}
\begin{split}
    v_{i+1}&=v_i - h\partial_x H(x_i,v_{i+1}),\\
    x_{i+1}&=x_i+h \partial_v H(x_{i+1},v_{i+1}).
\end{split}
\end{equation}
The symplectic Euler scheme \eqref{def:SIE;Rd;sep}, also known as the semi-implicit Euler scheme, is a first order symplectic integrator \cite{Vog56}, meaning that it is a first order numerical approximation of \eqref{def:HamODE;Rd} and the map $(x_i,v_i)\mapsto (x_{i+1},v_{i+1})$ is symplectic -- i.e. denoting by $\frac{\partial(x_{i+1},v_{i+1})}{\partial(x_i,v_i)}$ the Jacobian of this map and by $^\ast$ the matrix transpose, we have
\[\frac{\partial(x_{i+1},v_{i+1})}{\partial(x_i,v_i)}^\ast J \frac{\partial(x_{i+1},v_{i+1})}{\partial(x_i,v_i)} = J.\]
This identity can be easily verified, and implies preservation of volume (and orientation) under the map, which is the characterizing structural property of Hamiltonian systems. As a result, the symplectic Euler scheme almost preserves the Hamiltonian; this is in contrast to the explicit Euler scheme or the implicit Euler scheme which respectively tend to increase or damp the energy over time. We refer the readers to a extensive exposition on symplectic and geometric integration by Hairer, Lubich, and Wanner \cite[Chapter 6]{HaiLubWan02} for further information. We also note that Leimkuller and Matthews \cite{LeimMat13} suggested a splitting scheme for the underdamped Langevin dynamics, where the Hamiltonian dynamics is captured by the symplectic Euler scheme (or a higher order symplectic scheme) and the degenerate diffusion by the exact solver for the Ornstein–Uhlenbeck process. We take a different approach of reformulating the symplectic Euler scheme variationally, which will allow a direct implementation in the Wasserstein space. 

When $H$ is separable, for instance $V:\R^d\rightarrow\R^d$ is some suitable potential function and
\begin{equation}\label{def:H;sep_euc}
H(x,v)= V(x)+\frac12 |v|^2
\end{equation}
then the symplectic Euler scheme \eqref{def:SIE;Rd} is
\begin{equation}\label{def:SIE;Rd;sep}
\begin{split}
    v_{i+1}&=v_i - h\nabla_x V(x_i),\\
    x_{i+1}&=x_i+h v_{i+1}
\end{split}
\end{equation}
which is an explicit scheme. Alternatively, we may write \eqref{def:SIE;Rd;sep} as
\begin{equation}\label{def:SIE;Rd;sep;var}
\begin{split}
v_{i+1}&=\argmin_{w\in\R^d} \frac{|w-v_i|^2}{2h} + \nabla_x V(x_i)\cdot w,\\
x_{i+1}&=\argmin_{y\in\R^d} \frac{|y-x_i|^2}{2h} - y\cdot v_{i+1}.
\end{split}
\end{equation}
The variational formulation \eqref{def:SIE;Rd;sep;var} has two advantages: firstly, it is easier to translate to the Wasserstein setting, as it only relies on the metric rather than the vector space structure; moreover, dissipative effect in one of the variables can be naturally incorporated in the following way
\begin{equation}\label{def:SIE+damp;Rd;sep;var}
\begin{split}
v_{i+1}&=\argmin_{w\in\R^d} \frac{|w-v_i|^2}{2h} + \nabla_x V(x_i)\cdot w +\alpha H(x_i,w)\\
x_{i+1}&=\argmin_{y\in\R^d} \frac{|y-x_i|^2}{2h} - y\cdot v_{i+1}.
\end{split}
\end{equation}
One can easily check that the solution of \eqref{def:SIE+damp;Rd;sep;var} satisfies
\begin{align*}
    v_{i+1}&=v_i -h\nabla_x V(x_i) - \alpha h \partial_v H(x_i,v_{i+1}),\\
    x_{i+1}&=x_i+h v_{i+1},
\end{align*}
which is a discretization of the damped Hamiltonian system \eqref{def:HamODE;Rd}.

The coordinate-wise minimizing movement scheme this paper studies can be seen as an implementation of \eqref{def:SIE+damp;Rd;sep;var} in the Wasserstein space. Note that \eqref{def:SIE+damp;Rd;sep;var} first takes a steepest descent with respect to the Euclidean distance in the $v$-variable with $x_i$ fixed, then updates the position variable $x$ by taking steepest descent with respect to the distance in the $x$ variable with the velocity $v_{i+1}$ fixed. This motivates us to introduce \emph{the Wasserstein distance with fixed marginals}.

\subsection{Definition and basic properties}\label{ssec:W2p_defn}

Let $\mu,\nu\in\P_2(\R^{2d})$. Then we define the partial Wasserstein distances $W_{2,v}$ and $W_{2,x}$ by
\begin{equation*}
\begin{split}
    W_{2,v}(\mu,\nu)=
    \begin{cases}
        \left(\int_{\R^d} W_2^2(\mu^x,\nu^x)\,d\Pi^x \mu\right)^{1/2} &\text{ when } \Pi^x \mu=\Pi^x \nu,\\
        +\infty &\text{ otherwise }
    \end{cases}\\
    W_{2,x}(\mu,\nu)=
    \begin{cases}
    \left(\int_{\R^d} W_2^2(\mu^v,\nu^v)\,d\Pi^v \mu\right)^{1/2} &\text{ when } \Pi^v \mu=\Pi^v \nu,\\
    +\infty &\text{ otherwise}.
    \end{cases}
\end{split}
\end{equation*}
It is clear from the definitions that $W_{2,v}$ and $W_{2,x}$ are extended metrics on $\P_2(\R^{2d})$. Furthermore, note that whenever $\Pi^x\mu=\Pi^x\nu$, applying the triangle inequality in $W_2$ for each disintegration,
\begin{align*}
    W_{2,v}(\mu,\nu)&\leq \left(\int_{\R^d} W_2^2(\mu^x,\delta_0)\,d\Pi^x \mu\right)^{1/2}+\left(\int_{\R^d} W_2^2(\delta_0,\nu^x)\,d\Pi^x \nu\right)^{1/2} \\
    &\leq \left(\int_{\R^{2d}} |v|^2\,d\mu(x,v)\right)^{1/2}+\left(\int_{\R^{2d}} |v|^2\,d\nu(x,v)\right)^{1/2}<+\infty.
\end{align*}
Thus, for each $\sigma\in\P_2(\R^d)$, $(\P_2^v(\R^{2d};\sigma),W_{2,v})$ and $(\P_2^x(\R^{2d};\sigma),W_{2,x})$ are metric spaces.

Let us now fix $\sigma\in\P_2(\R^d)$ and define the set of couplings with fixed-marginals
\begin{equation}\label{def:Gammapx}
\begin{split}
    \Gamma^v(\mu,\nu)=\left\{\gamma=\int_{\R^d}\gamma^x\,d\sigma(x):\;\gamma^x\in\Gamma(\mu^x,\nu^x) \text{ for } \sigma\text{-a.e. } x\in\R^d\right\} \text{ given } \mu,\nu\in\P_2^v(\R^{2d};\sigma),\\
    \Gamma^x(\mu,\nu)=\left\{\gamma=\int_{\R^d}\gamma^v\,d\sigma(v):\;\gamma^v\in\Gamma(\mu^v,\nu^v) \text{ for } \sigma\text{-a.e. } x\in\R^d\right\} \text{ given } \mu,\nu\in\P_2^x(\R^{2d};\sigma).
\end{split}
\end{equation}
So we have
\begin{equation}\label{eq:W2pW2x;coupling}
\begin{split}
    W_{2,v}(\mu,\nu)&=\inf_{\gamma\in\Gamma^v(\mu,\nu)}\left(\int_{\R^d}\left(\iint_{\R^{d}\times\R^{d}}|w-v|^2\,d\gamma^x(v,w)\right)\,d\sigma(x)\right)^{1/2} \text{ for } \mu,\nu\in\P_2^v(\R^{2d};\sigma),\\
    W_{2,x}(\mu,\nu)&=\inf_{\gamma\in\Gamma^v(\mu,\nu)}\left(\int_{\R^d}\left(\iint_{\R^{d}\times\R^{d}}|y-x|^2\,d\gamma^v(y,x)\right)\,d\sigma(v)\right)^{1/2} \text{ for } \mu,\nu\in\P_2^x(\R^{2d};\sigma)
\end{split}
\end{equation}
As $\Gamma_o(\mu^x,\nu^x)$ (resp. $\Gamma_o(\mu^v,\nu^v)$) is non-empty for $\sigma$-a.e. $x\in\R^d$ (resp. $v\in\R^d$), the set of optimal couplings with fixed marginals
\begin{equation}\label{def:Gammapxo}
\begin{split}
    \Gamma^v_o(\mu,\nu)=\{\gamma\in\Gamma^v(\mu,\nu):\;\gamma^x\in\Gamma_o(\mu^x,\nu^x) \text{ for } \sigma\text{-a.e.} x\in\R^d\},\\
    \Gamma^x_o(\mu,\nu)=\{\gamma\in\Gamma^x(\mu,\nu):\;\gamma^v\in\Gamma_o(\mu^v,\nu^v) \text{ for } \sigma\text{-a.e.} v\in\R^d\}
\end{split}
\end{equation}
is nonempty and the infima in \eqref{eq:W2pW2x;coupling} are attained respectively by couplings in $\Gamma^v_o(\mu,\nu)$ and $\Gamma^x_o(\mu,\nu)$.

As all the definitions are symmetric in $x$ and $v$, without loss of generality we establish results for
$W_{2,v}$. As we shall see in Section~\ref{sec:KFP_JKO}, more `interesting' variational steps happen in the $v$-coordinate in our algorithm for VFP.

\begin{remark}[Relationship of $W_{2,v},W_{2,x}$ with $W_2$]\label{rmk:W2p_leq_W2}
We can see $W_{2,v}$ (resp. $W_{2,x}$) as the infimum of the quadratic cost over couplings that `move mass only along $v$-coordinate' (resp. $x$-coordinate). Thus we have 
\begin{equation}\label{eq:W2_leq_W2p}
    W_2(\mu,\nu)\leq W_{2,v}(\mu,\nu), W_{2,x}(\mu,\nu) \text{ for all }\mu,\nu\in\P_2(\R^{2d})
\end{equation}
in general. 

Furthermore, $W_{2,v}$ induces a stronger topology on $\P_2^v(\R^{2d};\sigma)$, $\sigma\in\P_2(\R^d)$. To see this, note that $W_{2,v}(\mu_n,\mu)\xrightarrow[]{n\rightarrow\infty} 0$ implies that for $\Pi^x\mu$-a.e. $x\in\R^d$ $W_{2}(\mu_n^x,\mu^x)\xrightarrow[]{n\rightarrow\infty}$ which in turn implies that $\mu_n^x$ converges narrowly to $\mu^x$. This is stronger than convergence with respect to $W_2$ in $\P_2^v(\R^{2d};\sigma)$. To take a simple example, consider $\P_2^v(\mathbb{T}^2;\sigma)$ with $\sigma$ being the Lebesgue measure on the 1-dimensional torus. Let $\mu_n$ be the normalized measure concentrated on the map $x\mapsto nx$ mod 1. One can readily check that $\Pi^x\mu_n=\Leb^1\restr_{[0,1]}$ and $\mu_n$ narrowly converges to the 2-dimensional Lebesgue measure on $\mathbb{T}^2$. On the other hand, each disintegration with respect to $x$ is a dirac mass, hence $\mu_n^x$ cannot narrowly converge to $\Leb^1\restr_{[0,1]}$. Indeed, one can check $W_{2,v}^2(\mu_n,\mu_{(n+1)})=\int_0^1 |x|^2\,dx$.
\end{remark}

We first check that $(\P_2^v,W_{2,v})$ is a complete metric space for each $\sigma\in\P_2(\R^d)$.
\begin{proposition}[Completeness of $(\P_2^v,W_{2,v})$]\label{prop:P2p_W2p_complete}
    Given $\sigma\in\P_2(\R^d)$, each
    $(\P_2^v(\R^{2d};\sigma),W_{2,v})$ is a complete metric space.
\end{proposition}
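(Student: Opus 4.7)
The plan is to use the $W_{2,v}$-Cauchy property to pass to a rapidly convergent subsequence, extract $\sigma$-a.e. pointwise $W_2$-limits of the disintegrations, assemble them into a candidate limit via gluing, and upgrade to $W_{2,v}$-convergence by dominated convergence. Given a Cauchy sequence $(\mu_n) \subset \P_2^v(\R^{2d};\sigma)$, I would first pass to a subsequence $(\mu_{n_k})$ with $\sum_k W_{2,v}(\mu_{n_k}, \mu_{n_{k+1}}) < \infty$. Since $W_{2,v}(\mu,\nu) = \|W_2(\mu^{\cdot},\nu^{\cdot})\|_{L^2(\sigma)}$, the partial sums $\sum_k W_2(\mu_{n_k}^{\cdot}, \mu_{n_{k+1}}^{\cdot})$ form a summable series in $L^2(\sigma)$, hence converge to some $g \in L^2(\sigma)$ with $g(x) < \infty$ for $\sigma$-a.e. $x$. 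For such $x$, the sequence $(\mu_{n_k}^x)_k$ is $W_2$-Cauchy in $\P_2(\R^d)$; completeness of $(\P_2(\R^d), W_2)$ then produces a limit $\nu^x \in \P_2(\R^d)$, and I extend $\nu^x := \delta_0$ on the remaining $\sigma$-null set.

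Next I would verify that $x \mapsto \nu^x$ is a Borel family and glue to define the candidate limit
$$\nu := \int_{\R^d} \delta_x \otimes \nu^x \, d\sigma(x) \in \P(\R^{2d}),$$
which automatically satisfies $\Pi^x \nu = \sigma$. Measurability follows because $W_2$-convergence implies narrow convergence and each $x \mapsto \int \phi \, d\mu_{n_k}^x$ is Borel for $\phi \in C_b(\R^d)$ by the disintegration theorem, so $x \mapsto \int \phi \, d\nu^x$ is Borel; passing to a countable generating subfamily of $C_b(\R^d)$ then yields Borel measurability of the whole family. To check $\nu \in \P_2(\R^{2d})$, $W_{2,v}$-boundedness of the Cauchy sequence gives $\sup_n \int W_2^2(\mu_n^x, \delta_0)\, d\sigma(x) < \infty$, and lower semicontinuity of $\eta \mapsto W_2(\eta,\delta_0)$ under $W_2$-convergence combined with Fatou yields $\int_{\R^{2d}} |v|^2 \, d\nu = \int_{\R^d} W_2^2(\nu^x, \delta_0)\, d\sigma(x) < \infty$; the $x$-moment is controlled by $\sigma \in \P_2(\R^d)$.

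Finally, for $\sigma$-a.e. $x$ the triangle inequality gives $W_2(\mu_{n_k}^x, \nu^x) \leq \sum_{j \geq k} W_2(\mu_{n_j}^x, \mu_{n_{j+1}}^x) \leq g(x)$, and the left-hand side tends pointwise to $0$. Dominated convergence in $L^2(\sigma)$, with dominating function $g$, then yields $W_{2,v}^2(\mu_{n_k}, \nu) = \int W_2^2(\mu_{n_k}^x, \nu^x)\, d\sigma \to 0$; the Cauchy property of $(\mu_n)$ promotes this to convergence of the full sequence to $\nu$ in $W_{2,v}$. The main technical obstacle is establishing Borel measurability of the glued family $\{\nu^x\}$ cleanly, so that $\nu$ is genuinely a probability measure on $\R^{2d}$; the remaining steps follow the standard template for completeness of $(\P_2(\R^d), W_2)$ adapted fiberwise.
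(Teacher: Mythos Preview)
Your proposal is correct and takes a genuinely different route from the paper. The paper's proof is much shorter: it observes $W_2 \leq W_{2,v}$ (Remark~\ref{rmk:W2p_leq_W2}), so a $W_{2,v}$-Cauchy sequence is $W_2$-Cauchy and hence has a limit $\mu_0$ in the complete space $(\P_2(\R^{2d}), W_2)$; then narrow convergence of $\mu_n$ to $\mu_0$ passes to marginals, giving $\Pi^x\mu_0 = \sigma$ and placing $\mu_0$ in $\P_2^v(\R^{2d};\sigma)$. Your approach instead works fiberwise, mimicking the standard completeness proof for $L^2(\sigma;X)$ with $X$ a complete metric space: extract $\sigma$-a.e.\ $W_2$-limits of the disintegrations and glue. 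Your route is more explicit and self-contained---in particular it actually verifies $W_{2,v}(\mu_{n_k},\nu)\to 0$ via dominated convergence, a step the paper's proof leaves implicit (and which does not follow automatically from $W_2$-convergence, since $W_{2,v}$ induces a strictly stronger topology, as noted in Remark~\ref{rmk:W2p_leq_W2}). The paper's route, by contrast, entirely sidesteps the measurability issue for the glued family $\{\nu^x\}$ that you correctly flag as the main technical point, since it produces the candidate limit directly as an element of $\P_2(\R^{2d})$.
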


\begin{proof}
From considering the disintegration with respect to $\sigma$, it is clear that $W_{2,v}$ satisfies the metric axioms on $\P_2^v(\R^{2d};\sigma)$, thus it remains to show completeness. Suppose $(\mu_n)_n$ is a Cauchy sequence in $\P_2^v(\R^{2d};\sigma)$ with respect to $W_{2,v}$. Then by Remark~\ref{rmk:W2p_leq_W2} the sequence $(\mu_n)_n$ is also Cauchy with respect to $W_{2}$, thus it has a subsequential Wasserstein limit $\mu_0\in\P_2(\R^{2d})$. Considering the convergent subsequence without relabling, note that $\mu_n$ narrowly converges to $\mu_0$. As narrow convergence is preserved by projection, $\Pi^x\mu_n=\sigma$ narrowly converges to $\Pi^x\mu_0$, thus $\Pi^x\mu_0=\sigma$ and we deduce completeness.

\end{proof}

As the narrow convergence behaves poorly with respect to disintegration, a priori it is not clear that the Wasserstein distance with fixed marginals are l.s.c. w.r.t. the topology of narrow convergence. Thus we turn to the dynamic formulation for the Wasserstein distance with fixed marginalss to establish the lower semicontinuity.

First we recall the definition of absolutely continuous curves in metric spaces. 
\begin{definition}\label{def:ACcurve}
Given a complete metric space $(X,m)$ and an interval $I\subset\R$ we say a curve $\bmu:I\mapsto \mu_t\in X$ belongs to $AC(I; X,m)$ if there exists $m\in L^1(I)$ such that
\begin{equation}\label{def:ACp_curve}
    m(\mu_s,\mu_t)\leq \int_s^t m(r)\,dr \quad \forall s,t\in I \text{ with }s<t.
\end{equation}
Furthermore, for any $\bmu\in AC(I; X,m)$, the metric derivative
\begin{equation}\label{def:met_der}
    |\mu'|_m(t):=\lim_{s\rightarrow t}\frac{m(\mu(s),\mu(t))}{|s-t|}.
\end{equation}
exists for $\Leb^1$-a.e. $t\in(a,b)$, and $|\mu'|_m\in L^1(I)$. In case $|\mu'|_m\in L^p(I)$ for $1\leq p<+\infty$ we write $\bmu\in AC^p(I;X,m)$.
\end{definition}

In most cases we will deal with $I=[0,T]$ for some finite time horizon $T>0$. We will often write $AC(I;X)$ when the choice of the metric $m$ is obvious from the context. When $(X,m)=(\P_2(\R^{2d}),W_2)$, we simply write $|u'|_{W_2}=|u'|$. 

\begin{theorem}[Benamou-Brenier formula for $W_{2,v}$]\label{thm:BenBre}
Let $\sigma\in\P_2(\R^d)$ and $\mu,\nu\in\P_2^v(\R^{2d};\sigma)$. Then
\begin{equation}\label{eq:BenBre}
    \begin{split}
    W_{2,v}(\mu,\nu)=\min\left\{\left(\int_{0}^1\|u_t\|_{L^2(\mu_t;\R^d)}^2\,dt\right)^{1/2}:\;\partial_t\mu_t+\nabla\cdot\left(\mu_t\begin{pmatrix} 0 \\ u_t
    \end{pmatrix}\right)=0 \text{ with } \mu_0=\mu,\;\mu_1=\nu\right\},
    \end{split}
\end{equation}
where the infimum is over narrowly continuous curves $\mu:t\mapsto\mu_t\in\P_2(\R^{2d};\sigma)$ and Borel vector fields $u:(t,x,v)\mapsto u_t(x,v)\in\R^d$ with $u_t\in L^2(\mu_t;\R^d)$ satisfying the continuity equation in the distributional sense
\begin{equation}\label{def:CEv;dist}
    0= \int_0^1 \int_{\R^{2d}} [\partial_t\varphi(t,x,v) +\nabla_v \varphi(t,x,v)\cdot u_t(x,v)]\,d\mu_t(x,v)\,dt \text{ for each } \varphi\in C_c^\infty((0,1)\times\R^{2d}).
\end{equation}

Moreover, $W_{2,v}$ is the length metric induced by $W_2$ in $\P_2^v(\R^{2d};\sigma)$ -- i.e.
\begin{equation}\label{eq:W2p_as_length}
    W_{2,v}(\mu,\nu)=\inf\left\{\int_0^1 |\mu'|(t)\,dt:\;(\mu_t)_{t\in[0,1]}\in AC([0,1];\P_2^v(\R^{2d};\sigma),W_2) \text{ with } \mu_0=\mu,\;\mu_1=\nu\right\}.
\end{equation}
\end{theorem}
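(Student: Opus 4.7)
The plan is to establish the Benamou--Brenier-type formula \eqref{eq:BenBre} by reducing to the classical formula fiber by fiber, and then to deduce the length-metric identity \eqref{eq:W2p_as_length} as a corollary. For the upper bound ``$\geq$'' in \eqref{eq:BenBre} I would build an explicit minimizer: pick an optimal plan $\gamma = \int \gamma^x\,d\sigma(x) \in \Gamma^v_o(\mu,\nu)$, and for each $x$ invoke the classical Benamou--Brenier theorem in $(\P_2(\R^d),W_2)$ to obtain a constant-speed geodesic $\mu_t^x$ and a Borel velocity field $u_t^x$ with $\int_0^1 \|u_t^x\|_{L^2(\mu_t^x)}^2\,dt = W_2^2(\mu^x,\nu^x)$; a measurable selection makes the choice jointly Borel in $x$. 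Assembling $\mu_t := \int \delta_x \otimes \mu_t^x\,d\sigma(x)$ and $u_t(x,v) := u_t^x(v)$ yields a narrowly continuous curve whose $x$-marginal is identically $\sigma$, and Fubini reduces \eqref{def:CEv;dist} to the fiberwise continuity equation for each $\mu_t^x$. The total action equals $\int_{\R^d} W_2^2(\mu^x,\nu^x)\,d\sigma(x) = W_{2,v}^2(\mu,\nu)$, so the minimum is attained.

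For the opposite inequality, given any admissible pair $(\mu_t,u_t)$, I would first test \eqref{def:CEv;dist} against $\varphi(t,x,v) = \eta(t)\psi(x)$ with $\psi \in C_c^\infty(\R^d)$ and $\eta \in C_c^\infty((0,1))$; since $\nabla_v\varphi = 0$ this shows $t \mapsto \int \psi\,d\Pi^x\mu_t$ is constant, hence $\Pi^x\mu_t = \sigma$ for all $t$. Disintegrating $\mu_t = \int \delta_x \otimes \mu_t^x\,d\sigma(x)$ and testing \eqref{def:CEv;dist} against separable functions $\eta(t)\zeta(x)\chi(v)$ for $\zeta$ ranging over a countable dense family of $C_c^\infty(\R^d)$, a uniqueness-of-disintegration argument yields for $\sigma$-a.e. $x$ the fiberwise continuity equation $\partial_t \mu_t^x + \nabla_v \cdot (u_t(x,\cdot)\mu_t^x) = 0$ on $\R^d$; the vanishing $x$-component of $u_t$ is crucial here, since otherwise a cross-fiber flux term would obstruct the decoupling. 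The classical Benamou--Brenier estimate in $(\P_2(\R^d),W_2)$ and Fubini then give $W_{2,v}^2(\mu,\nu) \leq \int_0^1 \|u_t\|_{L^2(\mu_t)}^2\,dt$.

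The length-metric identity \eqref{eq:W2p_as_length} now follows. The inequality ``$\leq$'' is immediate from the minimizer above: since $|\mu'|_{W_2}(t) \leq \|u_t\|_{L^2(\mu_t)}$ along the minimizer, Cauchy--Schwarz gives $\int_0^1 |\mu'|_{W_2}\,dt \leq W_{2,v}(\mu,\nu)$. For ``$\geq$'', given any $\mu_t \in AC([0,1];\P_2^v(\R^{2d};\sigma), W_2)$ of length $L$, I would reparametrize by arc length so that $|\mu'|_{W_2} \equiv L$, extract the minimal Ambrosio--Gigli--Savar\'e velocity $v_t = (v_t^x,v_t^v)$ with $\|v_t\|_{L^2(\mu_t)} = L$, and run a quantitative version of the disintegration argument above to produce a Borel family $w_t^x$ satisfying $\partial_t \mu_t^x + \nabla_v \cdot (w_t^x \mu_t^x) = 0$ together with the energy bound $\int_{\R^d} \|w_t^x\|_{L^2(\mu_t^x)}^2\,d\sigma(x) \leq \|v_t\|_{L^2(\mu_t)}^2$. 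Setting $u_t(x,v) := w_t^x(v)$ gives a pair admissible for \eqref{eq:BenBre} of action at most $L^2$, whence $W_{2,v}(\mu,\nu) \leq L$. The main obstacle is precisely this quantitative disintegration: extracting a $v$-only fiberwise field $w_t^x$ from the general AGS velocity while controlling its $L^2$ norm amounts to a Wasserstein analogue of projecting onto vector fields that preserve $x$-fibers, exploiting that the average $x$-flux is $\sigma$-divergence-free because $\Pi^x\mu_t$ is constant.
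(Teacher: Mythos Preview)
Your proposal is correct and follows the same fiberwise-disintegration strategy as the paper: your lower bound for \eqref{eq:BenBre} via separable test functions and the fiberwise classical Benamou--Brenier is exactly the paper's Step~1, and your construction of the minimizer by gluing fiberwise geodesics is the paper's Step~2 (though the paper avoids measurable selection by instead extracting the AGS minimal velocity from the assembled curve via \cite[Theorem~8.3.1]{AGS}).

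The one place you overcomplicate things is what you flag as the ``main obstacle'' --- the inequality $W_{2,v}(\mu,\nu)\leq \int_0^1|\mu'|_{W_2}\,dt$ for an arbitrary $W_2$-AC curve in $\P_2^v(\R^{2d};\sigma)$. You propose a ``quantitative disintegration'' to extract a fiberwise $v$-velocity with controlled energy from the AGS velocity $\bu_t=(w_t,u_t)$. The paper's Step~3 is simpler: after reparametrizing to constant $W_2$-speed, test the continuity equation against $\varphi(t,x,v)=\tilde\varphi(t,x)$; since $\Pi^x\mu_t\equiv\sigma$ is constant in $t$, this forces $\int w_t\cdot\nabla_x\tilde\varphi\,d\mu_t=0$ for all $\tilde\varphi$, whence $w_t\equiv 0$. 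Thus $(\mu_t,(0,u_t))$ is itself admissible for \eqref{eq:BenBre} with $\|u_t\|_{L^2(\mu_t)}=|\mu'|_{W_2}(t)$, and feeding this back into the already-established ``$\leq$'' direction of \eqref{eq:BenBre} gives $W_{2,v}(\mu,\nu)\leq\bigl(\int_0^1|\mu'|^2\,dt\bigr)^{1/2}=\int_0^1|\mu'|\,dt$. No new fiberwise extraction is needed --- all the disintegration work is already done in Step~1, and Step~3 simply reuses it.
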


\begin{proof}

\noindent\emph{Step 1$^o$.} Let us first suppose $(\mu_t,(0,u_t)^\ast)_{t\in[0,1]}$ solve the continuity equation in the sense of \eqref{def:CEv;dist} with $\mu_0=\mu$ and $\mu_1=\nu$, where $^\ast$ denotes the transpose. Let the Borel families $\{\mu^x\}_{x\in\R^d},\{\nu^x\}_{x\in\R^d}$ be the respective disintegrations of $\mu,\nu\in\P_2^v(\R^{2d};\sigma)$ with respect to $\sigma$. Writing $u_t^x(v):=u_t(x,v)$, $u^x:(t,v)\mapsto u_t^x(v)$ is a Borel vector field, and from the disintegration theorem we know that $\mu^x,\nu^x$ are uniquely defined for $\sigma$-a.e. $x\in\R^d$. Furthermore, for each $\varphi\in C_c^\infty((0,1)\times\R^d\times\R^d)$
\begin{align*}
    0&= \int_0^1 \int_{\R^{2d}} [\partial_t\varphi(t,x,v) +\nabla_v \varphi(t,x,v)\cdot u_t(x,v)]\,d\mu_t(x,v)\,dt \\
    &= \int_0^1 \int_{\R^d} \left(\int_{\R^d} [\partial_t\varphi(t,x,v) +\nabla_v \varphi(t,x,v)\cdot u_t^x(v)]\,d\mu_t^x(v)\right)d\sigma(x)\,dt.
\end{align*}
As
\begin{align*}
    &\int_0^1 \iint_{\R^d\times\R^d} |\partial_t\varphi(t,x,v) +\nabla_v \varphi(t,x,v)\cdot u_t(x,v)|\,d\mu_t^x(v)\,dv\,d\Pi^x \mu(x)\,dt\\
    &\leq C \int_0^1 \iint_{\R^d\times\R^d} (1+|u_t(x,v)|)\,d\mu_t^x(v)\,d\Pi^x \mu(x)\,dt
    \leq C \left(1+\int_{\R^d} W_2^2(\mu^x,\nu^x)\,d\Pi^x\mu(x)\right)<+\infty,
\end{align*}
we may consider test functions of the form $\varphi(t,x,v)=\psi(x)\tilde\varphi(t,v)$ with $\psi\in C_c^\infty(\R^d)$ and $\tilde\varphi\in C_c^\infty((0,1)\times\R^d)$ and apply Fubini's theorem to deduce
\[\int_{\R^d} \psi(x) \left(\int_0^1  \int_{\R^d} [\partial_t\tilde\varphi(t,v) +\nabla_v \tilde\varphi(t,v)\cdot u_t^x(v)]\,d\mu_t^x(v)\,dt\right)d\sigma(x)=0.\]
As $\psi\in C_c^\infty(\R^d)$ is arbitrary, we deduce that for $\sigma$-a.e. $x\in\R^d$
\[\int_0^1  \int_{\R^d} [\partial_t\tilde\varphi(t,v) +\nabla_v \tilde\varphi(t,v)\cdot u_t^x(v)]\,d\mu_t^x(v)\,dt =0 \text{ for all } \tilde\varphi\in C_c^\infty((0,1)\times\R^d).\]
Thus by the (usual) Benamou-Brenier formula \cite{BenBre00} for each $\mu^x,\nu^x$, we obtain
\begin{align*}
    W_2^2(\mu^x,\nu^x) \leq \int_0^1 \int_{\R^d} |u_t(x,v)|^2\,d\mu_t^x(v)\,dt \text{ for } \sigma\text{-a.e. }x\in\R^d.
\end{align*}
Integrating over $\sigma$ we obtain the inequality ``$\leq$'' of \eqref{eq:BenBre}.

\vspace{3mm}
\noindent\emph{Step 2$^o$.}
To see that in \eqref{eq:BenBre} the minimum is attained and the equality holds, for each $\sigma$-a.e. $x\in\R^d$ let $(\mu_t^x)_{t\in [0,1]}$ be the constant-speed displacement interpolation from $\mu^x$ to $\nu^x$. Then define $\mu_t:=\int_{\R^d} \mu_t^x\,d\sigma(x)$ for each $t\in[0,1]$, and note
\begin{equation}\label{eq:W2v-geo}
W_2(\mu_s,\mu_t)\leq W_{2,v}(\mu_s,\mu_t) = (t-s)W_{2,v}(\mu,\nu) \text{ for all } 0\leq s< t\leq 1.
\end{equation}
Replace $(\mu_t)_{t\in[0,1]}$ with its constant-speed reparametrization with respect to $W_2$ (see for instance \cite[Lemma 1.1.4]{AGS}), the same inequality holds and $(\mu_t)_{t\in[0,1]}\in AC^2([0,1];\P_2(\R^{2d}))$, and therefore \cite[Theorem 8.3.1]{AGS} allows us to find a Borel vector field $\bv:(t,x,v)\mapsto \bu_t(x,v)\in\R^{2d}$ such that for each $\varphi\in C_c^\infty((0,1)\times\R^{2d})$
\begin{equation}\label{eq:CE;phi}
    \int_0^1 \int_{\R^{2d}} \partial_t\varphi(t,x,v)+ \bu_t(x,v)\cdot\nabla\varphi(t,x,v)\,d\mu_t(x,v)\,dt=0
\end{equation}
and $\|\bu_t\|_{L^2(\mu_t;\R^{2d})}= |\mu'|(t)$ for a.e. $t\in(0,1)$. As $\Pi^x\mu_t=\sigma\in\P_2(\R^d)$ for a.e. $t\in[0,1]$, choosing $\tilde\varphi\in C_c^\infty((0,1)\times\R^d)$, we can argue by approximation that \eqref{eq:CE;phi} holds with $\varphi(t,x,v):=\tilde\varphi(t,x)$. Then writing $\bu_t=(w_t,u_t)$
\[0=\frac{d}{dt}\int_0^1 \int_{\R^{2d}}\tilde\varphi(t,x)\,d\mu_t(x,v)=- \int_{\R^{2d}}w_t(x,v)\cdot\nabla_x\tilde\varphi(t,x)\,d\mu_t(x,v)\,dt.\]
As $\tilde\varphi$ was arbitrary, we see $w_t\equiv 0$ for a.e $t\in[0,1]$, and thus
\begin{align*}
    |\mu'|(t)=\|\bu_t\|_{L^2(\mu_t)}=\|(0,u_t)\|_{L^2(\mu_t)} \text{ for a.e. } t\in[0,1].
\end{align*}
On the other hand, \eqref{eq:W2v-geo} also implies
\[|\mu'|(t)\leq W_{2,v}(\mu,\nu) \text{ for a.e. } t\in[0,1].\]
As $(\mu_t)_{t\in[0,1]}$ is constant-speed with respect to $W_2$,
\[W_{2,v}(\mu,\nu)\geq \int_0^1 |\mu'|(t)\,dt = \left(\int_0^1 |\mu'|^2(t)\,dt\right)^{1/2} = \left(\int_0^1 \|u_t\|_{L^2(\mu_t)}^2\,dt\right)^2 \]
and conclude the proof of \eqref{eq:BenBre}, and at the same time establish the bound ``$\geq$'' of \eqref{eq:W2p_as_length}.

\vspace{3mm}
\noindent\emph{Step 3$^o$.}
To establish the remaining inequality of \eqref{eq:W2p_as_length}, take any
absolutely continuous curve $(\mu_t)_{t\in[0,1]}$ in $AC([0,1];\P_2^v(\R^{2d};\sigma),W_2)$  with $\mu_0=\mu$, $\mu_1=\nu$. By considering a constant-speed reparametrization of $(\mu_t)_{t\in[0,1]}$ with respect to $W_2$ and repeating the argument in Step 2$^\circ$, we can produce $(\mu_t,(0,u_t))_{t\in[0,1]}$ of the continuity equation such that
\[|\mu'|(t)=\|(0,u_t)\|_{L^2(\mu_t)} \text{ for a.e. } t\in[0,1].\]
Then by the inequality ``$\leq$'' of \eqref{eq:BenBre} and that $t\mapsto|\mu'|(t)$ is constant, we deduce
\begin{align*}
    W_{2,v}(\mu,\nu)\leq \left(\int_0^1 \|u_t\|_{L^2(\mu_t;\R^d)}^2\,dt\right)^{1/2} = \left(\int_0^1 |\mu'|^2(t)\,dt\right)^{1/2} = \int_0^1 |\mu'|(t)\,dt.
\end{align*}

\end{proof}

We state a refined Arzel\`a-Ascoli theorem due to Ambrosio, Gigli, and Savar\'e \cite[Proposition 3.3.1.]{AGS} that allows us to construct pointwise narrow limit of equicontinuous curves. We will use this theorem to construct $W_{2,v}$-geodesics, which can be then used to deduce the lower semicontinuity of $W_{2,v}$ in Proposition~\ref{prop:W2p_lsc}. Moreover, this Arzel\`a-Ascoli-type theorem will also be pivotal in showing existence of minimizing movements in Section~\ref{sec:SIE;cpct}.

\begin{proposition}[A refined Arzel\`a-Ascoli Theorem \cite{AGS}]\label{prop:ArzelaAscoli-weak}
Let $(X,m)$ be a complete metric space. Let $T>0$ and $K\subset X$ be a sequentially compact set with respect to topology $\sigma$, and let $\mu^n:[0,T]\rightarrow X$ be curves such that
\begin{equation}\label{eq:u_n;eqcts}
\mu^n(t)\in K\quad\forall n\in\N,\;t\in[0,T],\\
\limsup_{n\rightarrow\infty} m(\mu^n(s),\mu^n(t))\leq \omega(s,t)\quad \forall s,t\in[0,T],
\end{equation}
for a symmetric function $\omega:[0,T]\times[0,T]\rightarrow[0,\infty)$, such that
\[\lim_{(s,t)\rightarrow(r,r)}\omega(s,t)=0\quad\forall r\in[0,T]\setminus\mathscr{C}\]
where $\mathscr{C}$ is an (at most) countable subset of $[0,T]$. Then there exists an increasing subsequence $k\mapsto n(k)$ and a limit curve $\mu:[0,T]\rightarrow X$ such that
\[\mu^{n(k)}(t)\xrightharpoonup[]{\sigma}\mu(t)\quad\forall t\in[0,T],\;\mu \text{ is continuous with respect to }  m \text{ in } [0,T]\setminus\mathscr{C}.\]
\end{proposition}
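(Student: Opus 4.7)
The plan is to proceed in three main stages: first produce a candidate limit curve on a countable dense subset of $[0,T]$ by a Cantor diagonal extraction; then extend the limit to $[0,T]\setminus\mathscr{C}$ using the equicontinuity-type bound encoded by $\omega$; and finally upgrade the pointwise $\sigma$-convergence from the dense set to all of $[0,T]$.

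First I would fix a countable set $D\subset[0,T]$ which is dense and contains $\mathscr{C}$, enumerate $D=\{t_j\}_{j\in\N}$, and use $\sigma$-sequential compactness of $K$ at each $t_j$ together with a diagonal argument to extract a subsequence (still denoted $(\mu^{n(k)})$) such that $\mu^{n(k)}(t)\xrightharpoonup{\sigma}\tilde\mu(t)$ in $K$ for every $t\in D$. Combining $\limsup_k m(\mu^{n(k)}(s),\mu^{n(k)}(t))\le \omega(s,t)$ with the $\sigma$-lower semicontinuity of $m$ (the implicit compatibility assumption between $m$ and $\sigma$ in the \cite{AGS} setup), I would obtain
\begin{equation*}
    m(\tilde\mu(s),\tilde\mu(t))\le \omega(s,t)\quad\text{for every } s,t\in D.
\end{equation*}

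Second, to extend $\tilde\mu$ to $r\in[0,T]\setminus\mathscr{C}$, I would take any sequence $s_j\in D$ with $s_j\to r$; the bound $m(\tilde\mu(s_i),\tilde\mu(s_j))\le \omega(s_i,s_j)\to 0$ shows $(\tilde\mu(s_j))$ is $m$-Cauchy, so by completeness of $(X,m)$ it converges to some point which I call $\mu(r)$, and $\mu(r)\in K$ because $K$ is $\sigma$-sequentially compact (hence $\sigma$-closed) and $m$-convergence together with $\sigma$-lower semicontinuity of $m$ keeps the limit in $K$. A standard triangle estimate using $\omega$ shows $\mu(r)$ is independent of the particular sequence $s_j\to r$, and the inequality $m(\mu(s),\mu(t))\le \omega(s,t)$ persists for $s,t\in([0,T]\setminus\mathscr{C})\cup D$, giving the advertised $m$-continuity on $[0,T]\setminus\mathscr{C}$.

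Third, I would promote the $\sigma$-convergence to every $t\in[0,T]\setminus\mathscr{C}$. Any subsequence of $(\mu^{n(k)}(t))$ admits a further $\sigma$-convergent subsubsequence with some limit $\nu\in K$; for each $s\in D$ the $\sigma$-lower semicontinuity of $m$ gives
\begin{equation*}
    m(\nu,\tilde\mu(s))\le \liminf_{k\to\infty} m(\mu^{n(k)}(t),\mu^{n(k)}(s))\le \omega(s,t),
\end{equation*}
and letting $D\ni s\to t$ (so $\tilde\mu(s)\to\mu(t)$ in $m$ and $\omega(s,t)\to 0$) forces $\nu=\mu(t)$ via Hausdorffness. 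Since every subsequence has a sub-subsequence converging $\sigma$ to the same limit $\mu(t)$, the whole sequence converges. For the countable exceptional set $\mathscr{C}$, one final diagonal extraction selects a subsubsequence that converges $\sigma$ at every point of $\mathscr{C}$, defining $\mu$ there (without any continuity claim).

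The main obstacle I anticipate is the careful interplay between the $\sigma$-topology — which powers compactness and defines the desired convergence — and the metric $m$, which controls equicontinuity and the regularity of the limit. Both the Cauchy argument of stage two and the identification-of-subsequential-limits argument of stage three rely essentially on being able to estimate $m$-distances between $\sigma$-limits, which is only legitimate under the $\sigma$-lower semicontinuity of $m$; this is the delicate hypothesis one must be careful to invoke.
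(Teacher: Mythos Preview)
The paper does not give its own proof of this proposition: it is stated as a citation of \cite[Proposition~3.3.1]{AGS} and used as a black box. Your proposal is essentially the proof given in \cite{AGS}, with the three stages (diagonal extraction on a countable dense set containing $\mathscr{C}$, extension by completeness using the $\omega$-modulus, identification of subsequential limits at all $t\notin\mathscr{C}$ followed by a final diagonal extraction on $\mathscr{C}$) matching the original argument; you also correctly single out the $\sigma$-lower semicontinuity of $m$ as the compatibility hypothesis that makes the second and third stages work.
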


Now we are ready to show the lower semicontinuity of $W_{2,v}$, which will be crucial in later sections. We note that this result was previously established by Peszek and Poyato \cite[Proposition 3.12 (i)]{PesPoy23} via an argument using transport plans.

\begin{proposition}[Lower semicontinuity of $W_{2,v}$ with respect to the narrow topology]\label{prop:W2p_lsc}
The map $(\mu,\sigma)\mapsto W_{2,v}(\mu,\sigma)$ from $\P_2(\R^{2d})\times\P_2(\R^{2d})$ to $[0,+\infty]$ is l.s.c. w.r.t. the narrow topology.
\end{proposition}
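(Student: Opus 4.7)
The plan is to avoid the length-metric characterization of $W_{2,v}$ (since narrow convergence in $\P_2(\R^{2d})$ does not control second moments uniformly along interpolating geodesics) and instead work directly with the coupling formulation \eqref{eq:W2pW2x;coupling}. The strategy: extract a narrow limit of optimal fiberwise plans, verify the limit lives in $\Gamma^v(\mu,\sigma)$ via the disintegration theorem, and close with narrow lower semicontinuity of the quadratic cost.

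Assume $\mu_n \rightharpoonup \mu$ and $\sigma_n \rightharpoonup \sigma$ narrowly in $\P_2(\R^{2d})$ and set $L:=\liminf_n W_{2,v}(\mu_n,\sigma_n)$. If $L=+\infty$ the claim is trivial, so pass to a subsequence (not relabeled) realizing the $\liminf$ as a limit, with $W_{2,v}(\mu_n,\sigma_n)<+\infty$ for every $n$. By the definition of $W_{2,v}$ this forces $\Pi^x\mu_n=\Pi^x\sigma_n$, and narrow continuity of the pushforward under $\id_x$ yields $\Pi^x\mu=\Pi^x\sigma$.

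For each $n$ choose $\gamma_n\in\Gamma_o^v(\mu_n,\sigma_n)$. Since every plan in $\Gamma^v(\mu_n,\sigma_n)$ is concentrated on pairs of the form $\bigl((x,v),(x,w)\bigr)$, identify $\gamma_n$ with a measure $\tilde\gamma_n\in\P(\R^{3d})$ on triples $(x,v,w)$ whose $(x,v)$-marginal is $\mu_n$ and $(x,w)$-marginal is $\sigma_n$. Narrow convergence of these marginals gives their tightness, hence $\tilde\gamma_n$ itself is tight, and by Prokhorov's theorem a further subsequence satisfies $\tilde\gamma_n\rightharpoonup\tilde\gamma$ narrowly in $\P(\R^{3d})$. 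Continuity of pushforward under the projections $(x,v,w)\mapsto(x,v)$ and $(x,v,w)\mapsto(x,w)$ identifies the $(x,v)$- and $(x,w)$-marginals of $\tilde\gamma$ as $\mu$ and $\sigma$ respectively.

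The next step is to show $\tilde\gamma\in\Gamma^v(\mu,\sigma)$. Disintegrate $\tilde\gamma=\int_{\R^d}\tilde\gamma^x\,d\Pi^x\mu(x)$ with respect to its $x$-marginal $\Pi^x\mu$. Consistency of marginals with the disintegrations $\mu=\int\mu^x\,d\Pi^x\mu$ and $\sigma=\int\sigma^x\,d\Pi^x\mu$, combined with the $\Pi^x\mu$-a.e.~uniqueness in the disintegration theorem, yields that the $v$- and $w$-marginals of $\tilde\gamma^x$ are $\mu^x$ and $\sigma^x$ respectively for $\Pi^x\mu$-a.e.~$x$, hence $\tilde\gamma^x\in\Gamma(\mu^x,\sigma^x)$ and $\tilde\gamma\in\Gamma^v(\mu,\sigma)$. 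Since $(x,v,w)\mapsto|v-w|^2$ is nonnegative and continuous, the standard narrow lower semicontinuity of integrals of nonnegative lower semicontinuous functions combined with \eqref{eq:W2pW2x;coupling} gives
\begin{equation*}
    W_{2,v}^2(\mu,\sigma) \;\leq\; \int_{\R^{3d}} |v-w|^2\,d\tilde\gamma \;\leq\; \liminf_n \int_{\R^{3d}} |v-w|^2\,d\tilde\gamma_n \;=\; \liminf_n W_{2,v}^2(\mu_n,\sigma_n) \;=\; L^2,
\end{equation*}
which completes the argument. The main technical point is the verification that the narrow limit $\tilde\gamma$ truly disintegrates into fiberwise couplings of $\mu^x$ and $\sigma^x$; once this is handled via the uniqueness of disintegrations applied simultaneously to the $(x,v)$- and $(x,w)$-marginal identifications, the remainder reduces to routine tightness and narrow semicontinuity arguments.
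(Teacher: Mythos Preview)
Your proof is correct and takes a genuinely different route from the paper's. The paper argues via the length-metric characterization \eqref{eq:W2p_as_length}: it builds $W_{2,v}$-geodesics between $\mu^k$ and $\nu^k$, applies the refined Arzel\`a--Ascoli theorem (Proposition~\ref{prop:ArzelaAscoli-weak}) to extract a pointwise narrow limit curve in $\P_2^v(\R^{2d};\sigma)$, and then uses lower semicontinuity of the Wasserstein length functional $L_W$ to conclude. Your argument bypasses all of this by working directly at the level of plans: you identify optimal fiberwise couplings with measures on $\R^{3d}$, use tightness of the marginals and Prokhorov to pass to a limit, verify via uniqueness of disintegrations that the limit plan lies in $\Gamma^v(\mu,\sigma)$, and finish with narrow lower semicontinuity of the nonnegative continuous cost $|v-w|^2$. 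This is essentially the transport-plan argument of Peszek--Poyato that the paper cites just before the statement. Your approach is more elementary (no Benamou--Brenier, no Arzel\`a--Ascoli) and sidesteps the need to control second moments along interpolating curves; the paper's route, on the other hand, reinforces the length-metric viewpoint that is thematically central to the rest of the work.
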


\begin{proof}
Let $\sigma\in\P_2(\R^d)$ and let $(\mu^k)_{k},(\nu^k)_{k}$ be a sequence in $\P_2(\R^{2d})$ such that $\mu^k\rightharpoonup\mu$ and $\nu_k\rightharpoonup\nu$ narrowly as $k\rightarrow\infty$. Without loss of generality, we may assume that the sequences are in $\P_2^v(\R^{2d};\sigma)$, as otherwise the statement of lower semicotinuity is trivial.

Let $\gamma^k\in\Gamma^v_o(\mu^k,\nu^k)$. Then $\mu_t^x=((1-t)\pi^1+t\pi^2)_\#\gamma^x$ is a constant speed $W_{2,v}$-geodesic from $\mu^k$ to $\nu^k$ and thus
\begin{align*}
    &\int_{\R^{2d}}|z|^2\,d\mu^k_t(z)
    =\int_{\R^{2d}}|x|^2+|v|^2\,d((1-t)\pi^1+t\pi^2)_\#(\gamma^k)^x(v)\,d\sigma(x)  \\
    &\qquad=\int_{\R^{2d}}|x|^2+|(1-t)v+tw|^2\,d(\gamma^k)^x(v)\sigma(x)
    \leq 2 \left[\int_{\R^{2d}}|z|\,d\mu^k(z) + t^2\int_{\R^{2d}}|z|^2\,d\nu^k(z)\right].
\end{align*}
As the second moment is lower semicontinuous w.r.t. narrow convergence, the right-hand side is uniformly bounded in $k$, and thus second moment of $\mu_t^k$ is bounded uniformly in $t\in[0,1]$ and $k$. Due to Prokhorov's theorem, any sublevel set of the second moment in $\P(\R^{2d})$ is precompact with respect to narrow convergence. Moreover,
\begin{align*}
    W_{2,v}(\mu^k,\nu^k) &\leq W_{2,v}(\mu^k,\sigma\otimes\delta_0) + W_{2,v}(\sigma\otimes\delta_0,\nu^k) \\
    &= \left(\int_{\R^{2d}} |v|^2\,d\mu^k(x,v)\right)^{1/2}+\left(\int_{\R^{2d}} |v|^2\,d\nu^k(x,v)\right)^{1/2}
\end{align*}
hence $\sup_k W_{2,v}(\mu^k,\nu^k)<+\infty$. Thus $(\mu^k_t)_{t\in[0,1]}$ are uniformly equicontinuous in $(\P_2^v(\R^{2d};\sigma),W_{2,v})$. 

Then we may apply Proposition~\ref{prop:ArzelaAscoli-weak} with the narrow topology and $(X,m)=(\P_2^v(\R^{2d};\sigma),W_{2,v})$, which we know is complete from Proposition~\ref{prop:P2p_W2p_complete}, to obtain $(\mu_t)_{t\in[0,1]}$ such that over a suitable subsequence, which we do not relabel, we have
\[\mu^k_t\rightharpoonup \mu_t \text{ narrowly for all } t\in[0,1].\]
Clearly by the closure of $\P_2^v(\R^{2d};\sigma)$ in the narrow convergence, $\mu_t\in\P_2^v(\R^{2d};\sigma)$ for all $t\in[0,1]$. In particular, $\mu_0=\mu$ and $\mu_1=\nu$ and $t\mapsto\mu_t$ is continuous in $W_{2,v}$ hence in $W_2$.

Let $L_W$ be the Wasserstein length of continuous curves $(\nu_t)_{t\in[0,1]}$ in $(\P_2(\R^{2d}),W_2)$ defined by 
\begin{equation}\label{def:LW}
    L_W((\nu_t)_{t\in[0,1]})=\sup\left\{\sum_{i=1}^N W_2(\nu_{t_{i-1}},\nu_{t_i}):\;N\in\N \text{ and } 0=t_0<\cdots<t_N=1\right\}=\int_0^1 |\mu'|(t)\,dt;
\end{equation}
for the equality on the right, see for instance \cite[Theorem 2.7.6]{BurIva01}. As $W_2$ is l.s.c. w.r.t. narrow convergence, one can readily check from definitions that
$L_W$ is l.s.c. w.r.t. pointwise narrow convergence. Thus by \eqref{eq:W2p_as_length} of Theorem~\ref{thm:BenBre} and that $(\mu_t^k)_{t\in[0,1]}$ are $W_{2,v}$-geodesics from $\mu^k$ to $\nu^k$, we deduce
\[W_{2,v}(\mu,\nu)\leq L_W((\mu_t)_{t\in[0,1]})\leq \liminf_{k\in\N} L_W((\mu^k_t)_{t\in[0,1]})= \liminf_{k\in\N} W_{2,v}(\mu^k,\nu^k).\]
\end{proof}

We now turn our attention to the optimal transport maps along fixed marginals. Often it is convenient to consider the optimal transport map with fixed marginals as a function in $L^2(\mu;\R^{2d})$. However even when $\mu$ is absolutely continuous, $W_{2,v}(\mu,\nu)$ is attained by a family $\{T_{\mu^x}^{\nu^x}\}_{x\in\R^d}$ of optimal transport maps, and a priori it is unclear if the map $(x,v)\mapsto (x,T_{\mu^x}^{\nu^x})$ is Borel-measurable. The following technical proposition allows us to avoid this measurability issue.
\begin{proposition}[Optimal transport map with fixed marginals]\label{prop:OTmap_partial}
    Let $\sigma\in\P_2(\R^d)$ and $\mu,\nu\in\P_2(\R^{2d};\sigma)$. Further suppose $\mu\in\P_2^r(\R^{2d})$. Then there exists a map $T\in L^2(\mu;\R^{2d})$ such that
    \[\|T-\id\|_{L^2(\mu;\R^{2d})}=W_{2,v}(\mu,\nu)\]
    and for all $\bu\in L^2(\mu;\R^{2d})$
    \begin{equation}\label{eq:OTmap_partial}
        \int_{\R^{2d}} \bu(x,v)\cdot T(x,v)\,d\mu(x,v) = \int_{\R^d}\left(\int_{\R^d} \bu(x,v)\cdot (x,T_{\mu^x}^{\nu^x}(v))\,d\mu^x(v) \right)\,d\sigma(x).
    \end{equation}
    Henceforth we simply refer to this map as $(x,v)\mapsto (x,T_{\mu^x}^{\nu^x}(v))$.
\end{proposition}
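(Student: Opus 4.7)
The plan is to obtain $T$ as the barycentric projection of an optimal plan in $\Gamma_o^v(\mu,\nu)$, which circumvents the joint measurability issue of the family $\{T_{\mu^x}^{\nu^x}\}_{x\in\R^d}$ entirely. First I would fix any $\gamma\in\Gamma_o^v(\mu,\nu)$; by the discussion preceding \eqref{def:Gammapxo}, such a plan exists and may be viewed as a Borel probability measure on $\R^{2d}\times\R^{2d}$ concentrated on $\{(x,v,y,w):y=x\}$, with first marginal $\mu$ and second marginal $\nu$. Apply the disintegration theorem (\cite[Theorem 5.3.1]{AGS}) to obtain a Borel family $\{\gamma^{(x,v)}\}_{(x,v)\in\R^{2d}}\subset\P(\R^{2d})$ such that $\gamma=\int\gamma^{(x,v)}\,d\mu(x,v)$, and define the candidate map
\[
T(x,v):=\int_{\R^{2d}}z\,d\gamma^{(x,v)}(z),
\]
which is Borel by joint measurability of the disintegration and lies in $L^2(\mu;\R^{2d})$ since $\int |T|^2\,d\mu\leq \int|z|^2\,d\gamma(\tilde z,z)<+\infty$.

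The main task is to show that $\gamma^{(x,v)}$ is a Dirac mass at $(x,T_{\mu^x}^{\nu^x}(v))$ for $\mu$-a.e.\ $(x,v)$. Since $\mu\in\P_2^r(\R^{2d})$ has density $\rho$, Fubini's theorem gives that $\sigma=\Pi^x\mu\ll\Leb^d$ and for $\sigma$-a.e.\ $x$ the disintegration $\mu^x$ is absolutely continuous on $\R^d$; hence by Brenier's theorem $\Gamma_o(\mu^x,\nu^x)=\{(\id\times T_{\mu^x}^{\nu^x})_\#\mu^x\}$ for such $x$. By definition of $\Gamma_o^v$, $\gamma^x\in\Gamma_o(\mu^x,\nu^x)$ for $\sigma$-a.e.\ $x$, so $\gamma$ is concentrated on $\{(x,v,x,T_{\mu^x}^{\nu^x}(v))\}$. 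Consequently $\gamma^{(x,v)}=\delta_{(x,T_{\mu^x}^{\nu^x}(v))}$ for $\mu$-a.e.\ $(x,v)$, and therefore $T(x,v)=(x,T_{\mu^x}^{\nu^x}(v))$ $\mu$-a.e. The identity \eqref{eq:OTmap_partial} is then immediate by disintegration, and
\[
\|T-\id\|_{L^2(\mu;\R^{2d})}^2=\int_{\R^{2d}}|T_{\mu^x}^{\nu^x}(v)-v|^2\,d\mu(x,v)=\int_{\R^d}W_2^2(\mu^x,\nu^x)\,d\sigma(x)=W_{2,v}^2(\mu,\nu).
\]

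The main obstacle — and the reason one cannot simply write ``$T(x,v)=(x,T_{\mu^x}^{\nu^x}(v))$'' directly — is that each individual Brenier map is only defined up to $\mu^x$-null sets with no a priori control on joint measurability. Routing the construction through the disintegration of a single globally defined optimal plan $\gamma$ sidesteps this entirely, since the resulting Borel family is well-defined and the almost-sure Dirac structure is then forced by uniqueness in Brenier's theorem applied slice-by-slice.
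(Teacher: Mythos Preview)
Your proof is correct and takes a genuinely different route from the paper. The paper defines the linear functional
\[
L(\bu)=\int_{\R^d}\left(\int_{\R^d}\bu(x,v)\cdot(x,T_{\mu^x}^{\nu^x}(v))\,d\mu^x(v)\right)d\sigma(x),
\]
shows it is bounded on $L^2(\mu;\R^{2d})$ via the estimate $L(\bu)\leq\|\bu\|_{L^2(\mu)}(W_{2,v}(\mu,\nu)+\|\id\|_{L^2(\mu)})$, and then invokes the Riesz representation theorem to produce $T$. You instead construct $T$ as the barycentric projection of a global optimal plan $\gamma\in\Gamma_o^v(\mu,\nu)$ and use Brenier's theorem fiberwise to force the disintegration $\gamma^{(x,v)}$ to be a Dirac mass.

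Your approach is more constructive and delivers the pointwise identity $T(x,v)=(x,T_{\mu^x}^{\nu^x}(v))$ for $\mu$-a.e.\ $(x,v)$ directly, whereas the paper's Riesz argument only yields the weak form \eqref{eq:OTmap_partial}. The paper's route, on the other hand, stays entirely at the level of $L^2$ duality and does not overtly rely on the existence of a Borel plan in $\Gamma_o^v(\mu,\nu)$ --- though it does implicitly require $x\mapsto\int\bu(x,v)\cdot(x,T_{\mu^x}^{\nu^x}(v))\,d\mu^x(v)$ to be $\sigma$-measurable, which is essentially the same measurable-selection issue in disguise. Both arguments ultimately rest on the paper's standing assertion that $\Gamma_o^v(\mu,\nu)\neq\emptyset$; your proof simply exploits that plan more directly rather than detouring through Riesz.
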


\begin{proof}
    Let $\mu,\nu$ be as given. As $\mu\in\P_2^r(\R^{2d})$, each $\mu^x\ll\Leb^d$ hence the optimal transport map $T_{\mu^x}^{\nu^x}$is well-defined \cite{GangboMcCann96}. Define the linear functional $L:L^2(\mu;\R^{2d})\rightarrow\R$ by
    \[L(\bu)=\int_{\R^d}\left(\int_{\R^d}\bu(x,v)\cdot (x,T_{\mu^x}^{\nu^x}(v))\,d\mu^x(v)\right)\,d\sigma(x).\]
    Note
    \begin{align*}
    \|(\id_x,T_{\mu^x}^{\nu^x})\|_{L^2(\mu;\R^{2d})}&=\|(\id_x,T_{\mu^x}^{\nu^x})-\id+\id\|_{L^2(\mu;\R^{2d})}\\
    &\leq \|T_{\mu^x}^{\nu^x}-\id_p\|_{L^2(\mu;\R^d)}+\|\id\|_{L^2(\mu;\R^{2d})}= W_{2,v}(\mu,\nu)+\|\id\|_{L^2(\mu;\R^{2d})}<+\infty
    \end{align*}
    and thus
    \begin{align*}
        L(\bu) &\leq \int_{\R^d} \|\bu(x,\cdot)\|_{L^2(\mu^x;\R^{2d})} \|(x,T_{\mu^x}^{\nu^x})\|_{L^2(\mu^x;\R^{2d})}\,d\sigma(x)    \\
        &\leq \|\bu\|_{L^2(\mu;\R^{2d})} \|(\id_x,T_{\mu^x}^{\nu^x})\|_{L^2(\mu;\R^{2d})}
        \leq \|\bu\|_{L^2(\mu;\R^{2d})}(W_{2,v}(\mu,\nu)+\|\id\|_{L^2(\mu;\R^{2d})}).
    \end{align*}
    Thus $L$ is a bounded operator on $L^2(\mu;\R^{2d})$, and thus by the Riesz representation theorem we can find $T\in L^2(\mu;\R^{2d})$ such that
    \[L(\bu)=\int_{\R^{2d}} \bu\cdot T\,d\mu.\]
    Working with Borel representatives of $T,\bu$, we conclude by the disintegration theorem that
    \begin{align*}
        \int_{\R^d}\left(\int_{\R^d} \bu(x,v)\cdot T(x,v)\,d\mu^x(v)\right)\,d\sigma(x)&=\int_{\R^{2d}} \bu\cdot T\,d\mu\\
        =L(\bu)
        &= \int_{\R^d}\left(\int_{\R^d}\bu(x,v)\cdot (x,T_{\mu^x}^{\nu^x}(v))\,d\mu^x(v)\right)\,d\sigma(x)
    \end{align*}
    for any $\bu\in L^2(\mu;\R^{2d})$.
\end{proof}

We conclude this section with the $L^2$-stability of $W_{2,v}$-optimal transport maps.  
% \fix{Peszek and Poyato \cite[Proposition 3.12 (ii)]{PesPoy23}}
\begin{proposition}[Stability of $W_{2,v}$-optimal-transport maps]\label{prop:stability}
    Let $\mu\in\P_2^r(\R^{2d})$, and let $\nu,\nu_n\in\P_2^v(\R^{2d};\Pi^x\mu)$ such that $W_{2,v}(\nu_n,\nu)\xrightarrow[]{n\rightarrow\infty} 0$. Let $T,T_n$ be the corresponding optimal transport maps with fixed marginals defined as
    \[T(x,v)=(x,T_{\mu^x}^{\nu^x}(v)),\qquad T_n(x,v):=(x,T_{\mu^x}^{\nu_n^x}(v))\]
    in the sense of Proposition~\ref{prop:OTmap_partial}. Then
    \begin{equation}\label{eq:W2p-map;L2stability}
        \|T_n-T\|_{L^2(\mu;\R^{2d})}\rightarrow 0 \text{ as } n\rightarrow\infty.
    \end{equation}
\end{proposition}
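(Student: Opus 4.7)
The plan is to argue fiberwise, applying the classical $L^2$-stability of optimal transport maps to each disintegration of $\mu$, and then integrate over $\sigma:=\Pi^x\mu$. Since
\[
W_{2,v}^2(\nu_n,\nu)=\int_{\R^d}W_2^2(\nu_n^x,\nu^x)\,d\sigma(x)\xrightarrow{n\to\infty}0,
\]
I would first extract a subsequence (not relabeled) along which $W_2(\nu_n^x,\nu^x)\to 0$ for $\sigma$-a.e. $x\in\R^d$. Because $\mu\in\P_2^r(\R^{2d})$, Fubini's theorem yields $\mu^x\in\P_2^r(\R^d)$ for $\sigma$-a.e. $x$, and for such $x$ the classical $L^2$-stability of optimal transport maps (e.g.\ \cite[Proposition 7.1.3]{AGS}) gives $T_{\mu^x}^{\nu_n^x}\to T_{\mu^x}^{\nu^x}$ in $L^2(\mu^x;\R^d)$. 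The standard subsequence principle will then allow me to upgrade the $L^2(\mu;\R^{2d})$-convergence obtained along this subsequence to convergence of the full sequence.

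From there I would use the Hilbert-space identity
\[
\|T_n-T\|_{L^2(\mu;\R^{2d})}^2=\|T_n\|_{L^2(\mu)}^2+\|T\|_{L^2(\mu)}^2-2\langle T_n,T\rangle_{L^2(\mu)},
\]
which reduces the claim to showing $\|T_n\|_{L^2(\mu)}^2\to\|T\|_{L^2(\mu)}^2$ and $\langle T_n,T\rangle_{L^2(\mu)}\to\|T\|_{L^2(\mu)}^2$. The norm convergence is direct: disintegration gives
\[
\|T_n\|_{L^2(\mu)}^2=\int_{\R^d}|x|^2\,d\sigma(x)+\int_{\R^{2d}}|w|^2\,d\nu_n(x,w),
\]
and since $W_2(\nu_n,\nu)\leq W_{2,v}(\nu_n,\nu)\to 0$ forces convergence of second moments, the limit follows.

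The only subtle step is the cross term. Disintegration yields
\[
\langle T_n,T\rangle_{L^2(\mu)}=\int_{\R^d}|x|^2\,d\sigma(x)+\int_{\R^d}\langle T_{\mu^x}^{\nu_n^x},T_{\mu^x}^{\nu^x}\rangle_{L^2(\mu^x)}\,d\sigma(x),
\]
and the fiberwise $L^2$-convergence sends the inner product $\sigma$-a.e. to $\|T_{\mu^x}^{\nu^x}\|_{L^2(\mu^x)}^2$. To pass to the limit under the $\sigma$-integral, I would invoke Cauchy--Schwarz
\[
\bigl|\langle T_{\mu^x}^{\nu_n^x},T_{\mu^x}^{\nu^x}\rangle_{L^2(\mu^x)}\bigr|\leq\tfrac{1}{2}\bigl(\|T_{\mu^x}^{\nu_n^x}\|_{L^2(\mu^x)}^2+\|T_{\mu^x}^{\nu^x}\|_{L^2(\mu^x)}^2\bigr),
\]
and note that the right-hand side is $\sigma$-integrable for each $n$, converges $\sigma$-a.e. to $\|T_{\mu^x}^{\nu^x}\|_{L^2(\mu^x)}^2$ (again by second-moment convergence fiberwise), and its $\sigma$-integrals converge to the integral of its limit by the global second-moment convergence of $\nu_n$. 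The generalized dominated convergence theorem then yields the desired limit, and combining with the polarization identity gives $\|T_n-T\|_{L^2(\mu)}\to 0$ on the subsequence, hence on the full sequence.

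The main obstacle I anticipate is precisely this last integration: one does not have a uniform pointwise dominant in $n$, only a sequence of dominants whose $\sigma$-integrals themselves converge, so the \emph{generalized} dominated convergence theorem (rather than the standard one) is the right tool.
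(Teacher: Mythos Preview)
Your argument is correct but takes a different route from the paper. The paper works globally: it forms the couplings $\pi_n=\int(\id_v\times T_n^x)_\#\mu^x\,d\sigma(x)$, shows $\pi_n\rightharpoonup\pi$ narrowly (using fiberwise stability of optimal \emph{plans}), then applies Lusin's theorem to deduce $T_n\to T$ in $\mu$-measure, and finally combines this with the $2$-uniform integrability of $(T_n)_\#\mu=\nu_n$ (inherited from $W_2(\nu_n,\nu)\to 0$) via Vitali's theorem to obtain $L^2$-convergence. Your approach instead blackboxes the one-marginal $L^2$-stability of Brenier maps fiberwise, and reduces the fibered statement to the polarization identity plus a generalized dominated convergence argument, with the subsequence principle handling the passage from $L^1(\sigma)$-convergence of $x\mapsto W_2^2(\nu_n^x,\nu^x)$ to $\sigma$-a.e.\ convergence.

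Your route is arguably more streamlined if one is willing to cite the fiberwise $L^2$-stability; the paper's route is more self-contained, essentially re-proving the convergence-in-measure step at the global level rather than importing it per fiber. One small caveat: the reference \cite[Proposition~7.1.3]{AGS} that you invoke yields narrow stability of optimal couplings, not directly $L^2$-convergence of the maps; the latter does follow (from narrow convergence plus convergence of second moments under $W_2$), but you may wish to either cite a source that states the $L^2$-version explicitly or add one line noting how it follows.
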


\begin{proof}
We argue as in the analogous statement for the Wasserstein case \cite[Corollary 5.23]{Vil09}. Define $\pi,\pi_n\in\P_2(\R^d\times\R^d\times\R^d)$ by
\[d\pi(x,v,w)=\int_{\R^d} (\id_v(x,\cdot)\times T^x)_{\#}\mu^x(v,w)\,d\Pi^x(x),\; d\pi_n(x,v,w)=\int_{\R^d} (\id_v(x,\cdot)\times T_n^x)_{\#}\mu^x(v,w)\,d\Pi^x(x).\]
For $\Pi^x\mu$-a.e. $x\in\R^d$, $\nu_n^x\rightarrow \nu^x$ as $W_{2,v}(\nu_n,\nu)\rightarrow 0$. Thus by the stability of the optimal transport map (see for instance \cite[Proposition 7.1.3]{AGS}) and the uniqueness of the optimal transport map,
\[(\id_v(x,\cdot)\times T_n^x)_{\#}\mu^x \rightharpoonup (\id_v(x,\cdot)\times T^x)_{\#}\mu^x \text{ narrowly as } n\rightarrow\infty \text{ for } \Pi^x\mu\text{-a.e. } x\in\R^d.\]
Hence by the dominated convergence theorem $\pi^n$ narrowly converges to $\pi$ as $n\rightarrow\infty$.

Since $T:\R^{2d}\rightarrow\R^{2d}$ is a Borel map, for each $\delta>0$ by Lusin's theorem we can fix a compact set $K\subset\R^{d}\times\R^d$ such that $T\restr_K$ is continuous. Thus for any $\eps>0$ the set
\[
 A_\eps:=\{(x,v,w)\in K\times\R^d:\;|T(x,v)-w|\geq \eps\}
\]
is closed in $K\times\R^d$ hence in $\R^{d}\times\R^d\times\R^d$. Thus, by the narrow convergence of $\pi_n$ to $\pi$,
\begin{align*}
    0=\pi(A_\eps)&\geq \limsup_{n\rightarrow\infty} \pi_n(A_\eps)    
    = \limsup_{n\rightarrow\infty}\mu(\{(x,v)\in K:\;|T(x,v)-T_n(x,v)|\geq \eps\}) \\
    &= \limsup_{n\rightarrow\infty}\mu(\{(x,v)\in \R^d\times\R^d:\;|T(x,v)-T_n(x,v)|\geq \eps\})-\delta,
\end{align*}
which by letting $\delta\downarrow 0$ shows that $T_n$ converges to $T$ in $\mu$-measure. 

On the other hand, $W_2(\nu_n,\nu)\leq W_{2,v}(\nu_n,\nu)$ thus by \cite[Definition 6.8, Theorem 6.9]{Vil09} we have 2-uniform integrability of $(\nu_n)_{n\in\N}$ -- i.e.
\[\lim_{s\rightarrow\infty}\limsup_{n\rightarrow\infty}\int_{\{|z|\geq s\}} |z|^2\,d\nu_n(z)=0.\]
As $(T_n)_\#\mu=\nu_n$, 
\[\int_{\{|T_n|\geq s\}} |T_n(x,v)|^2\,d\mu(x,v) = \int_{\{|z|\geq s\}} |z|^2\,d\nu_n(z)\]
and thus $(T_n)_{n\in\N}$ are $2$-uniformly integrable with respect to $\mu$, and thus by Vitali's convergence theorem we conclude \eqref{eq:W2p-map;L2stability}.
\end{proof}

\section{Metric slope with respect to the Wasserstein distance with fixed marginals}\label{sec:partial_metslope}
As the Wasserstein distance with fixed marginals is a length metric induced by $W_2$ on each subspace of $\P_2(\R^{2d})$ with fixed marginals, it inherits many of metric differential properties of the Wasserstein space with suitable modifications. In this section we record some useful properties on the metric slope of functionals with respect to $W_{2,v}$. Of particular importance is Proposition~\ref{prop:pJKO_minimizer}, which concerns the unique solvability of the problem
\begin{equation}\label{def:partial_JKO;v}
    \mu_{h,v} \in\argmin_{\nu\in\P_2(\R^{2d})} \frac{W_{2,v}^2(\mu,\nu)}{2h} + \E(\nu) \text{ given } h>0 \text{ and }\E:\P_2(\R^{2d})\rightarrow (-\infty,+\infty].
\end{equation}
Note that the minimization above happens in $\P_2^v(\R^{2d},\Pi^\mu)$, as $W_{2,v}(\mu,\nu)=+\infty$ outside of this set.

Then Proposition~\ref{prop:pslope_est} provides slope estimates over the solutions of the variational problem \eqref{def:partial_JKO;v} which play a pivotal role in later sections. We note that most results and ideas in this section are essentially from \cite[Chapters 2,3]{AGS}; when the necessary modifications are obvious, we skip the proofs and refer to the corresponding results of \cite{AGS}.

Given a functional $\E:\P_2(\R^{2d})\rightarrow (-\infty,+\infty]$, we denote by $\D(\E)$ the effective domain of $\E$
\begin{equation}\label{def:D(E)}
    D(\E)=\{\mu\in \P_2(\R^{2d}):\;\E(\mu)<+\infty\}.
\end{equation}

We say that a functional $\E:\P_2(\R^{2d})\rightarrow (-\infty,+\infty]$ is \textbf{lower semicontinuous} with respect to narrow convergence 
\begin{equation}\label{ass:LSC}
   \E(\mu)\leq\liminf_{n\rightarrow\infty} \E(\mu_n) \text{ if }  \mu_n \text{ converges narrowly to } \mu \text{ in }\P_2(\R^{2d})
\end{equation}

Recall that metric slope $|\partial\E|(\mu)$ of $\E$ with respect to the Wasserstein distance is defined by
\begin{equation}\label{def:met_slope}
     |\partial\E|(\mu)=\limsup_{\nu\rightarrow \mu \text{ in }W_2}\frac{[\E(\mu)-\E(\nu)]_+}{W_2(\mu,\nu)}.
\end{equation}

We define $v$-partial metric slope $|\partial_v\E|(\mu)$ of $\E$ at $\mu\in\P_2(\R^{2d})$ by
\begin{equation}\label{def:partial_slope}
    |\partial_v \E|(\mu)=\limsup_{\nu\rightarrow\mu \text{ in } W_{2,v}}\frac{[\E(\mu)-\E(\nu)]_+}{W_{2,v}(\mu,\nu)}.
\end{equation}
    
    We say $\E$ is $\lambda$-geodesically convex along the $v$-variable (or $\lambda$-convex along $W_{2,v}$ geodesics) if for any $\sigma\in\P_2(\R^d)$ and $\mu,\nu\in\P_2^v(\R^{2d};\sigma)$, the constant-speed $W_{2,v}$-geodesic $(\mu_t)_{t\in[0,1]}$ satisfies
    \begin{equation}\label{def:partial_geo_cvx}
        \E(\mu_t)\leq t\E(\mu_0) + (1-t)\E(\mu_1)-\frac{\lambda t(1-t)}{2}W_{2,v}^2(\mu_0,\mu_1).
    \end{equation}

    Note that the condition \eqref{def:partial_geo_cvx} is simply the $\lambda$-geodesic convexity in each $(\P_2^v(\R^{2d};\Pi^x\mu),W_{2,v})$, which is complete (Proposition~\ref{prop:P2p_W2p_complete}). Thus Theorem 2.4.9 of Ambrosio, Gigli, and Savar\'e \cite{AGS} immediately provides the following alternative characterization of the partial local slope.
    \begin{proposition}\label{prop:partial_slope;global}
    Let $\E$ be $\lambda$-geodesically convex along the $v$-variable. Then 
        \begin{equation}\label{eq:partial_slope;global}
        |\partial_v \E|(\mu)=\sup_{\substack{\nu\neq \mu,\\ \nu\in\P_2^v(\R^{2d};\Pi^x\mu)}}\left(\frac{\E(\mu)-\E(\nu)}{W_{2,v}(\mu,\nu)}+\frac12\lambda W_{2,v}(\mu,\nu)\right)^+.
    \end{equation}
    \end{proposition}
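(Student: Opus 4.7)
The key observation is that both sides of \eqref{eq:partial_slope;global} only depend on $\nu$ in $\P_2^v(\R^{2d};\Pi^x\mu)$, since $W_{2,v}(\mu,\nu)=+\infty$ otherwise. Inside this subspace, $(\P_2^v(\R^{2d};\Pi^x\mu),W_{2,v})$ is a complete length space by Proposition~\ref{prop:P2p_W2p_complete} and Theorem~\ref{thm:BenBre}, on which $\E$ is $\lambda$-geodesically convex by assumption. Thus the statement reduces to the abstract result \cite[Theorem 2.4.9]{AGS} for $\lambda$-geodesically convex functionals on complete metric spaces, and I would reprove it directly as follows.

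Denote the right-hand side of \eqref{eq:partial_slope;global} by $s_\lambda(\mu)$. For the inequality $|\partial_v\E|(\mu)\geq s_\lambda(\mu)$, fix $\nu\in\P_2^v(\R^{2d};\Pi^x\mu)$ with $\nu\neq\mu$ and let $(\mu_t)_{t\in[0,1]}$ be the constant-speed $W_{2,v}$-geodesic from $\mu$ to $\nu$ constructed in the proof of Theorem~\ref{thm:BenBre}, so that $W_{2,v}(\mu,\mu_t)=tW_{2,v}(\mu,\nu)$. Substituting this geodesic into \eqref{def:partial_geo_cvx} and rearranging yields
\begin{equation*}
    \frac{\E(\mu)-\E(\mu_t)}{W_{2,v}(\mu,\mu_t)}\geq \frac{\E(\mu)-\E(\nu)}{W_{2,v}(\mu,\nu)}+\frac{\lambda(1-t)}{2}W_{2,v}(\mu,\nu).
\end{equation*}
Taking positive parts and then $\limsup_{t\searrow 0}$ (noting $\mu_t\to\mu$ in $W_{2,v}$) gives $|\partial_v\E|(\mu)\geq (\tfrac{\E(\mu)-\E(\nu)}{W_{2,v}(\mu,\nu)}+\tfrac{\lambda}{2}W_{2,v}(\mu,\nu))^+$, and taking the supremum over $\nu$ yields the desired bound.

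For the reverse inequality, I use the elementary pointwise estimate: for each $\nu\in\P_2^v(\R^{2d};\Pi^x\mu)$ with $\nu\neq\mu$,
\begin{equation*}
    \frac{[\E(\mu)-\E(\nu)]^+}{W_{2,v}(\mu,\nu)} \leq \left(\frac{\E(\mu)-\E(\nu)}{W_{2,v}(\mu,\nu)}+\frac{\lambda}{2}W_{2,v}(\mu,\nu)\right)^+ + \frac{|\lambda|}{2}W_{2,v}(\mu,\nu),
\end{equation*}
which follows by case analysis on the sign of $\E(\mu)-\E(\nu)$ together with $a^+\geq a$. Taking $\limsup$ as $\nu\to\mu$ in $W_{2,v}$, the second term vanishes and the first is bounded by $s_\lambda(\mu)$, yielding $|\partial_v\E|(\mu)\leq s_\lambda(\mu)$.

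The argument is essentially a direct transcription of the Wasserstein case, and I do not anticipate a serious obstacle; the only genuine input beyond elementary manipulations is the existence of a constant-speed $W_{2,v}$-geodesic between any two points of $\P_2^v(\R^{2d};\Pi^x\mu)$, which was already secured in Step 2$^\circ$ of the proof of Theorem~\ref{thm:BenBre}.
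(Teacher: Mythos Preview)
Your proposal is correct and takes essentially the same approach as the paper: both reduce the statement to \cite[Theorem 2.4.9]{AGS} applied in the complete metric space $(\P_2^v(\R^{2d};\Pi^x\mu),W_{2,v})$, with the paper simply citing that theorem while you additionally unpack its standard proof. The two inequalities you give are exactly the usual argument for that abstract result, and the only external ingredient you need---existence of constant-speed $W_{2,v}$-geodesics---is indeed supplied by Step~2$^\circ$ of Theorem~\ref{thm:BenBre}.
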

    
In order to study the variational problem \eqref{def:partial_JKO;v}
\[
    \mu_{h,v} \in\argmin_{\nu\in\P_2^v(\R^{2d};\Pi^x\mu)} \frac{W_{2,v}^2(\mu,\nu)}{2h} + \E(\nu).
\]
We introduce the following notation: for each $\mu\in\P_2(\R^{2d})$ and $h>0$
\begin{equation}\label{def:Epmuh}
    \E_v(\nu;\mu,h)=\frac{W_{2,v}^2(\mu,\nu)}{2h}+\E(\nu),\qquad \E_x(\nu;\mu,h)=\frac{W_{2,x}^2(\mu,\nu)}{2h}+\E(\nu)
\end{equation}
and
\begin{equation}\label{def:Ehp}
\begin{split}
    \E_{h,v}(\mu)&=\inf_{\nu\in\P_2^v(\R^{2d};\Pi^x\mu)} \E_v(\nu;\mu,h),\\
    \E_{h,x}(\mu)&=\inf_{\nu\in\P_2^x(\R^{2d};\Pi^v\mu)} \E_x(\nu;\mu,h).
\end{split}
\end{equation}

When $\E$ is $\lambda$-convex along $W_{2,v}$-geodesics, observe that for any $\mu,\nu\in\P_2^v(\R^{2d};\sigma)$ and the constant-speed $W_{2,v}$-geodesic $(\mu_t)_{t\in[0,1]}$ from $\mu_0=\mu$ to $\mu_1=\nu$,
\begin{align*}
    \E_v(\mu_t;\mu,h)&=\frac{W_{2,v}^2(\mu,\nu_t)}{2h}+\E(\mu_t)
    \leq\frac{t^2 W_{2,v}^2(\mu,\nu)}{2h} + (1-t)\E(\mu)+t\E(\nu)-\frac{\lambda}{2}t(1-t)W_{2,v}^2(\mu,\nu) \\
    &=(1-t)\E(\mu) + t\left(\E(\nu)+\frac{W_{2,v}^2(\mu,\nu)}{2h}\right)-\frac12\left(\frac1h+\lambda\right)t(1-t)W_{2,v}^2(\mu,\nu).
\end{align*}
Thus $\E_v(\cdot;\mu,h)$ is $(h^{-1}+\lambda)$-convex along $W_{2,v}$-geodesics -- i.e.
\begin{equation}\label{eq:Epmuh_cvx}
    \E_v(\mu_t;\mu,h)\leq (1-t)\E_v(\mu;\mu,h)+t\E_v(\nu;\mu,h)-\frac12\left(\frac1h+\lambda\right)t(1-t)W_{2,v}^2(\mu,\nu).
\end{equation}

We now establish the existence of minimizers of the discrete variational problem \eqref{def:partial_JKO;v}. 
\begin{proposition}[Unique solvability of the variational problem]\label{prop:pJKO_minimizer}
Let $\E:\P_2(\R^{2d})\rightarrow (-\infty,+\infty]$ be $\lambda$-convex along $W_{2,v}$-geodesics for some $\lambda\in\R$. Then for any $\mu\in\P_2(\R^{2d})$ such that
\begin{listi}
    \item there exists $\mu^\ast\in D(\E)\cap\P_2^v(\R^{2d};\Pi^x\mu)$
    \item $\E$ is lower semicontinuous w.r.t. narrow convergence of measures in sublevel sets
    \begin{equation}\label{def:Esublevel}
        \{\nu\in\P_2^v(\R^{2d};\Pi^x\mu):\;\E(\nu)\leq c\}
    \end{equation}
    for each $c\in\R$.
\end{listi} 
there exists a unique minimizer $\mu_{h,v}\in\P_2^v(\R^{2d};\Pi^x\mu)$ to the problem \eqref{def:partial_JKO;v} for any $h< \frac{1}{\lambda_-}$.
\end{proposition}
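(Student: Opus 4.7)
The plan is to apply the direct method of the calculus of variations, leveraging the strict $(h^{-1}+\lambda)$-convexity of $\E_v(\,\cdot\,;\mu,h)$ along $W_{2,v}$-geodesics recorded in \eqref{eq:Epmuh_cvx} (positive precisely because $h<1/\lambda_-$), the completeness of $(\P_2^v(\R^{2d};\Pi^x\mu),W_{2,v})$ from Proposition~\ref{prop:P2p_W2p_complete}, the narrow lower semicontinuity of $W_{2,v}$ from Proposition~\ref{prop:W2p_lsc}, and the lower semicontinuity of $\E$ on sublevel sets granted by assumption (ii).

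As the main preparatory step, I would establish that $I:=\inf_{\nu\in\P_2^v(\R^{2d};\Pi^x\mu)}\E_v(\nu;\mu,h)$ is finite. The upper bound $I\leq \E_v(\mu^\ast;\mu,h)<+\infty$ is immediate from (i). For the lower bound, the $\lambda$-convexity of $\E$ combined with the existence of $\mu^\ast\in D(\E)$ gives a quadratic growth estimate of the form $\E(\nu)\geq -A - B\,W_{2,v}^2(\mu^\ast,\nu)$ for some $A\geq 0$ and $B$ arbitrarily close to $\lambda_-/2$; absorbing the penalty $W_{2,v}^2(\mu,\nu)/(2h)$ via the triangle inequality $W_{2,v}^2(\mu^\ast,\nu)\leq(1+\eps)W_{2,v}^2(\mu,\nu)+C_\eps W_{2,v}^2(\mu^\ast,\mu)$ and the hypothesis $h<1/\lambda_-$ then yields $I>-\infty$. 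For a minimizing sequence $(\nu_n)$, applying the $(h^{-1}+\lambda)$-convexity estimate \eqref{eq:Epmuh_cvx} at the midpoint $\mu_{1/2}^{n,k}$ of the $W_{2,v}$-geodesic between $\nu_n$ and $\nu_k$ gives
\[
I\leq \E_v(\mu_{1/2}^{n,k};\mu,h)\leq \tfrac{1}{2}\E_v(\nu_n;\mu,h)+\tfrac{1}{2}\E_v(\nu_k;\mu,h)-\tfrac{h^{-1}+\lambda}{8}W_{2,v}^2(\nu_n,\nu_k),
\]
so $W_{2,v}(\nu_n,\nu_k)\to 0$ as $n,k\to\infty$.

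By completeness the sequence converges in $W_{2,v}$ to some $\nu_0\in\P_2^v(\R^{2d};\Pi^x\mu)$. Since $W_{2,v}$-convergence implies narrow convergence (Remark~\ref{rmk:W2p_leq_W2}) and $(\nu_n)$ eventually lies in a sublevel set of $\E$ (write $\E(\nu_n)=\E_v(\nu_n;\mu,h)-W_{2,v}^2(\mu,\nu_n)/(2h)$ and observe both terms are bounded), assumption (ii) together with the continuity of $\nu\mapsto W_{2,v}^2(\mu,\nu)$ along $W_{2,v}$-convergent sequences gives $\E_v(\nu_0;\mu,h)\leq \liminf_n \E_v(\nu_n;\mu,h)=I$, so $\nu_0$ is a minimizer. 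Uniqueness follows from the same convexity estimate applied to two putative minimizers $\nu_0\neq\nu_0'$: their midpoint would satisfy $\E_v(\mu_{1/2};\mu,h)<I$, contradicting the definition of $I$. The hard part will be the quantitative boundedness-below step, where one must carefully track how the quadratic growth inherited from $\lambda$-convexity of $\E$ interacts with the penalty $W_{2,v}^2(\mu,\,\cdot\,)/(2h)$ so that the coercivity threshold lines up precisely with $h<1/\lambda_-$; once this is in place, the remaining steps reduce to a standard application of uniform convexity and lower semicontinuity in the complete metric space $(\P_2^v(\R^{2d};\Pi^x\mu),W_{2,v})$.
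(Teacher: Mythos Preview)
Your overall strategy is sound and leads to a valid proof, but it diverges from the paper's argument in the existence step. The paper establishes coercivity $\E_{h,v}(\mu^\ast)>-\infty$ and then invokes the direct method via narrow compactness (citing \cite[Lemma~2.2.1, Corollary~2.2.2]{AGS}): sublevel sets of $\E_v(\,\cdot\,;\mu,h)$ in $\P_2^v(\R^{2d};\Pi^x\mu)$ are narrowly precompact (bounded second moments) and closed, and $\E_v(\,\cdot\,;\mu,h)$ is l.s.c.\ there. You instead exploit the strict $(h^{-1}+\lambda)$-convexity \eqref{eq:Epmuh_cvx} to show any minimizing sequence is $W_{2,v}$-Cauchy, then use completeness (Proposition~\ref{prop:P2p_W2p_complete}) to produce the limit. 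Your route is arguably cleaner in that the same convexity inequality handles both existence and uniqueness, and it avoids a separate compactness argument at the existence stage; the paper's route is the more standard direct-method template and makes the role of narrow compactness explicit.

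One point to tighten: your assertion that ``$\lambda$-convexity of $\E$ combined with the existence of $\mu^\ast\in D(\E)$ gives $\E(\nu)\geq -A - B\,W_{2,v}^2(\mu^\ast,\nu)$'' is not automatic from those two ingredients alone---you need some local lower bound to anchor the estimate. The paper supplies this by first showing $m_\ast:=\inf\{\E(\nu):W_{2,v}(\mu^\ast,\nu)\leq 1\}>-\infty$, using that the unit $W_{2,v}$-ball is narrowly compact (via $W_2\leq W_{2,v}$ and Proposition~\ref{prop:W2p_lsc}) together with assumption~(ii). Only then does the $\lambda$-convexity inequality, applied with $t=\eps/W_{2,v}(\mu^\ast,\nu)$, yield the quadratic lower bound with $B$ arbitrarily close to $\lambda_-/2$. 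You correctly flag this as ``the hard part,'' but you will need precisely this local-boundedness ingredient (or an equivalent) to close the argument; once you have it, the rest of your proof goes through.
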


\begin{proof}
Fix $\mu\in\P_2(\R^{2d})$ and $\mu^\ast\in D(\E)\cap\P_2^v(\R^{2d};\Pi^x\mu)$. We first use $\lambda$-convexity along $W_{2,v}$-geodesics to show the coercivity in $v$ as in \cite[Lemma 2.4.8]{AGS}, namely that $h<\frac{1}{\lambda_-}$ we have
\begin{equation}\label{eq:pcoercive;E}
    \E_{h,v}(\mu^\ast)>-\infty.
\end{equation}
Then using the coercivity we deduce the existence of $\mu_{h,v}$, via the argument in \cite[Lemma 2.2.1, Corollary 2.2.2]{AGS}. Finally, uniqueness follows from the geodesic convexity.
\medskip

\noindent\emph{Step 1$^o$.} (Coercivity and existence of a minimizer) Let $\nu\in\P_2^v(\R^{2d};\Pi^x\mu^\ast)$. Note that
\[m_\ast:=\inf\{\E(\nu):\;W_{2,v}(\mu^\ast,\nu)\leq 1\}>-\infty.\]
Indeed, $W_2\leq W_{2,v}$ and $W_{2,v}$ is narrowly lower semicontinuous, thus the unit ball in $W_{2,v}$ around $\mu^\ast$ is narrowly compact. Moreover, $\E(\cdot;\mu,h)$ is l.s.c. w.r.t. the narrow convergence in the sublevel sets by assumption (ii), hence $m_\ast=\E(\nu_\ast)$ for some $\nu_\ast$ in the unit $W_{2,v}$-ball, and $m_\ast>-\infty$ as $\E$ takes values in $(-\infty,+\infty]$. Thus, independently of $h>0$
\[\inf\{\E_v(\nu;\mu,h):\;W_{2,v}(\mu^\ast,\nu)\leq 1\}\geq m_\ast.\]

Now suppose $W_{2,v}(\mu^\ast,\nu)>1$, and let $(\nu_t)_{t\in[0,1]}$ be the constant-speed $W_{2,v}$-geodesic with $\nu_0=\mu^\ast$ and $\nu_1=\nu$. Then by the convexity assumption on $\E$, reorganizing \eqref{def:partial_geo_cvx} we obtain
\[
\frac{\E(\nu_t)-\E(\mu^\ast)}{t}+\E(\mu^\ast)\leq\E(\nu)-\frac{\lambda}{2}(1-t)W_{2,v}^2(\mu^\ast,\nu)\leq \E(\nu)+\frac{\lambda_-}{2}(1-t)W_{2,v}^2(\mu^\ast,\nu).
\]

For any $\eps\in(0,1)$ choose $t=\eps/W_{2,v}(\mu^\ast,\nu)<\eps$. As $W_{2,v}(\mu^\ast,\nu_t)=tW_{2,v}(\mu^\ast,\nu)\leq \eps<1$, $\E(\nu_t)\geq m_\ast$, hence
\[\frac{m_\ast-\E(\mu^\ast)}{\eps}W_{2,v}(\mu^\ast,\nu)+\E(\mu^\ast)\leq \frac{\E(\nu_t)-\E(\mu^\ast)}{t}+\E(\mu^\ast).\]

If $\lambda\geq 0$ hence $\lambda_-=0$, letting $t=1$ in the above inequality yields, for any $h>0$
\begin{align*}
    \E(\nu)+\frac{1}{2h}W_{2,v}^2(\mu^\ast,\nu)
    &\geq \E(\mu^\ast) + \frac{1}{2h}W_{2,v}^2(\mu^\ast,\nu)+\frac{m_\ast-\E(\mu^\ast)}{\eps}W_{2,v}(\mu^\ast,\nu)   \\
    &\geq \E(\mu^\ast) - \frac{h}{2}\left(\frac{m_\ast-\E(\mu^\ast)}{\eps}\right)^2
\end{align*}
by the Young's product inequality, which verifies \eqref{eq:pcoercive;E}. In case $\lambda_->0$,
\begin{align*}
    \frac{m_\ast-\E(\mu^\ast)}{\eps}W_{2,v}(\mu^\ast,\nu)+\E(\mu^\ast)
    &\leq \frac{\E(\nu_t)-\E(\mu^\ast)}{t}+\E(\mu^\ast)  \\
    &\leq\E(\nu)+\frac{\lambda_-}{2}(1-t)W_{2,v}^2(\mu^\ast,\nu) 
    \leq  \E(\nu)+\frac{\lambda_-}{2}(1-\eps)W_{2,v}^2(\mu^\ast,\nu).
\end{align*}
Thus by applying Young's product inequality and reorganizing,
\begin{align*}
    -\frac{1}{\eps\lambda_-}\left(\frac{m_\ast-\E(\mu^\ast)}{\eps}\right)^2+\E(\mu^\ast)
    \leq \E(\nu)+\frac{\lambda_-}{2}(1-2\eps)W_{2,v}^2(\mu^\ast,\nu).
\end{align*}
As the left-hand side only depends on $\eps>0$, this concludes the proof of \eqref{eq:pcoercive;E} for any $h<\frac{1}{\lambda_-}$.

Let $h_\ast=\lambda_-^{-1}$ if $\lambda<0$ and $+\infty$ if $\lambda\geq 0$. Without loss of generality, we can assume $h_\ast<+\infty$ and show that for $h\in(0,h_\ast)$ \eqref{def:partial_JKO;v} admits a minimizer. By \cite[Lemma 2.2.1, Corollary 2.2.2]{AGS}, coercivity \eqref{eq:pcoercive;E} and lower semicontinuity of $\E$ on sublevel sets imply the existence of the minimizer. 
\medskip

\noindent\emph{Step 2$^o$.} (Uniqueness) Suppose $\mu_{h,v}^1,\mu_{h,v}^2$ are minimizers --i.e. 
\[\E_{v,h}(\mu)=\E_v(\mu_{h,v}^1;\mu,h)=\E_v(\mu_{h,v}^2;\mu,h).\]
Let $(\mu_t)_{t\in[0,1]}$ be a constant-speed $W_{2,v}$-geodesic from $\mu_0=\mu_{h,v}^1$ to $\mu_1=\mu_{h,v}^2$. Then by \eqref{eq:Epmuh_cvx}
\[\E_{v,h}(\mu)\leq \E_v(\mu_t;\mu,h)\leq \E_{v,h}(\mu) - \frac12\left(\frac1h + \lambda\right)t(1-t)W_{2,v}^2(\mu_{h,v}^1,\mu_{h,v}^2).\]
As $h<1/\lambda_-$, we have $\frac1h+\lambda>0$ and thus $W_{2,v}(\mu_{h,v}^1,\mu_{h,v}^2)=0$.
\end{proof}

Due to \cite[Lemma 3.1.5]{AGS} we also have the following characterization of the metric slope in terms of minimizers $\mu_{h,v}$.
\begin{proposition}[Duality formula for metric slope]\label{prop:met_slope;dual}
We have
\begin{equation}\label{eq:met_slope;dual}
    \frac12|\partial_v\E|^2(\mu)=\limsup_{h\searrow 0}\frac{\E(\mu)-\E_{h,v}(\mu)}{h}.
\end{equation}
Moreover, if the infimum of \eqref{def:partial_JKO;v} is attained at $\mu_{h,v}$, there exists a sequence $h_n\searrow 0$ such that
\begin{equation}\label{eq:met_slope;dual_seq}
    |\partial_v\E|^2(\mu)=\lim_{n\rightarrow\infty}\frac{W_{2,v}^2(\mu_{h_n,v},\mu)}{h_n^2}=\lim_{n\rightarrow\infty}\frac{\E(\mu)-\E(\mu_{h_n,v})}{h_n}\geq \liminf_{h\searrow 0}|\partial_v\E|^2(\mu_{h,v}).
\end{equation}
\end{proposition}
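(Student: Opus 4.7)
The proof parallels Lemma 3.1.5 of \cite{AGS}, with $W_2$ and the local slope replaced by $W_{2,v}$ and $|\partial_v\E|$. The cornerstone is the elementary inequality $qs-s^2/2 \leq q^2/2$ (with equality iff $s=q$), applied to $s = W_{2,v}(\mu,\nu)/h$ and $q = [\E(\mu)-\E(\nu)]_+/W_{2,v}(\mu,\nu)$. This yields, for every $\nu \in \P_2^v(\R^{2d};\Pi^x\mu)\setminus\{\mu\}$ and $h>0$,
\[
\frac{\E(\mu)-\E(\nu)}{h} - \frac{W_{2,v}^2(\mu,\nu)}{2h^2} \leq \frac{1}{2}\left(\frac{[\E(\mu)-\E(\nu)]_+}{W_{2,v}(\mu,\nu)}\right)^2,
\]
with equality precisely when $h = W_{2,v}^2(\mu,\nu)/[\E(\mu)-\E(\nu)]_+$. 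Taking the supremum of the left-hand side over $\nu$ gives $[\E(\mu)-\E_{h,v}(\mu)]/h$.

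For the lower bound $\tfrac12|\partial_v\E|^2(\mu) \leq \limsup_{h\searrow 0}[\E(\mu)-\E_{h,v}(\mu)]/h$, I would assume $|\partial_v\E|(\mu)>0$ (the zero case being trivial) and pick $(\nu_n)$ in $\P_2^v(\R^{2d};\Pi^x\mu)$ realizing the $\limsup$ in \eqref{def:partial_slope}. Setting $h_n := W_{2,v}^2(\mu,\nu_n)/[\E(\mu)-\E(\nu_n)]_+$ saturates Young's inequality, so
\[
\frac{\E(\mu)-\E_{h_n,v}(\mu)}{h_n} \geq \frac{1}{2}\left(\frac{[\E(\mu)-\E(\nu_n)]_+}{W_{2,v}(\mu,\nu_n)}\right)^2 \longrightarrow \frac{1}{2}|\partial_v\E|^2(\mu),
\]
and $h_n\to 0$ since $W_{2,v}(\mu,\nu_n)\to 0$ while the slope ratio converges to $|\partial_v\E|(\mu)>0$ (or diverges).

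For the matching upper bound and the sequence-wise identities \eqref{eq:met_slope;dual_seq}, I would substitute $\nu = \mu_{h,v}$ into Young's bound and use the decomposition $[\E(\mu)-\E(\mu_{h,v})]/h = [\E(\mu)-\E_{h,v}(\mu)]/h + W_{2,v}^2(\mu,\mu_{h,v})/(2h^2)$. The crucial step — and the main technical obstacle — is verifying that $\mu_{h,v}\to\mu$ in $W_{2,v}$ as $h\searrow 0$; this follows by combining the minimality estimate $W_{2,v}^2(\mu,\mu_{h,v})/(2h) \leq \E(\mu)-\E(\mu_{h,v})$ with the coercivity bound~\eqref{eq:pcoercive;E} supplied by Proposition~\ref{prop:pJKO_minimizer}, yielding $W_{2,v}(\mu,\mu_{h,v}) = O(\sqrt{h})$. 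Once this is in hand, the slope definition forces $\limsup_h [\E(\mu)-\E_{h,v}(\mu)]/h \leq \tfrac12 |\partial_v\E|^2(\mu)$, completing \eqref{eq:met_slope;dual}; along the sequence $(h_n)$ extracted above, the AM-GM equality case is saturated in the limit, which simultaneously pins both $[\E(\mu)-\E_{h_n,v}(\mu)]/h_n$ and $W_{2,v}^2(\mu,\mu_{h_n,v})/h_n^2$ to their extremal values and yields the two equalities in \eqref{eq:met_slope;dual_seq}. The final inequality $|\partial_v\E|^2(\mu) \geq \liminf_h |\partial_v\E|^2(\mu_{h,v})$ is an Euler-Lagrange estimate in the spirit of \cite[Lemma 3.1.3]{AGS}: testing $\mu_{h,v}$ against competitors obtained by moving infinitesimally along $W_{2,v}$-geodesics emanating from $\mu_{h,v}$ yields $|\partial_v\E|(\mu_{h,v}) \leq W_{2,v}(\mu,\mu_{h,v})/h$, and passing to the limit along $h_n$ closes the chain.
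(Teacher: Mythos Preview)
Your proposal is correct and takes essentially the same approach as the paper: the paper's proof is a one-line citation of \cite[Lemma~3.1.5]{AGS} applied to the complete metric space $(\P_2^v(\R^{2d};\Pi^x\mu),W_{2,v})$, and you have simply unpacked that very argument. One small remark: your appeal to Proposition~\ref{prop:pJKO_minimizer} for the bound $W_{2,v}(\mu,\mu_{h,v})=O(\sqrt{h})$ tacitly imports $\lambda$-convexity, which the proposition as stated does not assume; the AGS version needs only properness and lower semicontinuity (the coercivity $\E_{h,v}(\mu)>-\infty$ for small $h$ is implicit), so you may want to phrase that step accordingly.
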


\begin{proof}
This is a direct consequence of \cite[Lemma 3.1.5]{AGS} applied to $(\P_2^v(\R^{2d};\Pi^x\mu),W_{2,v})$ which is a complete metric space for each $\mu\in\P_2(\R^{2d})$.
\end{proof}

In general, the minimizers of the variational problems satisfy the following slope bounds.
\begin{proposition}
Let $\mu_{h,v}$ be a minimizer of the variational problem \eqref{def:partial_JKO;v}. Then
\begin{equation}\label{eq:partial_slope;W2bound}
    |\partial_v\E|(\mu_{h,v})\leq \frac{W_{2,v}(\mu_{h,v},\mu)}{h}.
\end{equation}
\end{proposition}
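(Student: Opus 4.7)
The plan is to exploit the minimizing property of $\mu_{h,v}$ directly in the definition of $|\partial_v\E|$ given in~\eqref{def:partial_slope}. The key observation is that competing against $\mu_{h,v}$ with an arbitrary $\nu\in\P_2^v(\R^{2d};\Pi^x\mu)$ in the variational problem~\eqref{def:partial_JKO;v} yields
\[
\E(\mu_{h,v})-\E(\nu)\leq \frac{W_{2,v}^2(\mu,\nu)-W_{2,v}^2(\mu,\mu_{h,v})}{2h},
\]
so the $\E$-decrement is controlled by a difference of squared distances to the base point $\mu$.

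Next, I would factor the right-hand side using $a^2-b^2=(a+b)(a-b)$ together with the triangle inequality $W_{2,v}(\mu,\nu)\leq W_{2,v}(\mu,\mu_{h,v})+W_{2,v}(\mu_{h,v},\nu)$, valid because $W_{2,v}$ is an extended metric on $\P_2(\R^{2d})$ and both $\nu,\mu_{h,v}$ lie in $\P_2^v(\R^{2d};\Pi^x\mu)$ (otherwise the slope ratio is vacuously controlled since only $\nu$ with $W_{2,v}(\mu_{h,v},\nu)<+\infty$ contribute to the $\limsup$). This yields
\[
[\E(\mu_{h,v})-\E(\nu)]_+\leq \frac{W_{2,v}(\mu,\nu)+W_{2,v}(\mu,\mu_{h,v})}{2h}\,W_{2,v}(\mu_{h,v},\nu).
\]

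Dividing by $W_{2,v}(\mu_{h,v},\nu)$ and taking $\limsup$ as $\nu\to\mu_{h,v}$ in $W_{2,v}$, the triangle inequality forces $W_{2,v}(\mu,\nu)\to W_{2,v}(\mu,\mu_{h,v})$, and the stated bound~\eqref{eq:partial_slope;W2bound} follows immediately. There is no serious obstacle here: the argument is a verbatim transcription of the standard $W_2$-version (see \cite[Lemma 3.1.3]{AGS}) to the length metric $W_{2,v}$, whose metric axioms on $\P_2^v(\R^{2d};\Pi^x\mu)$ were established in Proposition~\ref{prop:P2p_W2p_complete}. The only mild subtlety is confirming that relevant competitors $\nu$ lie in the same fixed-marginal space so that the triangle inequality is non-vacuous, but this is automatic from the definition of the $v$-partial slope.
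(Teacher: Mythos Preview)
Your proposal is correct and follows essentially the same approach as the paper: both use the minimality of $\mu_{h,v}$ to bound $\E(\mu_{h,v})-\E(\nu)$ by a difference of squared $W_{2,v}$-distances, factor via $a^2-b^2=(a+b)(a-b)$, apply the triangle inequality, and then divide and pass to the limit $\nu\to\mu_{h,v}$. The paper even cites the same source \cite[Lemma~3.1.3]{AGS} that you identify as the model argument.
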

\begin{proof}
    We include the short proof of \cite[Lemma 3.1.3]{AGS} for completeness. \eqref{eq:partial_slope;W2bound} follows from an application of triangle inequality as for all $\nu\in\P_2^v(\R^{2d};\Pi^x\mu)$,
    \begin{align*}
    \E(\mu_{h,v})-\E(\nu)&\leq \frac{1}{2h}(W_{2,v}^2(\nu,\mu)-W_{2,v}^2(\mu_{h,v},\mu))\\
    &= \frac{1}{2h}(W_{2,v}(\nu,\mu)+W_{2,v}(\mu_{h,v},\mu))(W_{2,v}(\nu,\mu)-W_{2,v}(\mu_{h,v},\mu))\\
    &\leq \frac{1}{2h}(W_{2,v}(\nu,\mu)+W_{2,v}(\mu_{h,v},\mu))W_{2,v}(\nu,\mu_{h,v}).
    \end{align*}
    We conclude by dividing both sides by $W_{2,v}(\nu,\mu_{h,v})$ and letting $\nu\rightarrow \mu_{h,v}$.
\end{proof}

Under assumptions of $\lambda$-geodesic convexity in each coordinate, we have a more refined chain of inequalities for the metric slope.
\begin{proposition}[Partial slope estimates]\label{prop:pslope_est}
Let $\E$ be $\lambda$-partial geodesically convex in $v$, and $\mu_{h,v}$ a solution to the variational problem \eqref{def:partial_JKO;v}. Then
\begin{equation}\label{eq:partial_slope_est}
    (1+\lambda h)|\partial_v\E|^2(\mu_{h,v})\leq (1+\lambda h)\frac{W_{2,v}^2(\mu_{h,v},\mu)}{h^2}\leq 2\frac{\E(\mu)-\E_h(\mu)}{h}\leq \frac{1}{1+\lambda h}|\partial_v\E|^2(\mu).
\end{equation}
\end{proposition}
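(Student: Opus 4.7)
The three inequalities can be handled separately, and all are standard adaptations of the argument in \cite[Theorem 3.1.6]{AGS} to the $W_{2,v}$ setting. Throughout I will assume $h<1/\lambda_-$, so that $1+\lambda h>0$ and Proposition~\ref{prop:pJKO_minimizer} yields existence of $\mu_{h,v}$.

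The first inequality is the easiest. Square the already established bound \eqref{eq:partial_slope;W2bound}, namely $|\partial_v\E|(\mu_{h,v})\le W_{2,v}(\mu_{h,v},\mu)/h$, and multiply by the nonnegative factor $1+\lambda h$.

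For the middle inequality, I would use the $(h^{-1}+\lambda)$-geodesic convexity of $\E_v(\cdot;\mu,h)$ recorded in \eqref{eq:Epmuh_cvx}. Let $(\mu_t)_{t\in[0,1]}$ be the constant-speed $W_{2,v}$-geodesic from $\mu_{h,v}$ to $\mu$. Since $\mu_{h,v}$ is a minimizer, $\E_v(\mu_t;\mu,h)\ge \E_{h,v}(\mu)=\E_v(\mu_{h,v};\mu,h)$. Plugging this into \eqref{eq:Epmuh_cvx}, cancelling $\E_{h,v}(\mu)$, dividing by $t>0$ and then letting $t\searrow 0$ yields
\[
\E_{h,v}(\mu)\le \E(\mu)-\tfrac12\left(\tfrac1h+\lambda\right)W_{2,v}^2(\mu_{h,v},\mu),
\]
which is exactly the second inequality after rearrangement.

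For the last inequality, I would apply the global slope identity of Proposition~\ref{prop:partial_slope;global} with the test measure $\nu=\mu_{h,v}$. Writing $w:=W_{2,v}(\mu_{h,v},\mu)$ and using the minimizer identity $\E(\mu_{h,v})=\E_{h,v}(\mu)-w^2/(2h)$, we obtain
\[
|\partial_v\E|(\mu)\ge \frac{\E(\mu)-\E_{h,v}(\mu)}{w}+\frac{w}{2h}+\frac{\lambda w}{2} = \frac{\E(\mu)-\E_{h,v}(\mu)}{w}+\frac{(1+\lambda h)\,w}{2h}.
\]
Both summands on the right are nonnegative (the first because $\E_{h,v}(\mu)\le\E(\mu)$, the second because $1+\lambda h>0$), so AM--GM gives
\[
|\partial_v\E|(\mu)\ge 2\sqrt{\frac{(\E(\mu)-\E_{h,v}(\mu))(1+\lambda h)}{2h}} = \sqrt{\frac{2(1+\lambda h)}{h}\bigl(\E(\mu)-\E_{h,v}(\mu)\bigr)}.
\]
Squaring and dividing by $1+\lambda h$ produces the third inequality. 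The case $w=0$ is trivial since then $\mu_{h,v}=\mu$ forces $\E_{h,v}(\mu)=\E(\mu)$ and both sides vanish.

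No part of the argument presents a real obstacle; the only thing to watch is that $1+\lambda h$ stays nonnegative, which is exactly the regime in which $\mu_{h,v}$ is well-defined, and that the AM--GM step in the last inequality is applied to two manifestly nonnegative terms.
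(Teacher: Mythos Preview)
Your argument is correct and is precisely the standard proof of \cite[Theorem 3.1.6]{AGS} transplanted to $(\P_2^v(\R^{2d};\Pi^x\mu),W_{2,v})$, which is all the paper claims (its own proof is simply a reference to that result). The only cosmetic point is that \eqref{eq:Epmuh_cvx} is stated for geodesics emanating from the base point $\mu$, so when you take the geodesic from $\mu_{h,v}$ to $\mu$ you are implicitly using that this is the time-reversal of a geodesic from $\mu$; the computation goes through unchanged.
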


\begin{proof}
We refer the readers to the analogous statement \cite[Theorem 3.1.6.]{AGS} for the proof. 
\end{proof}

\section{Partial subdifferential calculus in the Wasserstein space}\label{sec:psubdiff}
In this section we study the partial-subdifferential calculus in the Wasserstein space. For simplicity we limit our exposition mostly to the case where the base point $\mu\in\P_2^r(\R^{2d})$ has a $\Leb^{2d}$-density, as this allows us to formulate our statements in terms of the (unique) optimal transport maps rather than general transport plans. This will be sufficient for our results in later sections as the entropy functional $\U$ forces our measures to be absolutely continuous. 

After introducing the partial-subdifferentials in the Wasserstein space and establishing basic properties, we will study the partial-subdifferentials of functionals involved in our variational scheme \eqref{eq:SIE}. Section~\ref{ssec:partial_subdiff;U},\ref{ssec:psubdiff:LpLx}, and~\ref{ssec:subdiff_sum} respectively examine the entropy functional $\U$, the linear functionals $\Lfun_v,\Lfun_x$, and the sum of the functionals. 

While many of the results in this section parallel those in \cite{AGS}, often the disintegration introduces subtle technical difficulties. Thus, unlike in Section~\ref{sec:partial_metslope} we provide complete proofs in this section.

We first begin by defining the Fr\'echet subdifferentials with respect to $W_{2,v}$, which we refer to as $v$-partial subdifferentials; an equivalent definition can be found in the work of Peszek and Poyato \cite[Definition 3.26]{PesPoy23}.
\begin{definition}\label{def:psubdiff}
    Given $\mu\in\P_2(\R^{2d})$, we say $\xi=(0,\xi_v)\in L^2(\mu;\R^{2d})$ belongs to the $v$-partial subdifferential $\partial_v\E$ if for any $\nu\in D(\E)\cap\P_2^v(\R^{2d};\Pi^x\mu)$ there exists $\gamma\in\Gamma_o^v(\mu,\nu)$ such that
    \begin{equation}\label{eq:psubdiff}
        \E(\nu)-\E(\mu)\geq \int_{\R^d}\iint_{\R^d\times\R^d}\xi_v(x,v)\cdot (w-v)\,d\gamma^x(v,w)\,d\Pi^x\mu(x)+  o(W_{2,v}(\mu,\nu))
    \end{equation}
\end{definition}

Recall \cite[Definition 10.1.1]{AGS} that $\xi\in L^2(\mu;\R^{2d})$ is a strong subdifferential of $\E$ at $\mu$ if for any 
\[
    \E(T_\#\mu)-\E(\mu)\geq \int_{\R^{2d}} \xi\cdot (T-\id)\,d\mu + o(\|T-\id\|_{L^2(\mu;\R^{2d})}) \text{ for any } T\in L^2(\mu;\R^{2d}).
\]
Thus if $\xi=(\xi_x,\xi_v)$ is a strong subdifferential of $\E$ at $\mu$, then by setting $T\in L^2(\mu;\R^{2d})$ to be the $W_{2,v}$-optimal transport map from $\mu$ to $\nu$ to deduce that $(0,\xi_v)$ is a strong $v$-partial-subdifferential of $\E$ at $\mu$. This justifies the use of the term $v$-partial (or $x$-partial)-subdifferentials.

\begin{proposition}\label{prop:slope_subdiff}
    Let $\mu\in D(|\partial_v\E|)$ and $(0,\xi^v)\in \partial_v\E(\mu)$. Then 
    \begin{equation}\label{eq:slope_subdiff}
        |\partial_v\E|(\mu)\leq\|\xi^v\|_{L^2(\mu)}
    \end{equation}
\end{proposition}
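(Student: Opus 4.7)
The plan is to mimic the standard argument that bounds the metric slope by the norm of any element of the Fréchet subdifferential, but adapted to the disintegrated setting of $W_{2,v}$. Given $(0,\xi^v)\in\partial_v\E(\mu)$ and any $\nu\in D(\E)\cap\P_2^v(\R^{2d};\Pi^x\mu)$, Definition~\ref{def:psubdiff} supplies $\gamma\in\Gamma_o^v(\mu,\nu)$ satisfying
\[
\E(\mu)-\E(\nu)\;\leq\;-\int_{\R^d}\iint_{\R^d\times\R^d}\xi^v(x,v)\cdot(w-v)\,d\gamma^x(v,w)\,d\Pi^x\mu(x)+o(W_{2,v}(\mu,\nu)).
\]
The core step is to bound the linear term by $\|\xi^v\|_{L^2(\mu)}\,W_{2,v}(\mu,\nu)$ using a Cauchy--Schwarz inequality that exploits the disintegrated structure.

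To carry this out, first I would apply Cauchy--Schwarz fiberwise (with respect to $\gamma^x$) and then a second Cauchy--Schwarz against $\Pi^x\mu$, obtaining
\[
\left|\int_{\R^d}\!\iint\xi^v(x,v)\cdot(w-v)\,d\gamma^x(v,w)\,d\Pi^x\mu(x)\right|
\leq\left(\int_{\R^d}\!\iint|\xi^v(x,v)|^2\,d\gamma^x\,d\Pi^x\mu\right)^{1/2}\!\!\left(\int_{\R^d}\!\iint|w-v|^2\,d\gamma^x\,d\Pi^x\mu\right)^{1/2}.
\]
Because each $\gamma^x$ has first marginal $\mu^x$, the first factor collapses via the disintegration formula \eqref{def:disintegration} to $\|\xi^v\|_{L^2(\mu)}$, and because $\gamma\in\Gamma_o^v(\mu,\nu)$, the second factor equals $W_{2,v}(\mu,\nu)$ by \eqref{eq:W2pW2x;coupling} and \eqref{def:Gammapxo}.

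Combining these facts yields
\[
[\E(\mu)-\E(\nu)]_+\leq \|\xi^v\|_{L^2(\mu)}\,W_{2,v}(\mu,\nu)+o(W_{2,v}(\mu,\nu)).
\]
Dividing by $W_{2,v}(\mu,\nu)$ and taking $\limsup$ as $\nu\to\mu$ in $W_{2,v}$ delivers \eqref{eq:slope_subdiff}. The only place requiring mild care is ensuring that, when $\nu=\mu$, the bound is vacuous, and that the little-$o$ term in \eqref{eq:psubdiff} really is additive (not depending on the chosen $\gamma$), which is built into the definition. There is no substantive obstacle here: the argument is essentially the classical one of \cite[Lemma 10.3.7]{AGS}, with the single observation that the disintegration allows Cauchy--Schwarz to separate $|\xi^v|^2$ from the transport cost fiber by fiber.
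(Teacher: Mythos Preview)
Your proposal is correct and follows essentially the same approach as the paper's proof: rearrange the subdifferential inequality, bound the linear term by $\|\xi^v\|_{L^2(\mu)}W_{2,v}(\mu,\nu)$ via Cauchy--Schwarz, divide by $W_{2,v}(\mu,\nu)$, and take the $\limsup$. The paper's proof is simply more terse, compressing your Cauchy--Schwarz computation into a single inequality without explanation, while you spell out how disintegration reduces the two factors to $\|\xi^v\|_{L^2(\mu)}$ and $W_{2,v}(\mu,\nu)$ respectively.
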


\begin{proof}
Fix $\mu\in\P_2(\R^{2d})$. For each $\nu\in\P_2(\R^{2d})$, choosing $\gamma\in\Gamma_o^v(\mu,\nu)$ such that \eqref{eq:psubdiff}, we see
    \begin{align*}
        \frac{\E(\mu)-\E(\nu)}{W_{2,v}(\mu,\nu)}\leq \frac{\int_{\R^d}\int_{\R^d\times\R^d} \xi^v(x,v)\cdot(w-v)d\gamma^x(v,w)\,d\Pi^x\mu(x)}{W_{2,v}(\mu,\nu)}+ o(1)\leq \|\xi_v\|_{L^2(\mu;\R^d)}+o(1),
    \end{align*}
    hence $|\partial_v\E|(\mu)\leq\|\xi_v\|_{L^2(\mu;\R^d)}$.
\end{proof}

This brings up a natural question of whether there exists an element in $\partial_v\E(\mu)$ whose $L^2(\mu;\R^{2d})$-norm is equal to $|\partial_v\E|(\mu)$. In the case of the Wasserstein space there exists a unique vector field that saturates inequality analogous to \eqref{eq:slope_subdiff}, as documented thoroughly in \cite[Chapter 10]{AGS}. For simplicity, we make the following definition.

\begin{definition}\label{def:minimalsubdiff}
    We say $(0,\xi^v)\in\partial_v^\circ\E(\mu)$ if $(0,\xi^v)\in\partial_v\E(\mu)$ and $\|\xi^v\|_{L^2(\mu)}=|\partial_v\E|(\mu)$.
\end{definition}

The following Euler equations provide a connection between the minimizers of the variational problem \eqref{def:partial_JKO;v} and the $W_{2,v}$-optimal transport maps.
\begin{proposition}\label{prop:euler_eqn}
If $\mu_{h,v}\in\P_2^r(\R^{2d})$ is a minimizer of \eqref{def:partial_JKO;v}, then
\begin{equation}\label{eq:euler_eqn}
    \begin{pmatrix}
    0\\
    \frac{1}{h}(T_{\mu_{h,v}^x}^{\mu^x}-\id_{v})
    \end{pmatrix} \in \partial_v\E(\mu_{h,v}).
\end{equation}
\end{proposition}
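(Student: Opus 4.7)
The plan is to perturb $\mu_{h,v}$ toward an arbitrary competitor $\nu\in D(\E)\cap\P_2^v(\R^{2d};\Pi^x\mu)$ along a fibered $W_{2,v}$-geodesic, and combine minimality with a quadratic upper bound on $W_{2,v}^2(\mu,\mu_t)$ obtained through a carefully chosen fibered coupling. Since $\mu_{h,v}\in\P_2^r(\R^{2d})$, the disintegration $\mu_{h,v}^x$ is absolutely continuous on $\R^d$ for $\Pi^x\mu$-a.e.\ $x$, so Brenier's theorem provides the optimal transport maps $T_{\mu_{h,v}^x}^{\nu^x}$ and $T_{\mu_{h,v}^x}^{\mu^x}$; by Proposition~\ref{prop:OTmap_partial}, these assemble into $W_{2,v}$-optimal maps $T^\nu(x,v)=(x,T_{\mu_{h,v}^x}^{\nu^x}(v))$ and $T^\mu(x,v)=(x,T_{\mu_{h,v}^x}^{\mu^x}(v))$ in $L^2(\mu_{h,v};\R^{2d})$. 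Setting $T_t(x,v)=(x,(1-t)v+tT_{\mu_{h,v}^x}^{\nu^x}(v))$ and $\mu_t=(T_t)_\#\mu_{h,v}$, each fibre $\mu_t^x$ is the McCann interpolation from $\mu_{h,v}^x$ to $\nu^x$, so $(\mu_t)_{t\in[0,1]}$ is a constant-speed $W_{2,v}$-geodesic lying entirely inside $\P_2^v(\R^{2d};\Pi^x\mu)$, with $\mu_0=\mu_{h,v}$ and $\mu_1=\nu$.

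Since $\mu_t$ is admissible in the variational problem, minimality of $\mu_{h,v}$ gives
\[
\E(\mu_t)-\E(\mu_{h,v})\geq \frac{W_{2,v}^2(\mu,\mu_{h,v})-W_{2,v}^2(\mu,\mu_t)}{2h}.
\]
To bound $W_{2,v}^2(\mu,\mu_t)$ from above, consider the coupling $(T_t\times T^\mu)_\#\mu_{h,v}$: both $T_t$ and $T^\mu$ leave the $x$-coordinate invariant, so after disintegrating with respect to $x\sim \Pi^x\mu$ one obtains a plan between $\mu_t^x$ and $\mu^x$ almost surely, and hence the measure lies in $\Gamma^v(\mu_t,\mu)$. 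Expanding the integrand pointwise as
\[
\bigl|(1-t)v+tT_{\mu_{h,v}^x}^{\nu^x}(v)-T_{\mu_{h,v}^x}^{\mu^x}(v)\bigr|^2 = |T^\mu_v-v|^2 - 2t(T^\mu_v-v)\cdot(T^\nu_v-v) + t^2|T^\nu_v-v|^2
\]
and integrating against $\mu_{h,v}$ yields
\[
W_{2,v}^2(\mu,\mu_t)\leq W_{2,v}^2(\mu_{h,v},\mu) - 2t\!\int\! (T^\mu_v-v)\cdot(T^\nu_v-v)\,d\mu_{h,v} + t^2 W_{2,v}^2(\mu_{h,v},\nu).
\]
Combining with the minimality inequality and specializing to $t=1$ (so that $\mu_t=\nu$) gives
\[
\E(\nu)-\E(\mu_{h,v})\geq \frac{1}{h}\!\int\! (T^\mu_v-v)\cdot(T^\nu_v-v)\,d\mu_{h,v} - \frac{1}{2h}W_{2,v}^2(\mu_{h,v},\nu).
\]

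To conclude, observe that the unique $\gamma\in\Gamma_o^v(\mu_{h,v},\nu)$ is $(\id\times T^\nu)_\#\mu_{h,v}$, so the linear term above is precisely $\int \xi_v\cdot(w-v)\,d\gamma^x(v,w)\,d\Pi^x\mu(x)$ with $\xi_v=\tfrac{1}{h}(T_{\mu_{h,v}^x}^{\mu^x}-\id_v)$. The remainder $-\tfrac{1}{2h}W_{2,v}^2(\mu_{h,v},\nu)$ is controlled by $\omega(W_{2,v}(\mu_{h,v},\nu))$ where $\omega(r)=r^2/(2h)$ satisfies $\omega(r)/r\to 0$ as $r\to 0^+$, so it is admissible as the $o(W_{2,v}(\mu_{h,v},\nu))$ error in Definition~\ref{def:psubdiff}. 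This establishes \eqref{eq:euler_eqn}. The main technical care in this argument is the construction of the fibered coupling between $\mu_t$ and $\mu$: one must verify that the product map $(T_t\times T^\mu)_\#\mu_{h,v}$ indeed lies in the fibered set $\Gamma^v(\mu_t,\mu)$ rather than only in $\Gamma(\mu_t,\mu)$, which ultimately reduces to the invariance of the $x$-coordinate under both maps together with the disintegration theorem as codified in Proposition~\ref{prop:OTmap_partial}; once this is granted, the rest is a straightforward quadratic expansion and the definition of $\partial_v\E$.
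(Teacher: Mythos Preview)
Your proof is correct and follows essentially the same route as the paper's: compare $\mu_{h,v}$ with an arbitrary competitor $\nu$ via minimality, bound $W_{2,v}^2(\nu,\mu)$ from above using the fibered coupling $(T^\nu\times T^\mu)_\#\mu_{h,v}$, and expand quadratically to extract the linear term $\tfrac1h\langle T^\mu_v-\id_v,\,T^\nu_v-\id_v\rangle_{L^2(\mu_{h,v})}$ with an $O(W_{2,v}^2)$ remainder. The only cosmetic difference is that you introduce the interpolation $\mu_t$ and then use it only at $t=1$; the paper works directly with a general push-forward competitor $\nu=(x,T^x(v))_\#\mu_{h,v}$ and specializes $T^x=T_{\mu_{h,v}^x}^{\nu^x}$ at the end, so the geodesic structure is not actually needed in either argument.
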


\begin{proof}
We modify the proof of \cite[Lemma 10.1.2]{AGS}. Let $\nu\in\P_2^v(\R^{2d};\Pi^x\mu)$. By the definition of $\mu_{h,v}$,
\begin{align*}
    \E(\nu)-\E(\mu_{h,v})\geq \frac{1}{2h}(W_{2,v}^2(\mu_{h,v},\mu)-W_{2,v}^2(\nu,\mu)).
\end{align*}
For any $\nu=(x,T^x(v))_\# \mu_{h,v}$, we have
\begin{align*}
    W_2^2(\mu_{h,v},\mu)=\int_{\R^{2d}} |T_{\mu_{h,v}^x}^{\mu^x}(v)-v|^2\,d\mu_{h,v}(x,v),\qquad W_2^2(\nu,\mu) \leq \int_{\R^{2d}} |T^x(v)-T_{\mu_{h,v}^x}^{\mu^x}(v)|^2\,d\mu_{h,v}(x,v).
\end{align*}
By the identity $\frac12|a|^2-\frac12|b|^2=\langle a,a-b\rangle - \frac12|a-b|^2$,
\begin{equation}\label{eq:euler_eqn;test}
\begin{split}
    \E(\nu)-\E(\mu_{h,v}) &\geq \frac{1}{2h}\int_{\R^{2d}}\left(|T_{\mu_{h,v}^x}^{\mu^x}(v)-v|^2-|T^x(v)-v|^2\right)\,d\mu_{h,v}(x,v)   \\
    &= \int_{\R^{2d}}\langle \frac{1}{h} T_{\mu_{h,v}^x}^{\mu^x}(v)-v,T^x(v)-v\rangle\,d\mu_{h,v}(x,v) - \frac{1}{2h}\int_{\R^{2d}}|T^x(v)-v|^2\,d\mu_{h,v}(x,v).
\end{split}
\end{equation}
In particular, choosing $T^x=T_{\mu_{h,v}^x}^{\nu^x}$ we obtain \eqref{eq:euler_eqn}.
\end{proof}

\subsection{Convexity and partial subdifferentials of the entropy functional}\label{ssec:partial_subdiff;U}
The minimizing movement scheme \eqref{def:SIE} minimizes $\Lfun_v+\alpha\H$ in the first step using the metric $W_{2,v}$, and minimizes in the second step $-\Lfun_x$ penalizing deviation in $W_{2,x}$. All other functionals but $\U$ have relatively simple structure with respect to the relevant variable: $\Lfun_v$ is linear in $v$ and $-\Lfun_x$ in $x$; moreover, $\H=\V+\W+\U$ depends on $v$ only through $\mu\mapsto \int_{\R^{2d}} |v|^2/2\,d\mu(x,v)$ and $\U$. While $v\mapsto |v|^2/2$ is clearly $1$-convex along $W_{2,v}$-geodesics, but convexity and $v$-subdifferential structure of $\U$ deserve careful studies as $\U$ depends on the full density of $\mu$, not just of $\Pi^v\mu$. 
% \fix{See Peszek and Poyato \cite[Theorem 4.6]{PesPoy23} for convexity}

Thus we dedicate this section to carefully examine the convexity and $v$-partial subdifferential of the entropy functional
\begin{equation}\label{def:U}
    \U(\mu)= 
    \begin{cases}
    \int_{\R^{2d}} \rho\log\rho\,d\Leb^{2d} &\text{ if } \mu=\rho\Leb^{2d}, \\
    +\infty &\text{ otherwise. }
    \end{cases}
\end{equation}
We note that the results in this section are suitable adaptation of results from \cite[Chapter 10.4.3]{AGS} for out setting.

Rigorous definitions of the directional derivative and subdifferentials of $\U$ require the notion of approximate differentials, which we define here. See \cite[Definition 5.5.1]{AGS} and following discussions as well as \cite{Federer69} for further details.
\begin{definition}[Approximate differentials]\label{def:tildenabla}
    Let $\Omega\subset\R^k$ be an open set and $f:\Omega\rightarrow \R^m$. We say that approximate limit of $f$ at $x\in\Omega$ is $z\in\R^m$ if all sets
    \[\{y:\;|f(y)-z|>\eps\} \quad \eps >0\]
    have density $0$ at $x$. In this case we write $\tilde f(x)=z$.

    We say a linear map $L:\R^k\rightarrow\R^m$ is the approximate differential of $f$ at $x$ if $f$ has an approximate limit $\tilde f(x)$ at $x$ and all sets
    \[\left\{y:\;\frac{|f(y)-\tilde f(x)-L(y-x)|}{|y-x|}>\eps\right\} \quad \eps>0\]
    have density $0$ at $x$, and we write $\tilde\nabla f(x)=L$. 
\end{definition}
When $f:(x,v)\mapsto f(x,v)$ with $(x,v)\in\R^{2d}$, we often write $\tilde\nabla_v f$ to emphasize that the differentiation takes place with respect to the velocity variable $v$.

Now we present a useful lemma on the directional derivative of $\U$ along the $v$-coordinate. As the proof is an adaptation of the analogous statement \cite[Lemma 10.4.4]{AGS}, we delay it to Section~\ref{app:proof;U}.

\begin{lemma}[Partial-directional derivative of $\U$]\label{lem:pdirder;U}
Let $\U$ be the entropy functional defined in \eqref{def:U}. Let $\mu\in D(\U)$ with $\mu=\rho\Leb^{2d}$. 
% Then $\U$ is convex along $W_{2,v}$-geodesics.

For $\Pi^x \mu$-a.e. $x\in\R^d$, let $\br\in L^2(\mu;\R^{2d})$ with $\br(x,v)=(x,\br^x(v))$ and $\bar t>0$ such that
\begin{listi}
 \item For $\Pi^x\mu$-a.e. $x\in\R^d$, $\br^x$ is approximately differentiable $\rho(x,\cdot)\Leb^{d}$-a.e. and $\br_t^x:=(1-t)\id+t\br^x$ is $\rho\Leb^{d}$-injective with $|\det\tilde\nabla_v\br_t^x(v)|>0$ $\rho(x,\cdot)\Leb^{d}$-a.e., for any $t\in[0,\bar t]$;
\item $\tilde\nabla_v\br^x_{\bar t}$ is diagonalizable with positive eigenvalues;
\item $\U((\br_{\bar t})_{\#}\mu)<+\infty$.
\end{listi}
Then the map $t\mapsto \frac{\U((\br_t)_{\#}\mu)-\U(\mu)}{t}$ is nondecreasing; in particular, $\br^x(v)=T_{\mu^x}^{\nu^x}$ for any $\nu\in D(\U)$ satisfies (i)-(iii) and thus $\U$ is convex along $W_{2,v}$-geodesics.

Furthermore, we have the directional derivative
\begin{equation}\label{eq:pdirder;U}
    +\infty>\lim_{t\downarrow 0}\frac{\U((\br_t)_{\#}\mu)-\U(\mu)}{t}=-\int_{\R^{2d}} \tr\tilde\nabla_v(\br^x-\id_v)\,d\mu.
\end{equation}

Moreover, if (ii) is replaced by
\begin{equation}\label{cond:xi_v;Linftybd}
\|\tilde\nabla_v(\br^x-\id_v(x,\cdot))\|_{L^\infty(\mu;\R^{2d})}<+\infty
\end{equation}
the conclusion \eqref{eq:pdirder;U} still holds.
\end{lemma}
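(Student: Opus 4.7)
\begin{proofsketch}
The plan is to proceed fiberwise, combining the change-of-variables formula on each $x$-slice with convexity of $-\log\det$ on invertible matrices, in parallel with the argument of \cite[Lemma 10.4.4]{AGS}. Writing $\mu=\rho\Leb^{2d}$ and noting that the map $\br_t(x,v)=(x,\br_t^x(v))$ preserves the $x$-coordinate while being $\rho(x,\cdot)\Leb^d$-injective with positive Jacobian by hypothesis (i), the area formula on each fiber gives that $(\br_t)_\#\mu$ is absolutely continuous with density
\[
\rho_t(x,w)=\frac{\rho(x,(\br_t^x)^{-1}(w))}{|\det\tilde\nabla_v\br_t^x((\br_t^x)^{-1}(w))|}.
\]
Substituting into the entropy functional and changing variables back via $w=\br_t^x(v)$ yields the key identity
\[
\U((\br_t)_\#\mu)-\U(\mu)=-\int_{\R^{2d}}\log|\det\tilde\nabla_v\br_t^x(v)|\,d\mu(x,v),
\]
valid for all $t\in[0,\bar t]$.

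Next I will invoke condition (ii) to obtain convexity of the integrand in $t$. For each $x$ the matrix $\tilde\nabla_v\br^x(v)$ is diagonalizable with positive eigenvalues $\{\lambda_i(x,v)\}_{i=1}^d$, so $\tilde\nabla_v\br_t^x(v)$ has eigenvalues $(1-t)+t\lambda_i(x,v)>0$ for $t\in[0,1]$, whence
\[
-\log|\det\tilde\nabla_v\br_t^x(v)|=-\sum_{i=1}^d\log\bigl((1-t)+t\lambda_i(x,v)\bigr)
\]
is convex in $t$ pointwise. Integrating against $\mu$ gives convexity of $t\mapsto\U((\br_t)_\#\mu)-\U(\mu)$, which immediately yields monotonicity of the difference quotient and, by taking $\br^x=T_{\mu^x}^{\nu^x}$ (whose fiberwise approximate differential is symmetric positive semidefinite by Brenier's theorem), the $W_{2,v}$-geodesic convexity of $\U$. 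The derivative formula \eqref{eq:pdirder;U} then follows from monotone convergence: Jacobi's formula gives the pointwise limit
\[
\lim_{t\downarrow0}\frac{-\log|\det\tilde\nabla_v\br_t^x(v)|}{t}=-\tr\tilde\nabla_v(\br^x-\id_v)(v),
\]
and the difference quotient is monotone nondecreasing in $t$, bounded above at $t=\bar t$ by an integrable function thanks to assumption (iii); the integrability of the limit from below is a consequence of convexity together with $\U(\mu)<+\infty$ and $\U((\br_{\bar t})_\#\mu)<+\infty$.

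For the extension under \eqref{cond:xi_v;Linftybd}, replacing (ii), I cannot invoke the eigenvalue decomposition directly, so I will instead rely on a uniform Taylor expansion. Setting $A(x,v):=\tilde\nabla_v(\br^x-\id_v)(v)$ with $M:=\|A\|_{L^\infty(\mu)}<+\infty$, for every $t\in(0,\min(\bar t,1/(2M)))$ the Neumann series gives
\[
\log|\det(I+tA(x,v))|=t\tr A(x,v)+r_t(x,v),\qquad |r_t(x,v)|\leq C t^2 M^2,
\]
uniformly in $(x,v)$. Dividing by $t$ and applying the dominated convergence theorem on the finite-measure space $(\R^{2d},\mu)$ yields \eqref{eq:pdirder;U}. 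The main technical obstacle is to justify the integrability in the monotone-convergence step of the second paragraph, since the difference quotient can only be controlled from above at $t=\bar t$ via (iii); I expect this to be handled by combining the monotonicity established in the first half of the argument with Fatou's lemma applied to the nonnegative quantity $\log|\det\tilde\nabla_v\br_{\bar t}^x|-\log|\det\tilde\nabla_v\br_t^x|$ divided by $(\bar t-t)$.
\end{proofsketch}
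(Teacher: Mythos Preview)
Your approach is essentially the paper's, specialized to the entropy: the paper writes the computation via $G(z,s)=sU(z/s)$ and uses concavity of $t\mapsto(\det\tilde\nabla_v\br_t^x)^{1/d}$, whereas you exploit that for $U(s)=s\log s$ one has $G(z,s)-U(z)=-z\log s$, giving directly your identity $\U((\br_t)_\#\mu)-\U(\mu)=-\int\log|\det\tilde\nabla_v\br_t^x|\,d\mu$ and the eigenvalue-by-eigenvalue convexity of $-\log$; the monotone/dominated convergence steps are identical.

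One small slip to fix: condition (ii) asserts that $\tilde\nabla_v\br_{\bar t}^x$, not $\tilde\nabla_v\br^x$, is diagonalizable with positive eigenvalues. The eigenvalues $\lambda_i$ of $\tilde\nabla_v\br^x$ may be negative, so your claim that $(1-t)+t\lambda_i>0$ on $[0,1]$ is not guaranteed. The correct statement is that the eigenvalues of $\tilde\nabla_v\br_t^x$ interpolate linearly between $1$ (at $t=0$) and the positive eigenvalues of $\tilde\nabla_v\br_{\bar t}^x$ (at $t=\bar t$), hence stay positive on $[0,\bar t]$; convexity and the monotone-convergence argument then go through on $[0,\bar t]$, which is all you need.
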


\begin{remark}[Convexity of $\U$ along generalized $W_{2,v}$-geodesics]\label{rmk:Ucvx;gengeo}
    The entropy functional $\U$ satisfies the following stronger convexity property called \emph{convexity along generalized $W_{2,v}$-geodesics}: for any $W_{2,v}$-optimal transport maps $\br_0,\br_1\in L^2(\mu;\R)$ defining $\br_t=(1-t)\br_0+t\br_1$, the map $t\mapsto U\circ\br_t$ is convex. One can easily check that the properties (i)-(iii) are satisfied (see the end of proof of \cite[Proposition 9.3.9]{AGS} for the Wasserstein case). The same convexity property can be easily established for other functionals involved in our algorithm \eqref{eq:SIE}. 
    
    Convexity along generalized geodesics is of importance as the map $\mu\mapsto W_{2,v}^2(\mu,\nu)$ for some fixed $\nu$ is convex along \emph{some} generalized geodesics, while it is in fact concave along geodesics. This forms the basis of the stronger results in \cite[Chapter 4]{AGS}.
\end{remark}

Following is a modification of \cite[Theorem 10.4.6.]{AGS} for partial subdifferentials of the entropy functional. For completeness we include the proof in Section~\ref{app:proof;U}.
\begin{theorem}[Slope and partial subdifferential of $\U$]\label{thm:psubdiff;U}
Let $\U$ be the entropy functional defined in \eqref{def:U}. Let $\mu\in\P_2^r(\R^{2d})$ with $\mu=\rho\Leb^{2d}$. Then
$|\partial_v\U|(\mu)<+\infty$ \textbf{if and only if} $\nabla_v\rho\in L^1(\R^{2d})$ and $\nabla_v\rho=w_v\rho$ for some $w_v\in L^2(\mu;\R^d)$. 

Moreover, in this case $(0,w_v)\in\partial_v^\circ\U(\mu)$, and we write $w_v=\nabla_v\rho/\rho$.
\end{theorem}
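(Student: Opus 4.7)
The argument follows the template of \cite[Theorem 10.4.6]{AGS}, with Lemma~\ref{lem:pdirder;U} as the central analytic tool. I will establish the two implications separately and obtain minimality of $(0,w_v)$ at the end by combining the bounds.

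For the ``only if'' direction, I test with smooth compactly supported perturbations acting only in the velocity variable. Given $\zeta\in C_c^\infty(\R^{2d};\R^d)$, set $\br(x,v):=(x,v+\zeta(x,v))$. For $\bar t>0$ small the maps $\br_t^x(v)=v+t\zeta(x,v)$ are $C^\infty$ diffeomorphisms, so hypothesis~(i) of Lemma~\ref{lem:pdirder;U} holds; the $L^\infty$ variant~\eqref{cond:xi_v;Linftybd} replaces~(ii) since $\|\nabla_v\zeta\|_\infty<+\infty$, and~(iii) is automatic from compact support and the change of variables. Lemma~\ref{lem:pdirder;U} then gives
\[
\lim_{t\downarrow 0}\frac{\U((\br_t)_\#\mu)-\U(\mu)}{t}
= -\int_{\R^{2d}} \divv_v\zeta\,\rho\,d\Leb^{2d},
\]
while the coupling $(\id,\br_t)_\#\mu$ yields $W_{2,v}(\mu,(\br_t)_\#\mu)\leq t\|\zeta\|_{L^2(\mu;\R^d)}$. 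Inserting $\pm\zeta$ into the definition of $|\partial_v\U|(\mu)$ produces
\[
\Bigl|\int_{\R^{2d}}\divv_v\zeta\,\rho\,d\Leb^{2d}\Bigr|\leq |\partial_v\U|(\mu)\,\|\zeta\|_{L^2(\mu;\R^d)}.
\]
By density of $C_c^\infty(\R^{2d};\R^d)$ in $L^2(\mu;\R^d)$, the Riesz representation theorem delivers $w_v\in L^2(\mu;\R^d)$ with $\|w_v\|_{L^2(\mu;\R^d)}\leq|\partial_v\U|(\mu)$ satisfying the distributional identity $\nabla_v\rho=w_v\rho$. Cauchy--Schwarz then gives $\|\nabla_v\rho\|_{L^1(\R^{2d})}\leq\|w_v\|_{L^2(\mu;\R^d)}<+\infty$.

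For the ``if'' direction, I fix $\nu\in D(\U)\cap \P_2^v(\R^{2d};\Pi^x\mu)$ and apply Lemma~\ref{lem:pdirder;U} with $\br(x,v):=(x,T_{\mu^x}^{\nu^x}(v))$, which satisfies~(i)--(iii). The monotonicity of the difference quotient, combined with the explicit directional derivative, yields
\[
\U(\nu)-\U(\mu)\geq -\int_{\R^{2d}}\tr\tilde\nabla_v\bigl(T_{\mu^x}^{\nu^x}-\id_v\bigr)\,d\mu.
\]
An integration-by-parts identity using $\nabla_v\rho = w_v\rho$ then converts the right-hand side into $\int w_v\cdot(T_{\mu^x}^{\nu^x}-\id_v)\,d\mu$, which by Definition~\ref{def:psubdiff} establishes $(0,w_v)\in\partial_v\U(\mu)$. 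Proposition~\ref{prop:slope_subdiff} gives $|\partial_v\U|(\mu)\leq\|w_v\|_{L^2(\mu;\R^d)}$; together with the reverse bound obtained in the previous step, this forces equality and hence $(0,w_v)\in\partial_v^\circ\U(\mu)$.

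The principal technical obstacle is the rigorous justification of the integration by parts above, since each fiber map $T_{\mu^x}^{\nu^x}$ is only approximately differentiable. I expect to handle this by fiberwise regularization: since each $T_{\mu^x}^{\nu^x}$ is the gradient of a convex Brenier potential, standard convex-analytic smoothing (e.g.\ inf-convolution with a quadratic) produces $C^\infty$ diffeomorphisms $T_\eps^x$ for which the identity is classical and uses only $\nabla_v\rho=w_v\rho\in L^1$. Passing to the limit requires $L^2(\mu;\R^{2d})$-convergence of the maps (via Proposition~\ref{prop:stability} applied to the regularized targets), $\mu$-a.e.\ convergence of the approximate Jacobians from the convex structure, and uniform integrability from $\U(\mu),\U(\nu)<+\infty$. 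Measurability of the regularization in $x$ and the Fubini-type interchange with the disintegration must be tracked carefully but follow the scheme used throughout \cite[Chapter 10]{AGS}.
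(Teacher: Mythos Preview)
Your ``only if'' direction matches the paper's argument essentially verbatim (the paper tests with $\zeta$ depending only on $v$, but your choice of $\zeta\in C_c^\infty(\R^{2d};\R^d)$ works just as well).

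In the ``if'' direction the overall skeleton is the same, but the paper handles the integration-by-parts step quite differently, and your proposed route has a genuine gap. The relation
\[
-\int_{\R^{2d}}\tr\tilde\nabla_v(T_{\mu^x}^{\nu^x}-\id_v)\,d\mu \;\geq\; \int_{\R^{2d}} w_v\cdot(T_{\mu^x}^{\nu^x}-\id_v)\,d\mu
\]
is an \emph{inequality}, not an identity: the distributional divergence $D_v\cdot T_{\mu^x}^{\nu^x}$ is a nonnegative measure whose absolutely continuous part is $\tr\tilde\nabla_v T_{\mu^x}^{\nu^x}$, and the singular part is in general nonzero. Your plan to regularize the map and invoke uniform integrability of $\tr\nabla_v T_\eps^x$ would, if it succeeded, force equality in the limit---which is false. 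What actually happens is that $\tr\nabla_v T_\eps^x\to\tr\tilde\nabla_v T^x$ a.e.\ but the singular mass escapes, so only Fatou is available; Fatou does give the correct inequality direction, but this is not what you wrote. A second issue is that even for the smooth $T_\eps^x$ the integration by parts against $\rho\in W^{1,1}$ needs a decay/boundary argument, since $T_\eps^x$ is typically unbounded (it is the gradient of a convex function); you do not address this.

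The paper avoids both problems by a different approximation: it does not regularize the map at all. Instead it invokes \cite[Theorem~6.2.9]{AGS} directly (the $BV_{\mathrm{loc}}$ structure of Brenier maps with $D_v\cdot T^x\geq 0$) to obtain the inequality for any nonnegative $W^{1,1}$ test function, first restricting to targets $\nu$ with compact support in $v$ so that $T^x$ is bounded and the pairing is legitimate. General $\nu\in D(\U)$ is then reached by approximating the \emph{target measure} in $W_{2,v}$ (not the map), using the stability of optimal maps (Proposition~\ref{prop:stability}) to pass to the limit on the right-hand side. This sidesteps any need to control convergence of Jacobians.
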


\begin{remark}[Logarithmic partial gradient]\label{rmk:dvlogrho}
Note that whenever $\nabla_v\rho\in L^1(\R^{2d})$, we have for any $\xi\in C_c^\infty(\R^{2d};\R^d)$
\begin{equation}\label{eq:dvlogrho;IBP}
\int_{\R^{2d}} \nabla_v\rho/\rho(x,v)\cdot\xi(x,v)\,d\mu=-\int_{\R^{2d}} \nabla_v\cdot\xi^v(x,v)\,d\mu(x,v)
\end{equation}
and by standard approximation this holds for bounded continuous vector fields $\xi$ with bounded deriatives in $v$.

Note also that when $\mu\in D(|\partial_v\U|)$, the $v$-logarithmic gradient $\nabla_v\rho/\rho$ of the density $\rho$ of $\mu$ satisfies
\begin{equation}\label{eq:xidvlogrho=0}
\langle \xi,\nabla_v\rho/\rho\rangle_{L^2(\mu)} = 0 \text{ for any } \xi\in L^2(\mu;\R^d) \text{ depending only on the } x\text{-variable}.
\end{equation}
Indeed, $\int_{\R^d}\nabla_v\rho\,d\Leb^d=0$ by \eqref{eq:dvlogrho;IBP}, and thus by the definition of $\nabla_v\rho/\rho$ we have
\begin{align*}
   \langle \xi,\nabla_v\rho/\rho\rangle_{L^2(\mu)} = \int_{\R^{2d}} \xi(x)\cdot \nabla_v\rho(x,v)\,d\Leb^{2d}
   = \int_{\R^d} \xi(x)\cdot \left(\int_{\R^d} \nabla_v\rho(x,v)\,d\Leb^d(x)\right)\,d\Leb^d(x)  =0.
\end{align*}
The application of Fubini's theorem is justified by the fact that $|\langle \xi,\nabla_v\rho/\rho\rangle_{L^2(\mu)}|<+\infty$.
\end{remark}

\begin{remark}\label{rmk:psubdiff;Up}
Consider $\U_v$ as defined in \eqref{def:UpHp} instead of $\U$. Then for each $\Pi^x\mu$-a.e. $x\in\R^d$ the analogous result to Theorem~\ref{thm:psubdiff;U} (i),(ii) with
\[w_v(x,v)=\frac{\nabla_v\rho^x}{\rho^x}(v) \text{ where } \rho^x(v)d\Leb^d(v)=d\mu^x(v)\]
follows immediately from applying \cite[Theorem 10.4.6]{AGS} to each $\mu^x$ and integrating with respect to $\Pi^x\mu$. Note that if $\mu=\rho\Leb^{2d}$, then $\rho^x(v)=\frac{\rho(v)}{\int \rho(x,v)\,dv}$ and $\frac{\nabla_v\rho^x}{\rho^x}=\frac{\nabla_v\rho(x,\cdot)}{\rho(x,\cdot)}$. Thus the $v$-subdifferential is consistent with that of $\U$, as
\begin{align*}
    \U_v(\nu)-\U_v(\mu)\geq \int_{\R^d}\left(\int_{\R^d} w_v\cdot(T_{\mu^x}^{\nu^x}-\id_v)\,\rho^x(v)\,dv\right)\,d\Pi^x\mu(x)
    = \int_{\R^{2d}} w_v\cdot (T_{\mu^x}^{\nu^x}-\id_v)\,d\mu(x,v).
\end{align*}
\end{remark}

\begin{corollary}\label{cor:psubdiff;UPhih}
    Let $\mu=\rho\Leb^{2d}\in D(\U)\cap D(|\partial_v\U|)$ with $(0,w_v)\in\partial_v^\circ\U(\mu)$. Define $\Phi:\R^{2d}\rightarrow\R^{2d}$ by
    \[\Phi_h(x,v)=(x+hv,v).\]
    Then
    \[(0,w_v\circ\Phi_h^{-1})\in\partial_v^\circ\U((\Phi_h)_\#\mu).\]
\end{corollary}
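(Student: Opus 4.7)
My approach is to verify the two defining properties of $\partial_v^\circ\U((\Phi_h)_\#\mu)$ directly. Write $\nu^{(0)}:=(\Phi_h)_\#\mu$. Since $\Phi_h(x,v)=(x+hv,v)$ is a volume-preserving diffeomorphism ($|\det d\Phi_h|=1$), entropy is invariant under pushforward: $\U(\nu^{(0)})=\U(\mu)<+\infty$, so $\nu^{(0)}\in D(\U)$. A change of variables along $\Phi_h$ also yields
\[
\|w_v\circ\Phi_h^{-1}\|_{L^2(\nu^{(0)};\R^d)}^2=\int_{\R^{2d}}|w_v|^2\,d\mu=|\partial_v\U|^2(\mu),
\]
so the candidate vector field lies in $L^2(\nu^{(0)};\R^d)$ with norm equal to the slope at $\mu$.

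Next I would establish the subdifferential inequality at $\nu^{(0)}$. Given a test measure $\nu\in D(\U)\cap\P_2^v(\R^{2d};\Pi^x\nu^{(0)})$, set $\tilde\mu:=(\Phi_h^{-1})_\#\nu$; volume-preservation gives $\U(\tilde\mu)=\U(\nu)<+\infty$, and the identity $\Pi^x\nu=\Pi^x\nu^{(0)}$ propagates under $(y,w)\mapsto y-hw$ to yield $\Pi^x\tilde\mu=\Pi^x\mu$, so $\tilde\mu\in D(\U)\cap\P_2^v(\R^{2d};\Pi^x\mu)$. Applying $(0,w_v)\in\partial_v^\circ\U(\mu)$ against $\tilde\mu$ produces an optimal plan $\gamma\in\Gamma_o^v(\mu,\tilde\mu)$ satisfying
\[
\U(\tilde\mu)-\U(\mu)\geq\int_{\R^d}\iint_{\R^d\times\R^d} w_v(x,v)\cdot(v'-v)\,d\gamma^x(v,v')\,d\Pi^x\mu(x)+o(W_{2,v}(\mu,\tilde\mu)).
\]
Since $\Phi_h$ fixes the second coordinate, setting $\gamma':=(\Phi_h\times\Phi_h)_\#\gamma$ identifies $w=v$, $w'=v'$, and by change of variables the integrand on the right-hand side matches $w_v\circ\Phi_h^{-1}(y,w)\cdot(w'-w)$ along $\supp\gamma'$. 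Combining with $\U(\tilde\mu)-\U(\mu)=\U(\nu)-\U(\nu^{(0)})$ and the identification of $W_{2,v}$-distances induced by $\Phi_h$ on fibered subspaces yields the subdifferential inequality at $\nu^{(0)}$. Minimality of the candidate then follows from Proposition~\ref{prop:slope_subdiff} together with the norm identity above.

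The main obstacle is verifying that $\gamma'\in\Gamma_o^v(\nu^{(0)},\nu)$: because $\Phi_h$ shifts the first coordinate by $hv$, the naive pushforward is supported on $\{y'-y=h(w'-w)\}$ rather than on $\{y=y'\}$, so the plan transfer requires careful reparametrization fixing the second coordinate on each fiber. I expect the cleanest route is to bypass the direct plan transfer by using Lemma~\ref{lem:pdirder;U}: admissible perturbations $\br_t(y,w)=(y,(1-t)w+t\br^y(w))$ of $\nu^{(0)}$ pull back under $\Phi_h$ to maps $\hat\br_t:=\br_t\circ\Phi_h$ with $\hat\br_t(x,v)=(x+hv,(1-t)v+t\br^{x+hv}(v))$, whose Jacobian determinant equals $\det((1-t)I+t\partial_w\br^y)$ independently of $x$. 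Change of variables along $\Phi_h$ then converts the resulting directional derivative formula for $\U$ at $\mu$ into the corresponding statement at $\nu^{(0)}$, pairing naturally against $\int w_v\circ\Phi_h^{-1}(y,w)\cdot(\br^y(w)-w)\,d\nu^{(0)}(y,w)$ and identifying the minimal subdifferential element as claimed.
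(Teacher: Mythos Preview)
Your marginal-propagation claim is false, and this breaks the first approach before you even reach the plan-transfer obstacle you flag. From $\Pi^x\nu=\Pi^x\nu^{(0)}$ it does \emph{not} follow that $\Pi^x\tilde\mu=\Pi^x\mu$: the $x$-marginal of $(\Phi_h^{-1})_\#\nu$ is the law of $Y-hW$ under $\nu$, which depends on the joint distribution of $(Y,W)$, not just on the law of $Y$. A one-dimensional example makes this concrete: take $\nu^{(0)}=\frac12(\delta_{(0,1)}+\delta_{(0,-1)})$ and $\nu=\delta_{(0,0)}$; both have $\Pi^x=\delta_0$, but pulling back by $\Phi_h^{-1}$ gives $\Pi^x\mu=\frac12(\delta_{-h}+\delta_h)$ versus $\Pi^x\tilde\mu=\delta_0$. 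Hence $W_{2,v}(\mu,\tilde\mu)=+\infty$ in general and you cannot apply the subdifferential inequality at $\mu$ against $\tilde\mu$. Your fallback via Lemma~\ref{lem:pdirder;U} is in the right spirit but amounts to rederiving the content of Theorem~\ref{thm:psubdiff;U} at $\nu^{(0)}$, and you have not closed the minimality step: Proposition~\ref{prop:slope_subdiff} only gives $|\partial_v\U|(\nu^{(0)})\le\|w_v\circ\Phi_h^{-1}\|_{L^2(\nu^{(0)})}=|\partial_v\U|(\mu)$, so you still need $|\partial_v\U|(\nu^{(0)})\ge|\partial_v\U|(\mu)$.

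The paper's proof avoids all of this by never touching the subdifferential inequality directly. Theorem~\ref{thm:psubdiff;U} gives a complete characterization: $|\partial_v\U|(\mu)<+\infty$ iff $\nabla_v\rho\in L^1(\R^{2d})$ with $\nabla_v\rho/\rho\in L^2(\mu)$, and in that case $(0,\nabla_v\rho/\rho)\in\partial_v^\circ\U(\mu)$. So one only needs to verify this criterion for $\mu_h=(\Phi_h)_\#\mu$. Since $\det\nabla\Phi_h\equiv 1$, the density is $\rho_h=\rho\circ\Phi_h^{-1}$; since $\Phi_h^{-1}(y,w)=(y-hw,w)$ has $\nabla_v\Phi_h^{-1}$ with identity $v$-block, the chain rule in $W^{1,1}$ gives $\nabla_v\rho_h=(\nabla_v\rho)\circ\Phi_h^{-1}\in L^1$ and hence $\nabla_v\rho_h/\rho_h=(\nabla_v\rho/\rho)\circ\Phi_h^{-1}=w_v\circ\Phi_h^{-1}$. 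The $L^2$-norm identity and the characterization then immediately yield $(0,w_v\circ\Phi_h^{-1})\in\partial_v^\circ\U(\mu_h)$, with no need to compare fibered marginals or transfer optimal plans.
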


\begin{proof}
Let $\mu_h=(\Phi_h)_\#\mu$ and let $\rho_h$ be its $\Leb^{2d}$-density. Then by the push-forward formula \cite[Lemma 5.5.3]{AGS}
\begin{equation}\label{eq:rho_vhih}
    \rho_h(\Phi_h(x,v))= \frac{\rho(x,v)}{|\det\nabla\Phi_h(x,v)|}=\rho(x,v)
\end{equation}
as $\det \nabla\Phi_h(x,v)= \det\begin{pmatrix} I_d & h I_d\\ 0 & I_d\end{pmatrix} = 1$. As $\rho_h=\rho\circ\Phi_h^{-1}$ and $\det \nabla\Phi_h \equiv 1$, we deduce $\U(\mu_h)=\U(\mu)$.

As $\mu\in D(|\partial_v\U|)$, by Theorem~\ref{thm:psubdiff;U} we have $\int_{\R^d}|\nabla_v\rho|\,dz<+\infty$ and $\nabla_v\rho/\rho\in L^2(\mu;\R^d)$. As $\nabla_v\Phi_h^{-1}=I_d$, the chain rule in $W^{1,1}(\R^{2d})$ allows us to deduce
\[\nabla_v\rho_h = \nabla_v\rho\circ\Phi_h^{-1}\in L^1(\R^{2d}).\]
Hence
\[\nabla_v\rho_h = \nabla_v\rho\circ\Phi_h^{-1} = ((\nabla_v\rho/\rho)\circ\Phi_h^{-1}) (\rho\circ\Phi^{-1})=(\nabla_v\rho/\rho)\circ\Phi_h^{-1} \rho_h,\]
and further
\[\int_{\R^{2d}}|(\nabla_v\rho/\rho)\circ\Phi_h^{-1}|^2\,d\mu_h= \int_{\R^{2d}}|\nabla_v\rho/\rho|^2\,d\mu=|\partial_v\U|(\mu).\]
Thus, by the last part of Theorem~\ref{thm:psubdiff;U} we deduce $(0,(\nabla_v\rho/\rho)\circ\Phi_h^{-1})\in\partial_v^\circ\U(\mu_h)$.
\end{proof}

\subsection{Partial subdifferential of the linear functionals}\label{ssec:psubdiff:LpLx}

In this section we consider the partial subdifferentials of the two functionals $\Lfun_x,\Lfun_v:\P_2(\R^{2d})\rightarrow\R$ linear respectively in $x$ and $v$,
\begin{equation}\label{def:LxLp}
    \Lfun_x(\mu)=\int_{\R^{2d}} x\cdot v\,d\mu(x,v),\qquad \Lfun_v(\mu)=\int_{\R^{2d}} v\cdot (\nabla_x V(x)+\nabla_x W\ast \Pi^x\mu)\,d\mu(x,v).
\end{equation}

For the functional $\Lfun_v$ to be well-defined on $\P_2(\R^{2d})$, we need to ensure 
\begin{equation}\label{eq:W_ass}
    \nabla_x W\ast\sigma\in L^1(\sigma) \text{ for all } \sigma\in\P_2(\R^d).
\end{equation}
Unless otherwise stated, we will assume throughout the paper that \eqref{eq:W_ass}. Remark~\ref{rmk:W_assump} provides sufficient conditions, one of which is our main assumption that $\nabla_x W$ is Lipschitz continuous, which implies $\nabla_x W\ast\sigma\mu\in L^2(\sigma)$ for all $\sigma\in\P_2(\R^d)$.

\begin{remark}[Assumptions on $W$]\label{rmk:W_assump}
Suppose $\nabla_x W$ is Lipschitz. Then $\nabla_x W\ast\Pi^x\mu\in L^2(\mu)$ at any $\mu\in\P_2(\R^{2d})$, as $\nabla_x W$ grows at most linearly and thus for some $C>0$
\begin{align*}
    \int_{\R^d}\left|\int_{\R^{d}} \nabla_x W(x-y)\,d\Pi^x\mu(y)\right|^2d\Pi^x\mu(x)\leq C \int_{\R^d}\int_{\R^{d}} |\nabla_x W(0)+|x-y||^2\,d\Pi^x\mu(y)\,d\Pi^x\mu(x)\\
    \leq C \int_{\R^{d}} 1+ |x|^2+|y|^2\,d\Pi^x\mu(y)\,d\Pi^x\mu(x)<+\infty.
\end{align*}

Alternatively, let $W\in C^1(\R^d)$ and suppose there exists $M>0$ such that $W_M(x):=W(x)+\frac{M}{2}|x|^2$ is convex, bounded from below, and satisfy the doubling condition -- i.e. there exists $C=C(W)>0$ such that
    \begin{equation}\label{eq:doubling_cond;W}
        W_M(x+y)\leq C(1+W_M(x)+W_M(y)) \text{ for all }x,y\in\R^d.
    \end{equation}
Then we can apply \cite[Lemma 10.4.10]{AGS} to $W_M$ to deduce that $\nabla_x W\ast\sigma\in L^1(\sigma)$ for any $\sigma\in\P_2(\R^d)$ such that $\int_{\R^d} W\ast\sigma\,d\sigma<+\infty$. 
\end{remark}

\begin{remark}\label{rmk:Jp_convexity}
    As $\Lfun_v$ is linear in $v$ whereas $\alpha\H$ is $\alpha$-geodesically convex, the minimization problem $J_h^v$ is $\alpha$-convex along $W_{2,v}$-geodesics \emph{independently} of the choice of $V$ and $W$. Similarly, the minimization problem $J_h^x$ is linear along $W_{2,x}$-geodesics.
\end{remark}

While $|\Lfun_x(\mu)|\leq \frac12\int_{\R^{2d}}|\id|^2\,d\mu<+\infty$ for any $\mu\in\P_2(\R^{2d})$, the same is not true for $\Lfun_v$ in full generality. Thus we first establish some basic results about the linear functional $\Lfun_v$.

\begin{proposition}\label{prop:dom_dvLp}
    Let $V,W\in C^1(\R^d)$ and $\mu\in\P_2(\R^{2d})$, and suppose $\nabla_x W\ast\Pi^x\mu \in L^1(\mu)$. Then $\mu\in D(|\partial_v\Lfun_v|)$ \textbf{ if and only if } $\nabla_x V+\nabla_x W\ast\Pi^x\mu \in L^2(\mu)$.
\end{proposition}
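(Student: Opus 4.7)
The key observation is that the functional $\Lfun_v$, when restricted to $\nu \in \P_2^v(\R^{2d};\Pi^x\mu)$, is truly linear in the velocity variable with a fixed coefficient. Indeed, since such $\nu$ satisfy $\Pi^x\nu = \Pi^x\mu$, the convolution $\nabla_xW\ast\Pi^x\nu$ equals $\nabla_xW\ast\Pi^x\mu$ throughout, so setting $F(x) := \nabla_xV(x) + (\nabla_xW\ast\Pi^x\mu)(x)$ we have $\Lfun_v(\nu) = \int w\cdot F(x)\,d\nu(x,w)$ for every such $\nu$. The argument will reduce to a duality question: when is this linear form bounded by the $L^2(\mu;\R^d)$-norm?

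For the $(\Leftarrow)$ direction, assuming $F \in L^2(\mu;\R^d)$, I will show $(0,F)\in\partial_v\Lfun_v(\mu)$ directly. For any $\gamma \in \Gamma^v(\mu,\nu)$, the linearity-in-$v$ identity yields the exact equality
\[
\Lfun_v(\nu)-\Lfun_v(\mu) = \int_{\R^d}\iint_{\R^d\times\R^d}(w-v)\cdot F(x)\,d\gamma^x(v,w)\,d\Pi^x\mu(x),
\]
so the subdifferential inclusion \eqref{eq:psubdiff} holds without any remainder term, and Proposition~\ref{prop:slope_subdiff} then gives $|\partial_v\Lfun_v|(\mu) \leq \|F\|_{L^2(\mu)} < +\infty$.

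For the $(\Rightarrow)$ direction, I will probe the slope using smooth velocity perturbations. For $\xi\in C_c^\infty(\R^{2d};\R^d)$ and small $t>0$, set $\nu_t := (\id+t(0,\xi))_\#\mu$; the displacement coupling furnishes $W_{2,v}(\mu,\nu_t)\leq t\|\xi\|_{L^2(\mu)}$ (in particular $\nu_t\to\mu$ in $W_{2,v}$ as $t\downarrow 0$), while the linearity observation gives $\Lfun_v(\nu_t)=\Lfun_v(\mu)+t\int\xi\cdot F\,d\mu$. Feeding these into the definition of $|\partial_v\Lfun_v|$ and applying the resulting bound to both $\pm\xi$ will produce the dual estimate
\[
\Bigl|\int \xi\cdot F\,d\mu\Bigr| \leq |\partial_v\Lfun_v|(\mu)\,\|\xi\|_{L^2(\mu)} \qquad \text{for all } \xi\in C_c^\infty(\R^{2d};\R^d).
\]

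The main obstacle is upgrading this tested bound to genuine $L^2$-membership of $F$. My plan is to extend the bounded linear functional $\xi\mapsto \int\xi\cdot F\,d\mu$ from $C_c^\infty$ to all of $L^2(\mu;\R^d)$ by density and invoke Riesz representation to produce $G\in L^2(\mu;\R^d)$ with $\int\xi\cdot G\,d\mu = \int\xi\cdot F\,d\mu$ for all test $\xi$. To identify $F = G$ $\mu$-a.e., I will verify that $F\in L^1_{\mathrm{loc}}(\mu;\R^d)$ — which follows from continuity of $\nabla_xV$ (hence local boundedness on compact sets) together with the standing hypothesis $\nabla_xW\ast\Pi^x\mu\in L^1(\mu)$ — and then apply a standard variational-lemma argument: the vanishing of $\int\xi\cdot(F-G)\,d\mu$ on $C_c^\infty$ combined with $F-G\in L^1_{\mathrm{loc}}(\mu)$ forces $F=G$ $\mu$-a.e., yielding $F \in L^2(\mu;\R^d)$.
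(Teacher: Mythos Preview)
Your proof is correct. The $(\Rightarrow)$ direction is essentially identical to the paper's: both probe with $\mu_t=(\id+t(0,\xi))_\#\mu$, use linearity to compute the difference quotient exactly, and conclude $F\in L^2(\mu)$ by $L^2$-duality; you are simply more explicit than the paper about the identification step (the paper writes only ``by duality in $L^2(\mu)$'').

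The $(\Leftarrow)$ direction is where you differ. You verify $(0,F)\in\partial_v\Lfun_v(\mu)$ directly via the exact linearity identity and invoke Proposition~\ref{prop:slope_subdiff}. The paper instead constructs the explicit minimizer $\mu_h=(\Psi_h)_\#\mu$ with $\Psi_h(x,v)=(x,v+hF(x))$ of the variational problem $(\Lfun_v)_v(\cdot;\mu,h)$, checks optimality via Young's inequality, and then appeals to the duality formula for the slope (Proposition~\ref{prop:met_slope;dual}). Your route is shorter and avoids the Moreau--Yosida machinery entirely; the paper's route has the side benefit of exhibiting the minimizer explicitly, which fits the broader development of the variational scheme but is not needed for the bare statement.
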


\begin{proof}
    By assumption $\nabla_x W\ast\Pi^x \mu\in L^1(\mu)$. For convenience we write
    \[F_{\Pi^x\mu}:=\nabla_x V+\nabla_x W\ast\Pi^x\mu.\]

    Suppose $|\partial_v\Lfun_v|(\mu)<+\infty$. Fix $u\in C_c^\infty(\R^{2d};\R^d)$ and let $\mu_t=(\id_v+t u)_\#\mu$. Then $W_{2,v}(\mu,\mu_t)\leq t\|u\|_{L^2(\mu;\R^d)}$ and thus
    \begin{align*}
        \int_{\R^{2d}} F_{\Pi^x\mu}\cdot v(x,v)\,d\mu = \frac{\Lfun_v(\mu_t)-\Lfun_v(\mu)}{t}
        \leq \frac{\Lfun_v(\mu_t)-\Lfun_v(\mu)}{W_{2,v}(\mu,\mu_t)}\|u\|_{L^2(\mu)} \leq |\partial_v\Lfun_v|(\mu)\|u\|_{L^2(\mu)}.
    \end{align*}
    Thus by duality in $L^2(\mu)$ we deduce $F_{\Pi^x\mu}\in L^2(\mu;\R^d)$.

    Now suppose $F_{\Pi^x\mu}\in L^2(\mu;\R^d)$. Define
    \[\Psi_h(x,v)=\begin{pmatrix}x \\ v+h F_{\Pi^x\mu}(x)\end{pmatrix}.\]
    Let $\mu_h=(\Psi_h)_{\#}\mu$ and note
    \[\int_{\R^{2d}}|x|^2+|v|^2\,d\mu_h(x,v)\leq\int_{\R^{2d}} |x|^2 2|v|^2+ 2h^2 |F_{\Pi^x\mu}(x)|^2\,d\mu(x,v)<+\infty,\]
    thus $\mu_h\in\P_2(\R^{2d})$. It suffices to show that $\mu_h$ is a minimizer of $\Lfun_v(\cdot;\mu,h)$, as then by the duality of slope (Proposition~\ref{prop:met_slope;dual})
    \[\frac12|\partial_v\Lfun_v|=\limsup_{h\searrow 0} \frac{\Lfun_v(\mu)-\Lfun_v(\mu_h)}{h}=\|F_{\Pi^x\mu}\|_{L^2(\mu;\R^d)}<+\infty.\]
    To this end, note that for any $\nu\in\P_2^v(\R^{2d};\Pi^x\mu)$ and $\gamma\in\Gamma_o^v(\mu,\nu)$,
    \begin{align*}
        \Lfun_v(\nu)-\Lfun_v(\mu) = \int_{\R^d} \int_{\R^d} F_{\Pi^x\mu}(x)\cdot(w-v)\,d\gamma^x(v,w)\,d\Pi^x(x) \geq -\frac{h}{2}\|F_{\Pi^x\mu}\|_{L^2(\mu)}^2 - \frac{W_{2,v}^2(\mu,\nu)}{2h}.
    \end{align*}
    Thus, reorganizing,
    \begin{align*}
        \Lfun_v(\nu;\mu,h)\geq -\frac{h}{2}\|F_{\Pi^x\mu}\|_{L^2(\mu)}^2
        =\frac{\|h F_{\Pi^x\mu}\|_{L^2(\mu;\R^d)}^2}{2h}-h\|F_{\Pi^x\mu}\|_{L^2(\mu_h;\R^d)}^2
        =\Lfun_v(\mu_h;\mu,h).
    \end{align*}
\end{proof}

\subsection{Partial subdifferential of the sum of the functionals}\label{ssec:subdiff_sum}
Based on the study of each functionals in Sections~\ref{ssec:partial_subdiff;U}-\ref{ssec:psubdiff:LpLx}, we finally examine the partial subdifferential of the sum of functionals $\Lfun_v+\alpha\H$ which appears in the variational scheme \eqref{eq:SIE}.

We first use the argument of Jordan-Kinderlehrer-Otto \cite{JKO98} (see also \cite[Lemma 2.1]{CarGan04}) to derive the Euler-Lagrange equation associated to the steepest descent in the velocity step.
\begin{lemma}[Euler-Lagrange equation for $\Lfun_v+\alpha\H$]\label{lem:euler-lagrange;Jp}
    Let $\mu\in D(\Lfun_v+\alpha\H)$ and $\mu_{h,v}\in J_h^v(\mu)$. Then the optimal transport map $T_{\mu_{h,v}^x}^{\mu^x}$ from $\mu_{h,v}^x$ to $\mu^x$ is given by
    \begin{equation}\label{eq:euler-lagrange;Jp}
        T_{\mu_{h,v}^x}^{\mu^x}(v)=v+h\left(\nabla_x V(x)+(\nabla_x W\ast\Pi^x\mu)(x)+\alpha v+\alpha\frac{\nabla_v \rho_{h,v}}{\rho_{h,v}}(x,v)\right).
    \end{equation}
\end{lemma}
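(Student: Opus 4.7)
The plan is to run the standard first-variation argument directly: I perturb the minimizer $\mu_{h,v}$ by pushforwards in the velocity variable, use minimality to get a first-order inequality, and then read off the identity by density of smooth test vector fields in $L^2(\mu_{h,v};\R^d)$. Since $W_{2,v}(\mu,\nu)<+\infty$ forces $\Pi^x\nu=\Pi^x\mu$, the minimization takes place in the fixed fiber $\P_2^v(\R^{2d};\Pi^x\mu)$, and I shall exploit this to freeze the interaction term and the $x$-part of $\V$.

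Concretely, write $F(x):=\nabla_x V(x)+(\nabla_x W\ast\Pi^x\mu)(x)$ and, for $\xi\in C_c^\infty(\R^{2d};\R^d)$ and $|\tau|$ small, define the diffeomorphism $\Psi_\tau(x,v):=(x,v+\tau\xi(x,v))$ together with $\mu_\tau:=(\Psi_\tau)_\#\mu_{h,v}$. Since $\Psi_\tau$ leaves the $x$-coordinate invariant, $\Pi^x\mu_\tau=\Pi^x\mu_{h,v}=\Pi^x\mu$, so $\mu_\tau$ is admissible. Next I compute the one-sided derivative at $\tau=0^+$ of each piece of $\Lfun_v(\cdot;\mu,h)+\alpha\H(\cdot;\mu,h)$:
\begin{itemize}
\item[(i)] because $\Pi^x\mu_\tau$ is frozen, $\Lfun_v(\mu_\tau)=\int(v+\tau\xi)\cdot F(x)\,d\mu_{h,v}$, whence $\frac{d}{d\tau}\big|_0\Lfun_v(\mu_\tau)=\int\xi\cdot F\,d\mu_{h,v}$, and the interaction term $\W(\mu_\tau)$ and the $V(x)$-part of $\V(\mu_\tau)$ are both constant in $\tau$;
\item[(ii)] the kinetic part of $\V$ contributes $\int v\cdot\xi\,d\mu_{h,v}$, while Lemma~\ref{lem:pdirder;U} yields $\frac{d}{d\tau}\big|_0\U(\mu_\tau)=-\int \tr\tilde\nabla_v\xi\,d\mu_{h,v}=\int\xi\cdot\frac{\nabla_v\rho_{h,v}}{\rho_{h,v}}\,d\mu_{h,v}$ (the integration by parts and the $L^2$-integrability being granted by Theorem~\ref{thm:psubdiff;U}, once we observe that Proposition~\ref{prop:euler_eqn} and Proposition~\ref{prop:slope_subdiff} force $|\partial_v\U|(\mu_{h,v})<+\infty$);
\item[(iii)] for the distance term I use $\tilde T(x,v):=(x,T_{\mu_{h,v}^x}^{\mu^x}(v))$ from Proposition~\ref{prop:OTmap_partial}: the map $\tilde T\circ\Psi_\tau^{-1}$ disintegrates as a coupling in $\Gamma^v(\mu_\tau,\mu)$, yielding
\[W_{2,v}^2(\mu_\tau,\mu)\leq\int_{\R^{2d}}|T_{\mu_{h,v}^x}^{\mu^x}(v)-v-\tau\xi(x,v)|^2\,d\mu_{h,v}=W_{2,v}^2(\mu_{h,v},\mu)-2\tau\!\!\int(T_{\mu_{h,v}^x}^{\mu^x}(v)-v)\cdot\xi\,d\mu_{h,v}+O(\tau^2).\]
\end{itemize}

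Combining (i)--(iii) with minimality of $\mu_{h,v}$ in the form $\limsup_{\tau\downarrow 0}\tau^{-1}[\Lfun_v(\mu_\tau;\mu,h)+\alpha\H(\mu_\tau;\mu,h)-\Lfun_v(\mu_{h,v};\mu,h)-\alpha\H(\mu_{h,v};\mu,h)]\geq 0$ produces
\[\int_{\R^{2d}}\xi\cdot\Bigl[F(x)+\alpha v+\alpha\tfrac{\nabla_v\rho_{h,v}}{\rho_{h,v}}-\tfrac{1}{h}(T_{\mu_{h,v}^x}^{\mu^x}(v)-v)\Bigr]d\mu_{h,v}\geq 0.\]
Running the same argument with $-\xi$ (still in $C_c^\infty$) gives the reverse inequality, so the bracket integrates against any $\xi\in C_c^\infty(\R^{2d};\R^d)$ to zero; since this class is dense in $L^2(\mu_{h,v};\R^d)$, the bracket vanishes $\mu_{h,v}$-a.e., which is exactly~\eqref{eq:euler-lagrange;Jp}.

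The main delicate point I anticipate is step~(iii): I must ensure that $\tilde T\circ\Psi_\tau^{-1}$ really produces a legitimate element of $\Gamma^v(\mu_\tau,\mu)$ with $x$-marginal fixed as $\Pi^x\mu$, and that the remainder is $O(\tau^2)$ with an implicit constant depending only on $\|\xi\|_\infty$ and $\|T_{\mu_{h,v}^x}^{\mu^x}-\id_v\|_{L^2(\mu_{h,v})}=W_{2,v}(\mu_{h,v},\mu)$; both reduce to pointwise-in-$x$ applications of the Wasserstein first-variation estimate (cf.\ \cite[Ch.~10]{AGS}) followed by integration against $\Pi^x\mu$ via Proposition~\ref{prop:OTmap_partial}. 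Everything else is bookkeeping from Section~\ref{sec:psubdiff}.
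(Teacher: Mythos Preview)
Your proposal is correct and follows essentially the same route as the paper: perturb $\mu_{h,v}$ by $(\id_v+\tau\xi)$ with $\xi\in C_c^\infty$, compute the first variation of each piece, use minimality to get one inequality and then replace $\xi$ by $-\xi$ to get equality. Your step~(iii) is exactly the content of the paper's Proposition~\ref{prop:euler_eqn} (equation~\eqref{eq:euler_eqn;test}), which the paper invokes rather than redoing.

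One minor point: your justification that $|\partial_v\U|(\mu_{h,v})<+\infty$ via Propositions~\ref{prop:euler_eqn} and~\ref{prop:slope_subdiff} actually yields finiteness of $|\partial_v(\Lfun_v+\alpha\H)|(\mu_{h,v})$, not of $|\partial_v\U|$ directly; you would still need to peel off the $\Lfun_v$ and kinetic contributions. The paper sidesteps this by leaving the entropy variation in the form $-\int\tr\nabla_v\xi\,d\mu_{h,v}$ (Lemma~\ref{lem:pdirder;U} under the $L^\infty$ bound~\eqref{cond:xi_v;Linftybd}) and interpreting $\nabla_v\rho_{h,v}/\rho_{h,v}$ weakly via~\eqref{eq:dvlogrho;IBP}; the $L^2$-integrability then falls out of the Euler--Lagrange equation itself rather than being needed as input.
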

\begin{proof}
    Let $\xi=(0,\xi_v)$ with $\xi_v\in C_c^\infty(\R^{2d};\R^d)$. Letting $\nu=\nu_t=(\id+t\xi)_{\#}\mu_{h,v}$ in \eqref{eq:euler_eqn;test}, we have
    \begin{align*}
        &\frac{(\Lfun_v+\alpha\H)(\nu_t)-(\Lfun_v+\alpha\H)(\mu_{h,v})}{t}\\
        &\qquad\qquad\geq\frac{1}{h}\int_{\R^{2d}} (T_{\mu_{h,v}^x}^{\mu^x}(v)-v,\xi^v(x,v))\cdot\xi^v(x,v)\,d\mu_{h,v}(x,v) - \frac{t}{2h}\|\xi\|_{L^2(\mu_{h,v})}^2.
    \end{align*}
    Arguing for instance as in the proof of \cite[Theorem 5.1]{JKO98} we can verify
    \begin{align*}
        &\lim_{t\searrow 0}\frac{(\Lfun_v+\alpha\H)(\nu_t)-(\Lfun_v+\alpha\H)(\mu_{h,v})}{t}\\
        &\qquad=\int_{\R^{2d}} (\nabla_x V(x)+(\nabla_x W\ast\Pi^x\mu)(x,v)+\alpha v+\alpha\frac{\nabla_v \rho}{\rho}(x,v))\cdot\xi^v(x,v)\,d\mu_{h,v}(x,v).
    \end{align*}
    Thus letting $t\searrow 0$ in the first displayed equation of the proof, we have
    \begin{align*}
        \int_{\R^{2d}} (\nabla_x V(x)+(\nabla_x W\ast\Pi^x\mu)(x,v)+\alpha v+\alpha (\nabla_v \rho_{h,v}/\rho_{h,v})(x,v))\cdot\xi^v(x,v)\,d\mu_{h,v}\\
        \geq \frac{1}{h}\int_{\R^{2d}} (T_{\mu_{h,v}^x}^{\mu^x}(v)-v)\cdot\xi^v(x,v)\,d\mu_{h,v}(x,v),
    \end{align*}
    where $\nabla_v\rho/\rho$ satisfies, by \eqref{eq:dvlogrho;IBP}
\[\int_{\R^{2d}} \nabla_v\rho/\rho(x,v)\cdot\xi^v(x,v)\,d\mu_{h,v}=-\int_{\R^{2d}} \nabla_v\xi^v(x,v)\,d\mu_{h,v}(x,v).\]
    By the same computation using $-\xi$ instead of $\xi$ we deduce the inequality above is in fact an equality -- i.e. for any $\xi^v\in C_c^\infty(\R^{2d};\R^d)$
    \begin{equation}\label{eq:euler-lagrange;Jp;test}
    \begin{split}
        \int_{\R^{2d}} \left[\left(\nabla_x V(x)+(\nabla_x W\ast\Pi^x\mu)(x,v)+\alpha(v+\nabla_v \rho_{h,v}/\rho_{h,v}\right)(x,v))\right.& \\
        \left.-\frac1h(T_{\mu_{h,v}^x}^{\mu^x}(v)-v)\right]&\cdot\xi^v(x,v)\,d\mu_{h,v}=0.
    \end{split}
    \end{equation}
    From this we deduce \eqref{eq:euler-lagrange;Jp}.
    
\end{proof}

\begin{lemma}[$v$-partial-subdifferential of $\Lfun_v+\alpha\H$]\label{lem:psubdiff;velfunc}
    Let $\nabla_x V,\nabla_x W$ be Lipschitz continuous, and let $\mu\in D(|\partial_v\U|)$. Then $\mu\in D(|\partial_v(\Lfun_v+\alpha \H)|$ and
    \begin{equation}\label{eq:dvLpH}
        \begin{pmatrix} 0 \\ \nabla_x V+\nabla_x W\ast\Pi^x\mu + \alpha\id_v + \alpha\frac{\nabla_v\rho}{\rho}\end{pmatrix}\in\partial_v^\circ(\Lfun_v+\alpha\H)(\mu).
    \end{equation}
    Further suppose $\nabla_x V$ is $M$-Lipschitz. Then for each $\mu\in D(\H)\cap D(|\partial_v(\Lfun_v+\alpha\H)|)$
    \begin{equation}\label{eq:dvE_xstep;ubd}
        |\partial_v(\Lfun_v+\alpha\H)|((\Phi_h)_\#\mu)\leq |\partial_v(\Lfun_v+\alpha\H)|(\mu)+3Mh\|\id_v\|_{L^2(\mu)}.
    \end{equation}

\end{lemma}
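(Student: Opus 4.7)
I would prove Lemma~\ref{lem:psubdiff;velfunc} in two stages matching the two displayed claims. For the first claim, the candidate velocity field $\xi_v := \nabla_x V + \nabla_x W * \Pi^x\mu + \alpha\id_v + \alpha\nabla_v\rho/\rho$ belongs to $L^2(\mu;\R^d)$ thanks to the linear growth of $\nabla_x V + \nabla_x W * \Pi^x\mu$ from Lipschitz continuity combined with $\mu \in \P_2(\R^{2d})$, together with $\nabla_v\rho/\rho \in L^2(\mu)$ from Theorem~\ref{thm:psubdiff;U}. I would verify the subdifferential inequality summand by summand: for $\U$, Theorem~\ref{thm:psubdiff;U} yields $(0,\nabla_v\rho/\rho) \in \partial_v^\circ\U(\mu)$; for $\Lfun_v$, since $\Pi^x\nu = \Pi^x\mu$ along any $\gamma \in \Gamma_o^v(\mu,\nu)$, linearity gives $\Lfun_v(\nu) - \Lfun_v(\mu) = \int_{\R^d}\iint_{\R^d\times\R^d} (\nabla_x V + \nabla_x W * \Pi^x\mu)(x) \cdot (w-v)\,d\gamma^x(v,w)\,d\Pi^x\mu(x)$; the terms $\int V\,d\mu$ and $\W(\mu)$ are constant on $\P_2^v(\R^{2d};\Pi^x\mu)$ and so contribute $0$; and the kinetic term $\tfrac12\int|v|^2\,d\mu$ admits the $1$-convex expansion $\tfrac12|w|^2 - \tfrac12|v|^2 = v \cdot (w-v) + \tfrac12|w-v|^2$, contributing $\id_v$. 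Summing gives $(0,\xi_v) \in \partial_v(\Lfun_v + \alpha\H)(\mu)$, and Proposition~\ref{prop:slope_subdiff} delivers $|\partial_v(\Lfun_v + \alpha\H)|(\mu) \leq \|\xi_v\|_{L^2(\mu)} < \infty$.

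Minimality requires the reverse inequality, which I would obtain by testing with the perturbation $\nu_t := (\id_x, \id_v - t\xi_v)_\#\mu \in \P_2^v(\R^{2d};\Pi^x\mu)$, which satisfies $W_{2,v}(\mu,\nu_t) \leq t\|\xi_v\|_{L^2(\mu)}$. A first-order expansion yields $(\Lfun_v + \alpha\H)(\mu) - (\Lfun_v + \alpha\H)(\nu_t) = t\|\xi_v\|^2_{L^2(\mu)} + o(t)$; the non-trivial ingredient is the directional derivative of $\U$ via Lemma~\ref{lem:pdirder;U}. Since $\xi_v$ may be unbounded in $L^\infty$, I would first establish the expansion for truncated and mollified perturbations (which satisfy~\eqref{cond:xi_v;Linftybd}) and then pass to the limit using $L^2$-approximation together with the lower semicontinuity of the slope along $W_{2,v}$. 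Dividing by $W_{2,v}(\mu,\nu_t)$ and sending $t \downarrow 0$ gives $|\partial_v(\Lfun_v + \alpha\H)|(\mu) \geq \|\xi_v\|_{L^2(\mu)}$, establishing $(0,\xi_v) \in \partial_v^\circ(\Lfun_v + \alpha\H)(\mu)$.

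For the second claim, first observe that the hypothesis $\mu \in D(\H) \cap D(|\partial_v(\Lfun_v + \alpha\H)|)$ forces $\mu \in D(|\partial_v\U|)$: since $\Lfun_v$, $\V$, $\W$ admit strong $v$-subdifferentials with $L^2(\mu)$ norms controlled by the second moments under the Lipschitz assumptions, a subdifferential decomposition gives $\alpha|\partial_v\U|(\mu) \leq |\partial_v(\Lfun_v + \alpha\H)|(\mu) + C(\mu) < \infty$. Setting $\mu_h := (\Phi_h)_\#\mu$, Corollary~\ref{cor:psubdiff;UPhih} gives $\mu_h \in D(|\partial_v\U|)$ with $\nabla_v\rho_h/\rho_h = (\nabla_v\rho/\rho) \circ \Phi_h^{-1}$, so by the first claim applied at $\mu_h$ the vector field $\xi_v^{\mu_h} := \nabla_x V + \nabla_x W * \Pi^x\mu_h + \alpha\id_v + \alpha\nabla_v\rho_h/\rho_h$ satisfies $|\partial_v(\Lfun_v + \alpha\H)|(\mu_h) \leq \|\xi_v^{\mu_h}\|_{L^2(\mu_h)}$. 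Computing the latter via the change of variables $(x,v) \mapsto (x+hv,v)$, the terms $\alpha\id_v$ and $\alpha\nabla_v\rho_h/\rho_h \circ \Phi_h$ coincide with those at $\mu$; the Lipschitz bound gives $|\nabla_x V(x+hv) - \nabla_x V(x)| \leq Mh|v|$, contributing $Mh\|\id_v\|_{L^2(\mu)}$; and the convolution difference splits as $[(\nabla_x W * \Pi^x\mu_h)(x+hv) - (\nabla_x W * \Pi^x\mu)(x+hv)] + [(\nabla_x W * \Pi^x\mu)(x+hv) - (\nabla_x W * \Pi^x\mu)(x)]$, the first bounded pointwise by $Mh\|\id_v\|_{L^1(\mu)} \leq Mh\|\id_v\|_{L^2(\mu)}$ via $\Pi^x\mu_h = (\id_x + h\id_v)_\#\mu$, and the second by $Mh|v|$ since $\nabla_x W * \Pi^x\mu$ is $M$-Lipschitz. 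The $L^2$-triangle inequality produces the $3Mh\|\id_v\|_{L^2(\mu)}$ correction; combined with $\|\xi_v^\mu\|_{L^2(\mu)} = |\partial_v(\Lfun_v + \alpha\H)|(\mu)$ from the minimality in the first claim, this yields~\eqref{eq:dvE_xstep;ubd}. The main technical obstacle is the directional-derivative expansion of $\U$ in the minimality step, where the possibly unbounded perturbation $\xi_v$ forces a truncation/mollification approximation to legitimately invoke Lemma~\ref{lem:pdirder;U}.
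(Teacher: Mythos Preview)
Your proposal is correct and follows essentially the same route as the paper: summand-by-summand verification of the subdifferential inequality (using Theorem~\ref{thm:psubdiff;U} for $\U$, linearity for $\Lfun_v$, and the quadratic expansion for the kinetic term), then a directional-derivative argument for minimality, and finally Corollary~\ref{cor:psubdiff;UPhih} plus Lipschitz estimates for~\eqref{eq:dvE_xstep;ubd}. The only notable difference is in the minimality step: the paper avoids the truncation/mollification of $\xi_v$ entirely by testing directly against arbitrary $u\in C_c^\infty(\R^{2d};\R^d)$ and invoking $L^2$-duality, which is cleaner than your approach of perturbing by $\xi_v$ itself and then approximating (your mention of ``lower semicontinuity of the slope'' is unnecessary there---only $L^2$-convergence of the truncations is needed).
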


\begin{proof}
As $\nabla_x V,\nabla_x W$ are Lipschitz continuous, $\nabla_x V + \nabla_x W\ast\Pi^x\mu\in L^2(\mu)$ by Remark~\ref{rmk:W_assump}. Furthermore, writing $\mu=\rho\Leb^{2d}$, $\mu\in D(|\partial_v\U|)$ implies, by Theorem~\ref{thm:psubdiff;U}, that $\nabla_v\rho/\rho\in L^2(\mu)$. Thus the left-hand side of \eqref{eq:dvLpH} is in $L^2(\mu)$.
\medskip

\noindent\emph{Step 1$^o$.} In this step we show that
    \[
    \begin{pmatrix} 0 \\ \nabla_x V + \nabla_x W\ast\Pi^x\mu + \alpha\id_v + \alpha\frac{\nabla_v\rho}{\rho}\end{pmatrix}\in \partial_v(\Lfun_v+\alpha\H)(\mu).\]
To see this, note that for any $\Pi^x\nu=\Pi^x\mu$, we have 
\begin{align*}
    (\V+\W)(\nu)-(\V+\W)(\mu)=\frac12\int_{\R^{2d}} |v|^2\,d(\nu-\mu)=\frac12\int_{\R^{2d}} |T_{\mu^x}^{\nu^x}(v)|^2-|v|^2\,d\mu(x,v)   \\
    =\int_{\R^{2d}} v\cdot(T_{\mu^x}^{\nu^x}(v)-v)+\frac12|T_{\mu^x}^{\nu^x}-v|^2\,d\mu(x,v).
\end{align*} 
Thus, by linearity of $\Lfun_v$ and the convexity and the $v$-partial differential of $\U$ (Theorem~\ref{thm:psubdiff;U}),
\begin{align*}
    &(\Lfun_v+\alpha\H)(\nu)-(\Lfun_v+\alpha\H)(\mu)=\Lfun_v(\nu-\mu) + \alpha(\V+\W)(\nu)-\alpha(\V+\W)(\mu) + \alpha\U(\nu)-\alpha\U(\mu)\\
    &\geq \int_{\R^{2d}} (\nabla_x V+\nabla_x W\ast\Pi^x\mu+\alpha \id_v)\cdot (T_{\mu^x}^{\nu^x}-\id_v)\,d\mu + \frac{\alpha}{2}W_{2,v}^2(\mu,\nu)
    +\int_{\R^{2d}} \alpha\frac{\nabla_v\rho}{\rho}\cdot (T_{\mu^x}^{\nu^x}-\id_v)\,d\mu.
\end{align*}
Hence by Proposition~\ref{prop:slope_subdiff}
\[|\partial_v(\Lfun_v+\alpha\H)|(\mu)\leq \|\nabla_x V + \nabla_x W\ast\Pi^x\mu+\alpha\id_v+\alpha\nabla_v\rho/\rho\|_{L^2(\mu;\R^d)}<+\infty.\]
On the other hand,
\begin{equation}\label{eq:subdiff_leq_dvE}
    \|\nabla_x V + \nabla_x W\ast\Pi^x\mu+ \alpha\id_v + \alpha\nabla_v\rho/\rho\|_{L^2(\mu)}\leq |\partial_v(\Lfun_v+\alpha\H)|(\mu)
\end{equation}
follows by obvious modification of the argument in proof of Theorem~\ref{thm:psubdiff;U} (i), as
fixing $v\in C_c^\infty(\R^d;\R^d)$ and $\br(x,v):=(x,\br^x(v))$ we have
\begin{align*}
    -&\int_{\R^{2d}} (\nabla_x V +\nabla_x W\ast\Pi^x\mu + \alpha\id_v+\alpha\nabla_v\rho/\rho)\cdot v\,d\mu \\
    &= -\int_{\R^{2d}} (\nabla_x V + \nabla_x W\ast\Pi^x\mu + \alpha\id_v)\cdot v\,d\mu + \alpha\int_{\R^{2d}} \int_{\R^{2d}}\tr(\nabla_v v)\rho\,dz \\
    &= \lim_{t\downarrow 0} \frac{(\Lfun_v+\alpha\H)(\mu)-(\Lfun_v+\alpha\H)((\br_t)_{\#}\mu)}{t} 
    \\
    &\leq \limsup_{t\downarrow\infty}\frac{(\Lfun_v+\alpha\H)(\mu)-(\Lfun_v+\alpha\H)((\br_t)_{\#}\mu)}{W_{2,v}(\mu,(\br_t)_{\#}\mu)}\frac{t\|v\|_{L^2(\mu)}}{t}
    \leq |\partial_v(\Lfun_v+\alpha\H)|(\mu)\|v\|_{L^2(\mu)}.
\end{align*}
Thus by duality in $L^2(\mu)$ we deduce \eqref{eq:subdiff_leq_dvE}.
\medskip

\noindent\emph{Step 2$^o$.}
Now it remains to show \eqref{eq:dvE_xstep;ubd}. From Corollary~\ref{cor:psubdiff;UPhih} and \eqref{eq:subdiff_leq_dvE}, we know
\[\nabla_x V+\nabla_x W\ast\Pi^x((\Phi_h)_\#\mu) + \alpha\id_v + \alpha(\nabla_v \rho/\rho)\circ\Phi_h^{-1} \in \partial_v^\circ(\Lfun_v+\alpha\H)((\Phi_h)_{\#}\mu).\]
Thus by the triangle inequality
\begin{align*}
&|\partial_v(\Lfun_v+\alpha\H)|((\Phi_h)_\#\mu)-|\partial_v(\Lfun_v+\alpha\H)|(\mu)  \\
&=   
\|\nabla_x V+\nabla_x W\ast \Pi^x((\Phi_h)_\#(\mu))+\alpha\id_v+\alpha(\nabla_v\rho/\rho)\circ\Phi_h^{-1}\|_{L^2((\Phi_h)_\#\mu)}\\
&\qquad-\|\nabla_x V+\nabla_x W\ast \Pi^x\mu+\alpha\id_v + \alpha\nabla_v\rho/\rho\|_{L^2(\mu)}    \\
&\leq \|\nabla_x V\circ\Phi_h - \nabla_x V\|_{L^2(\mu)}+\|(\nabla_x W\ast \Pi^x((\Phi_h)_\#(\mu)))\circ\Phi_h- \nabla_x W\ast \Pi^x\mu\|_{L^2(\mu)}.
\end{align*}
Observe
\[|(\nabla_x V\circ\Phi_h)(x)-\nabla_x V(x)|=|\nabla_x V(x+hv)-\nabla_x V(x)|\leq Mh\|\id_v\|_{L^2(\mu)}\]
whereas
\begin{align*}
    &|[\nabla_x W\ast \Pi^x((\Phi_h)_\#(\mu))](x+hv)-[\nabla_x W\ast \Pi^x\mu](x)|
    \\
    &=\left|\int_{\R^{2d}} \nabla_x W(x+hv-y-hw)-\nabla_x W(x-y)\,d\mu(y,w)\right|
    \leq Mh\int_{\R^{2d}}|v-w|\,d\mu(y,w),
\end{align*}
and thus
\begin{equation}\label{eq:Wgrad_vhih_error}
\begin{split}
    \|(\nabla_x W\ast \Pi^x((\Phi_h)_\#(\mu)))\circ\Phi_h- \nabla_x W\ast \Pi^x\mu\|_{L^2(\mu)}^2
    \leq M^2 h^2 \int_{\R^{2d}} \int_{\R^{2d}} |v-w|^2 \,d\mu(y,w)\,d\mu(x,v)   \\
    \leq 2 M^2 h^2 \int_{\R^{2d}} \int_{\R^{2d}} |v|^2 + |w|^2\,d\mu(y,w)\,d\mu(x,v)
    \leq 4M^2 h^2 \|\id_v\|_{L^2(\mu)}^2.
\end{split}
\end{equation}
This concludes the proof of \eqref{eq:dvE_xstep;ubd}.
\end{proof}

\section{A minimizing movement scheme for the Vlasov-Fokker-Planck equation}\label{sec:KFP_JKO}

In this section we introduce \emph{the coordinate-wise minimizing movement scheme} for the Vlasov-Fokker-Planck equation and show that the time-discrete variational problem has unique solutions.

We introduce the following notation for the minimizers of the variational problem:
\begin{equation}\label{def:JpJx}
\begin{split}
    J^v_h[\mu]&:=\argmin_{\nu\in\P_2^v(\R^{2d};\Pi^x\mu)} \frac{W_{2,v}^2(\mu,\nu)}{2h}+\Lfun_v(\nu)+\alpha\H(\nu) \text{ given } \mu\in D(\Lfun_v+\alpha\H),\\
    J^x_h[\bar\mu]&:=\argmin_{\nu\in\P_2^x(\R^{2d};\Pi^v\bar\mu)} \frac{W_{2,x}^2(\bar\mu,\nu)}{2h}-\Lfun_x(\nu) \text{ given } \bar\mu\in D(\Lfun_x).
\end{split}
\end{equation}
The coordinate-wise minimizing movement scheme is defined as follows:
\begin{equation}\label{eq:SIE}
    \bar\mu_{(i+1)h}^N\in J^v_h[\mu_{ih}^N],\qquad \mu_{(i+1)h}^N\in J^x_h[\bar\mu_{(i+1)h}^N].
\end{equation}
We refer to the first step as the velocity update, and the second as the position update.

We first comment on the connection of the linear functionals $\Lfun_v$ and $\Lfun_x$ to the Poisson structure of the Wasserstein space.

\begin{remark}[$\Lfun_v-\Lfun_x$ and the Poisson structure of $\P_2(\R^{2d})$]\label{rmk:Lp-Lx=poisson}
    Given functionals $\H,\E$ on $\P_2(\R^{2d})$, the (formal) Poisson bracket $\{\H,\E\}_{\P_2(\R^{2d})}(\mu)$ is defined by
    \begin{equation}\label{def:PoissonBracket}
        \{\H,\E\}_{\P_2(\R^{2d})}(\mu)=\left.\frac{d}{dt}\right|_{t=0}\H(\mu_t) \text{ where } \partial_t\mu_t+\nabla\cdot\left(\mu_t \begin{pmatrix}0 & I_d\\ -I_d & 0 \end{pmatrix} \grad\E(\mu_t)\right)=0 \text{ with } \mu_0=\mu.
    \end{equation}
    Lott \cite{Lott08} and Gangbo, Kim, and Pacini \cite{GanKimPac11} studied the Poisson bracket \eqref{def:PoissonBracket} in the Wasserstein space, which arises naturally from the canonical Poisson bracket $\{\;,\;\}_{\R^{2d}}$ of $\R^{2d}$. Temporarily writing $M_2(\mu):=\frac12 \int_{\R^{2d}} |x|^2+|v|^2\,d\mu(x,v)$ the second moment of $\mu$, we have 
    \[\Lfun_v(\mu)-\Lfun_x(\mu)=\{\H,M_2\}_{\P_2(\R^{2d})}(\mu).\]
    Indeed, by definition \eqref{def:PoissonBracket} and the chain rule \cite[Proposition 10.3.18]{AGS},
    \[\{\U,M_2\}_{\P_2(\R^{2d})}(\mu)=\int_{\R^{2d}} \nabla_x\rho\cdot v - \nabla_v\rho\cdot x\,dxdv=0.\]
    In fact for all internal energy functionals the same argument can be formally extended to deduce $\{\U,\cdot\}_{\P_2(\R^{2d})}\equiv 0$ whereas \eqref{eq:divJU=0} implies $\{\cdot,\U\}_{\P_2(\R^{2d})}\equiv 0$; in a sense internal energy functionals can be seen as \emph{first integrals}, and this has been noticed in a greater generality by \cite{KhesinLee08}.
    
    Thus, noting
    \[\nabla_x V+ \nabla_x W\ast\Pi^x\mu\in\partial_x(\V+\W)(\mu) \text{ and } \id_v\in \partial_v(\V+\W)(\mu),\]
    and appealing again to the chain rule,
    \begin{align*}
        \{\H,M_2\}_{\P_2(\R^{2d})}(\mu)&=\{\V+\W,M_2\}_{\P_2(\R^{2d})}(\mu)\\
        &= \int_{\R^{2d}} (\nabla_x V+ \nabla_x W\ast\Pi^x\mu)(x)\cdot v - v\cdot x \,d\mu(x,v) = \Lfun_v(\mu)-\Lfun_x(\mu). 
    \end{align*}
    Thus $\Lfun_v$ and $-\Lfun_x$ arises naturally from the Poisson structure of the Wasserstein space and the energy functional $\H$.

    This also provides a direct connection with the variational reformulation \eqref{def:SIE+damp;Rd;sep;var} of the symplectic Euler algorithm in the Euclidean space with dissipation. Writing $H=V+|\id_v|^2/2$, note that the linear functionals appearing in \eqref{def:SIE+damp;Rd;sep;var} are exactly $\{H,|\id|^2/2\}_{\R^{2d}}=\{V+|\id_v|^2/2,|\id|^2/2\}_{\R^{2d}}$, whereas the the variational step in $v$-variable dissipates $H$. This is in exact parallel for the minimizing movement scheme in the Wasserstein space with fixed marginals \eqref{def:JpJx}.
\end{remark}

We can also reformulate the velocity update of \eqref{eq:SIE} in terms of the conditional entropy.
\begin{remark}[Alternative characterization of the velocity update]\label{rmk:Jp_alt}
    Let $\mu\in\P_2(\R^{2d})$. Then the entropy functional $\U$ satisfies the so-called chain rule
    \begin{equation}\label{eq:entropy;chainrule}
        \U(\mu)=\int_{\R^d} \log\frac{d\Pi^x\mu}{d\Leb^d}(x)\,d\Pi^x\mu(x) + \int_{\R^d}\left(\int_{\R^d}\log\frac{d\mu^x}{d\Leb^d}(v)\,d\mu^x(v)\right)\,d\Pi^x\mu(x).
    \end{equation}
    If $\mu_{h,v}\in J_h^v[\mu]$ then $\Pi^x\mu_{h,v}=\Pi^x\mu$, thus the first term on the right-hand side is not affected. Further noting $\V(\mu_{h,v})=\V(\mu_{h,v})$ and $\W(\mu_{h,v})=\W(\mu)$ we may define
    \begin{equation}\label{def:UpHp}
    \begin{split}
        \U_v(\mu)= \int_{\R^d}\left(\int_{\R^d}\log\frac{d\mu^x}{d\Leb^d}(v)\,d\mu^x(v)\right)\,d\Pi^x\mu(x),
        \H_v(\mu)= \int_{\R^{2d}} \frac{|v|^2}{2}\,d\mu(x,v) + \U_v(\mu).
    \end{split}
    \end{equation}
    and rewrite $J_h^v$ as
    \begin{equation}\label{eq:Jp_alt}
        J_h^v[\mu]=\argmin_{\nu\in\P_2^v(\R^{2d};\Pi^x\mu)}\frac{W_{2,v}^2(\mu,\nu)}{2h}+\Lfun_v(\nu)+\H_v(\nu).
    \end{equation}
    Note that this allows us to consider $J_h^v[\mu]$ even when $\Pi^x\mu$ is not absolutely continuous, as long as $\H_v(\mu)<+\infty$. Considering disintegration with respect to $\Pi^x\mu$, one can readily verify that 
    % Furthermore, as for $\nu\in\P_2^v(\R^{2d};\Pi^x\mu)$
    % \begin{align*}
    %     \frac{W_{2,v}^2(\mu,\nu)}{2h}+\Lfun_v(\nu)+\H_v(\nu)
    %     =\int_{\R^d}\left(\frac{W_{2,v}^2(\mu^x,\nu^x)}{2h}+\int_{\R^d}\nabla_x V(x)+(\nabla_x W\ast\Pi^x)(x)\cdot v\,d\nu^x(v)\right.\\
    %     \left.+ \int_{\R^d}\left(\log\frac{d\nu^x}{d\Leb^d}\right)\,d\nu^x(v)\right)\,d\Pi^x\mu(x),
    % \end{align*}
    this is equivalent to taking the steepest descent for each conditional density for fixed $x\in\R^d$, as done in Carlen and Gangbo \cite{CarGan04} for a different nonlinear kinetic Fokker-Planck equation.

    As the conditional entropy $\U_v(\mu)$ is simply the entropy of $\mu^x(dv)$ weighted by $\Pi^x\mu$, it is convex along $W_{2,v}$-geodesics and thus $\Lfun_v+\alpha H$ is $\alpha$-geodesically convex in the $v$-variable. Hence we conclude from Proposition~\ref{prop:pslope_est}
    \begin{equation}\label{eq:Jp_alt;est}
        (1+\alpha h)|\partial_v(\Lfun_v+\alpha\H_v)|^2(\mu_{h,v})\leq (1+\alpha h)\frac{W_{2,v}^2(\mu_{h,v},\mu)}{h^2}\leq \frac{1}{1+\alpha h}|\partial_v(\Lfun_v+\alpha\H_v)|^2(\mu).
    \end{equation}

    However, as $\H$ is the Lyapunov functional for the Vlasov-Fokker-Planck dynamics, the formulation \eqref{def:JpJx} can be seen as more natural.
\end{remark}

Now we turn to the unique solvability of the discrete algorithm. We first establish the lower semicontinuity of $\Lfun_v+\alpha\H$ in its sublevel sets. 

\begin{lemma}\label{lem:Lp+aH;LSC}
    Let $V,W\in C^1(\R^d)$ and let $\mu\in D(|\partial_v\Lfun_v|)$ with $\nabla_x W\ast\Pi^x\mu\in L^1(\mu)$. Then $\Lfun_v+\alpha\H$ is l.s.c. w.r.t. the narrow convergence in each sublevel set -- i.e. for any $c\in\R$, $\sigma\in\P_2(\R^d)$ and a sequence $(\mu_n)_{n\in\N}$ and $\mu_0$ in $\{\nu\in\P_2^v(\R^{2d};\sigma):\;(\Lfun_v+\alpha\H)(\nu)\leq c\}$ such that $\mu_n\rightharpoonup \mu_0$ narrowly, 
    \begin{equation}\label{eq:Lp+aH;LSC}
    (\Lfun_v+\alpha\H)(\mu_0)\leq \liminf_{n\rightarrow\infty} (\Lfun_v+\alpha\H)(\mu_n).
    \end{equation}
\end{lemma}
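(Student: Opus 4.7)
The plan is to exploit the fact that on $\P_2^v(\R^{2d};\sigma)$ the $x$-marginal is frozen, so that $\W$ and the $V$-part of $\V$ collapse to constants and $\Lfun_v$ reduces to a linear functional with a fixed driving field $F := \nabla_x V + \nabla_x W \ast \sigma$. A completing-the-square trick will then rewrite the nonconstant part of $\Lfun_v + \alpha\H$ as the sum of $\alpha\U$ (classically l.s.c.\ under narrow convergence) and a nonnegative integral $\int G\,d\nu$ amenable to the Portmanteau theorem.

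Concretely, for any $\nu \in \P_2^v(\R^{2d};\sigma)$ with $(\Lfun_v+\alpha\H)(\nu) \le c$, finiteness of $\H(\nu)$ forces $\int V\,d\sigma$, $\iint W(x-y)\,d\sigma(x)d\sigma(y)$ and $\int|v|^2\,d\nu$ to be finite, while Proposition~\ref{prop:dom_dvLp} applied to $\mu$ (in the intended setup $\sigma = \Pi^x\mu$) yields $F \in L^2(\sigma)$. The elementary identity $v\cdot F + \tfrac{\alpha}{2}|v|^2 = \tfrac{\alpha}{2}|v+F/\alpha|^2 - \tfrac{1}{2\alpha}|F|^2$ then gives
\[(\Lfun_v+\alpha\H)(\nu) \;=\; \int_{\R^{2d}} G\,d\nu \;+\; \alpha\U(\nu) \;+\; C_\sigma, \qquad G(x,v):=\tfrac{\alpha}{2}\bigl|v+F(x)/\alpha\bigr|^2,\]
with $C_\sigma$ a constant depending only on $\sigma$ (absorbing $\alpha\int V\,d\sigma$, $\alpha\W$ and $-\tfrac{1}{2\alpha}\|F\|_{L^2(\sigma)}^2$). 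The l.s.c.\ of $\U$ being standard, it will remain to verify l.s.c.\ of $\nu\mapsto \int G\,d\nu$ on the sublevel set.

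For that step I would approximate $F$ in $L^2(\sigma)$ by continuous bounded vector fields $F_k \in C_b(\R^d;\R^d)$, so that the associated $G_k(x,v):=\tfrac{\alpha}{2}|v+F_k(x)/\alpha|^2$ are nonnegative and continuous. The Portmanteau theorem immediately yields $\int G_k\,d\mu_0 \le \liminf_n \int G_k\,d\mu_n$, and Cauchy-Schwarz bounds the error
\[\Bigl|\int(G-G_k)\,d\nu\Bigr| \;\le\; \|F-F_k\|_{L^2(\sigma)}\Bigl(\|\id_v\|_{L^2(\nu)}+\tfrac{1}{2\alpha}(\|F\|_{L^2(\sigma)}+\|F_k\|_{L^2(\sigma)})\Bigr),\]
which vanishes uniformly in $\nu$ as $k\to\infty$, \emph{provided} a uniform bound $\sup_n \|\id_v\|_{L^2(\mu_n)}<+\infty$ on the sublevel set is available. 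A standard $\eps$-argument (choose $k$ large so that the error is $\le\eps$ at $\mu_0$ and uniformly in $n$) then transfers Portmanteau from each $G_k$ to $G$, and the lemma would follow.

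The hard part will be securing that uniform $v$-moment bound, and this is the one place where all the pieces must be carefully balanced. My plan is to extract it from the sublevel constraint itself: combine Young's inequality $\int v\cdot F\,d\nu \ge -\tfrac{2}{\alpha}\|F\|_{L^2(\sigma)}^2 - \tfrac{\alpha}{8}\int|v|^2\,d\nu$ with the Gibbs/Gaussian comparison $\U(\nu) \ge -d\log(4\pi) - \tfrac{1}{4}\int|z|^2\,d\nu$ (valid for any $\nu$ admitting a density) and the identity $\int|x|^2\,d\nu=\int|x|^2\,d\sigma$. The coefficients of $\int|v|^2\,d\nu$ on the left-hand side of $(\Lfun_v+\alpha\H)(\nu)\le c$ then become $\tfrac{\alpha}{2}$ (from $\V$), $-\tfrac{\alpha}{8}$ (from Young), and $-\tfrac{\alpha}{4}$ (from Gibbs), summing to the strictly positive $\tfrac{\alpha}{8}$ and producing the desired bound $\int|v|^2\,d\mu_n \le M(c,\sigma,F)$. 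Aside from this balancing, every other ingredient is a routine combination of density of $C_b$ in $L^2(\sigma)$, Cauchy-Schwarz, and Portmanteau.
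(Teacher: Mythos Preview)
Your argument is correct and follows essentially the same skeleton as the paper's proof: both first extract a uniform bound on $\int |v|^2\,d\mu_n$ from the sublevel constraint (via Young's inequality plus an entropy lower bound), and both rely on the fact that the $x$-marginal is frozen on $\P_2^v(\R^{2d};\sigma)$ so that $F=\nabla_x V+\nabla_x W\ast\sigma\in L^2(\sigma)$ becomes a fixed driving field. The difference lies in how the lower semicontinuity is then concluded. The paper keeps the decomposition $\Lfun_v + \tfrac{\alpha}{2}\int|v|^2\,d\nu + \alpha\U(\nu)$ intact and shows that $\Lfun_v$ is in fact \emph{continuous} along the sequence: the uniform $v$-moment bound gives uniform integrability of $(x,v)\mapsto F(x)\cdot v$ against $\mu_n$, so \cite[Lemma~5.1.7]{AGS} yields $\Lfun_v(\mu_n)\to\Lfun_v(\mu_0)$, after which standard l.s.c.\ of $\nu\mapsto \tfrac12\int|v|^2\,d\nu+\U(\nu)$ finishes the job. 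You instead complete the square to absorb $\Lfun_v$ into the nonnegative integrand $G(x,v)=\tfrac{\alpha}{2}|v+F(x)/\alpha|^2$, handle the lack of continuity of $F$ by $L^2(\sigma)$-approximation with $F_k\in C_b$, and invoke Portmanteau for the resulting nonnegative continuous $G_k$. Your route is slightly more elementary in that it avoids the uniform-integrability lemma, at the cost of the extra approximation layer; the paper's route buys the marginally stronger conclusion that $\Lfun_v$ is actually continuous (not merely l.s.c.) on the sublevel set. For the entropy lower bound the paper uses the sublinear estimate $\U(\nu)\ge -C(1+\int|z|^2\,d\nu)^\beta$ with $\beta<1$, whereas your Gibbs comparison $\U(\nu)\ge -d\log(4\pi)-\tfrac14\int|z|^2\,d\nu$ is cleaner and makes the coefficient bookkeeping $\tfrac{\alpha}{2}-\tfrac{\alpha}{8}-\tfrac{\alpha}{4}=\tfrac{\alpha}{8}>0$ explicit.
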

\begin{proof}
    Let $(\mu_n)_{n\in\N},\mu_0$ be as in the lemma. As $\Pi^x\mu_n=\Pi^x\mu=\sigma$, write
    \[C_\sigma:=\int_{\R^{d}}(V+W\ast\Pi^x\mu_n)\,d\Pi^x\mu_n=\int_{\R^{d}}(V+W\ast\Pi^x\mu)\,d\Pi^x\mu\]
    By Young's product inequality,
    \begin{align*}
        (\Lfun_v+\alpha\H)(\mu_n)-\alpha C_\sigma
        =\langle \nabla_x V + \nabla_x W \ast \Pi^x\mu,\id_v\rangle_{L^2(\mu_n)} + \frac{\alpha}{2}\int_{\R^{2d}}|\id_v|^2\,d\mu_n + \alpha\U(\mu_n)    \\
        \geq -\frac{1}{\alpha}\|\nabla_x V + \nabla_x W \ast \Pi^x\mu\|_{L^2(\mu)}^2 + \frac{\alpha}{4}\int_{\R^{2d}}|\id_v|^2\,d\mu_n + \alpha\U(\mu_n).
    \end{align*}
    As $\mu\in D(|\partial_v\Lfun_v|)$, Proposition~\ref{prop:dom_dvLp} implies $\|\nabla_x V + \nabla_x W \ast \Pi^x\mu\|_{L^2(\mu)}^2<+\infty$. On the other hand, we can fix $\beta\in(0,1)$ and $C>0$ such that
    \[\U(\mu_n)\geq - C\left(\int_{\R^{2d}} |x|^2+|v|^2\,d\mu_n(x,v) +1\right)^{\beta} \]
    -- see for instance proof of \cite[Proposition 4.1]{JKO98}. This implies
    \[(\Lfun_v+\alpha\H)(\mu_n)\rightarrow +\infty \text{ as } \int_{\R^{2d}} |v|^2\,d\mu_n(x,v)\rightarrow\infty.\]
    Thus, on each sublevel set of $(\Lfun_v+\alpha\H)$ in $\P_2^v(\R^{2d};\Pi^x\mu)$ the second $v$-moments of $\mu_n$ must remain bounded.

    As $V,W\in C^1$, the map $(x,v)\mapsto (\nabla_x V(x) + \nabla_x W\ast\Pi^x\mu(x))\cdot v$ is continuous and
    \[\sup_{n\in\N} \int_{\R^{2d}} |\nabla_x V + \nabla_x W \ast \Pi^x\mu||v|\,d\mu_n\leq  \frac12 \|\nabla_x V + \nabla_x W \ast \Pi^x\mu\|_{L^2(\mu)}^2 + \frac12 \sup_{n\in\N}\int_{\R^{2d}} |v|^2\,d\mu(x,v)<+\infty.\]
    Hence by \cite[Lemma 5.1.7]{AGS} we have $\lim_{n\rightarrow\infty}\Lfun_v(\mu_n)=\Lfun_v(\mu)$. On the other hand, it is well-known that 
    \[\nu\mapsto \frac12\int_{\R^{2d}} |v|^2\,d\nu + \U(\nu)\]
    is l.s.c. w.r.t. narrow convergence, thus we conclude \eqref{eq:Lp+aH;LSC}.
\end{proof}

Existence and uniqueness of minimizers of the variational problem \eqref{def:JpJx} is then a consequence of the abstract result Proposition~\ref{prop:pJKO_minimizer}.

\begin{proposition}[Existence a unique minimizers of the variational problem]\label{prop:LxJKO;Phih}
Let $V,W\in C^1(\R^d)$ and $\mu\in D(\Lfun_v+\alpha\H)\cap\D(|\partial_v\Lfun_v|)$ such that $\nabla_x W\ast\Pi^x\mu\in L^1(\mu)$. Then for any $h>0$, $J_v^h(\mu)$ defined in \eqref{def:JpJx} contains exactly one element.

Moreover, at any $\mu\in\P_2(\R^{2d})$ $J_x^h[\mu]$ is also a singleton, and the minimizer $\mu_{h,x}\in J_x^h[\mu]$ satisfies
\begin{equation}\label{eq:LxJKO;Phih}
    \mu_{h,x} = (\Phi_h)_\#\mu \text{ where } \Phi_h(x,v)=\begin{pmatrix} x+hv\\ v\end{pmatrix}.
\end{equation}

\end{proposition}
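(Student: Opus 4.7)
The plan is to treat the two parts separately. For the velocity update I would invoke the abstract existence/uniqueness result Proposition~\ref{prop:pJKO_minimizer} applied to $\E=\Lfun_v+\alpha\H$ on the fiber $\P_2^v(\R^{2d};\Pi^x\mu)$. Since the infimum of $1/\alpha_-$ is $+\infty$ (we'll see $\E$ is $\alpha$-convex with $\alpha>0$), any $h>0$ is admissible.

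The key check is $\alpha$-convexity along $W_{2,v}$-geodesics. Recall from the proof of Theorem~\ref{thm:BenBre} (Step 2$^\circ$) that a constant-speed $W_{2,v}$-geodesic between $\mu_0,\mu_1\in\P_2^v(\R^{2d};\sigma)$ takes the form $\mu_t=\int_{\R^d}\mu_t^x\,d\sigma(x)$ where $(\mu_t^x)_{t\in[0,1]}$ is the Wasserstein displacement interpolation on each fiber. Along such a curve, $\Pi^x\mu_t\equiv\sigma$ is fixed, so $\int V\,d\mu_t$ and $\W(\mu_t)$ are both constant, and the ``weight'' $\nabla_x V+\nabla_x W\ast\Pi^x\mu_t$ inside $\Lfun_v$ does not depend on $t$; since the $v$-variable is transported linearly on each fiber, $\Lfun_v(\mu_t)$ is affine in $t$. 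The kinetic part $\int\tfrac{|v|^2}{2}d\mu_t$ is $1$-convex (integrate the $1$-convexity of $|v|^2/2$ along each fiber Wasserstein geodesic), and $\U(\mu_t)$ is convex by Lemma~\ref{lem:pdirder;U}. Adding these contributions yields $\alpha$-convexity of $\Lfun_v+\alpha\H$ along $W_{2,v}$-geodesics. Hypothesis (i) of Proposition~\ref{prop:pJKO_minimizer} is immediate with $\mu^\ast=\mu$, and hypothesis (ii) is exactly Lemma~\ref{lem:Lp+aH;LSC} under the standing assumptions $\mu\in D(|\partial_v\Lfun_v|)$ and $\nabla_x W\ast\Pi^x\mu\in L^1(\mu)$. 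This produces a unique $\mu_{h,v}\in J_h^v[\mu]$ for every $h>0$.

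For the position update, no additional regularity on $\mu$ is needed (in particular $|\Lfun_x|\le\tfrac{1}{2}\int|\id|^2d\mu<+\infty$ by Cauchy-Schwarz). I would exploit the fact that both $W_{2,x}^2(\mu,\nu)$ and $\Lfun_x(\nu)$ decouple under disintegration with respect to $\Pi^v\mu$: writing $\nu=\int \nu^v\,d\Pi^v\mu(v)$, the problem reduces fiberwise to
\begin{equation*}
\min_{\rho\in\P_2(\R^d)}\;\frac{W_2^2(\mu^v,\rho)}{2h}-\int_{\R^d} x\cdot v\,d\rho(x),\qquad v\in\R^d\text{ fixed.}
\end{equation*}
For any $\gamma\in\Gamma(\mu^v,\rho)$, a completion of squares gives the pointwise identity
\begin{equation*}
\frac{|x-y|^2}{2h}-y\cdot v=\frac{1}{2h}\bigl|y-(x+hv)\bigr|^2-x\cdot v-\frac{h|v|^2}{2},
\end{equation*}
so integrating against $\gamma$ the fiber functional is bounded below by $-\tfrac{h|v|^2}{2}-\int x\cdot v\,d\mu^v(x)$, with equality if and only if $\gamma$ concentrates on the graph of $x\mapsto x+hv$. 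Hence the fiber minimizer is uniquely $(x\mapsto x+hv)_\#\mu^v$; reassembling gives $\mu_{h,x}=(\Phi_h)_\#\mu$, which clearly lies in $\P_2(\R^{2d})$ since $\int(|x+hv|^2+|v|^2)d\mu<+\infty$.

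The main obstacle is purely bookkeeping in Part 1, namely combining the individual convexity properties of $\Lfun_v$, $\V$, $\W$, and $\U$ along $W_{2,v}$-geodesics with the correct constants; Part 2 is essentially a direct explicit calculation made possible by the product structure of $\Phi_h$.
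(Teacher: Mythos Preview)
Your proposal is correct and follows essentially the same route as the paper: for $J_h^v$ you invoke Proposition~\ref{prop:pJKO_minimizer} after checking $\alpha$-convexity of $\Lfun_v+\alpha\H$ along $W_{2,v}$-geodesics (which the paper records in Remark~\ref{rmk:Jp_convexity}) and citing Lemma~\ref{lem:Lp+aH;LSC} for lower semicontinuity, and for $J_h^x$ your fiberwise completion-of-squares is exactly the paper's Young's inequality argument on $\gamma^v$ rewritten pointwise. The only difference is presentational---you spell out the convexity decomposition and the fiber computation in slightly more detail than the paper does.
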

\begin{proof}
Let $\mu\in D(\Lfun_v+\alpha \H)\cap\D(|\partial_v\Lfun_v|)$. Note $\Lfun_v+\alpha\H$ is $\alpha$-convex along $W_{2,v}$-geodesics. By Lemma~\ref{lem:Lp+aH;LSC}, $\Lfun_v+\alpha\H$ is l.s.c. w.r.t. narrow convergence in sublevel sets, hence by Proposition~\ref{prop:pJKO_minimizer} it follows that $J_h^v[\mu]$ is a singleton.

Now we show directly that $\mu_{h,x}=(\Phi_h)_\#\mu$ is the unique element of $J_h^x[\mu]$. Let $\nu\in\P_2^x(\R^{2d};\Pi^v\mu)$. Then one can check that the optimality condition
\begin{equation}\label{eq:Phihmu;LxJKOmin}
    \frac{W_{2,x}^2(\mu,(\Phi_h)_\#\mu)}{2h}-\Lfun_x((\Phi_h)_{\#}\mu)
    \leq \frac{W_{2,x}^2(\mu,\nu)}{2h}-\Lfun_x(\nu)% \text{ for all }\nu\in\P_2^x(\R^{2d};\Pi^v\mu).
\end{equation}
can be reorganized and written in terms of $\gamma=\{\gamma^v\}_{v\in\R^d}\in\Gamma_o^v(\mu,\nu)$ as
\[\frac{h}{2}\|\id_v\|_{L^2(\mu)}+\frac{1}{2h}\int_{\R^d}\iint_{\R^d\times\R^d} |y-x|^2\,d\gamma^v(x,y)\,d\Pi^v\mu(v)
-\int_{\R^d}\iint_{\R^d\times\R^d} v\cdot(y-x)\,d\gamma^v(x,y)\,d\Pi^v\mu(v)\geq 0,\]
which holds by Young's inequality. 
Furthermore, the equality holds if and only if $hv=y-x$ for $\gamma$-a.e. $(x,y,v)\in(\R^{d})^3$, from which we conclude that $(\Phi_h)_\#\mu$ is the unique minimizer.

% To see the equivalence, reorganizing \eqref{eq:Phihmu;LxJKOmin} we obtain
% \begin{align*}
%     \frac{W_{2,x}^2(\mu,(\Phi_h)_\#\mu)}{2h}-\frac{W_{2,x}^2(\mu,\nu)}{2h}
%     \leq \frac{1}{2h}\|h\id_v\|_{L^2(\mu)}^2-\frac{1}{2h}\int_{\R^d}\iint_{\R^d\times\R^d} |y-x|^2\,d\gamma^v(x,y)\,d\Pi^v\mu(v),
% \end{align*}
% whereas
% \begin{align*}\Lfun_x((\Phi_h)_\#\mu)-\Lfun_x(\nu)=\int_{\R^{2d}} v\cdot(x+hv)\,d\mu(x,v)-\int_{\R^{2d}} v\cdot y\,d\nu(x,y)   \\
% = h\|\id_v\|_{L^2(\mu)}^2 - \int_{\R^d}\iint_{\R^d\times\R^d} v\cdot(y-x)\,d\gamma^v(x,y)\,d\Pi^v\mu(v).
% \end{align*}
\end{proof}

We conclude this section by establishing that the iterative algorithm has a unique solution.
\begin{theorem}[Unique solvability of the iterative algorithm]\label{thm:iter_solv}
    Let $\nabla_x V,\nabla_x W$ be Lipschitz and $\mu_0\in D(\U)$. Then for any $h>0$ and $N\in\N$, there is a unique solution $(\mu_{ih}^N)_{i=0}^N$ with $\mu_0^N=\mu_0$ to the recursive variational scheme \eqref{eq:SIE}
    \[\bar\mu_{(i+1)h}^N \in J_h^v[\mu_{ih}^N],\qquad \mu_{(i+1)h}^N\in J_h^x[\bar\mu_{(i+1)h}^N].\]
\end{theorem}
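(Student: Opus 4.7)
The plan is to proceed by induction on $i \in \{0, 1, \ldots, N-1\}$, carrying the invariant that $\mu_{ih}^N$ is uniquely determined and lies in $\P_2(\R^{2d}) \cap D(\U)$. Since Proposition~\ref{prop:LxJKO;Phih} already supplies unique solvability of each half-step under appropriate hypotheses, the remaining task is to verify that those hypotheses propagate across iterations. First I would observe that, under the Lipschitz assumption on $\nabla_x V$ and $\nabla_x W$, every $\mu \in \P_2(\R^{2d}) \cap D(\U)$ automatically satisfies $\mu \in D(\Lfun_v + \alpha \H) \cap D(|\partial_v \Lfun_v|)$ and $\nabla_x W \ast \Pi^x \mu \in L^1(\mu)$: the induced at-most-quadratic growth of $V, W$ yields $\V(\mu), \W(\mu) < +\infty$; Remark~\ref{rmk:W_assump} places $\nabla_x V + \nabla_x W \ast \Pi^x \mu$ in $L^2(\mu)$; Cauchy--Schwarz then controls $\Lfun_v(\mu)$; and Proposition~\ref{prop:dom_dvLp} converts that $L^2$ bound into finiteness of $|\partial_v \Lfun_v|(\mu)$.

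The base case $i=0$ is immediate from $\mu_0 \in \P_2(\R^{2d}) \cap D(\U)$. For the inductive step, assuming $\mu_{ih}^N$ lies in the invariant set, Proposition~\ref{prop:LxJKO;Phih} produces a unique $\bar\mu_{(i+1)h}^N \in J_h^v[\mu_{ih}^N]$, and I would verify that it again belongs to $\P_2(\R^{2d}) \cap D(\U)$ as follows. The finiteness of $W_{2,v}(\mu_{ih}^N, \bar\mu_{(i+1)h}^N)$ forces $\Pi^x \bar\mu_{(i+1)h}^N = \Pi^x \mu_{ih}^N$, and the triangle inequality in $(\P_2^v(\R^{2d};\Pi^x \mu_{ih}^N), W_{2,v})$ against $\Pi^x\mu_{ih}^N \otimes \delta_0$ gives
\[
\|\id_v\|_{L^2(\bar\mu_{(i+1)h}^N)} = W_{2,v}(\bar\mu_{(i+1)h}^N, \Pi^x \mu_{ih}^N \otimes \delta_0) \leq W_{2,v}(\bar\mu_{(i+1)h}^N, \mu_{ih}^N) + \|\id_v\|_{L^2(\mu_{ih}^N)} < +\infty,
\]
so $\bar\mu_{(i+1)h}^N \in \P_2(\R^{2d})$. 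Minimality then yields $(\Lfun_v + \alpha \H)(\bar\mu_{(i+1)h}^N) \leq (\Lfun_v + \alpha \H)(\mu_{ih}^N) < +\infty$, and isolating $\U$ using the independently established finiteness of $\V, \W, \Lfun_v$ at $\bar\mu_{(i+1)h}^N$ gives $\U(\bar\mu_{(i+1)h}^N) < +\infty$.

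For the position update, Proposition~\ref{prop:LxJKO;Phih} directly provides the unique minimizer $\mu_{(i+1)h}^N = (\Phi_h)_\# \bar\mu_{(i+1)h}^N$ with $\Phi_h(x,v) = (x+hv, v)$. Finite second moments pass to the pushforward via $|x+hv|^2 \leq 2|x|^2 + 2h^2|v|^2$, and since $|\det \nabla \Phi_h| \equiv 1$ the change-of-variables computation already carried out in the proof of Corollary~\ref{cor:psubdiff;UPhih} yields $\U(\mu_{(i+1)h}^N) = \U(\bar\mu_{(i+1)h}^N) < +\infty$, closing the induction. The only genuinely nontrivial point in the argument is the recovery of finite entropy for $\bar\mu_{(i+1)h}^N$ after the velocity step: the minimizing property bounds the full functional $\Lfun_v + \alpha \H$ rather than $\U$ itself, so one must first use the Lipschitz/quadratic-growth structure to control $\V, \W, \Lfun_v$ and then solve for $\U$. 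Everything else is a direct application of Proposition~\ref{prop:LxJKO;Phih} at each half-step.
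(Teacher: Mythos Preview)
Your proposal is correct and follows essentially the same route as the paper: both proceed by induction with the invariant $\mu_{ih}^N \in D(\U)$, invoke Proposition~\ref{prop:LxJKO;Phih} at each half-step, use the minimality inequality $(\Lfun_v+\alpha\H)(\bar\mu_{(i+1)h}^N)\leq(\Lfun_v+\alpha\H)(\mu_{ih}^N)$ together with the automatic finiteness of $\V,\W,\Lfun_v$ under the Lipschitz hypothesis to recover $\U(\bar\mu_{(i+1)h}^N)<+\infty$, and use $|\det\nabla\Phi_h|\equiv 1$ to preserve entropy through the position update. Your version is slightly more explicit in verifying second-moment bounds at each stage (which the paper leaves implicit since the minimizer is by construction in $\P_2^v(\R^{2d};\Pi^x\mu)\subset\P_2(\R^{2d})$), but the substance is identical.
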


\begin{proof}
It is easy to see that $D(\Lfun_v+\alpha\H)=D(\U)$, as Lipschitz continuity of $\nabla_x V,\nabla_x W$ imply that $\Lfun_v(\mu)+\V(\mu)<+\infty$ for any $\mu\in\P_2(\R^{2d})$. Moreover, from Remark~\ref{rmk:W_assump} we know $D(|\partial_v\Lfun_v|)=\P_2(\R^{2d})$. Thus
\[D(\U)=D(\Lfun_v+\alpha\H)\cap D(|\partial_v\Lfun_v|).\]

Hence whenever $\mu_0\in D(\U)$, $\bar\mu_{h}^N,\mu_{h}^N$, by Proposition~\ref{prop:LxJKO;Phih}
$\bar\mu_{h}\in J_h^v[\mu_0]$ and $\mu_h^N \in J_h^x[\bar\mu_{h}^N]$ are uniquely defined. As 
\[\Lfun_v(\bar\mu_{h}^N)+\alpha \H(\bar\mu_{h}^N)\leq \Lfun_v(\mu_0)+\alpha \H(\mu_0)<+\infty,\]
clearly $\bar\mu_{h}^N\in D(\Lfun_v+\alpha\H)=D(\U)$ and thus absolutely continuous w.r.t. $\Leb^{2d}$. 

On the other hand, from \eqref{eq:LxJKO;Phih} $\mu_h=(\Phi_h)_\#\bar\mu_{h}^N$. Clearly $\mu_h\ll\Leb^{2d}$, and denoting by $\rho_h^N,\bar\rho_{h}^N$ the $\Leb^{2d}$-densities of $\mu_h^N,\bar\mu_{h}^N$ respectively, we have \eqref{eq:rho_vhih}
\[\rho_h^N(\Phi_h(x,v))= \frac{\bar\rho_{h}^N(x,v)}{|\det\nabla\Phi_h(x,v)|}=\bar\rho_{h}^N(x,v)\]
Thus
\[\U(\mu_h^N)=\U(\bar\mu_{h}^N)<+\infty,\]
and $\mu_h^N\in D(\U)=D(\Lfun_v+\alpha\H)\cap D(|\partial_v\Lfun_v|)$. Thus we can repeat the same process with $\mu_h^N$ in place of $\mu_0$ and so on to conclude.

\end{proof}

\section{Existence of minimizing movements: compactness of the discrete solutions}\label{sec:SIE;cpct}
This section establishes the existence of minimizing movements. More precisely, we show that the piece-wise constant interpolation between discrete solutions of the coordinate-wise minimizing movement scheme \eqref{eq:SIE} with vanishing time steps converge pointwise narrowly over a subsequence. Using the Arzel\`a-Ascoli-type theorem (Proposition~\ref{prop:ArzelaAscoli-weak}), we establish the compactness result Theorem~\ref{thm:disc_sol;cpct2} given that the Hamiltonian energy functional $\H$ \eqref{def:H=V+U+W} is finite at the initial datum. We will impose the additional condition
\begin{equation}\label{eq:H;bddbelow}
    \inf_{\nu\in\P_2(\R^{2d})} \H(\nu)>-\infty.
\end{equation}
The assumption \eqref{eq:H;bddbelow} can be relaxed in various ways, for instance leveraging the lower bound on the entropy functional in terms of the second moment as in \cite{JKO98} or more generally the coercivity of $\H$ as in \cite[Chapter 3]{AGS}. However, \eqref{eq:H;bddbelow} significantly simplifies the proof and is satisfied whenever $V$ is a confining potential (i.e. $e^{-V}\in L^1(\R^d)$) and the interaction potential $W$ is nonnegative.

At the end of this section we state another compactness result, Theorem~\ref{thm:disc_sol;subseq_conv}, which does not assume \eqref{eq:H;bddbelow} and requires only the finiteness of the $v$-metric slope $|\partial_v(\Lfun_v+\alpha\H)|$ at the initial datum. As such an assumption is not as natural physically, we delay the proof to Appendix~\ref{app:cpct_alt}. However, the assumption is not in general stronger than the assumption of finite $\H$ at the initial datum; see Remark~\ref{rmk:mu0_xsingular} for further details.

We also note that results in this sections can be generalized to dissipative Hamiltonian flows involving internal energy functionals that are convex in the Wasserstein space; see Remark~\ref{rmk:generalU}. 
\medskip

We begin by providing useful preliminary estimates, namely growth bounds on metric slopes over the discrete solutions.
\begin{lemma}[Growth bounds on slopes]\label{lem:dWVip_growthbd}
    Let $\nabla_x V,\nabla_x W$ be $M$-Lipchitz. Then the discrete solutions $(\mu_{ih}^N)_{i=0}^N$ of \eqref{def:SIE} with time step $h\leq \frac12$ and initial datum $\mu_0^N\in D(\U)$ satisfy
    \begin{equation}\label{eq:Wgrowthbd}
        |\partial_x\W|(\mu_{(i+1)h}^N)\leq |\partial_x\W|(\mu_{ih}^N) +  2Mh\|\id_v\|_{L^2(\mu_{(i+1)h}^N)}
    \end{equation}
    \begin{equation}\label{eq:Vgrowthbd}
        |\partial_x\V|(\mu_{(i+1)h}^N)\leq |\partial_x\W|(\mu_{ih}^N) + Mh \|\id_v\|_{L^2(\mu_{(i+1)h}^N)}
    \end{equation}
    and
    \begin{equation}\label{eq:pmoment_growthbd}
        \|\id_v\|_{L^2(\mu_{(i+1)h}^N)}^2 \leq \frac{1}{(1+2\alpha h)} \|\id_v\|_{L^2(\mu_{ih}^N)}^2 -2h\Lfun_v(\bar\mu_{(i+1)h}^N) + 2\alpha dh
    \end{equation}
    In particular, if furthermore $h\leq \frac{1}{2M}$ then letting $E(\nu):=|\partial_x\W|^2(\nu)+|\partial_x\V|^2(\nu)+\|\id_v\|_{L^2(\nu)}^2$, we have
    \begin{equation}\label{eq:WVpmoment_growthbd}
        E(\mu_{(i+1)h}^N)\leq (1+(4+14M)h)E(\mu_{ih}^N) + 16\alpha dh.
    \end{equation}
\end{lemma}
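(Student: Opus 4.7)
\textbf{Proof plan for Lemma~\ref{lem:dWVip_growthbd}.}
The foundation is the decomposition $\mu_{(i+1)h}^N = (\Phi_h)_\# \bar\mu_{(i+1)h}^N$ from Proposition~\ref{prop:LxJKO;Phih}, where $\Phi_h(x,v) = (x+hv, v)$. This implies $\Pi^v \mu_{(i+1)h}^N = \Pi^v \bar\mu_{(i+1)h}^N$, while the velocity step preserves $\Pi^x$, so $\Pi^x \bar\mu_{(i+1)h}^N = \Pi^x \mu_{ih}^N$. Throughout I abbreviate $\bar\mu := \bar\mu_{(i+1)h}^N$.

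\emph{For (\ref{eq:Wgrowthbd}) and (\ref{eq:Vgrowthbd}).} The $x$-analogues of Lemma~\ref{lem:psubdiff;velfunc} give the expressions $|\partial_x\V|(\mu) = \|\nabla_x V\|_{L^2(\mu)}$ and $|\partial_x\W|(\mu) = \|\nabla_x W \ast \Pi^x\mu\|_{L^2(\mu)}$. Since both integrands depend only on $x$ and $\Pi^x$ is preserved during the velocity step, these slopes are invariant under $\mu_{ih}^N \mapsto \bar\mu$. For the position step, use $\mu_{(i+1)h}^N = (\Phi_h)_\#\bar\mu$ and the Lipschitz bound on $\nabla_x V$ to obtain via the triangle inequality
\[\|\nabla_x V\|_{L^2(\mu_{(i+1)h}^N)} \leq \|\nabla_x V\|_{L^2(\bar\mu)} + \|\nabla_x V\circ\Phi_h - \nabla_x V\|_{L^2(\bar\mu)} \leq |\partial_x\V|(\mu_{ih}^N) + Mh\|\id_v\|_{L^2(\mu_{(i+1)h}^N)},\]
using $\|\id_v\|_{L^2(\bar\mu)} = \|\id_v\|_{L^2(\mu_{(i+1)h}^N)}$ in the last step. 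The estimate~(\ref{eq:Wgrowthbd}) is derived analogously, with the factor $2$ arising because both the base point and the convolution variable shift, precisely as in the computation~(\ref{eq:Wgrad_vhih_error}).

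\emph{For (\ref{eq:pmoment_growthbd}).} This is the most delicate step. The Euler-Lagrange equation of Lemma~\ref{lem:euler-lagrange;Jp} reads
\[T_{\bar\mu^x}^{\mu_{ih}^x}(v) = (1+\alpha h)v + hF(x) + \alpha h \frac{\nabla_v \bar\rho}{\bar\rho}(x,v), \qquad F := \nabla_x V + \nabla_x W\ast \Pi^x\mu_{ih}^N.\]
Taking the $L^2(\bar\mu)$-inner product against $\id_v$, invoking the integration by parts identity $\int v \cdot \nabla_v\bar\rho/\bar\rho\,d\bar\mu = -d$ from Remark~\ref{rmk:dvlogrho} with $\xi = \id_v$, and using $\int v\cdot F\,d\bar\mu = \Lfun_v(\bar\mu)$ (valid since $\Pi^x\bar\mu = \Pi^x\mu_{ih}^N$) yields, together with Cauchy-Schwarz,
\[(1+\alpha h)\|\id_v\|_{L^2(\bar\mu)}^2 + h\Lfun_v(\bar\mu) - \alpha h d \;=\; \int v\cdot T_{\bar\mu^x}^{\mu_{ih}^x}(v)\,d\bar\mu \;\leq\; \|\id_v\|_{L^2(\bar\mu)}\|\id_v\|_{L^2(\mu_{ih}^N)}.\]
The critical observation is that Young's inequality with the weight $1+2\alpha h$, namely $ab \leq \tfrac{1}{2}(1+2\alpha h) a^2 + \tfrac{1}{2}(1+2\alpha h)^{-1}b^2$, reduces $(1+\alpha h)a^2$ to exactly $\tfrac12 a^2$; rearranging then produces~(\ref{eq:pmoment_growthbd}).

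\emph{For (\ref{eq:WVpmoment_growthbd}).} Square (\ref{eq:Wgrowthbd}) and (\ref{eq:Vgrowthbd}) via $(a+b)^2 \leq (1+Mh)a^2 + (1+(Mh)^{-1})b^2$, yielding bounds on $|\partial_x\W|^2(\mu_{(i+1)h}^N)$ and $|\partial_x\V|^2(\mu_{(i+1)h}^N)$ with leading constant $(1+Mh)$ and lower-order terms controlled by $\|\id_v\|^2_{L^2(\mu_{(i+1)h}^N)}$. Summing these with (\ref{eq:pmoment_growthbd}) and using Cauchy-Schwarz followed by $2ab \leq a^2 + b^2$ to estimate
\[-2h\Lfun_v(\bar\mu) \leq h\|\id_v\|^2_{L^2(\bar\mu)} + h\|F\|^2_{L^2(\mu_{ih}^N)} \leq h\|\id_v\|^2_{L^2(\mu_{(i+1)h}^N)} + 2h(|\partial_x\V|^2 + |\partial_x\W|^2)(\mu_{ih}^N),\]
produces a recurrence for $E(\mu_{(i+1)h}^N)$ in terms of $E(\mu_{ih}^N)$. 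The constraint $h \leq 1/(2M) \leq 1/2$ ensures $1+Mh \leq 3/2$ and allows the residual $\|\id_v\|^2_{L^2(\mu_{(i+1)h}^N)}$ terms on the right-hand side to be reabsorbed into the left, yielding (\ref{eq:WVpmoment_growthbd}) after collecting constants. The main technical obstacle is the Young's inequality weight in (\ref{eq:pmoment_growthbd}): choosing exactly $1+2\alpha h$ (rather than $1$ or $1+\alpha h$) is what matches the $(1+\alpha h)$-factor from the Euler-Lagrange identity to the target denominator $1+2\alpha h$.
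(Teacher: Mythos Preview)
Your proposal is correct and follows essentially the same route as the paper. The treatment of~\eqref{eq:Wgrowthbd} and~\eqref{eq:Vgrowthbd} (invariance under the velocity step, then Lipschitz bound under $\Phi_h$) and of~\eqref{eq:WVpmoment_growthbd} (square via weighted Young, bound $-2h\Lfun_v$ by Cauchy--Schwarz, absorb the residual $\|\id_v\|^2$-term) matches the paper almost verbatim.

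The one genuine variation is in~\eqref{eq:pmoment_growthbd}. The paper uses the $2$-convexity of $\nu\mapsto\|\id_v\|_{L^2(\nu)}^2$ along $W_{2,v}$-geodesics, i.e.\ the identity $|T(v)|^2=|v|^2+2v\cdot(T(v)-v)+|T(v)-v|^2$, drops the square term, and substitutes the Euler--Lagrange expression for $T-\id_v$ to obtain
\[
\|\id_v\|_{L^2(\mu_{ih}^N)}^2 \geq (1+2\alpha h)\|\id_v\|_{L^2(\bar\mu)}^2 + 2h\Lfun_v(\bar\mu) - 2\alpha dh.
\]
You instead take the inner product of the Euler--Lagrange identity with $\id_v$, apply Cauchy--Schwarz to $\langle\id_v,T\rangle_{L^2(\bar\mu)}$, and then use Young with the carefully chosen weight $1+2\alpha h$. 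Both are valid; your route has the pleasant feature that it lands exactly on the stated form of~\eqref{eq:pmoment_growthbd} without any further rearrangement, whereas the paper's intermediate bound carries the factor $(1+2\alpha h)^{-1}$ on all three right-hand terms and must be relaxed to match the statement. Either form suffices for the subsequent derivation of~\eqref{eq:WVpmoment_growthbd}, where the crude bound $(1+2\alpha h)^{-1}\leq 1$ is used anyway.
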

\begin{proof}
By Theorem~\ref{thm:iter_solv}, $(\mu_{ih}^N)_{i=0}^N$ is well-defined. First we consider $|\partial_x\V|$. As $\Lip \nabla_x V\leq M$,
\begin{align*}
    |\partial_x\V|(\mu_{(i+1)h}^N)&-|\partial_x\V|(\mu_{ih}^N)
    =|\partial_x\V|(\mu_{(i+1)h}^N)-|\partial_x\V|(\bar\mu_{(i+1)h}^N)    \\
    &\leq \left(\int_{\R^{2d}} |\nabla_x V(x+hv)-\nabla_x V(x)|^2\,d\bar\mu_{(i+1)h}^N(x,v) \right)^{1/2}
    \leq M h \|\id_v\|_{L^2(\bar\mu_{(i+1)h}^N)}.
\end{align*}

Next we consider $|\partial_x\W|$. Using $\Lip\nabla_x W\leq M$, we know from \eqref{eq:Wgrad_vhih_error} that
\begin{align*}
    &\|\nabla_x W\ast\Pi^x \mu_{(i+1)h}^N\|_{L^2(\mu_{(i+1)h})}-\|\nabla_x W\ast\Pi^x \bar\mu_{(i+1)h}^N\|_{L^2(\bar\mu_{(i+1)h}^N)} \\
    &\leq \|(\nabla_x W\ast \Pi^x((\Phi_h)_\#(\bar\mu_{(i+1)h}^N)))\circ\Phi_h- \nabla_x W\ast \Pi^x\bar\mu_{(i+1)h}^N\|_{L^2(\bar\mu_{(i+1)h}^N)}\leq 2Mh\|\id_v\|_{L^2(\bar\mu_{(i+1)h}^N)}.
\end{align*}

On the other hand, by the $2$-convexity of $\nu\mapsto\|\id_v\|_{L^2(\nu)}^2$ along $W_{2,v}$-geodesics, Lemma~\ref{lem:euler-lagrange;Jp}, and \eqref{eq:xidvlogrho=0}, we have
\begin{align*}
    \|\id_v\|_{L^2(\mu_{ih}^N)}^2&\geq \|\id_v\|_{L^2(\bar\mu_{(i+1)h}^N)}^2+2h\langle \id_v, \nabla_x V+\nabla_x W\ast\Pi^x\bar\mu_{(i+1)h}^N+\alpha\id_v+\alpha\nabla_v\log\bar\rho_{(i+1)h}^N\rangle_{L^2(\bar\mu_{(i+1)h}^N)} \\
    &\geq (1+2\alpha h)\|\id_v\|_{L^2(\bar\mu_{(i+1)h}^N)}^2+ 2h\Lfun_v(\bar\mu_{(i+1)h}^N) - 2\alpha dh.
\end{align*}
Noting $\|\id_v\|_{L^2(\bar\mu_{(i+1)h}^N)}=\|\id_v\|_{L^2(\mu_{ih}^N)}$, we deduce
\begin{equation}\label{eq:pmoment_bd1}
        \|\id_v\|_{L^2(\mu_{(i+1)h}^N)}^2 \leq \frac{1}{1+2\alpha h} \|\id_v\|_{L^2(\mu_{ih}^N)}^2 -2h\Lfun_v(\bar\mu_{(i+1)h}^N) + 2\alpha dh.
\end{equation}
As $-2h\Lfun_v(\bar\mu_{(i+1)h}^N)\leq h\|\id_v\|_{L^2(\mu_{(i+1)h}^N)}^2+ h|\partial_x(\V+\W)|^2(\mu_{ih}^N)$, we can use a crude bound to deduce
\begin{align*}
    (1-h)\|\id_v\|_{L^2(\mu_{(i+1)h}^N)}^2 \leq \|\id_v\|_{L^2(\mu_{ih}^N)}^2+h|\partial_x(\V+\W)|^2(\mu_{ih}^N)+2\alpha d h.
\end{align*}
Using that $\frac{1}{1-h}\leq 1+2h$ for $h\leq \frac12$ and that $h|\partial_x(\V+\W)|^2(\mu_{ih}^N)\leq 2h(|\partial_x \V|^2(\mu_{ih}^N)+|\partial_x \W|^2(\mu_{ih}^N))$, we have
\begin{equation}\label{eq:pmomentsqr;crudebd}
    \|\id_v\|_{L^2(\mu_{(i+1)h}^N)}^2\leq (1+2h) \|\id_v\|_{L^2(\mu_{ih}^N)}^2 + 4h(|\partial_x \V|^2(\mu_{ih}^N)+|\partial_x \W|^2(\mu_{ih}^N)) + 4\alpha dh.
\end{equation}

Now suppose $h\leq \frac{1}{2M}$. Squaring and adding both sides of \eqref{eq:Wgrowthbd}, \eqref{eq:Vgrowthbd} and using $4M^2h^2\leq 2 Mh$
\begin{align*}
    |\partial_x\W|^2(\mu_{(i+1)h}^N)&+|\partial_x\V|^2(\mu_{(i+1)h}^N)\\
    &\leq (1+2Mh)|\partial_x\W|^2(\mu_{ih}^N)+(1+Mh)|\partial_x\V|^2(\mu_{ih}^N)+6Mh \|\id_v\|_{L^2(\mu_{(i+1)h}^N)}^2\\
    &\leq (1+2Mh)[|\partial_x\W|^2(\mu_{ih}^N)+|\partial_x\V|^2(\mu_{ih}^N)] + 6Mh \|\id_v\|_{L^2(\mu_{(i+1)h}^N)}^2\\
    &\leq (1+14Mh)[|\partial_x\W|^2(\mu_{ih}^N)+|\partial_x\V|^2(\mu_{ih}^N)] + 6Mh(1+2h)\|\id_v\|_{L^2(\mu_{ih}^N)}^2 + 12\alpha dh.
\end{align*}
where we have used \eqref{eq:pmomentsqr;crudebd} in the last line. Finally, we add \eqref{eq:pmomentsqr;crudebd} to both sides to obtain
\begin{align*}
    &|\partial_x\W|^2(\mu_{(i+1)h}^N)+|\partial_x\V|^2(\mu_{(i+1)h}^N)+\|\id_v\|_{L^2(\mu_{(i+1)h}^N)}^2 \\
    &\leq (1+4h+14Mh)[|\partial_x\W|^2(\mu_{ih}^N)+|\partial_x\V|^2(\mu_{ih}^N)]+(1+2h)\|\id_v\|_{L^2(\mu_{ih}^N)}^2
    + 16\alpha dh.
\end{align*}
Thus, letting $E(\nu):=|\partial_x\W|^2(\nu)+|\partial_x\V|^2(\nu)+\|\id_v\|_{L^2(\nu)}^2$ we have
\[E(\mu_{(i+1)h}^N)\leq (1+(4+14M)h)E(\mu_{ih}^N) + 16\alpha dh.\]
    
\end{proof}

Next we establish a key energy estimate.
\begin{lemma}\label{lem:energy_decay;disc}
    Suppose $\nabla_x V,\nabla_x W$ are $M$-Lipschitz, and let $(\mu_{ih}^N)_{i=0}^N$ be defined by \eqref{eq:SIE} with time step $h>0$, number of iterations $n\in\N$ and initial datum $\mu_0^N\in D(\H)$. Then
\begin{equation}\label{eq:energy_decay;ubd}
\begin{split}
     &\left(1+\frac{\alpha h}{2}\right)\frac{W_{2,v}^2(\mu_{ih}^N,\bar\mu_{(i+1)h}^N)}{h} 
    \leq \alpha(\H(\mu_{ih}^N)-\H(\mu_{(i+1)h}^N))\\
    &\qquad+h\|\nabla_x V+\nabla_x W\ast\Pi^x\bar\mu_{(i+1)h}^N\|_{L^2(\bar\mu_{(i+1)h}^N)}^2+2\alpha h\Lfun_v(\bar\mu_{(i+1)h}^N)+M\alpha h^2\|\id_v\|_{L^2(\bar\mu_{(i+1)h}^N)}^2.
\end{split}
\end{equation}
\end{lemma}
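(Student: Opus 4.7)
The plan combines three ingredients: (a) the $\alpha$-geodesic convexity of $\Lfun_v+\alpha\H$ along $W_{2,v}$-geodesics (Remark~\ref{rmk:Jp_convexity}); (b) the Euler-Lagrange identity for the velocity minimizer from Lemma~\ref{lem:euler-lagrange;Jp}; and (c) a second-order Taylor expansion for the position update, controlled by the Lipschitz constants of $\nabla_x V$ and $\nabla_x W$.

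First I would apply Proposition~\ref{prop:pslope_est} with $\E=\Lfun_v+\alpha\H$ and $\lambda=\alpha$ at $\mu=\mu_{ih}^N$, whose velocity minimizer is $\bar\mu_{(i+1)h}^N$. Rearranging the middle inequality using $\E_{h,v}(\mu_{ih}^N)=\E(\bar\mu_{(i+1)h}^N)+\tfrac{1}{2h}W_{2,v}^2(\mu_{ih}^N,\bar\mu_{(i+1)h}^N)$ gives the base estimate
\[\Bigl(1+\tfrac{\alpha h}{2}\Bigr)\frac{W_{2,v}^2(\mu_{ih}^N,\bar\mu_{(i+1)h}^N)}{h}\leq \Lfun_v(\mu_{ih}^N)-\Lfun_v(\bar\mu_{(i+1)h}^N)+\alpha\bigl[\H(\mu_{ih}^N)-\H(\bar\mu_{(i+1)h}^N)\bigr].\]

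The crucial step is to evaluate the $\Lfun_v$ difference \emph{exactly} rather than bound it via Cauchy-Schwarz. Abbreviating $F=\nabla_x V+\nabla_x W\ast\Pi^x\bar\mu_{(i+1)h}^N$, the Euler-Lagrange identity reads
\[T_{\bar\mu^x}^{\mu^x}(v)-v = h\bigl(F(x)+\alpha v+\alpha\nabla_v\log\bar\rho_{(i+1)h}^N(x,v)\bigr).\]
Since $F$ depends only on $x$, substituting into $\Lfun_v(\mu_{ih}^N)-\Lfun_v(\bar\mu_{(i+1)h}^N)=\int F\cdot(T_{\bar\mu^x}^{\mu^x}-\id_v)\,d\bar\mu_{(i+1)h}^N$ and using the orthogonality $\langle F,\nabla_v\log\bar\rho_{(i+1)h}^N\rangle_{L^2(\bar\mu_{(i+1)h}^N)}=0$ from Remark~\ref{rmk:dvlogrho} produces the exact identity
\[\Lfun_v(\mu_{ih}^N)-\Lfun_v(\bar\mu_{(i+1)h}^N)=h\|F\|^2_{L^2(\bar\mu_{(i+1)h}^N)}+\alpha h\Lfun_v(\bar\mu_{(i+1)h}^N).\]

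Finally I would handle the position step via $\mu_{(i+1)h}^N=(\Phi_h)_\#\bar\mu_{(i+1)h}^N$ with $\Phi_h(x,v)=(x+hv,v)$ (Proposition~\ref{prop:LxJKO;Phih}): since $\det\nabla\Phi_h\equiv 1$ the entropies agree, so a second-order Taylor expansion of $V$ and $W$ with the $M$-Lipschitz bound (using the symmetry $\nabla_x W(-z)=-\nabla_x W(z)$ to reduce the resulting $W$-double integral to a single-integral first-order term matching the interaction part of $h\Lfun_v(\bar\mu_{(i+1)h}^N)$) yields
\[\H(\mu_{(i+1)h}^N)-\H(\bar\mu_{(i+1)h}^N)=h\Lfun_v(\bar\mu_{(i+1)h}^N)+R,\qquad |R|\leq Mh^2\|\id_v\|^2_{L^2(\bar\mu_{(i+1)h}^N)}.\]
Writing $\H(\mu_{ih}^N)-\H(\bar\mu_{(i+1)h}^N)=[\H(\mu_{ih}^N)-\H(\mu_{(i+1)h}^N)]-[h\Lfun_v(\bar\mu_{(i+1)h}^N)+R]$ and substituting both identities into the base estimate yields the stated inequality with the constants lining up exactly (the two $\alpha h\Lfun_v(\bar\mu_{(i+1)h}^N)$ contributions combining into $2\alpha h\Lfun_v(\bar\mu_{(i+1)h}^N)$). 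The main technical point to verify rigorously is the orthogonality used in the $\Lfun_v$ identity; this requires $F\in L^2(\bar\mu_{(i+1)h}^N)$ (from Lipschitzness together with Remark~\ref{rmk:W_assump}) and $\nabla_v\log\bar\rho_{(i+1)h}^N\in L^2(\bar\mu_{(i+1)h}^N)$, the latter of which holds because the Euler-Lagrange equation forces $\bar\mu_{(i+1)h}^N\in D(|\partial_v\U|)$ via Theorem~\ref{thm:psubdiff;U}.
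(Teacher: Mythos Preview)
Your proposal is correct and follows essentially the same route as the paper: the paper also combines the slope estimate from Proposition~\ref{prop:pslope_est} (yielding your ``base estimate''), the exact identity $\Lfun_v(\mu_{ih}^N)-\Lfun_v(\bar\mu_{(i+1)h}^N)=h\|F\|_{L^2(\bar\mu_{(i+1)h}^N)}^2+\alpha h\Lfun_v(\bar\mu_{(i+1)h}^N)$ obtained from the Euler--Lagrange equation together with the orthogonality \eqref{eq:xidvlogrho=0}, and the Taylor bound $\H(\mu_{(i+1)h}^N)-\H(\bar\mu_{(i+1)h}^N)\leq h\Lfun_v(\bar\mu_{(i+1)h}^N)+Mh^2\|\id_v\|_{L^2(\bar\mu_{(i+1)h}^N)}^2$ for the position step. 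One trivial sign slip: your displayed substitution should read $\H(\mu_{ih}^N)-\H(\bar\mu_{(i+1)h}^N)=[\H(\mu_{ih}^N)-\H(\mu_{(i+1)h}^N)]+[h\Lfun_v(\bar\mu_{(i+1)h}^N)+R]$ (with a plus, not a minus), which is in fact what your parenthetical remark about the two $\alpha h\Lfun_v$ contributions combining into $2\alpha h\Lfun_v$ already presupposes.
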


\begin{proof}
As $\nabla_x V,\nabla_x W$ are $M$-Lipschitz, $\nabla_x W\ast \Pi^x\nu$ is also $M$-Lipschitz for any $\nu\in\P_2(\R^{2d})$. Furthermore, $\mu\mapsto\U(\mu)$ is invariant under the position update. Thus
    \begin{align*}
    \H(\mu_{(i+1)h}^N)-\H(\bar\mu_{(i+1)h}^N)&=(\V+\W)(\mu_{(i+1)h}^N)-(\V+\W)(\bar\mu_{(i+1)h}^N)
    \\
    &\leq h\Lfun_v(\bar\mu_{(i+1)h}^N)+Mh^2\|\id_v\|_{L^2(\bar\mu_{(i+1)h}^N)}.
    \end{align*}
Moreover, as $\Lfun_v+\alpha\H$ is $\alpha$-convex along $W_{2,v}$-godesics, the slope estimates \eqref{eq:partial_slope_est} imply
\begin{align*}
    \alpha \H(\bar\mu_{(i+1)h}^N) + \Lfun_v(\bar\mu_{(i+1)h}^N)+\left(1+\frac{\alpha h}{2}\right)\frac{W_{2,v}^2(\mu_{ih}^N,\bar\mu_{(i+1)h}^N)}{h}\leq \alpha\H(\mu_{ih}^N)+\Lfun_v(\mu_{ih}^N).
\end{align*}
By the Euler-Lagrange equation \eqref{eq:euler-lagrange;Jp} and \eqref{eq:xidvlogrho=0} we have
\begin{equation}\label{eq:Lp_diff;eq}
\Lfun_v(\mu_{ih}^N-\bar\mu_{(i+1)h}^N)=h\|\nabla_x V+\nabla_x W\ast\Pi^x\bar\mu_{(i+1)h}^N\|_{L^2(\bar\mu_{(i+1)h}^N)}^2+\alpha h\Lfun_v(\bar\mu_{(i+1)h}^N).
\end{equation}
Thus we can combine the estimates to obtain
\begin{align*}
    &\left(1+\frac{\alpha h}{2}\right)\frac{W_{2,v}^2(\mu_{ih}^N,\bar\mu_{(i+1)h}^N)}{h}
    \leq \alpha(\H(\mu_{ih}^N)-\H(\bar\mu_{(i+1)h}^N))+\Lfun_v(\mu_{ih}^N)-\Lfun_v(\bar\mu_{(i+1)h}^N)  \\
    &\leq \alpha(\H(\mu_{ih}^N)-\H(\mu_{(i+1)h}^N))+\Lfun_v(\mu_{ih}^N-\bar\mu_{(i+1)h}^N)
    + \alpha h\Lfun_v(\bar\mu_{(i+1)h}^N)+ \alpha M h^2\|\id_v\|_{L^2(\bar\mu_{(i+1)h}^N)}^2 \\
    &= \alpha(\H(\mu_{ih}^N)-\H(\mu_{(i+1)h}^N))+h\|\nabla_x V+W\ast\Pi^x\bar\mu_{(i+1)h}^N\|_{L^2(\bar\mu_{(i+1)h}^N)}^2\\
    &\qquad+2\alpha h\Lfun_v(\bar\mu_{(i+1)h}^N)+M\alpha h^2\|\id_v\|_{L^2(\bar\mu_{(i+1)h}^N)}^2,
\end{align*}
which is precisely \eqref{eq:energy_decay;ubd}. 
\end{proof}

Now we are ready to establish the existence of minimizing movements starting from initial data in $D(\H)$ when $\H$ is bounded from below.
\begin{theorem}[Compactness of discrete solutions with finite initial Hamiltonian]\label{thm:disc_sol;cpct2}
   Let $\nabla_x V,\nabla_x W$ be Lipschitz, and suppose $\inf_{\P_2(\R^{2d})} \H>-\infty$. Fix $T>0$, and for each $N\in\N$ let $h_N=T/N$. Define the constant interpolation $(\mu_t^N)_{t\in [0,T]}$ between the discrete solutions $(\mu_{ih_N}^N)_{i=1}^N$ of \eqref{eq:SIE} by
    \[
        \mu_t^N=\mu_{ih_N}^N \text{ for } t\in[ih_N,(i+1)h_N)
    \]
    where the initial data $\mu_0^N$ converges narrowly to $\mu_0\in\P_2(\R^{2d})\cap D(\H)$ as $N\nearrow\infty$.
    
    Then there exists $(\mu_t)_{t\in[0,T]}\in AC^2([0,T];\P_2(\R^{2d}))$ such that over a subsequence
    \begin{equation}\label{eq:ptws_narrow convergence}
        \mu_t^N\rightharpoonup \mu_t \text{ narrowly for all } t\in[0,T].
    \end{equation}
\end{theorem}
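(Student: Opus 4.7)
The plan is to apply the refined Arzel\`a-Ascoli theorem (Proposition~\ref{prop:ArzelaAscoli-weak}) with $\sigma$ taken as the narrow topology on $\P_2(\R^{2d})$ and $m=W_2$. For this I need (a) a narrowly sequentially compact set $K$ containing every iterate $\mu_{ih_N}^N$, which by Prokhorov reduces to a uniform bound on their second moments, and (b) the equicontinuity estimate $\limsup_N W_2(\mu_s^N,\mu_t^N)\leq \omega(s,t)$ with $\omega(s,t)\to 0$ as $s\to t$.

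First I would establish the \emph{a priori} bounds. Assuming the initial data satisfy $E(\mu_0^N):=|\partial_x\V|^2(\mu_0^N)+|\partial_x\W|^2(\mu_0^N)+\|\id_v\|_{L^2(\mu_0^N)}^2\leq C_0$ uniformly in $N$, a discrete Gr\"onwall argument applied to \eqref{eq:WVpmoment_growthbd} yields $\sup_{N,i}E(\mu_{ih_N}^N)\leq C$ once $h_N\leq 1/(2M)$. This controls $\|\id_v\|_{L^2(\mu_{ih}^N)}$ uniformly, and iterating the position update $\id_x\circ\Phi_h=\id_x+h\id_v$ produces a uniform second-moment bound. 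Next, summing the energy estimate \eqref{eq:energy_decay;ubd} over $i=0,\dots,N-1$: the telescoping contribution $\alpha\sum_i(\H(\mu_{ih}^N)-\H(\mu_{(i+1)h}^N))=\alpha(\H(\mu_0^N)-\H(\mu_T^N))$ is bounded above by $\alpha\H(\mu_0^N)-\alpha\inf_{\P_2}\H<\infty$, while each of the remaining three error terms on the right of \eqref{eq:energy_decay;ubd} contains a factor of $h$ or $h^2$ multiplied by a quantity uniformly controlled by the previous step, so their sum over $i$ is $O(T)$. Combining with the identity $\mu_{(i+1)h}^N=(\Phi_h)_\#\bar\mu_{(i+1)h}^N$ from Proposition~\ref{prop:LxJKO;Phih} (which gives $W_2(\bar\mu_{(i+1)h}^N,\mu_{(i+1)h}^N)\leq h\|\id_v\|_{L^2(\bar\mu_{(i+1)h}^N)}$) yields
\[
\sum_{i=0}^{N-1}\frac{W_2^2(\mu_{ih}^N,\mu_{(i+1)h}^N)}{h}\leq C.
\]

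Equicontinuity then follows by Cauchy-Schwarz: for $0\leq s<t\leq T$ with $\mu_s^N=\mu_{i_sh}^N$, $\mu_t^N=\mu_{i_th}^N$, and $v_k:=W_2(\mu_{(k-1)h}^N,\mu_{kh}^N)$,
\[
W_2(\mu_s^N,\mu_t^N)\leq \sum_{k=i_s+1}^{i_t}v_k\leq \bigl(h(i_t-i_s)\bigr)^{1/2}\left(\sum_{k=1}^{N}\frac{v_k^2}{h}\right)^{1/2}\leq C\bigl(|t-s|+h\bigr)^{1/2},
\]
so $\omega(s,t)=C|t-s|^{1/2}$ suffices, and Proposition~\ref{prop:ArzelaAscoli-weak} yields a narrow limit curve $(\mu_t)_{t\in[0,T]}$ along a subsequence. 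To upgrade the resulting $W_2$-continuity to $AC^2$, I set $m^N(t):=v_k/h$ for $t\in[(k-1)h,kh)$; these are uniformly bounded in $L^2([0,T])$ by the previous display. Along a further subsequence $m^N\rightharpoonup m\in L^2$ weakly, and since $\sum_{k=i_s+1}^{i_t}v_k=\int_{i_sh}^{i_th}m^N\,dr=\int_s^t m^N\,dr+o(1)$ (the boundary corrections are $O(h^{1/2})$ by Cauchy-Schwarz), narrow lower semicontinuity of $W_2$ gives $W_2(\mu_s,\mu_t)\leq \int_s^t m\,dr$.

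The main obstacle is securing the uniform \emph{a priori} control on $E(\mu_{ih}^N)$: the bare narrow convergence $\mu_0^N\rightharpoonup \mu_0$ with $\H(\mu_0)<\infty$ does not obviously yield uniform bounds on $E(\mu_0^N)$, so the argument implicitly relies on a stronger assumption on the initial data (e.g.\ $W_2$-convergence together with $\H(\mu_0^N)\to \H(\mu_0)$), standard in the minimizing-movement literature. A secondary technicality is the careful handling of boundary corrections when passing from the discrete inequality to $W_2(\mu_s,\mu_t)\leq \int_s^t m\,dr$ and ensuring they vanish as $h\to 0$.
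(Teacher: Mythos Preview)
Your proposal is correct and follows essentially the same route as the paper: sum the energy inequality \eqref{eq:energy_decay;ubd} together with the position-update contribution to obtain $\sum_i W_2^2(\mu_{ih}^N,\mu_{(i+1)h}^N)/h\leq C$, introduce the piecewise-constant $m^N\in L^2([0,T])$, extract a weak $L^2$ limit, and apply Proposition~\ref{prop:ArzelaAscoli-weak} with lower semicontinuity of $W_2$ to conclude $AC^2$. The paper controls the error terms on the right of \eqref{eq:energy_decay;ubd} exactly as you describe, via \eqref{eq:WVpmoment_growthbd} and a discrete Gr\"onwall argument.

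The concern you flag about the initial data is well taken, and in fact the paper is no more careful than you on this point: its proof writes $\H(\mu_0)-\H(\mu_T^N)$ where $\H(\mu_0^N)-\H(\mu_T^N)$ is meant, and it appeals to the growth bounds \eqref{eq:WVpmoment_growthbd} without explicitly justifying $\sup_N E(\mu_0^N)<\infty$. Since $\nabla_x V,\nabla_x W$ are Lipschitz, $E(\mu_0^N)$ is controlled by the second moment of $\mu_0^N$, so what is really needed is $\sup_N\int|z|^2\,d\mu_0^N<\infty$ and $\sup_N\H(\mu_0^N)<\infty$; bare narrow convergence does not guarantee either. The paper later (proof of Theorem~\ref{thm:main}) invokes ``convergence of $\mu_0^N$ to $\mu_0$ in $\P_2(\R^{2d})$'' to get the second-moment bound, suggesting $W_2$-convergence of the initial data is the intended hypothesis. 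Your diagnosis that a slightly stronger assumption on $\mu_0^N$ is implicitly in force is accurate.
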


\begin{proof}
Let $\Lip \nabla_x V,\Lip \nabla_x W\leq M$. As
\[\frac{1}{h}W_{2,x}^2(\bar\mu_{(i+1)h}^N,\mu_{(i+1)h}^N)=h\|\id_v\|_{L^2(\bar\mu_{(i+1)h}^N)}^2,\]
adding $\frac{1}{h}W_{2,x}^2(\bar\mu_{(i+1)h}^N,\mu_{(i+1)h}^N)$ to \eqref{eq:energy_decay;ubd} and summing over $i=0,\cdots,N-1$ we obtain
\begin{align*}
    \sum_{i=0}^{N-1}\frac{W_2^2(\mu_{ih}^N,\mu_{(i+1)h}^N)}{2h}&\leq \sum_{i=0}^{N-1}\frac{W_{2,v}^2(\mu_{ih}^N,\bar\mu_{(i+1)h}^N)}{h}+\frac{W_{2,x}^2(\bar\mu_{(i+1)h}^N,\mu_{(i+1)h}^N)}{h} \\
    &\leq \alpha(\H(\mu_0)-\H(\mu_{T}^N))
    +\sum_{i=0}^{N-1} h\|\nabla_x V+W\ast\Pi^x\bar\mu_{(i+1)h}^N\|_{L^2(\bar\mu_{(i+1)h}^N)}^2\\
    &+2\alpha h\Lfun_v(\bar\mu_{(i+1)h}^N)+h(1+M\alpha h)\|\id_v\|_{L^2(\bar\mu_{(i+1)h}^N)}^2.
\end{align*}
As $\inf_{\nu\in\P_2(\R^{2d})}\H(\nu)>-\infty$, we have $\H(\mu_0)-\H(\mu_T^N)\leq  C$ for some constant $C$ depending only on $\H$ and $\mu_0$. Additionally note that remaining terms on the right-hand side is uniformly bounded in $N$ by the growth bounds \eqref{eq:WVpmoment_growthbd} and discrete Gronwall's inequality.

Now we proceed as in the proof of \cite[Corollary 3.3.4]{AGS}. Let $m_N\in L^2([0,T])$ be defined by
\begin{equation}\label{def:mN}
    m_N(t)=\frac{W_2(\mu_{ih_N}^N,\mu_{(i+1)h_N}^N)}{h_N} \text{ for } t\in[ih_N,(i+1)h_N).
\end{equation}
In other words, $m_N$ is the metric derivative of the piecewise-geodesic interpolation constructed from the discrete solutions $(\mu_{ih_N}^N)_{i=0}^N$. Then by the previous estimate we have that $\|m_N\|_{L^2([0,T])}$ is bounded uniformly in $N$, thus we can find a weak subsequential limit $m_{\infty}\in L^2([0,T])$.

Letting 
\begin{equation}\label{def:i_vm}
    i_-(N,r)=\lfloor Nr/T-1\rfloor,i_+(N,r)=\lceil Nr/T-1\rceil,
\end{equation}
we have
\begin{equation}\label{eq:W2muN_distbd}
    W_2(\mu_s^N,\mu_t^N)\leq \int_{i_-(N,s)}^{i_+(N,r)} m_N(r)\,dr.
\end{equation}
In particular, by H\"older's inequality
\[W_2(\mu_0,\mu_{T}^N) \leq \int_0^T m_N(r)\,dr \leq \sqrt{T} \|m_N\|_{L^2([0,T])}\]
is uniformly bounded, thus $\mu_t^N$ remain in sufficiently large Wasserstein ball around $\mu_0$ which is relatively compact with respect to the narrow topology.

Further, to see equicontinuity, note that for any $0<s<t<T$
\begin{align*}
    \limsup_{N\rightarrow\infty} W_2(\mu_s^N,\mu_t^N)\leq \limsup_{N\rightarrow\infty}\int_{i_-(N,s)}^{i_+(N,r)} m_N(r)\,dr \leq \int_s^t m_\infty(r)\,dr.
\end{align*}
Thus we apply the weak Arzel\`a-Ascoli theorem (Proposition~\ref{prop:ArzelaAscoli-weak}) to obtain a limiting curve $(\mu_t)_{t\in[0,T]}$ satisfying \eqref{eq:ptws_narrow convergence}. 

Finally $(\mu_t)_{t\in[0,T]}\in AC^2([0,T];\P_2(\R^{2d}))$ follows from \eqref{eq:W2muN_distbd}, weak convergence of $m_N$ to $m_\infty$ in $L^2([0,T])$, and the lower semicontnuity of $W_2$ with respect to the narrow convergence.
\end{proof} 

We state an alternative compactness results that only requires that the metric slope in $v$ is finite at the initial datum, while delaying the proof to Appendix~\ref{app:cpct_alt}. As we will see in Remark~\ref{rmk:mu0_xsingular}, this assumption allows for singularities in the $x$-variable unlike that of Theorem~\ref{thm:disc_sol;cpct2}.
\begin{theorem}[Compactness of discrete solutions with finite metric slope in $v$]\label{thm:disc_sol;subseq_conv}
    Let $\nabla_x V,\nabla_x W$ be Lipschitz. Fix $T>0$, and for each $N\in\N$ let $h_N=T/N$.
    Define the constant interpolation $(\mu_t^N)_{t\in [0,T]}$ between discrete solutions $(\mu_{ih_N}^N)_{i=1}^N$ of \eqref{eq:SIE} by
    \begin{equation}\label{def:const_interpol}
        \mu_t^N=\mu_{ih_N}^N \text{ for } t\in[ih_N,(i+1)h_N).
    \end{equation}
    where the initial data $\mu_0^N$ satisfy
    \begin{equation}\label{ass:mu0N}
        \sup_{N\in\N} |\partial_v(\Lfun_v+\alpha\H)|(\mu_0^N),\quad \sup_{N,\tilde N\in\N}W_{2,v}(\mu_0^N,\mu_0^{\tilde N})<+\infty.
    \end{equation}
        
    Then there exists a Lipschitz curve $(\mu_t)_{t\in[0,T]}\in AC([0,T];\P_2(\R^{2d}))$ such that over a subsequence
    \[
        \mu_t^N\rightharpoonup \mu_t \text{ narrowly for all } t\in[0,T].
    \]
\end{theorem}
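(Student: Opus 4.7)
The plan is to establish uniform-in-$(i,N)$ bounds on the three quantities
\[
a_i^N := |\partial_v(\Lfun_v+\alpha\H)|(\mu_{ih_N}^N), \quad b_i^N := \|\id_v\|_{L^2(\mu_{ih_N}^N)}, \quad c_i^N := \|\id_x\|_{L^2(\mu_{ih_N}^N)},
\]
and to combine them via the slope/distance estimate of Proposition~\ref{prop:pslope_est} into a uniform Lipschitz-in-time estimate for the interpolation in $W_2$; the refined Arzel\`a-Ascoli theorem (Proposition~\ref{prop:ArzelaAscoli-weak}) then extracts the desired pointwise narrow subsequential limit.

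First I would check that these quantities are uniformly bounded at time zero. The hypothesis $\sup_{N,\tilde N}W_{2,v}(\mu_0^N,\mu_0^{\tilde N})<\infty$ forces all $\mu_0^N$ to share a common $x$-marginal $\sigma_0\in\P_2(\R^d)$, so $c_0^N=\|\id_x\|_{L^2(\sigma_0)}$ is independent of $N$; applying Minkowski's inequality to any coupling in $\Gamma_o^v(\mu_0^N,\mu_0^0)$ bounds $b_0^N$; $a_0^N$ is bounded by assumption; and the linear growth of $\nabla_x V,\nabla_x W$ inherited from Lipschitz continuity yields uniform bounds on $|\partial_x\V|(\mu_0^N)$ and $|\partial_x\W|(\mu_0^N)$ as well.

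Next I would propagate these bounds. For the moments and $x$-slopes, estimate \eqref{eq:WVpmoment_growthbd} of Lemma~\ref{lem:dWVip_growthbd} together with discrete Grönwall yields $\sup_{i,N}E(\mu_{ih_N}^N)\leq C$, where $E:=|\partial_x\V|^2+|\partial_x\W|^2+\|\id_v\|_{L^2}^2$; in particular $b_i^N$ is uniformly bounded. For the $v$-partial slope, the contraction \eqref{eq:partial_slope_est} along the velocity step (valid because $\Lfun_v+\alpha\H$ is $\alpha$-convex along $W_{2,v}$-geodesics) combined with the pushforward bound \eqref{eq:dvE_xstep;ubd} from Lemma~\ref{lem:psubdiff;velfunc} along the position step gives
\[
a_{i+1}^N \leq \tfrac{1}{1+\alpha h_N}\,a_i^N + 3M h_N\,\|\id_v\|_{L^2(\bar\mu_{(i+1)h_N}^N)}.
\]
Since \eqref{eq:partial_slope_est} and Minkowski's inequality yield $\|\id_v\|_{L^2(\bar\mu_{(i+1)h_N}^N)} \leq b_i^N + h_N a_i^N$, this becomes a recursion of the form $a_{i+1}^N \leq (1+C h_N)a_i^N + C' h_N$ with $b_i^N$ already controlled, and discrete Grönwall closes the loop. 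Feeding the uniform bounds into
\[
W_2(\mu_{ih_N}^N,\mu_{(i+1)h_N}^N) \leq W_{2,v}(\mu_{ih_N}^N,\bar\mu_{(i+1)h_N}^N) + W_{2,x}(\bar\mu_{(i+1)h_N}^N,\mu_{(i+1)h_N}^N) \leq h_N a_i^N + h_N \|\id_v\|_{L^2(\bar\mu_{(i+1)h_N}^N)} \leq C'' h_N
\]
gives $W_2(\mu_s^N,\mu_t^N)\leq C''(|t-s|+h_N)$ for all $0\leq s\leq t\leq T$. The curves $(\mu_t^N)$ then remain in a fixed Wasserstein ball around $\mu_0^0$, which by Prokhorov's theorem is narrowly precompact, so Proposition~\ref{prop:ArzelaAscoli-weak} extracts a pointwise narrow subsequential limit $\mu_t$; narrow lower semicontinuity of $W_2$ transfers the Lipschitz bound to $(\mu_t)_{t\in[0,T]}$.

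The principal subtlety — which is what distinguishes this statement from Theorem~\ref{thm:disc_sol;cpct2} — is that $\sup_N|\partial_v(\Lfun_v+\alpha\H)|(\mu_0^N)<\infty$ does not force $\H(\mu_0^N)<\infty$, since the common $x$-marginal $\sigma_0$ may be singular with respect to Lebesgue, as with the product initial data of Remark~\ref{rmk:mu0_xsingular}. The remedy, following Remark~\ref{rmk:Jp_alt}, is to reformulate $J_h^v$ in terms of the conditional Hamiltonian $\H_v$, so that the velocity step remains well-posed and the slope and convexity inequalities used above are literally those for $\Lfun_v+\alpha\H_v$; the position-update pushforward by $\Phi_h$ is then handled via Corollary~\ref{cor:psubdiff;UPhih}. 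With this bookkeeping the argument proceeds exactly as sketched, with $\H$ replaced throughout by $\H_v$, and the slope and moment estimates carry over without further modification.
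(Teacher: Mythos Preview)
Your approach is essentially the paper's: control $|\partial_v(\Lfun_v+\alpha\H)|(\mu_{ih_N}^N)$ and $\|\id_v\|_{L^2(\mu_{ih_N}^N)}$ via recursions built from \eqref{eq:partial_slope_est} and \eqref{eq:dvE_xstep;ubd}, combine into a uniform $W_2$-Lipschitz bound $W_2(\mu_s^N,\mu_t^N)\leq C(|t-s|+h_N)$, and apply Proposition~\ref{prop:ArzelaAscoli-weak}; the only difference is that the paper couples the slope and $v$-moment in a single recursion (Proposition~\ref{prop:equicts;discrete_sol}) instead of first invoking Lemma~\ref{lem:dWVip_growthbd}, which is purely organizational. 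Your final paragraph is unnecessary: the theorem presupposes that the discrete solutions of \eqref{eq:SIE} exist, which via \eqref{def:JpJx} already forces $\mu_0^N\in D(\Lfun_v+\alpha\H)=D(\U)$, so the common $x$-marginal is automatically absolutely continuous and no reformulation in terms of $\H_v$ is needed (in Remark~\ref{rmk:mu0_xsingular} it is the limit $\mu_0$, not the approximating $\mu_0^N$, whose $x$-marginal may be singular).
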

\begin{remark}[Initial data singular in the $x$-variable]\label{rmk:mu0_xsingular}
    Let $\mu_0=\sigma\otimes\upsilon$ with $\sigma,\upsilon\in\P_2(\R^d)$. Suppose further $\upsilon\ll\Leb^d$ with density $g$ that has finite Fisher information.
    Let $\eta:\R^d\rightarrow\R$ be any standard convolution kernel and $\eta_\eps(x)=\eps^{-d}\eta(x/\eps)$. Then $\mu_0^N:=(\sigma\ast\eta_{1/N})\otimes\upsilon$ has $\Leb^{2d}$ density $\rho_0^N(x,v)=f_\eps(x)g(v)$ for some $\int_{\R^d} f_\eps(x)\,dx=1$. Thus
\begin{align*}
    &|\partial_v\U|^2(\mu_0^N)=\int_{\R^{2d}} \left|\frac{\nabla_v \rho_0^N}{\rho_0^N}(x,v)\right|^2\,d\mu_0^N(x,v) 
    =\int_{\R^{2d}} \left|\frac{f_\eps(x)\nabla_v g(v)}{f_\eps(x)g(v)}\right|^2\,f_\eps(x)g(v)\,dxdv\\
    &=\int_{\R^{d}}\left|\frac{\nabla_v g(v)}{g(v)}\right|^2 g(v)dv \int_{\R^d} f_\eps(x)\,dx  
    =\int_{\R^{d}}\left|\frac{\nabla_v g(v)}{g(v)}\right|^2 g(v)dv.
\end{align*}
As $\mu_0^N\rightharpoonup \mu_0$ narrowly as $N\rightarrow\infty$, by lower semicontinuity of $W_2$ (see \cite[Remark 6.12]{Vil09} for instance, where they refer to the narrow convergence as weak convergence) we have $W_2(\mu_0^N,\mu_0)\xrightarrow[]{N\rightarrow \infty} 0$ , thus the condition \eqref{ass:mu0N} is satisfied and $\lim_{t\searrow 0} \mu_t = \sigma\otimes\upsilon$. In light of Theorem~\ref{thm:disc_sol;subseq_conv}, this implies the existence of weak solutions of the Vlasov-Fokker-Planck equation with initial datum of product form with arbitrary $x$-marginal, as long as the $v$-marginal of the initial datum is sufficiently regular.
\end{remark}

We note that analogous compactness results can be deduced when the entropy functional is replaced with other internal energy functionals geodesically convex in the Wasserstein space.
\begin{remark}[Generalization to other internal energies]\label{rmk:generalU}
    Consider $\U:\P_2(\R^{2d})\rightarrow(-\infty,+\infty]$ of the form
    \begin{align*}
        \U(\mu)= \int_{\R^{2d}} U(\rho)\,d\Leb^{2d} \text{ if } \mu=\rho\Leb^{2d} \text{ and } +\infty \text{ otherwise }
    \end{align*}
    where $U$ satisfies the conditions (cf. \cite[Example 9.3.6]{AGS})
    \begin{equation}\label{cond:U;cvxmonotone}
        s\mapsto s^{2d} U(s^{-2d}) \text{ is convex and nondecreasing in } (0,+\infty)
    \end{equation}
    and
    \begin{equation}\label{cond:U;growth}
        U(0)=0,\;\liminf_{s\searrow 0}\frac{U(s)}{s^\beta}>-\infty \text{ for some } \beta>\frac{2d}{2d+2}.
    \end{equation}
    For instance $U(s)=s^\beta/(\beta-1)$ for $\beta>1$ satisfies these conditions, and it is well-known that the corresponding functional is geodesically convex in the Wasserstein space. While we have only established convexity of $\U$ for $U(s)=s\log s$ in Lemma~\ref{lem:pdirder;U} and characterizations of subdifferentials in Theorem~\ref{thm:psubdiff;U} for simplicity, the proofs of these results depend only on the conditions \eqref{cond:U;cvxmonotone} and \eqref{cond:U;growth}, hence analogous results can be derived for more general internal energies with appropriate modifications; see \cite[Chapter 10.4.3]{AGS}.

    Furthermore, internal energy functionals are generally invariant under the pushforward with respect to the map $(x,v)\mapsto (x+hv,v)$, hence the same arguments in this section and Appendix~\ref{app:cpct_alt} generalize to a more general class of internal energies.
\end{remark}

\section{Convergence of the discrete solutions to the weak solution of the PDE}\label{sec:MMS=VFP}   
We now show that minimizing movements obtained as a limiting curve of discrete solutions to \eqref{eq:SIE} are distributional solutions of the Vlasov-Fokker-Planck equation \eqref{eq:VFP}. After obtaining time-discrete PDE satisfied by the discrete variational problems in Lemma~\ref{lem:phi_dmu;discrete_time}, we take the limit as time step vanishes in Proposition~\ref{prop:conv_PDE} to show that minimizing movements are solutions of VFP. Combined with the results from Section~\ref{sec:SIE;cpct}, this allows us to obtain our main result, Theorem~\ref{thm:main}, that our discrete solutions converge to the solution of VFP.

Throughout this section, we will use the big-O notation $\O$ very precisely, to mean that $f\in\O(g)$ if $|f|\leq g$ without any implicit constant.
\medskip

First we use the Euler-Lagrange equations from Lemma~\ref{eq:euler-lagrange;Jp} associated to the velocity update to deduce the time-discrete PDE satisfied by the discrete variational problems.
\begin{lemma}[Time-discrete PDE]\label{lem:phi_dmu;discrete_time}
Let $V,W\in C^1(\R^d)$ and $W$ satisfy \eqref{eq:W_ass}. let $(\mu_{ih})_{i=1}^N$ be the discrete solutions to \eqref{eq:SIE} with $\mu_0^N\in D(\Lfun_v+\alpha\H)$. Then for any $\varphi\in C_c^\infty(\R^{2d})$ and $i=0,\cdots,N-1$
\begin{equation}\label{eq:phi_dmu;discrete_time}
\begin{split}
    \frac1h \int_{\R^{2d}} \varphi\,d(\mu_{(i+1)h}^N-\mu_{ih}^N)
    &= \int_{\R^{2d}} \begin{pmatrix}
            \id_v \\ -\nabla_x V-(\nabla_x W\ast\Pi^x\bar\mu_{(i+1)h}^N) - \alpha \id_v
        \end{pmatrix}\cdot \begin{pmatrix}
            \nabla_x \varphi \\ \nabla_v \varphi
        \end{pmatrix}
    +\alpha\Delta_v \varphi\,d\bar\mu_{(i+1)h}^N \\
    &+  \frac1h W_{2,v}^2(\mu_{ih}^N,\bar\mu_{(i+1)h}^N)\O(\|\nabla_v^2 \varphi\|_\infty) + h \O(\|\nabla^2\varphi\|_\infty) \int_{\R^{2d}} |v|^2\,d\bar\mu_{(i+1)h}^N.
\end{split}
\end{equation}
\end{lemma}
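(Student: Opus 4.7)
The plan is to split the difference $\mu_{(i+1)h}^N - \mu_{ih}^N = (\mu_{(i+1)h}^N - \bar\mu_{(i+1)h}^N) + (\bar\mu_{(i+1)h}^N - \mu_{ih}^N)$ into the contributions from the position and velocity updates, and compute each by Taylor-expanding $\varphi$ and using the explicit structure of each step.

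First I would handle the position update using Proposition~\ref{prop:LxJKO;Phih}, which gives $\mu_{(i+1)h}^N = (\Phi_h)_\#\bar\mu_{(i+1)h}^N$ with $\Phi_h(x,v) = (x+hv,v)$. A second-order Taylor expansion yields $\varphi(x+hv,v) - \varphi(x,v) = hv\cdot\nabla_x\varphi(x,v) + r_1(x,v)$ with $|r_1(x,v)| \leq \tfrac12 h^2 |v|^2 \|\nabla_x^2\varphi\|_\infty$; integrating against $\bar\mu_{(i+1)h}^N$ and dividing by $h$ produces the $\int v\cdot\nabla_x\varphi\,d\bar\mu_{(i+1)h}^N$ term and the error $h\O(\|\nabla^2\varphi\|_\infty)\int|v|^2\,d\bar\mu_{(i+1)h}^N$.

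Next I would handle the velocity update. Since $\bar\mu_{(i+1)h}^N \in D(\U)$ is absolutely continuous, Proposition~\ref{prop:OTmap_partial} identifies the map $T(x,v) := (x, T_{\bar\mu_{(i+1)h}^{N,x}}^{\mu_{ih}^{N,x}}(v))$ with $T_\#\bar\mu_{(i+1)h}^N = \mu_{ih}^N$. A Taylor expansion in $v$ gives $\varphi(T(x,v)) - \varphi(x,v) = \nabla_v\varphi(x,v)\cdot(T_{\bar\mu_{(i+1)h}^{N,x}}^{\mu_{ih}^{N,x}}(v) - v) + r_2(x,v)$ with $|r_2(x,v)| \leq \tfrac12\|\nabla_v^2\varphi\|_\infty|T_{\bar\mu_{(i+1)h}^{N,x}}^{\mu_{ih}^{N,x}}(v) - v|^2$. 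Substituting the Euler--Lagrange equation of Lemma~\ref{lem:euler-lagrange;Jp},
\begin{equation*}
    T_{\bar\mu_{(i+1)h}^{N,x}}^{\mu_{ih}^{N,x}}(v) - v = h\bigl(\nabla_x V(x) + \nabla_x W\ast\Pi^x\mu_{ih}^N(x) + \alpha v + \alpha\nabla_v\log\bar\rho_{(i+1)h}^N(x,v)\bigr),
\end{equation*}
and using that $\Pi^x\mu_{ih}^N = \Pi^x\bar\mu_{(i+1)h}^N$ (the velocity update preserves the $x$-marginal), yields after division by $h$ the drift terms $-\int (\nabla_x V + \nabla_x W\ast\Pi^x\bar\mu_{(i+1)h}^N + \alpha v)\cdot\nabla_v\varphi\,d\bar\mu_{(i+1)h}^N$. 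The entropy contribution is then converted via the integration-by-parts identity \eqref{eq:dvlogrho;IBP} of Remark~\ref{rmk:dvlogrho} into $\alpha\int\Delta_v\varphi\,d\bar\mu_{(i+1)h}^N$. The $r_2$ remainder integrates to $\O(\|\nabla_v^2\varphi\|_\infty)W_{2,v}^2(\mu_{ih}^N,\bar\mu_{(i+1)h}^N)$, which after division by $h$ gives the remaining error term.

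The main obstacle, and the only step that is not purely a Taylor estimate, is justifying the integration by parts against the logarithmic gradient. This requires $\nabla_v\bar\rho_{(i+1)h}^N \in L^1(\R^{2d})$ together with $\nabla_v\log\bar\rho_{(i+1)h}^N \in L^2(\bar\mu_{(i+1)h}^N;\R^d)$. These are supplied by Theorem~\ref{thm:psubdiff;U}, applicable because $\bar\mu_{(i+1)h}^N \in D(|\partial_v\U|)$; this in turn follows from the optimality of $\bar\mu_{(i+1)h}^N$ in $J_h^v[\mu_{ih}^N]$ and the slope bound \eqref{eq:partial_slope;W2bound}, which forces finiteness of $|\partial_v(\Lfun_v + \alpha\H)|(\bar\mu_{(i+1)h}^N)$, hence of $|\partial_v\U|(\bar\mu_{(i+1)h}^N)$ after subtracting the $L^2$-bounded drift terms. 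Once this regularity is in hand, combining the two contributions and matching terms produces exactly \eqref{eq:phi_dmu;discrete_time}.
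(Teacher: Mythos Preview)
Your proposal is correct and follows essentially the same approach as the paper: split into position and velocity updates, Taylor-expand $\varphi$ for each, and identify the velocity contribution via the Euler--Lagrange equation of Lemma~\ref{lem:euler-lagrange;Jp} together with the integration-by-parts identity \eqref{eq:dvlogrho;IBP}. The only cosmetic difference is that the paper phrases the velocity step through an optimal coupling $\gamma_{ih}\in\Gamma_o^v(\mu_{ih}^N,\bar\mu_{(i+1)h}^N)$ and invokes the weak form \eqref{eq:euler-lagrange;Jp;test} directly, whereas you use the transport map (available since $\bar\mu_{(i+1)h}^N\in\P_2^r(\R^{2d})$) and the pointwise form \eqref{eq:euler-lagrange;Jp} followed by integration by parts; these are equivalent.
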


\begin{proof}
Let $\gamma_{ih}\in\Gamma_o^v(\mu_{ih}^N,\bar\mu_{(i+1)h}^N)$. Fix a test function $\varphi\in C_c^\infty(\R^{2d})$. Then by the Taylor expansion,
\begin{align*}
    \int_{\R^{2d}}\varphi\,d(\bar\mu_{(i+1)h}^N-\mu_{ih}^N)
    =\int_{\R^d}\iint_{\R^d\times\R^d}(w-v)\cdot\nabla_v\varphi(x,w)\,d\gamma_{ih}^x(v,w)\Pi^x\mu_{ih}^N(x) + E_0
\end{align*}
where $E_0$ is an error term satisfying
\begin{equation}\label{eq:E0;ubd}
    |E_0|\leq \|\nabla_v^2\varphi\|_\infty W_{2,v}^2(\mu_{ih}^N,\bar\mu_{(i+1)h}^N) 
    %\leq \left(\frac{h}{1+h}\right)^2\|\nabla_v^2\varphi\|_\infty |\partial_v(\Lfun_v+\alpha\H)|^2(\mu_{ih}^N).
\end{equation}
Letting $\xi_v(x,v)=\nabla_v\varphi(x,v)$ in \eqref{eq:euler-lagrange;Jp;test} and integrating by parts, we have
\begin{align*}
&\int_{\R^d} \iint_{\R^d\times\R^d} (w-v)\cdot\nabla_v\varphi(x,w)\,d\gamma_{ih}^x(v,w)\,\Pi^x\pi_{ih}^N(x)
\\
&\quad = -h \int_{\R^{2d}} [\nabla_x V(x)+ (\nabla_x W\ast \Pi^x\bar\mu_{(i+1)h}^N)(x)+ \alpha v]\cdot \nabla_v\varphi(x,v)- \alpha\Delta_v \varphi(x,v)\,d\bar\mu_{(i+1)h}^N(x,v).
\end{align*}
We can thus combine the estimates and divide both sides by $h$ to deduce
\begin{equation}\label{eq:PDE;diss;2step}
\begin{split}
    &\frac1h\int_{\R^{2d}} \varphi\,d(\bar\mu_{(i+1)h}^N-\mu_{ih}^N)\\
    &\quad= -\int_{\R^{2d}} [\nabla_x V(x)+ (\nabla_x W\ast \Pi^x\bar\mu_{(i+1)h}^N)(x)+ \alpha v]\cdot \nabla_v\varphi(x,v)-\alpha\Delta_v \varphi(x,v)\,d\bar\mu_{(i+1)h}^N(x,v) \\
    &\quad+  \frac1h W_{2,v}^2(\mu_{ih}^N,\bar\mu_{(i+1)h}^N)  \O(\|\nabla_v^2 \varphi\|_\infty).
\end{split}
\end{equation}

On the other hand,
\begin{align*}
    \frac1h\int_{\R^{2d}} \varphi\,d(\mu_{(i+1)h}^N-\bar\mu_{(i+1)h}^N)
    =\frac1h\int_{\R^{2d}}\varphi(x+hv)-\varphi(x)\,d\bar\mu_{(i+1)h}^N(x,v)\\
    =\int_{\R^{2d}}\nabla_x\varphi(x,v)\cdot v\,d\bar\mu_{(i+1)h}^N(x,v)
    + h \O(\|\nabla^2\varphi\|_\infty) \int_{\R^{2d}} |v|^2\,d\bar\mu_{(i+1)h}^N(x,v) .
\end{align*}
Thus we obtain \eqref{eq:phi_dmu;discrete_time} by combining the above with \eqref{eq:PDE;diss;2step}.

\end{proof}

Using arguments inspired by \cite[Theorem 5.1]{JKO98} we take the limit as time step vanishes to show that minimizing movements are indeed distributional solutions of VFP.
\begin{proposition}[Minimizing movements are solutions of VFP]\label{prop:conv_PDE}
Let $\nabla_x V, \nabla_x W$ be Lipschitz. Fix $T>0$ and the time step $h=h(N)=T/N$ for each $N\in\N$ and
let $(\mu_{ih_N})_{i=1}^N$ be the sequence produced by the algorithm \eqref{eq:SIE} with $\mu_0^N\in D(\Lfun_v+\alpha\H)$ satisfying
\begin{equation}\label{cond:2nd_moment;unifbd}
    M_2:=\sup_{N\in\N} \int_{\R^{2d}} |z|^2\,d\mu_0^N(z) <+\infty
\end{equation}
and for some $C$ independent of $N$
\begin{equation}\label{cond:AC2;bound}
    \sum_{i=0}^{N-1} W_{2,v}^2(\mu_{ih}^N,\bar\mu_{(i+1)h}^N) \leq Ch.
\end{equation}
Let $(\mu_t^N)_{t\in[0,T]}$ the corresponding piecewise constant interpolation defined as in \eqref{def:const_interpol}.

Suppose $(\mu_t)_{t\in[0,T]}\in AC([0,T];\P_2(\R^{2d}))$ satisfies, over a suitable subsequence in $N$ (which we do not relabel),
\begin{equation}\label{eq:mutN-ptwisenarrowconv}
\mu_t^N\rightharpoonup \mu_t \text{ narrowly as } N\rightarrow\infty \text{ for } t=0 \text{ and a.e. } t\in(0,T]
\end{equation}
Then $(\mu_t)_{t\in[0,T]}$ is a weak solution of the Vlasov-Fokker-Planck equation -- i.e. 
\begin{equation}\label{def:VFP_distributional_sol}
\begin{split}
    \int_{0}^T\int_{\R^{2d}}\left[\partial_t\varphi+\begin{pmatrix}
            \id_v \\ -\nabla_x V-(\nabla_x W\ast\Pi^x\mu_{t}) - \alpha \id_v
        \end{pmatrix}\cdot \begin{pmatrix}
            \nabla_x \varphi \\ \nabla_v \varphi
        \end{pmatrix}
    +\alpha\Delta_v \varphi\right]\,d\mu_{t}\,dt\\
    =\int_{\R^{2d}}\varphi(T,z)\,d\mu_T(z)-\int_{\R^{2d}}\varphi(0,z)\,d\mu_0(z) \quad \text{ for any } \varphi\in C_c^\infty([0,T]\times\R^{2d}).
\end{split}
\end{equation}

\end{proposition}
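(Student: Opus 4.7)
The plan is to apply the time-discrete PDE from Lemma~\ref{lem:phi_dmu;discrete_time} at step $i$ with the spatial test function $\varphi(ih_N,\cdot)\in C_c^\infty(\R^{2d})$, multiply by $h_N$, sum over $i=0,\ldots,N-1$, and pass to the limit $N\to\infty$. Before doing so I would check that $\mu_0^N\in D(\Lfun_v+\alpha\H)$ together with Theorem~\ref{thm:iter_solv} ensures every $\mu_{ih_N}^N,\bar\mu_{(i+1)h_N}^N$ lies in $D(\U)\subset D(\Lfun_v+\alpha\H)\cap D(|\partial_v\Lfun_v|)$, so repeated application of Lemma~\ref{lem:phi_dmu;discrete_time} is legitimate.

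For the left-hand side I would rewrite the telescoping sum $\sum_i \int \varphi(ih_N,\cdot)\,d(\mu_{(i+1)h_N}^N-\mu_{ih_N}^N)$ by Abel summation as
\[\int \varphi((N-1)h_N)\,d\mu_T^N - \int \varphi(0)\,d\mu_0^N - \int_0^{(N-1)h_N} \int \partial_t\varphi(s,\cdot)\,d\check\mu_s^N\,ds,\]
where $\check\mu_s^N:=\mu_{(i+1)h_N}^N$ for $s\in[ih_N,(i+1)h_N)$. Since $\check\mu_s^N=\mu_{s+h_N}^N$, the pointwise narrow convergence of $\mu_t^N$ at $t=0$ and a.e.\ $t\in(0,T]$, continuity of $(\mu_t)$ in $W_2$ (hence narrowly), the uniform second-moment bound furnished by \eqref{eq:WVpmoment_growthbd} combined with discrete Gronwall, and dominated convergence yield convergence of the LHS (along a further subsequence) to $\int \varphi(T)\,d\mu_T-\int\varphi(0)\,d\mu_0-\int_0^T\int \partial_t\varphi\,d\mu_t\,dt$; an additional extraction is used if necessary to ensure $\mu_T^N\rightharpoonup\mu_T$ at the endpoint.

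For the main term on the right-hand side, which reads $\int_0^T\int[\text{drift}\cdot\nabla\varphi+\alpha\Delta_v\varphi]\,d\bar\mu_t^N\,dt$ with $\bar\mu_t^N:=\bar\mu_{(i+1)h_N}^N$ on $[ih_N,(i+1)h_N)$, I would use hypothesis \eqref{cond:AC2;bound} to obtain $\int_0^T W_2^2(\bar\mu_t^N,\mu_t^N)\,dt \leq h_N\sum_i W_{2,v}^2(\mu_{ih_N}^N,\bar\mu_{(i+1)h_N}^N)\leq Ch_N^2\to 0$, hence after a further extraction $\bar\mu_t^N\rightharpoonup\mu_t$ narrowly for a.e.\ $t$. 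The purely local drift terms (those involving $\nabla_x V$ and $\id_v$) are handled by narrow convergence and uniform second moments via \cite[Lemma 5.1.7]{AGS}. The nonlocal interaction term $\iint v\cdot \nabla_x W(x-y)\cdot\nabla_v\varphi(x,v)\,d\bar\mu_t^N(x,v)\,d\Pi^x\bar\mu_t^N(y)$ is treated by noting that narrow convergence is preserved under products and projections, so $\bar\mu_t^N\otimes\Pi^x\bar\mu_t^N\rightharpoonup \mu_t\otimes\Pi^x\mu_t$; the integrand is continuous with magnitude bounded by $\|\nabla_v\varphi\|_\infty\mathbf{1}_{\supp\varphi}(x,v)\cdot\Lip(\nabla_x W)(1+|x|+|y|)$, and uniform second moments supply the requisite 2-uniform integrability to apply \cite[Lemma 5.1.7]{AGS} once more.

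Finally, the two error terms in \eqref{eq:phi_dmu;discrete_time} summed over $i$ are controlled respectively by $\|\nabla_v^2\varphi\|_\infty \sum_i W_{2,v}^2(\mu_{ih_N}^N,\bar\mu_{(i+1)h_N}^N)\leq C\|\nabla_v^2\varphi\|_\infty h_N$ via \eqref{cond:AC2;bound}, and by $h_N\|\nabla^2\varphi\|_\infty T\sup_{i,N}\int|v|^2\,d\bar\mu_{(i+1)h_N}^N$, and both tend to zero. The principal obstacle is the uniform second-moment bound on $\bar\mu_{(i+1)h_N}^N$ which pervades all of the above: I would derive it from Lemma~\ref{lem:dWVip_growthbd} (which gives a discrete Gronwall estimate on $E=|\partial_x\W|^2+|\partial_x\V|^2+\|\id_v\|_{L^2}^2$), the identity $\|\id_v\|_{L^2(\bar\mu_{(i+1)h_N}^N)}=\|\id_v\|_{L^2(\mu_{(i+1)h_N}^N)}$, and the push-forward relation $\mu_{(i+1)h_N}^N=(\Phi_{h_N})_\#\bar\mu_{(i+1)h_N}^N$ controlling the $x$-moment. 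A secondary subtle point is passing to the limit in the interaction term despite $\nabla_x W$ being merely Lipschitz (and thus only linearly bounded); the combination of Lipschitz continuity, compact support of $\varphi$, and 2-uniform integrability from uniform second moments is what ultimately makes the passage work.
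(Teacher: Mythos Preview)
Your proposal is correct and shares the same backbone as the paper's proof: apply Lemma~\ref{lem:phi_dmu;discrete_time} at each step, sum via Abel summation, and pass to the limit, with the two $\O$-error terms controlled exactly as you indicate by \eqref{cond:AC2;bound} and the uniform second-moment bounds coming from Lemma~\ref{lem:dWVip_growthbd} plus discrete Gronwall. The one substantive difference is the treatment of the interaction term. The paper does not pass through the product measure $\bar\mu_t^N\otimes\Pi^x\bar\mu_t^N$; instead it couples $\mu_t$ and $\bar\mu_t^N$ by some $\gamma_t\in\Gamma_o(\mu_t,\bar\mu_t^N)$ and estimates
\[
\left|\int(\nabla_x W\ast\Pi^x\bar\mu_t^N)\cdot\nabla_v\varphi\,d\bar\mu_t^N-\int(\nabla_x W\ast\Pi^x\mu_t)\cdot\nabla_v\varphi\,d\mu_t\right|\leq C\|\varphi\|_{C^2}\,W_2(\bar\mu_t^N,\mu_t),
\]
using $\Lip(\nabla_x W)\leq M$ and the uniform bound on $\|\nabla_x W\ast\Pi^x\bar\mu_t^N\|_{L^2(\bar\mu_t^N)}$ from Lemma~\ref{lem:dWVip_growthbd}; it then argues $W_2(\bar\mu_t^N,\mu_t)\to 0$ for a.e.\ $t$ and concludes by dominated convergence in $t$. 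Your product-measure route via \cite[Lemma 5.1.7]{AGS} is a clean alternative that uses only narrow convergence of $\bar\mu_t^N$ and $2$-uniform integrability from the uniform second moments, and thereby sidesteps the need to upgrade narrow convergence to $W_2$-convergence. Two cosmetic differences: the paper tests at step $i$ with $\varphi((i+1)h,\cdot)$ rather than $\varphi(ih,\cdot)$, which only shifts which piecewise-constant time interpolation of $\varphi$ appears on the right; and the paper does not take an extra subsequence for the endpoint $t=T$ but simply passes the boundary terms to the limit along with the rest.
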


\begin{proof}

\noindent\emph{Step 1$^o$.}
Fix $\varphi\in C_c^\infty([0,T]\times\R^{2d})$ and let $K_\varphi\subset\subset\R^{2d}$ such that $\supp\varphi\subset [0,T]\times K_\varphi$. Writing $\varphi(t,\cdot)=\varphi_t$ for simplicity, we may apply \eqref{eq:phi_dmu;discrete_time} for each $i=0,\cdots,N-1$ respectively to test function $\varphi_{(i+1)h}$ and sum to obtain
\begin{align*}
    &\sum_{i=1}^{N-1}\int_{\R^{2d}}\varphi_{ih}(z)-\varphi_{(i+1)h}(z)\,d\mu_i^N(z)+\int_{\R^{2d}}\varphi_{Nh}(z)\,d\mu_{Nh}^N - \int_{\R^{2d}} \varphi_h(z)\,d\mu_{0}^N  \\
    &=\sum_{i=0}^{N-1} \int_{\R^{2d}} \varphi_{(i+1)h}(z)\,d(\mu_{i+1}^N-\mu_i^N)(z) \\
    &= \sum_{i=0}^{N-1} h\int_{\R^{2d}} \begin{pmatrix}
            \id_v \\ -\nabla_x V-(\nabla_x W\ast\Pi^x\bar\mu_{(i+1)h}^N) - \alpha\id_v
        \end{pmatrix}\cdot \begin{pmatrix}
            \nabla_x \varphi_{(i+1)h} \\ \nabla_v \varphi_{(i+1)h}
        \end{pmatrix}
    +\alpha\Delta_v \varphi_{(i+1)h}\,d\bar\mu_{(i+1)h}^N \\
    &+  \O(\|\nabla_v^2 \varphi\|_\infty)\sum_{i=0}^{N-1} W_{2,v}^2(\mu_{ih}^N,\bar\mu_{(i+1)h}^N) + \O(\|\nabla^2\varphi\|_\infty)\sum_{i=0}^{N-1} h^2 \|\id_v\|_{L^2(\bar\mu_{(i+1)h}^N)}^2.
\end{align*}
Note that the last two terms on the right-hand side are of order $O(h)$. Indeed, by Jensen's inequality
\[\|\id_v\|_{L^2(\bar\mu_{(i+1)h}^N)}^2\leq \left(\|\id_v\|_{L^2(\bar\mu_0^N)}+\sum_{i=0}^{N-1}W_{2,v}(\mu_{ih}^N,\bar\mu_{(i+1)h}^N)\right)^2 \leq 2 \|\id_v\|_{L^2(\bar\mu_0^N)}^2 + N\sum_{i=0}^N W_{2,v}^2(\mu_{ih},\bar\mu_{(i+1)h}^N),\]
which is bounded uniformly in $N\in\ N$ and $i=0,1,\cdots,N-1$ due to the assumptions \eqref{cond:2nd_moment;unifbd} and \eqref{cond:AC2;bound}.
On the other hand, as $t\mapsto(\mu_t^N)_{t\in[0,T]}$ is piecewise constant,
\begin{align*}
    \sum_{i=1}^{N-1}\int_{\R^{2d}}\varphi_{ih}(z)-\varphi_{(i+1)h}(z)\,d\mu_i^N(z)=-\sum_{i=1}^{N-1}\int_{ih}^{(i+1)h}\int_{\R^{2d}} \partial_t\varphi(t,z)\,d\mu_t^N(z)\,dt    \\
    =-\int_{h}^{Nh} \int_{\R^{2d}} \partial_t\varphi(t,z)\,d\mu_t^N(z)\,dt
    = \int_0^T \int_{\R^{2d}} \partial_t\varphi(t,z)\,d\mu_t^N(z)\,dt + O(h).
\end{align*}
Thus, defining $\varphi^N(t,z):=\varphi(ih,z)$ and $\bar\mu_t^N=\bar\mu_{(i+1)h}^N$ for $t\in [ih,(i+1)h)$, we have
\begin{equation}\label{eq:phi_dmuN;PDE}
\begin{split}
    &-\int_0^T \partial_t\varphi(t,z)\,d\mu_t^N(z)+\int_{\R^{2d}}\varphi(T,z)\,d\mu_T^N-\int_{\R^{2d}}\varphi(0,z)\,d\mu_0^N(z)   \\
    &= \int_0^T \int_{\R^{2d}} \begin{pmatrix}
            \id_v \\ -\nabla_x V-(\nabla_x W\ast\Pi^x\bar\mu_{t}^N) + \alpha\id_v
        \end{pmatrix} \cdot \nabla\varphi^N + \alpha\Delta_v \varphi^N\,d\bar\mu_t^N\,dt + O(h).
\end{split}
\end{equation}

\vspace{3mm}
\noindent\emph{Step 2$^o$.} We now study the limit of the linear terms \eqref{eq:phi_dmuN;PDE} as $N\rightarrow\infty$. As $\varphi\in C_c^\infty([0,T]\times\R^{2d})$, we can directly pass the left-hand side of the equation to the limit. We claim that
\begin{equation}\label{eq:VFPlinterms_limit}
\begin{split}
    \lim_{N\rightarrow\infty}\int_0^T \int_{\R^{2d}} \begin{pmatrix}
            \id_v \\ -\nabla_x V - \alpha\id_v
        \end{pmatrix} \cdot \nabla\varphi^N + \alpha\Delta_v\varphi^N\,d\bar\mu_t^N\,dt
        \\
        =\int_0^T \int_{\R^{2d}} \begin{pmatrix}
            \id_v \\ -\nabla_x V - \alpha\id_v
        \end{pmatrix} \cdot \nabla\varphi + \alpha\Delta_v\varphi\,d\mu_t\,dt.
\end{split}
\end{equation}
Indeed, as $\id_v,\nabla_x V$ are continuous thus bounded on $K_\varphi\subset\subset\R^{2d}$,
\begin{equation}\label{eq:phiN-phi;linear}
\left|\int_0^T \int_{\R^{2d}} \begin{pmatrix}
            \id_v \\ -\nabla_x V - \alpha\id_v
        \end{pmatrix} \cdot \nabla(\varphi^N-\varphi) + \alpha\Delta_v (\varphi^N-\varphi)\,d\bar\mu_t^N\,dt\right|
\leq C h\|\varphi\|_{C^3} T
\end{equation}
for some $C=C(V,K_\varphi)>0$. Furthermore, by the dominated convergence theorem
\begin{align*}
    \int_0^T \int_{\R^{2d}} \begin{pmatrix}
            \id_v \\ -\nabla_x V - \alpha\id_v
        \end{pmatrix} \cdot \nabla\varphi + \alpha\Delta_v \varphi\,d\bar\mu_t^N\,dt\rightarrow\int_0^T \int_{\R^{2d}} \begin{pmatrix}
            \id_v \\ -\nabla_x V - \alpha\id_v
        \end{pmatrix} \cdot \nabla\varphi + \alpha\Delta_v \varphi\,d\mu_t\,dt
\end{align*}
as $N\rightarrow\infty$. From this and \eqref{eq:phiN-phi;linear} we deduce \eqref{eq:VFPlinterms_limit}.

\vspace{3mm}
\noindent\emph{Step 3$^o$.} 
It remains to pass the nonlinear term in \eqref{eq:phi_dmuN;PDE} to the limit. Let $\Lip\nabla_x V,\Lip\nabla_x W\leq M$. first notice that the uniform bound on the second moment \eqref{cond:2nd_moment;unifbd} implies, by Lemma~\ref{lem:dWVip_growthbd}, that there exists a constant $C=C(M,M_2)$ independent of $N\in\N$ and $i=1,\cdots,N$ such that
\begin{equation}\label{eq:Emu_ih;upperbd}
    E(\mu_{ih}^N)=\|\nabla_x V\|_{L^2(\mu_{ih}^N)}+\|\nabla_x W\ast\Pi^x\bar\mu_{ih}^N\|_{L^2(\bar\mu_{ih}^N)}+\|\id_v\|_{L^2(\mu_{ih}^N)} \leq C
\end{equation}
for all $N\in\N$ and $i=0,1,\cdots,N$. Indeed, it is easy to check that $E(\mu_0^N)\leq C'$ for some $C'=C'(M,M_2)>0$ follows from \eqref{cond:2nd_moment;unifbd} and the Lipschitz continuity of $\nabla_x V,\nabla_x W$. Then applying \eqref{eq:WVpmoment_growthbd} with $h=h_N=T/N$ and a discrete Gr\"onwall's inequality provides the uniform upper bound \eqref{eq:Emu_ih;upperbd}.

Moreover, the uniform upper bound and along with \eqref{cond:AC2;bound} allows us to control $W_2(\mu_0^N,\mu_{ih}^N)$ uniformly for all $N\in\N$ and $i\leq N$ as
\begin{align*}
    W_2^2(\mu_0^N,\mu_{ih}^N)\leq \left(\sum_{j=0}^{i-1} W_2(\mu_{jh}^N,\mu_{(j+1)h}^N)\right)^2 &\leq  2i\sum_{j=0}^{i-1} W_{2,v}^2(\mu_{jh}^N,\bar\mu_{(j+1)h}^N)+W_{2,x}^2(\bar\mu_{(j+1)h}^N,\mu_{(j+1)h}^N) \\
    &= 2i\sum_{j=0}^{i-1} W_{2,v}^2(\mu_{jh}^N,\bar\mu_{(j+1)h}^N) + h^2\|\id_v\|_{L^2(\bar\mu_{(j+1)h}^N)}^2
\end{align*}
In particular, the second moments of $\mu_{ih}^N$ are uniformly bounded.
Letting $\gamma_t\in\Gamma_o(\mu_t,\bar\mu_t^N)$, we have
\begin{align*}
    &\int_{\R^{2d}} (\nabla_x W\ast\Pi^x\bar\mu_t^N)(x)\cdot\nabla_v \varphi(t,x,v) d\bar\mu_t^N-\int_{\R^{2d}} (\nabla_x W\ast\Pi^x\mu_{t})(x)\cdot\nabla_v \varphi(t,x,v) d\mu_{t}(x,v)   \\
    &= \int_{\R^{2d}} (\nabla_x W\ast\Pi^x\bar\mu_t^N)(y)\cdot\nabla_v\varphi(t,y,w)-(\nabla_x W\ast\Pi^x\mu_{t})(x)\cdot\nabla_v \varphi(x,v)\,d\gamma_t((x,v),(y,w))\\
    &= \int_{\R^{2d}} \nabla_x W\ast \Pi^x\bar\mu_t^N(y)\cdot(\nabla_v\varphi(t,y,w)-\nabla_v\varphi(t,x,v))\,d\gamma_t((x,v),(y,w))  \\
    &+ \int_{\R^{2d}} (\nabla_x W\ast \Pi^x\bar\mu_t^N(y)-\nabla_x W\ast\Pi^x\mu_{t}(x))\cdot\nabla_v\varphi(t,x,v))\,d\gamma_t((x,v),(y,w))  \\
    &\leq \|\nabla^2\varphi\|_\infty W_2(\bar\mu_t^N,\mu_t) \|\nabla_x W\ast\Pi^x\bar\mu_t^N\|_{L^2(\bar\mu_t^N)}
    + M \|\nabla_v\varphi\|_\infty W_2(\bar\mu_t^N,\mu_t).
\end{align*}
The estimate \eqref{eq:WVpmoment_growthbd} implies a uniform bound on $|\partial_x\W|(\mu_{ih}^N)=|\partial_x\W|(\bar\mu_{ih}^N)=\|\nabla_x W\ast\Pi^x\bar\mu_{ih}^N\|_{L^2(\bar\mu_{ih}^N)}$ only dependent on $\mu_0^N$ and $M,T>0$. Moreover,
by the lower semicontinuity of $W_2$ with respect to narrow convergence, for each $t\in[0,T]$, writing $i_-=i_-(N,t)$ defined \eqref{def:i_vm}
\begin{equation}\label{eq:W2barmut_conv}
\begin{split}
    W_2(\bar\mu_t^N,\mu_t)=W_2(\bar\mu_{(i_-+1)T/N}^N,\mu_t)&\leq W_2(\bar\mu_{(i_-+1)T/N}^N,\mu_{t}^N)+W_2(\mu_t^N,\mu_t)\\
    &=W_2(\bar\mu_{(i_-+1)T/N}^N,\mu_{i_-T/N}^N)+W_2(\mu_t^N,\mu_t)\xrightarrow[]{N\uparrow\infty} 0.
\end{split}
\end{equation}
Hence we can find $C=C(M,\mu_0^N)$ such that
\begin{align*}
    \left|\int_0^T \int_{\R^{2d}} \nabla_x W\ast\Pi^x\bar\mu_t^N \cdot \nabla_v\varphi_t\,d\bar\mu_t^N\,dt-\int_0^T \int_{\R^{2d}} \nabla_x W\ast\Pi^x\mu_t\cdot \nabla_v\varphi_t\,d\mu_t^N\,dt\right| \\
    \leq C\|\varphi\|_{C^2}\int_0^T W_2(\bar\mu_t^N,\mu_t)\,dt
    \xrightarrow[]{N\rightarrow\infty} 0
\end{align*}
where we have applied the dominated convergence theorem in the last line, as $t\mapsto W_2(\bar\mu_t^N,\mu_t)$ is bounded due to uniform second moment bound and converges a.e. to $0$ as $N\rightarrow\infty$ by \eqref{eq:W2barmut_conv}. 
\end{proof}

\begin{remark}\label{rmk:conv_PDE}
    If $W\equiv 0$ then the third step of the proof of Proposition~\ref{prop:conv_PDE} is unnecessary, hence we can replace the Lipschitz-continuity of $\nabla_x V$ with continuous differentiability of $V$.
\end{remark}

The main result of this paper then follows directly from Proposition~\ref{prop:conv_PDE} and the existence of minimizing movements (Theorems~\ref{thm:disc_sol;subseq_conv} and~\ref{thm:disc_sol;cpct2}).
\begin{theorem}\label{thm:main}
    Suppose $\nabla_x V,\nabla_x W$ are Lipschitz continuous, and let $\mu_0$ satisfy either
    \begin{listi}
        \item $\H(\mu_0)<+\infty$ and $\inf_{\nu\P_2(\R^{2d})} \H(\nu)>-\infty$, or
        \item $|\partial_v\H|(\mu_0)<+\infty$.
    \end{listi}
    Fix $T>0$, and for each $N\in\N$ let $h_N=T/N$. Let $(\mu_{ih_N}^N)_{i=0}^N$ be as defined in \eqref{eq:SIE} with the initial data $\mu_0^N$ narrowly converging as $N\nearrow\infty$ to some $\mu_0\in\P_2(\R^{2d})$ satisfying $\H(\mu_0)<+\infty$. Denote by $(\mu_t^N)_{t\in [0,T]}$ the corresponding piecewise-constant interpolations defined as in \eqref{def:const_interpol}.
    
    Then $\mu_t^N\xrightharpoonup[]{N\rightarrow\infty} \mu_t$ narrowly for all $t\in[0,T]$
    where $(\mu_t)_{t\in[0,T]}\in AC([0,T];\P_2(\R^{2d}))$ is the unique weak solution of the Vlasov-Fokker-Planck equation in the sense of \eqref{def:VFP_distributional_sol}.
\end{theorem}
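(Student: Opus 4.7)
The plan is to assemble the three main ingredients already developed in Sections~\ref{sec:SIE;cpct}--\ref{sec:MMS=VFP}: compactness of the discrete solutions, consistency of any limit with the PDE, and uniqueness of weak solutions of VFP. No new estimates are required.

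First, I would invoke the appropriate compactness theorem to extract subsequential pointwise narrow limits. Well-posedness of the iterative scheme $(\mu_{ih_N}^N)_{i=0}^N$ is guaranteed by Theorem~\ref{thm:iter_solv} as soon as $\mu_0^N \in D(\U)$ (which is implicitly required for the scheme to make sense). Under hypothesis (i), Theorem~\ref{thm:disc_sol;cpct2} produces a curve $(\mu_t)_{t\in[0,T]}\in AC^2([0,T];\P_2(\R^{2d}))$ such that $\mu_t^{N_k}\rightharpoonup\mu_t$ narrowly for every $t\in[0,T]$ along a subsequence $N_k\to\infty$; under hypothesis (ii), Theorem~\ref{thm:disc_sol;subseq_conv} provides the same conclusion with a Lipschitz (rather than merely $AC^2$) limit.

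Second, I would check the hypotheses of Proposition~\ref{prop:conv_PDE} in order to identify any such subsequential limit as a distributional solution of VFP on $[0,T]$. The uniform second-moment bound~\eqref{cond:2nd_moment;unifbd} follows from the narrow convergence $\mu_0^N\rightharpoonup\mu_0 \in \P_2(\R^{2d})$ together with the lower semicontinuity of the second moment (and, where needed, from the fact that $\H(\mu_0^N)$ stays bounded along an approximating sequence). The crucial summability bound~\eqref{cond:AC2;bound} on $\sum_i W_{2,v}^2(\mu_{ih}^N,\bar\mu_{(i+1)h}^N)$ is already contained in the proof of Theorem~\ref{thm:disc_sol;cpct2}: it is obtained by summing the energy estimate~\eqref{eq:energy_decay;ubd} of Lemma~\ref{lem:energy_decay;disc}, where the Hamiltonian terms telescope (finite by hypothesis (i) and $\inf\H>-\infty$) and the remaining terms are controlled uniformly in $N$ by the growth bounds~\eqref{eq:WVpmoment_growthbd} of Lemma~\ref{lem:dWVip_growthbd} and a discrete Grönwall argument. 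Hypothesis (ii) yields the analogous bound via the refined estimates in Appendix~\ref{app:cpct_alt}. Hence Proposition~\ref{prop:conv_PDE} applies and $(\mu_t)_{t\in[0,T]}$ solves~\eqref{def:VFP_distributional_sol}.

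Third, I would invoke the uniqueness of distributional solutions of the Vlasov-Fokker-Planck equation under the Lipschitz assumption on $\nabla_x V, \nabla_x W$ (e.g., \cite[Theorem 2.2]{Meleard96}) to upgrade subsequential convergence to convergence of the full sequence. If the full sequence $(\mu_t^N)$ did not converge pointwise narrowly to the unique weak solution, one could extract a subsequence bounded away from this solution at some $t\in[0,T]$; applying Step 1 to that subsequence would yield a further subsequence converging narrowly to \emph{some} weak solution of VFP with initial datum $\mu_0$, which by Step 2 would also solve~\eqref{def:VFP_distributional_sol}, contradicting uniqueness. The main subtlety (rather than obstacle) is verifying that the technical hypotheses of Proposition~\ref{prop:conv_PDE} carry over cleanly under both alternative assumptions (i) and (ii); but since each hypothesis has already been tailored to produce exactly the compactness and energy bounds that proof requires, this is routine bookkeeping rather than a new argument.
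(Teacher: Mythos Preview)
Your proposal is correct and follows essentially the same approach as the paper's proof: invoke Theorem~\ref{thm:disc_sol;cpct2} or Theorem~\ref{thm:disc_sol;subseq_conv} for compactness, verify the hypotheses \eqref{cond:2nd_moment;unifbd}--\eqref{cond:AC2;bound} of Proposition~\ref{prop:conv_PDE} (the paper likewise points to the proof of Theorem~\ref{thm:disc_sol;cpct2} for case (i) and to the slope bound \eqref{eq:W2v_leq_slope} with Proposition~\ref{prop:equicts;discrete_sol} for case (ii)), and conclude via uniqueness from \cite[Theorem~2.2]{Meleard96}. Your explicit subsequence argument upgrading subsequential to full convergence is a standard step the paper leaves implicit.
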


\begin{proof}
As $\nabla_x V,\nabla_x W$ are Lipschitz continuous, all coefficients appearing in the PDE are Lipschitz continuous. Thus uniqueness of weak solution of the Vlasov-Fokker-Planck equation in $\P_2(\R^{2d})$ is ensured for instance by \cite[Theorem 2.2]{Meleard96}. In each case (i) and (ii), existence of minimizing movements follows respectively by Theorem~\ref{thm:disc_sol;cpct2} and Theorem~\ref{thm:disc_sol;subseq_conv}. 

Thus, once we verify that the discrete solutions satisfy the conditions of Proposition~\ref{prop:conv_PDE} the claim follows. As the uniform bound on the second moments of the initial data \eqref{cond:2nd_moment;unifbd} follows from the convergence of $\mu_0^N$ to $\mu_0$ in $\P_2(\R^{2d})$, it remains to verify \eqref{cond:AC2;bound}. In case (i), we have already verified this in the proof of Theorem~\ref{thm:disc_sol;cpct2}. In case (ii), recall \eqref{eq:W2v_leq_slope} which states
\[ W_{2,v}(\mu_{ih}^N,\bar\mu_{(i+1)h}^N)\leq \frac{h}{1+\alpha h} |\partial_v(\Lfun_v+\alpha\H)|(\mu_{ih}^N),\]
whereas the slope $|\partial_v(\Lfun_v+\alpha\H)|$ is uniformly bounded by Proposition~\ref{prop:equicts;discrete_sol}. From this the condition \eqref{cond:AC2;bound} follows easily. 
\end{proof}

\begin{remark}[General time partition]\label{rmk:timestep}
    For simplicity we have restricted our attention to uniform time step $h_N=T/N$ in Theorem~\ref{thm:main}. However, from the proofs it is clear that ths convergence of the discrete solution holds for general partitions $\bm{h}_N=\{h_i:\;1\leq i\leq N, 0=h_0<h_1<\cdots<h_N=T\}$ of $[0,T]$ such that the modulus $|\bm{h}_N|$ tends to zero. Indeed, the errors terms in the key discrete estimates in Section~\ref{sec:SIE;cpct} (Proposition~\ref{prop:equicts;discrete_sol}, Lemma~\ref{lem:energy_decay;disc}) and Section~\ref{sec:MMS=VFP} (Lemma~\ref{eq:phi_dmu;discrete_time}) vanishes as $|\bm{h}_N|\searrow 0$.
\end{remark}

\section{Decay of the Hamiltonian along discrete solutions}\label{sec:Hdecay;disc}
We conclude this paper by establishing a two-sided Hamiltonian decay bound over discrete solutions that follows easily from energy estimates of Section~\ref{sec:SIE;cpct}. 

\begin{proposition}[Dissipation of the Hamiltonian at the discrete level]\label{prop:Hdecay;disc}
 Suppose $\nabla_x V,\nabla_x W$ are $M$-Lipschitz. Let $(\mu_{ih}^N)_{i=0}^N$ be defined by \eqref{eq:SIE} with time step $h>0$, number of iterations $N\in\N$, and initial datum $\mu_0^N\in D(\H)$. If $Nh\leq T$, then 
\begin{equation}\label{eq:Hdecay;ubd}
\begin{split}
    \frac{\H(\mu_{(i+1)h}^N)-\H(\mu_{ih}^N)}{h}
    &\leq -\alpha \|\id_v+\nabla_v\rho_{(i+1)h}^N/\rho_{(i+1)h}^N\|_{L^2(\mu_{(i+1)h}^N)}^2\\
    &-\frac{W_{2,v}^2(\mu_{ih}^N,\bar\mu_{(i+1)h}^N)}{2h} + M h\|\id_v\|_{L^2(\mu_{(i+1)h}^N)}^2,
\end{split}
\end{equation}
and
\begin{equation}\label{eq:Hdecay;lbd}
\begin{split}
     \frac{\H(\mu_{(i+1)h}^N)-\H(\mu_{ih}^N)}{h}&\geq - \frac{\alpha}{1+\alpha h}\|\id_v+\nabla_v\rho_{ih}^N/\rho_{ih}^N\|_{L^2(\mu_{ih}^N)}^2+\frac{W_{2,v}^2(\mu_{ih}^N,\bar\mu_{(i+1)h}^N)}{2h}   \\
    &-\frac{h}{1+\alpha h}\|\nabla_x V+\nabla_x W\ast\Pi^x\mu_{ih}^N\|_{L^2(\mu_{ih}^N)}^2
    - Mh\|\id_v\|_{L^2(\mu_{(i+1)h}^N)}^2.
\end{split}
\end{equation}

In particular, there exists $C=C(T,M,\mu_0^N)>0$ such that
\begin{equation}\label{eq:Hdecay;disc}
\begin{split}
    - \frac{\alpha}{1+\alpha h}|\partial_v\H|^2(\mu_{ih}^N)-Ch\leq\frac{\H(\mu_{(i+1)h}^N)-\H(\mu_{ih}^N)}{h}\leq -\alpha |\partial_v\H|^2(\mu_{(i+1)h}^N)+ Ch.
\end{split}
\end{equation}
\end{proposition}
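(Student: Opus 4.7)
The two-sided bound hinges on a single algebraic identity extracted from the Euler-Lagrange equation of Lemma~\ref{lem:euler-lagrange;Jp}. Squaring the relation $T^{\mu_{ih}^N}_{\bar\mu_{(i+1)h}^N}(v) - v = h[F(x) + \alpha(v + \nabla_v\bar\rho_{(i+1)h}^N/\bar\rho_{(i+1)h}^N)]$ with $F := \nabla_x V + \nabla_x W\ast\Pi^x\mu_{ih}^N$, integrating against $\bar\mu_{(i+1)h}^N$, and exploiting the orthogonality \eqref{eq:xidvlogrho=0} to drop the $F\cdot(\nabla_v\bar\rho/\bar\rho)$ cross term yields the key decomposition
\[
\frac{W_{2,v}^2(\mu_{ih}^N, \bar\mu_{(i+1)h}^N)}{h^2} = \|F\|^2_{L^2(\bar\mu_{(i+1)h}^N)} + 2\alpha\Lfun_v(\bar\mu_{(i+1)h}^N) + \alpha^2|\partial_v\H|^2(\bar\mu_{(i+1)h}^N).
\]
Because $\Phi_h(x,v) = (x+hv,v)$ preserves the velocity coordinate, $\|\id_v\|_{L^2(\bar\mu_{(i+1)h}^N)} = \|\id_v\|_{L^2(\mu_{(i+1)h}^N)}$ and (by Corollary~\ref{cor:psubdiff;UPhih}) $|\partial_v\H|(\bar\mu_{(i+1)h}^N) = |\partial_v\H|(\mu_{(i+1)h}^N)$. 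A Taylor expansion using Lipschitz $\nabla_x V, \nabla_x W$ together with the symmetry $W(-z)=W(z)$ yields the two-sided position-step estimate $\H(\mu_{(i+1)h}^N) - \H(\bar\mu_{(i+1)h}^N) = h\Lfun_v(\bar\mu_{(i+1)h}^N) + \O(Mh^2\|\id_v\|^2_{L^2(\bar\mu_{(i+1)h}^N)})$.

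For the upper bound \eqref{eq:Hdecay;ubd}, I would exploit that $\H$ is $1$-geodesically convex along $W_{2,v}$-geodesics (since $\V$ contributes $|v|^2/2$, $\W$ is $W_{2,v}$-constant, and $\U$ is $0$-convex by Lemma~\ref{lem:pdirder;U}). Applying the convexity inequality between $\bar\mu_{(i+1)h}^N$ and $\mu_{ih}^N$, substituting the Euler-Lagrange transport map, and invoking \eqref{eq:xidvlogrho=0} once more gives
\[
\H(\mu_{ih}^N) - \H(\bar\mu_{(i+1)h}^N) \geq h\Lfun_v(\bar\mu_{(i+1)h}^N) + \alpha h|\partial_v\H|^2(\bar\mu_{(i+1)h}^N) + \tfrac{1}{2}W_{2,v}^2.
\]
Negating and adding the upper Taylor estimate of the position step produces \eqref{eq:Hdecay;ubd} after dividing by $h$.

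For the lower bound \eqref{eq:Hdecay;lbd}, I would invoke Proposition~\ref{prop:pslope_est} applied to $\E = \Lfun_v + \alpha\H$ (which is $\alpha$-convex along $W_{2,v}$-geodesics) at $\mu_{ih}^N$, giving $\E(\mu_{ih}^N) - \E_h(\mu_{ih}^N) \leq h|\partial_v\E|^2(\mu_{ih}^N)/(2(1+\alpha h))$. Expanding $\E_h$ and using \eqref{eq:Lp_diff;eq} together with the lower Taylor side yields the preliminary inequality
\[
\alpha[\H(\mu_{(i+1)h}^N) - \H(\mu_{ih}^N)] \geq h\|F\|^2 + 2\alpha h\Lfun_v(\bar\mu_{(i+1)h}^N) - \frac{h|\partial_v\E|^2(\mu_{ih}^N)}{2(1+\alpha h)} - \frac{W_{2,v}^2}{2h} - \alpha Mh^2\|\id_v\|^2.
\]
The decisive algebraic step is to rewrite $-\tfrac{W_{2,v}^2}{2h} = -\tfrac{W_{2,v}^2}{h} + \tfrac{W_{2,v}^2}{2h}$ and substitute $W_{2,v}^2/h = h|\partial_v\E|^2(\bar\mu_{(i+1)h}^N)$: the $-W_{2,v}^2/h$ contribution exactly cancels the $h\|F\|^2 + 2\alpha h\Lfun_v(\bar\mu_{(i+1)h}^N)$ terms, leaving behind $-\alpha^2 h|\partial_v\H|^2(\bar\mu_{(i+1)h}^N)$, while $+\tfrac{W_{2,v}^2}{2h}$ supplies the positive term required by the target. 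Comparing the resulting expression term-by-term with \eqref{eq:Hdecay;lbd} (after multiplying the latter by $\alpha$), one finds that the difference between them reduces, after substituting $|\partial_v\E|^2 = \|F\|^2 + 2\alpha\Lfun_v + \alpha^2|\partial_v\H|^2$ and using \eqref{eq:Lp_diff;eq} to eliminate $\Lfun_v(\bar\mu_{(i+1)h}^N) - \Lfun_v(\mu_{ih}^N)$, to $\tfrac{h[|\partial_v\E|^2(\mu_{ih}^N) - (1+\alpha h)^2|\partial_v\E|^2(\bar\mu_{(i+1)h}^N)]}{2(1+\alpha h)}$, which is non-negative by the outer inequalities $(1+\alpha h)|\partial_v\E|^2(\bar\mu_{(i+1)h}^N) \leq |\partial_v\E|^2(\mu_{ih}^N)/(1+\alpha h)$ in Proposition~\ref{prop:pslope_est}. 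Finally, \eqref{eq:Hdecay;disc} follows by controlling $\|F\|^2$ and $\|\id_v\|^2$ uniformly for $i \leq N \leq T/h$ via Lemma~\ref{lem:dWVip_growthbd} and a discrete Gr\"onwall argument. The main obstacle is the delicate algebraic bookkeeping in the lower bound, specifically the $W_{2,v}^2/h$ substitution that flips the sign of the $W_{2,v}^2$ term and the slope comparison that converts dependence on $|\partial_v\H|(\bar\mu_{(i+1)h}^N)$ into dependence on $|\partial_v\H|(\mu_{ih}^N)$.
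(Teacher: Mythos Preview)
Your proposal is correct and rests on the same three ingredients as the paper: the Euler--Lagrange identity (Lemma~\ref{lem:euler-lagrange;Jp}) squared to give $W_{2,v}^2/h^2=|\partial_v(\Lfun_v+\alpha\H)|^2(\bar\mu_{(i+1)h}^N)$, the orthogonality \eqref{eq:xidvlogrho=0}, and the slope estimates of Proposition~\ref{prop:pslope_est}. The organization differs in two places worth noting.

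For the upper bound you take a more direct route than the paper: you invoke $1$-convexity of $\H$ along $W_{2,v}$-geodesics and plug in the Euler--Lagrange transport map, which immediately produces the $h\Lfun_v(\bar\mu)+\alpha h|\partial_v\H|^2(\bar\mu)+\tfrac12 W_{2,v}^2$ combination. The paper instead starts from Lemma~\ref{lem:energy_decay;disc} (an energy inequality for $\Lfun_v+\alpha\H$) and then uses the slope bound $h|\partial_v(\Lfun_v+\alpha\H)|^2(\bar\mu)\leq W_{2,v}^2/h$ to cancel the $\|F\|^2+2\alpha\Lfun_v$ terms, arriving at the same place. Your path is shorter and avoids the detour through Lemma~\ref{lem:energy_decay;disc}.

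For the lower bound both arguments are algebraically equivalent but order the steps differently. The paper absorbs the slope comparison $(1+\alpha h)^2 W_{2,v}^2/h^2\leq |\partial_v(\Lfun_v+\alpha\H)|^2(\mu_{ih}^N)$ early, converting $\tfrac{W_{2,v}^2}{2h}+\tfrac{h}{2(1+\alpha h)}|\partial_v\E|^2$ into $\tfrac{h}{1+\alpha h}|\partial_v\E|^2-\tfrac{\alpha}{2}W_{2,v}^2$ before expanding and invoking \eqref{eq:Lp_diff;eq}. You instead expand first via the exact identity $W_{2,v}^2/h=h|\partial_v\E|^2(\bar\mu)$, and defer the slope comparison to the very last step, where it appears as the non-negativity of $\tfrac{h}{2(1+\alpha h)}\bigl[|\partial_v\E|^2(\mu_{ih}^N)-(1+\alpha h)^2|\partial_v\E|^2(\bar\mu_{(i+1)h}^N)\bigr]$. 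This is a matter of bookkeeping preference; neither approach buys anything the other does not.
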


\begin{remark}\label{rmk:Hdecay;disc}
    Observe that \eqref{eq:Hdecay;disc} resembles the decay of the Hamiltonian along the solution of the Vlasov-Fokker-Planck equation
    \[\frac{d}{dt}\H(\mu_t)=-\alpha|\partial_v\H|^2(\mu_t)=-\alpha\norm{\id_v+\frac{\nabla_v\rho_t}{\rho_t}}_{L^2(\mu_t)}^2.\]
    Moreover, as we can see from the error terms in\eqref{eq:Hdecay;lbd}-\eqref{eq:Hdecay;ubd}, the constant $C$ in \eqref{eq:Hdecay;disc} are controlled by the Lipschitz constants of $V,W$ and the second moments of the discrete solutions $(\mu_{ih}^N)_{i=0}^N$.
\end{remark}

\begin{proof}

\noindent\emph{Step 1$^o$.} (Upper bound)
Recall that in Lemma~\ref{lem:energy_decay;disc} we had
\begin{equation}\label{eq:Hdiff;ubd}
\begin{split}
    &\alpha(\H(\mu_{(i+1)h}^N)-\H(\mu_{ih}^N)) 
    \leq -(1+\frac{\alpha h}{2})\frac{W_{2,v}^2(\mu_{ih}^N,\bar\mu_{(i+1)h}^N)}{h} \\
    &+h\|\nabla_x V+\nabla_x W\ast\Pi^x\bar\mu_{(i+1)h}^N\|_{L^2(\bar\mu_{(i+1)h}^N)}^2
    +2\alpha h\Lfun_v(\bar\mu_{(i+1)h}^N)+M\alpha h^2\|\id_v\|_{L^2(\bar\mu_{(i+1)h}^N)}^2.
\end{split}
\end{equation}
By the slope estimates \eqref{eq:partial_slope_est}, each $\bar\mu_{(i+1)h}^N\in D(|\partial_v\U|)$ and thus we can apply \eqref{eq:xidvlogrho=0} to deduce
\[\langle \nabla_x V+\nabla_x W\ast\Pi^x\bar\mu_{(i+1)h}^N,\nabla_v\bar\rho_{(i+1)h}/\bar\rho_{(i+1)h}^N\rangle_{L^2(\bar\mu_{(i+1)h}^N)}= 0.
\]
Expanding the squares and using the slope estimate \eqref{eq:partial_slope_est}
\begin{align*}
    &h\|\nabla_x V+\nabla_x W\ast\Pi^x\bar\mu_{(i+1)h}^N\|_{L^2(\bar\mu_{(i+1)h}^N)}^2+2\alpha h\Lfun_v(\bar\mu_{(i+1)h}^N)+\alpha^2h\|\id_v+\nabla_v\bar\rho_{(i+1)h}^N/\bar\rho_{(i+1)h}^N\|_{L^2(\bar\mu_{(i+1)h}^N)}^2\\
    &\qquad=h\|\nabla_x V+W\ast\Pi^x\bar\mu_{(i+1)h}^N+\alpha(\id_v+\nabla_v\bar\rho_{(i+1)h}^N/\bar\rho_{(i+1)h}^N)\|_{L^2(\bar\mu_{(i+1)h}^N)}^2\\
    &\qquad=h|\partial_v(\Lfun_v+\alpha\H)|^2(\bar\mu_{(i+1)h}^N)\leq \frac{W_{2,v}^2(\mu_{ih}^N,\bar\mu_{(i+1)h}^N)}{h}.
\end{align*}
Reorganizing,
\begin{align*}
    -\frac{W_{2,v}^2(\mu_{ih}^N,\bar\mu_{(i+1)h}^N)}{h} &+h\|\nabla_x V+W\ast\Pi^x\bar\mu_{(i+1)h}^N\|_{L^2(\bar\mu_{(i+1)h}^N)}^2+2\alpha h\Lfun_v(\bar\mu_{(i+1)h}^N)\\
    &\leq -\alpha^2h\|\id_v+\nabla_v\bar\rho_{(i+1)h}^N/\bar\rho_{(i+1)h}^N\|_{L^2(\bar\mu_{(i+1)h}^N)}^2
\end{align*}
As $\mu\mapsto \|\id_v+\nabla_v\rho/\rho\|_{L^2(\mu)}$ is invariant under pushforward by the map $(x,v)\mapsto (x+hv,v)$ (see Corollary~\ref{cor:psubdiff;UPhih}),
\[\|\id_v+\nabla_v\bar\rho_{(i+1)h}^N/\bar\rho_{(i+1)h}^N\|_{L^2(\bar\mu_{(i+1)h}^N)}^2=\|\id_v+\nabla_v\rho_{(i+1)h}^N/\rho_{(i+1)h}^N\|_{L^2(\mu_{(i+1)h}^N)}^2.\]
Thus plugging into \eqref{eq:Hdiff;ubd} we obtain
\begin{equation*}
\begin{split}
    \alpha(\H(\mu_{(i+1)h}^N)-\H(\mu_{ih}^N))
    &\leq -\alpha^2 h\|\id_v+\nabla_v\rho_{(i+1)h}^N/\rho_{(i+1)h}^N\|_{L^2(\mu_{(i+1)h}^N)}^2\\
    &-\frac{\alpha W_{2,v}^2(\mu_{ih}^N,\bar\mu_{(i+1)h}^N)}{2} + M\alpha h^2\|\id_v\|_{L^2(\bar\mu_{(i+1)h}^N)}^2.
\end{split}
\end{equation*}
Dividing both sides by $\alpha h$ we have the upper bound \eqref{eq:Hdecay;ubd}.

\vspace{3mm}
\noindent\emph{Step 2$^o$.} (Lower bound)
In order to obtain the lower bound, note that by the mean value theorem we have
\[\H(\mu_{(i+1)h}^N)-\H(\bar\mu_{(i+1)h}^N)=(\W+\V)(\mu_{(i+1)h}^N)-(\W+\V)(\bar\mu_{(i+1)h}^N)\geq h\Lfun_v(\bar\mu_{(i+1)h}^N)-M h^2\|\id_v\|_{L^2(\mu_{(i+1)h}^N)}^2.\]
Moreover, by the partial slope estimates \eqref{eq:partial_slope_est}
\begin{align*}
    \alpha&(\H(\mu_{ih}^N)-\H(\bar\mu_{(i+1)h}^N))+\Lfun_v(\mu_{ih}^N)-\Lfun_v(\bar\mu_{(i+1)h}^N)  \\
    &\leq \frac{W_{2,v}^2(\mu_{ih}^N,\bar\mu_{(i+1)h}^N)}{2h} + \frac{h}{2(1+\alpha h)}|\partial_v(\Lfun_v+\alpha\H)|^2(\mu_{ih}^N) \\
    &\leq \frac{h}{1+\alpha h}|\partial_v(\Lfun_v+\alpha\H)|^2(\mu_{ih}^N)-\frac{\alpha}{2}W_{2,v}^2(\mu_{ih}^N,\bar\mu_{(i+1)h}^N).
\end{align*}
where the last line uses \eqref{eq:partial_slope_est} in the form $W_{2,v}^2(\mu_{ih}^N,\bar\mu_{(i+1)h}^N)/h\leq \frac{h}{(1+\alpha h)^2}|\partial_v(\Lfun_v+\alpha\H)|^2(\mu_{ih}^N)$. 
Combining the two, we have
\begin{align*}
    &\alpha(\H(\mu_{(i+1)h}^N)-\H(\mu_{ih}^N))=\alpha(\H(\mu_{(i+1)h}^N)-\H(\bar\mu_{(i+1)h}^N))+\alpha(\H(\bar\mu_{(i+1)h}^N)-\H(\mu_{ih}^N))\\
    &\geq -(1-\alpha h)\Lfun_v(\bar\mu_{(i+1)h}^N)+\Lfun_v(\mu_{ih}^N)-\frac{h}{1+\alpha h}|\partial_v(\Lfun_v+\alpha\H)|^2(\mu_{ih}^N) \\
    &+\frac{\alpha}{2}W_{2,v}^2(\mu_{ih}^N,\bar\mu_{(i+1)h}^N)-\alpha M h^2\|\id_v\|_{L^2(\mu_{(i+1)h}^N)}^2.
    % \\    &\geq 2\alpha h\Lfun_v(\bar\mu_{(i+1)h}^N) + h\|\nabla_x V+\nabla_x W\ast\Pi\bar\mu_{(i+1)h}^N\|_{L^2(\bar\mu_{(i+1)h}^N)}^2  \\
    % &\qquad-h|\partial_v(\Lfun_v+\alpha\H)|^2(\mu_{ih}^N) -M\alpha h^2\|\id_v\|_{L^2(\bar\mu_{(i+1)h}^N)}^2.    
\end{align*}
As we know from Step 1$^o$
\begin{align*}
    |\partial_v(\Lfun_v+\alpha\H)|^2(\mu_{ih}^N)
    =\|\nabla_x V+\nabla_x W\ast\Pi^x\mu_{ih}^N\|_{L^2(\mu_{ih}^N)}^2+ \alpha^2 \|\id_v+\nabla_v\rho_{ih}^N/\rho_{ih}^N\|_{L^2(\mu_{ih}^N)}^2 + 2\alpha\Lfun_v(\mu_{ih}^N),
\end{align*}
observe
\begin{align*}
    &-\frac{h}{1+\alpha h}|\partial_v(\Lfun_v+\alpha\H)|^2(\mu_{ih}^N)-(1-\alpha h)\Lfun_v(\bar\mu_{(i+1)h}^N)+\Lfun_v(\mu_{ih}^N)\\
    &=-\frac{h}{1+\alpha h}(\alpha^2\|\id_v+\nabla_v\rho_{ih}^N/\rho_{ih}^N\|_{L^2(\mu_{ih}^N)}^2+\|\nabla_x V+\nabla_x W\ast\Pi^x\mu_{ih}^N\|_{L^2(\mu_{ih}^N)}^2)\\
    &-(1-\alpha h)\Lfun_v(\bar\mu_{(i+1)h}^N)+\frac{1-\alpha h}{1+\alpha h}\Lfun_v(\mu_{ih}^N).
    % \\
    % =-\frac{h}{1+\alpha h}(\|\id_v+\nabla_v\rho_{ih}^N/\rho_{ih}^N\|_{L^2(\mu_{ih}^N)}^2+\|\nabla_x V+\nabla_x W\ast\Pi^x\mu_{ih}^N\|_{L^2(\mu_{ih}^N)}^2).
\end{align*}
Applying \eqref{eq:Lp_diff;eq} in the form $\Lfun_v(\mu_{ih}^N)=(1+\alpha h)\Lfun_v(\bar\mu_{(i+1)h}^N)+h\|\nabla_x V+\nabla_x W\ast\Pi^x\mu_{ih}^N\|_{L^2(\mu_{ih}^N)}^2$,
\begin{align*}
    -(1-\alpha h)\Lfun_v(\bar\mu_{(i+1)h}^N)+\frac{1-\alpha h}{1+\alpha h}\Lfun_v(\mu_{ih}^N)
    =\frac{h(1-\alpha h)}{1+\alpha h}\|\nabla_x V+\nabla_x W\ast\Pi^x\mu_{ih}^N\|_{L^2(\mu_{ih}^N)}^2.
\end{align*}
Thus we can simplify the expressions and obtain the lower bound \eqref{eq:Hdecay;lbd}
\begin{align*}
    &\alpha(\H(\mu_{(i+1)h}^N)-\H(\mu_{ih}^N))\geq - \frac{\alpha^2 h}{1+\alpha h}\|\id_v+\nabla_v\rho_{ih}^N/\rho_{ih}^N\|_{L^2(\mu_{ih}^N)}^2\\
    &-\frac{\alpha h^2}{1+\alpha h}\|\nabla_x V+\nabla_x W\ast\Pi^x\mu_{ih}^N\|_{L^2(\mu_{ih}^N)}^2-\alpha Mh^2\|\id_v\|_{L^2(\mu_{ih}^N)}^2+\frac{\alpha}{2}W_{2,v}^2(\mu_{(i+1)h}^N,\bar\mu_{(i+1)h}^N).
\end{align*}

To conclude \eqref{eq:Hdecay;disc} first note that we can ignore the terms $W_{2,v}^2(\mu_{(i+1)h}^N,\bar\mu_{(i+1)h}^N)$ in \eqref{eq:Hdecay;lbd}-\eqref{eq:Hdecay;ubd} due to their signs. Remaining error terms are bounded by $Ch$ for some constant $C$ only depending on $T,M,\mu_0^N$ by Lemma~\ref{lem:dWVip_growthbd}.
\end{proof}

\textbf{Acknowledgements.}
The author is grateful to Dejan Slep\v{c}ev and Lihan Wang for stimulating discussions and valuable comments. The author would also like to thank Jeremy Sheung-Him Wu for pointing us to various related works. This work was partially supported by National Science Foundation via the grant DMS-2206069 (PI: Dejan Slep\v{c}ev) and DMS-2106534 (PI: Robert Pego).
\smallskip

\textbf{Data availability.}
We do not analyze or generate any datasets, because our work proceeds within a theoretical and mathematical approach.

\bibliographystyle{siam}
\bibliography{momentum_bib.bib}

\begin{appendix}

\section{Proofs of results in Section~\ref{ssec:partial_subdiff;U}}\label{app:proof;U}
This section provides various proofs of results regarding the convexity subdifferential of the internal energy functional $\U$ in Section~\ref{ssec:partial_subdiff;U}.
\begin{proof}[Proof of Lemma~\ref{lem:pdirder;U}]
    Let $\mu=\int_{\R^d} \mu^x\,d\Pi^x\mu$, where the disintegration $\mu^x\in\P^r(\R^d)$ is uniquely defined up to $\Pi^x\mu$-null set. As $\mu^x\in\P^r(\R^d)$, by \cite[Theorem 6.1.3]{AGS} the optimal transport map $\br^x=T_{\mu^x}^{\nu^x}$ is approximately differentiable at $v$ and $\tilde\nabla_v T_{\mu^x}^{\nu^x}(v)$ is diagonalizable with nonnegative eigenvalues. Let
\[\br(x,v)=(x,\br^x(v)),\qquad \br_t(x,v)=(1-t)(x,v)+t\br(x,v)=(x,(1-t)v+t\br^x(v)),\]
where $\br$ is rigorously defined as in Proposition~\ref{prop:OTmap_partial}.
By the push-forward formula \cite[Lemma 5.5.3]{AGS}, the density $\rho_t=\frac{d(\br_t)_{\#}\mu)}{d\Leb^{2d}}$ satisfies, by considering $v\mapsto \rho_t(x,v)$ for each fixed $x\in\R^d$,
\[\rho_t(y,w)=\frac{\rho(y,(\br_t^x)^{-1}(w))}{\det\tilde\nabla_v\br_t^x((\br_t^x)^{-1}(w))} \text{ for } \mu\text{-a.e. }(y,w)\in\R^d\times\R^d.\]
Thus, by the change of variables $v=\br_t^x(v)$ and that $|\det\tilde\nabla_v\br_t(v)|>0$ for all $v\in\R^d$ whenever $t>0$, we have,
\begin{align*}
    \U((\br_t)_{\#}\mu)-\U(\mu) &= \int_{\R^{2d}} U\left(\frac{\rho(x,(\br_t^x)^{-1}(w))}{\det\tilde\nabla_v\br_t^x((\br_t^x)^{-1}(w))}\right)\,dq\,dx - \int_{\R^{2d}} U(\rho(x,v))\,dv\,dx \\
    &=\int_{\R^{2d}} U\left(\frac{\rho(x,v)}{\det\tilde\nabla_v\br_t^x(v))}\right)\,\det\tilde\nabla_v\br_t(v)\,dv\,dx - \int_{\R^{2d}} U(\rho(x,v))\,dv\,dx\\
    &= \int_{\R^{2d}} G(\rho(x,v),\det\tilde\nabla_v\br_t^x(v))-U(\rho(x,v))\,dv\,dx
\end{align*}
where $G(z,s):=sU(z/s)$. Note that $G(\rho(x,v),\det\tilde\nabla_v\br_t^x(v))$ is the composition of the convex nonincreasing map $s\mapsto s^d U(\rho(x,v)/s^d)$ and the map
\[t\mapsto\det(\tilde\nabla_v\br_t^x)^{1/d}=\det((1-t)\id_v+t\tilde\nabla_v\br^x)^{1/d}\]
which is concave in $t\in[0,1]$ by condition (ii) that $\tilde\nabla_v\br^x$ is diagonalizable with nonnegative eigenvalues for a.e. $\Pi^x\mu$-a.e. $x\in\R^d$. Thus $t\mapsto G(\rho(x,v),\det\tilde\nabla_v\br_t^x(v))$ is convex for $\mu$-a.e. $(x,v)$, which directly implies
\[G(\rho(x,v),\det\tilde\nabla_v\br_t^x(v))\leq (1-t)U(\rho(x,v))+tG(\rho(x,v),\det\tilde\nabla_v\br^x(v)) \text{ for } t\in(0,1].\]
Upon integrating in $x,v$ we directly obtain
\[\U((\br_t)_\#\mu)\leq (1-t)\U(\mu)+tU((\br)_\#\mu).\]
As for any $\nu\in \P_2(\R^d)$ with $\Pi^x\nu=\Pi^x\mu$ by Proposition~\ref{prop:OTmap_partial} we can find such $\br^x:=T_{\mu^x}^{\nu^x}$ this implies $\U$ is $0$-convex along $W_{2,v}$-geodesics. 

Furthermore, by taking instead $\br_{\bar t}$ for some $\bar t>0$ of $\br=\br_1$, we see that 
\[(\br_{\bar t})_t=(1-t)\id_v+t\br_{\bar t}=(1-t)\id_v+t(1-\bar t)\id_v+t \bar t\br=\br_{t \bar t}\]
and 
\begin{align*}
    G(\rho(x,v),\det\tilde\nabla_v\br_{t\bar t}^x(v))-U(\rho(x,v))\leq (1-t)U(\rho(x,v))+tG(\rho(x,v),\det\tilde\nabla_v\br_{\bar t}^x(v)),
\end{align*}
which, after rearranging,
\begin{align*}
    \frac{G(\rho(x,v),\det\tilde\nabla_v\br_{t\bar t}^x(v))-U(\rho(x,v))}{t\bar t}\leq \frac{G(\rho(x,v),\det\tilde\nabla_v\br_{\bar t}^x(v))-U(\rho(x,v))}{\bar t} \text{ for } t\in[0,1]
\end{align*}
-- i.e. that 
\[\frac{G(\rho(x,v),\det\tilde\nabla_v\br_t^x(v))-U(\rho(x,v))}{t}\quad t\in (0,\bar t]\]
is nondecreasing w.r.t. $t$ and bounded above by an integrable function, as seen in the case $t=\bar t$. Thus applying the monotone convergence theorem and noting
\[\frac{\partial}{\partial s}G(z,s)=\frac{\partial}{\partial s} (sU(z/s))=U(z/s)-\frac{z}{s}U'(z/s)
% =\frac{z}{s}\log\frac{z}{s}-\frac{z}{s}\log\frac{z}{s}-\frac{z}{s}
=-\frac{z}{s}\]
we have
\begin{equation}\label{eq:dirder;MCT}
\begin{split}
    +\infty&>\lim_{t\downarrow 0}\frac{\U((\br_t)_{\#}\mu)-\U(\mu)}{t}=\int_{\R^{2d}}\frac{d}{dt}\left.G(\rho(x,v),\det\tilde\nabla_v\br_t^x(v))\right|_{t=0}\,dv\,dx   \\
    &=-\int_{\R^{2d}} \left.\frac{\rho(x,v)}{\det\tilde\nabla_v\br_t^x(v)}\frac{d}{dt}\det\tilde\nabla_v\br_t^x(v)\right|_{t=0}\,dv\,dx
    =-\int_{\R^{2d}} \tr\tilde\nabla(\br-\id)\rho(x,v)\,dxdv
\end{split}
\end{equation}
where we have used $\det\tilde\nabla_v\br_t=1+t\tr\tilde\nabla_v(\br^x-\id)+o(t)$.

Finally consider the case when \eqref{cond:xi_v;Linftybd} holds instead of nonnegative eigenvalues of $\tilde\nabla_v(\br^x-\id_v)$. In this case, we replace the monotone convergence theorem in \eqref{eq:dirder;MCT} by the dominated convergence theorem. Indeed, the assumption \eqref{cond:xi_v;Linftybd} allows us to uniformly control $|\det\tilde\nabla_v(\br^x_t-\id_v)|$ for sufficiently small $t>0$, whereas
\[\left|\frac{\partial}{\partial s} G(z,s)\right|=|z/s|\leq 2|z| \text{ for } |s-1|\leq 1/2.\]
Thus we can pass to the limit $t\downarrow 0$ in \eqref{eq:dirder;MCT}.
\end{proof}

\begin{proof}[Proof of Theorem~\ref{thm:psubdiff;U}]
    
\noindent\emph{Step 1$^o$.} Suppose $|\partial_v\U|(\mu)<+\infty$. To see $\int_{\R^d} |\nabla_v\rho|\,dz<\infty$, fix any $u\in C_c^\infty(\R^d;\R^d)$ and set $\br^x(v):=u(v)$ for $\Pi^x\mu$-a.e. $x\in\R^{d}$. Defining $\br(x,v):=(x,\br^x(v))$ as in Proposition~\ref{prop:OTmap_partial}, note
\begin{align*}
    W_{2,v}(\mu,((1-t)\id+t\br)_\#\mu) \leq t\|\br-\id\|_{L^2(\mu)}= t\|u\|_{L^2(\mu)}.
\end{align*}
By \eqref{eq:pdirder;U}
\begin{align*}
    \int_{\R^{2d}}\tr(\nabla_v u)\rho\,d\Leb^{2d} 
    &= \lim_{t\downarrow 0} \frac{\U(\mu)-\U((\br_t)_{\#}\mu)}{t}  \\
    &\leq \limsup_{t\downarrow\infty}\frac{\U(\mu)-\U((\br_t)_{\#}\mu)}{W_{2,v}(\mu,(\br_t)_{\#}\mu)}\frac{t\|u\|_{L^2(\mu)}}{t} \leq |\partial_v\U|(\mu)\|u\|_{L^2(\mu)}.
\end{align*}
As $\|u\|_{L^2(\mu)}\leq \sup|u|$, taking supremum over all $u\in C_c^\infty(\R^d;\R^d)$ we deduce that $\rho$ is a function of bounded variation. Denoting by $D_v\rho$ the distributional derivative of $\rho$ in the $v$-variable, we have,
\[|\langle D_v\rho, u\rangle|\leq |\partial_v\U|(\mu) \|u\|_{L^2(\mu)}\]
hence by $L^2$ duality we can find $w_v\in L^2(\mu;\R^{d})$ such that for all $u\in L^2(\mu;\R^d)$
\[\langle D_v\rho,u\rangle=\langle w_v,u\rangle_{L^2(\mu;\R^d)}=\langle (0,w_v),(0,u)\rangle_{L^2(\mu;\R^{2d})}\]
--i.e. $w_v\rho=\nabla_v\rho$. Furthermore, as
\[|\langle w_v,u\rangle_{L^2(\mu;\R^d)}|=|\langle D_v \rho,u\rangle_{L^2(\R^{d})}|\leq |\partial_v\U|(\mu)\|u\|_{L^2(\mu;\R^d)}\]
we deduce $\|w_v\|_{L^2(\mu;\R^d)}\leq |\partial_v\U|(\mu)$, hence $(0,w_v)\in\partial_v^\circ\U(\mu)$.
% \begin{equation}\label{eq:psubdiff_leq_slope;U}
%         \|w_v\left(\int_{\R^{2d}} |w_v(x,v)|^2\,d\mu(x,v)\right)^{1/2}\leq |\partial_v\U|(\mu)<+\infty.
% \end{equation}

\vspace{3mm}
\noindent\emph{Step 2$^o$.} To show the converse, it suffices to show that if $\nabla_v\rho\in L^1(\R^{2d})$ then
\begin{equation}\label{eq:psubdiff;U}
    \U(\nu)-\U(\mu)\geq \int_{\R^{2d}} w_v\cdot (T_{\mu^x}^{\nu^x}-\id_v)\,d\mu \text{ for any } \nu\in\P_2(\R^{2d}).
\end{equation}
Indeed, whenever $\nabla_v\rho\in L^1(\R^{2d})$ and $w_v=\nabla_v\rho/\rho\in L^2(\mu;\R^d)$, \eqref{eq:psubdiff;U} implies by \eqref{eq:psubdiff} that $(0,\nabla_v\rho/\rho)\in\partial_v\U(\mu)$. Thus by Proposition~\ref{prop:slope_subdiff}
\[|\partial_v\U|(\mu)\leq \|\nabla_v\rho/\rho\|_{L^2(\mu)}<+\infty.\]
% ``Weak integration by parts formula'': we show
% \[-\int_{\R^{2d}} \tr\tilde\nabla(\br-\id)\,\mu(x,v) \geq \int_{\R^{2d}} (\br^x-v)\cdot \nabla_v \rho(x,v)\,dx\]
% by approximating $\br$ with $\br_h$ in $BV_{loc}$, as in \cite[Lemma 10.4.5]{AGS}.
Note that it suffices to show \eqref{eq:psubdiff;U} for all $\nu\in\P_2^r(\R^{2d})$, as if $\nu\not\ll\Leb^{2d}$ then the inequality is trivially true. Fix $\nu\in\P_2^r(\R^{2d})$ and set $\br^x:=T_{\mu^x}^{\nu^x}$. By \eqref{eq:pdirder;U} and the monotonicity we have
\[\U(\nu)-\U(\mu)\geq -\int_{\R^{2d}}\tr\tilde\nabla_v(\br^x-\id_v)\,d\mu.\]
We will show
\[-\int_{\R^{2d}} \tr\tilde\nabla(\br-\id)\,\mu(x,v) \geq \int_{\R^{2d}} (\br^x-v)\cdot \nabla_v \rho(x,v)\,dx.\]

Let us first assume $\nu\in\P_2^r(\R^{2d})$ is compactly supported in the $v$-variable -- i.e. there exists $K\subset\subset\R^d$ such that $\supp\nu\subset\R^d\times K$. By \cite[Theorem 6.2.9]{AGS} $\tilde\nabla_v\br^x \in BV_{loc}(\R^d)$ and the distributional divergence in $v$ satisfies $D_v\cdot\br^x\geq 0$, whereas $\tr\tilde\nabla_v\rho^x$ is the absolutely continuous part $D_v\cdot\br^x$. As $\supp\nu\subset\R^d\times K$, $\br^x$ is bounded hence by approximation we have
\[
\int_{\R^d} \varphi\tr\tilde\nabla_v\br^x(v)\,dv \leq - \int_{\R^d} \nabla_v \varphi\cdot \br^x\,dv \text{ for any nonnegative } \varphi\in W^{1,1}(\R^d).
\]
In particular, as $\int_{\R^{2d}} |\nabla_v\rho|\,dv\,dx<+\infty$, the map $v\mapsto \rho(x,v)$ is in $W^{1,1}(\R^d)$ for a.e. $x\in\R^d$. Thus
\begin{align*}
\U(\nu)-\U(\mu)&\geq -\int_{\R^{2d}}\tr\tilde\nabla_v(\br^x-\id_v)\,d\mu= -\int_{\R^{2d}}\rho(x,v)\tr\tilde\nabla_v\br^x(v) \,dxdv + d \\
&\geq \int_{\R^{2d}} \nabla_v\rho(x,v)\cdot\br^x(v)\,dv\,dx + d  
= \int_{\R^{2d}} \nabla_v\rho(x,v)\cdot(\br^x(v)-\id_v)\,dv\,dx \\
&= \int_{\R^{2d}} w_v(x,v)\cdot(\br^x(v)-\id_v)\,d\mu(x,v)
\end{align*}
for all $\nu$ compactly supported in the $v$-variable. 

For general $\nu\in\P_2^r(\R^{2d})$, consider a sequence of absolutely continuous $\nu_n\in\P_2^v(\R^{2d};\Pi^x\nu)$ compactly supported in the $v$-variable such that $W_{2,v}(\nu_n,\nu)\rightarrow 0$ and $\U(\nu)=\lim_{n\nearrow \infty}\U(\nu_n)$. Let $\br_n^x=T_{\mu^x}^{\nu_n^x}$ and $\br_n(x,v)=(x,\br_n^x(x,v))$.
Note
\begin{align*}
    \int_{\R^{2d}} w_v\cdot(\br^x-\id_v)\,d\mu
    &=\int_{\R^{2d}} w_v\cdot(\br^x_n-\id_v)\,d\mu+\int_{\R^{2d}} w_v\cdot(\br^x-\br^x_n)\,d\mu \\
    &\leq \U(\nu_n)-\U(\mu) + \|w_v\|_{L^2(\mu)}\|\br-\br_n\|_{L^2(\mu)},
\end{align*}
whereas by Proposition~\ref{prop:stability}, $\|\br-\br_n\|_{L^2(\mu)}\rightarrow 0$ as $n\rightarrow\infty$. Thus we may pass the limit $n\rightarrow\infty$ to conclude \eqref{eq:psubdiff;U}.
\end{proof}

\section{An alternative proof of compactness of the discrete solutions}\label{app:cpct_alt}
This section provides the proof of Theorem~\ref{thm:disc_sol;subseq_conv}. We first show the uniform equicontinuity of discrete solutions. 
\begin{proposition}[Uniform equicontinuity of discrete solutions]\label{prop:equicts;discrete_sol}
Let $\nabla_x V,\nabla_x W$ be $M$-Lipschitz, and let $(\mu_{ih}^N)_{i=1}^N$ be the solution to the variational scheme \eqref{eq:SIE} with time step $h>0$ and initial datum $\mu_0^N=\mu_0$. Then
\begin{equation}\label{eq:partial_grad;ubd}
\begin{split}
\frac{1}{1+\alpha h}|\partial_v(\Lfun_v+\alpha\H)|(\mu_{ih}^N) + \alpha\|\id_v\|_{L^2(\mu_{(i+1)h}^N)} &\leq \frac{1}{1+\alpha h}|\partial_v(\Lfun_v+\alpha\H)|(\mu_{(i-1)h}^N)\\
&+(\alpha+3Mh)\|\id_v\|_{L^2(\mu_{ih}^N)}.
\end{split}
\end{equation}
In particular,
\begin{equation}\label{eq:W2discrete;ubd}
\begin{split}
   \frac1h W_2(\mu_{(i+1)h}^N,\mu_{ih}^N)&\leq \frac{1}{1+\alpha h}|\partial_v(\Lfun_v+\alpha\H)|(\mu_{ih}^N) + \|\id_v\|_{L^2(\bar\mu_{(i+1)h}^N)}  \\
   & \leq \max\{\alpha^{-1},1\}(1+3Mh)^{i}(|\partial_v(\Lfun_v+\alpha\H)|(\mu_{0})+\alpha\|\id_v\|_{L^2(\mu_0)}).
\end{split}
\end{equation}
\end{proposition}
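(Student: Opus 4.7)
The approach is to chain two elementary one-step estimates — one for the partial slope $|\partial_v(\Lfun_v+\alpha\H)|$ and one for the velocity second moment $\|\id_v\|_{L^2}$ — into the balanced recursion \eqref{eq:partial_grad;ubd}, then iterate to control the total Wasserstein displacement.

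First I would derive the two one-step inequalities. The functional $\Lfun_v+\alpha\H$ is $\alpha$-convex along $W_{2,v}$-geodesics (Remark~\ref{rmk:Jp_convexity}), so Proposition~\ref{prop:pslope_est} applied to the velocity update yields both the slope contraction $|\partial_v(\Lfun_v+\alpha\H)|(\bar\mu_{(i+1)h}^N)\le\frac{1}{1+\alpha h}|\partial_v(\Lfun_v+\alpha\H)|(\mu_{ih}^N)$ and the distance bound
\begin{equation*}
W_{2,v}(\mu_{ih}^N,\bar\mu_{(i+1)h}^N)\le\tfrac{h}{1+\alpha h}|\partial_v(\Lfun_v+\alpha\H)|(\mu_{ih}^N).
\end{equation*}
Combining the slope contraction with the position-step estimate \eqref{eq:dvE_xstep;ubd} of Lemma~\ref{lem:psubdiff;velfunc}, and observing that $(\Phi_h)_\#$ preserves $v$-marginals (so $\|\id_v\|_{L^2(\bar\mu_{(i+1)h}^N)}=\|\id_v\|_{L^2(\mu_{(i+1)h}^N)}$), I obtain the one-step slope recursion
\begin{equation*}
|\partial_v(\Lfun_v+\alpha\H)|(\mu_{(i+1)h}^N)\le \tfrac{1}{1+\alpha h}|\partial_v(\Lfun_v+\alpha\H)|(\mu_{ih}^N)+3Mh\|\id_v\|_{L^2(\mu_{(i+1)h}^N)}.
\end{equation*}
A direct application of the triangle inequality to the distance bound above gives the companion one-step moment recursion
\begin{equation*}
\|\id_v\|_{L^2(\mu_{(i+1)h}^N)}\le\|\id_v\|_{L^2(\mu_{ih}^N)}+\tfrac{h}{1+\alpha h}|\partial_v(\Lfun_v+\alpha\H)|(\mu_{ih}^N).
\end{equation*}

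To get \eqref{eq:partial_grad;ubd}, I add the slope recursion applied at step $i-1\to i$ to $\alpha$ times the moment recursion applied at $i\to i+1$. The key cancellation is that the extra term $\tfrac{\alpha h}{1+\alpha h}|\partial_v|(\mu_{ih}^N)$ produced by the $\alpha$-weighted moment recursion combines with the $|\partial_v|(\mu_{ih}^N)$ coming from the slope recursion on the left to leave precisely $\tfrac{1}{1+\alpha h}|\partial_v|(\mu_{ih}^N)$, matching the symmetric form required by \eqref{eq:partial_grad;ubd}. I expect this bookkeeping -- arranging the two one-step bounds so that the contraction factor $\tfrac{1}{1+\alpha h}$ appears on both sides of the balanced recursion -- to be the main (in fact the only nontrivial) obstacle.

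For the displacement bound \eqref{eq:W2discrete;ubd}, I would split $W_2(\mu_{(i+1)h}^N,\mu_{ih}^N)\le W_{2,v}(\mu_{ih}^N,\bar\mu_{(i+1)h}^N)+W_{2,x}(\bar\mu_{(i+1)h}^N,\mu_{(i+1)h}^N)$; since $\mu_{(i+1)h}^N=(\Phi_h)_\#\bar\mu_{(i+1)h}^N$, a direct computation gives $W_{2,x}(\bar\mu_{(i+1)h}^N,\mu_{(i+1)h}^N)=h\|\id_v\|_{L^2(\bar\mu_{(i+1)h}^N)}$, and the distance bound from Proposition~\ref{prop:pslope_est} supplies the rest of the first inequality in \eqref{eq:W2discrete;ubd}. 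The iterated bound follows by setting $R_i:=\tfrac{1}{1+\alpha h}|\partial_v|(\mu_{ih}^N)+\alpha\|\id_v\|_{L^2(\mu_{(i+1)h}^N)}$ and rewriting \eqref{eq:partial_grad;ubd} as $R_i\le R_{i-1}+3Mh\|\id_v\|_{L^2(\mu_{ih}^N)}$; the trivial bound $\alpha\|\id_v\|_{L^2(\mu_{ih}^N)}\le R_{i-1}$ then yields a per-step growth of the form $R_i\le(1+3Mh)R_{i-1}$ up to the factor $\max\{\alpha^{-1},1\}$ that one absorbs when passing between $R_i$ and the combination $\tfrac{1}{1+\alpha h}|\partial_v|(\mu_{ih}^N)+\|\id_v\|_{L^2(\mu_{(i+1)h}^N)}$ appearing on the LHS of \eqref{eq:W2discrete;ubd}. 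Controlling $R_0$ via a single application of the moment recursion from $\mu_0^N$ closes the iteration.
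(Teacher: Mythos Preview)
Your proposal is correct and follows essentially the same route as the paper: derive the moment recursion from the $W_{2,v}$-distance bound in Proposition~\ref{prop:pslope_est}, derive the slope recursion by chaining that same slope estimate with \eqref{eq:dvE_xstep;ubd}, and combine them (your ``add slope at $i-1\to i$ to $\alpha$ times moment at $i\to i+1$'' is exactly the paper's four-line chain, just presented in reverse); the treatment of \eqref{eq:W2discrete;ubd} via the split $W_2\le W_{2,v}+W_{2,x}$, the identity $W_{2,x}(\bar\mu,\Phi_{h\#}\bar\mu)=h\|\id_v\|_{L^2}$, and the iteration of the quantity you call $R_i$ is likewise the paper's argument.
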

\begin{proof}
Let $\gamma_i\in \Gamma_o^{v}(\mu_{ih}^N,\bar\mu_{(i+1)h}^N)$. Then by the $\alpha$-convexity of $\Lfun_v+\alpha\H$ along $W_{2,v}$ geodesics, the slope estimates \eqref{eq:partial_slope_est} yield
\begin{align*}
    &\|\id_v\|_{L^2(\mu_{(i+1)h}^N)}-\|\id_v\|_{L^2(\mu_{ih}^N)}=\|\id_v\|_{L^2(\bar\mu_{(i+1)h}^N)}-\|\id_v\|_{L^2(\mu_{ih}^N)}\\
    &\leq \left(\int_{\R^{2d}} |w-v|^2\,d\gamma_i(w,v,x)\right)^{1/2}
    = W_{2,v}(\bar\mu_{(i+1)h}^N,\mu_{ih}^N)
    \leq  \frac{h}{1+\alpha h}|\partial_v(\Lfun_v+\alpha\H)|(\mu_{ih}^N).
\end{align*}
Moreover, by \eqref{eq:dvE_xstep;ubd} and that $\Pi^v\mu_{ih}^N=\Pi^v\mu_{ih}^N$),
\begin{align*}
    |\partial_v(\Lfun_v+\alpha\H)|(\mu_{ih}^N)\leq |\partial_v(\Lfun_v+\alpha\H)|(\bar\mu_{ih}^N)+3Mh\|\id_v\|_{L^2(\mu_{ih}^N)}.
\end{align*}
As $\Lfun_v+\alpha\H$ is $\alpha$-partial geodesically convex in $v$, again Proposition~\ref{prop:pslope_est} yields
\begin{align*}
    |\partial_v (\Lfun_v+\alpha\H)|(\bar\mu_{(i+1)h}^N) \leq \frac1h W_{2,v}(\mu_{ih}^N,\bar\mu_{(i+1)h}^N)\leq \frac{1}{1+\alpha h}|\partial_v(\Lfun_v+\alpha\H)|(\mu_{ih}^N).
\end{align*}
Then \eqref{eq:partial_grad;ubd} follows directly from above three estimates, as
\begin{align*}
    \frac{1}{1+\alpha h}|\partial_v(\Lfun_v+\alpha\H)|(\mu_{ih}^N) &+ \alpha\|\id_v\|_{L^2(\mu_{(i+1)h}^N)}  \\
    &\leq |\partial_v(\Lfun_v+\alpha\H)|(\mu_{ih}^N) + \alpha\|\id_v\|_{L^2(\mu_{ih}^N)}    \\
    &\leq |\partial_v(\Lfun_v+\alpha\H)|(\bar\mu_{ih}^N)+\left(\alpha+3Mh\right)\|\id_v\|_{L^2(\mu_{ih}^N)} \\
    &\leq \frac{1}{1+\alpha h}|\partial_v(\Lfun_v+\alpha\H)|(\mu_{(i-1)h}^N)+(\alpha+3Mh)\|\id_v\|_{L^2(\mu_{ih}^N)}.
\end{align*}

Now we turn to \eqref{eq:W2discrete;ubd}. Note
\[W_{2,x}(\bar\mu_{(i+1)h}^N,\mu_{(i+1)h}^N)= h\|\id_v\|_{L^2(\bar\mu_{(i+1)h}^N)}=h\|\id_v\|_{L^2(\mu_{(i+1)h}^N)}\]
whereas the slope estimates (Proposition~\ref{prop:pslope_est}) with $\alpha$-geodesic convexity of $\Lfun_v+\alpha\H$ yield
\begin{equation}\label{eq:W2v_leq_slope}
    W_{2,v}(\mu_{ih}^N,\bar\mu_{(i+1)h}^N)\leq \frac{h}{1+\alpha h} |\partial_v(\Lfun_v+\alpha\H)|(\mu_{ih}^N).
\end{equation}
Thus
\begin{align*}
    W_2(\mu_{ih}^N,\mu_{(i+1)h}^N)&\leq W_2(\mu_{ih}^N,\bar\mu_{(i+1)h}^N)+W_2(\bar\mu_{(i+1)h}^N,\mu_{(i+1)h}^N) \\
    &\leq W_{2,v}(\mu_{ih}^N,\bar\mu_{(i+1)h}^N) + W_{2,x}(\bar\mu_{(i+1)h}^N,\mu_{(i+1)h}^N)  \\
    &\leq \frac{h}{1+\alpha h} |\partial_v(\Lfun_v+\alpha\H)|(\mu_{ih}^N) + h\|\id_v\|_{L^2(\mu_{(i+1)h}^N)} \\
    &\leq \max\{\alpha^{-1},1\} h \left(\frac{1}{1+\alpha h}|\partial_v(\Lfun_v+\alpha\H)|(\mu_{ih}^N)+\alpha\|\id_v\|_{L^2(\mu_{(i+1)h}^N)}\right)
\end{align*}

Replacing \eqref{eq:partial_grad;ubd} by the crude bound
\begin{align*}
\frac{1}{1+\alpha h}|\partial_v(\Lfun_v+\alpha\H)|(\mu_{ih}^N) &+ \alpha\|\id_v\|_{L^2(\mu_{ih}^N)}\\
&\leq (1+3Mh)\left(\frac{1}{1+\alpha h}|\partial_v(\Lfun_v+\alpha\H)|(\mu_{(i-1)h}^N)+\alpha\|\id_v\|_{L^2(\mu_{ih}^N)}\right)
\end{align*}
we deduce
\begin{align*}
\frac1h W_2(\mu_{ih}^N,\mu_{(i+1)h}^N)&\leq \max\{\alpha^{-1},1\} \left(\frac{1}{1+\alpha h}|\partial_v(\Lfun_v+\alpha\H)|(\mu_{ih}^N)+\alpha\|\id_v\|_{L^2(\mu_{(i+1)h}^N)}\right)\\
& \leq \max\{\alpha^{-1},1\} (1+3Mh)^{i} \left(\frac{1}{1+ \alpha h}|\partial_v(\Lfun_v+\alpha\H)|(\mu_{0})+\alpha\|\id_v\|_{L^2(\mu_{h}^N)}\right)   \\
&\leq \max\{\alpha^{-1},1\} (1+3Mh)^{i}(|\partial_v(\Lfun_v+\alpha\H)|(\mu_{0})+\alpha\|\id_v\|_{L^2(\mu_0)}),
\end{align*}
where in the last line we have used $\|\id_v\|_{L^2(\mu_{h}^N)}\leq\|\id_v\|_{L^2(\mu_{0})}+\frac{h}{1+\alpha h}|\partial_v(\Lfun_v+\alpha\H)|(\mu_0)$.
        
\end{proof}

Now we deduce compactness of discrete solutions combining the equicontinuity estimates and the weak Arzel\`a-Ascoli Theorem.

\begin{proof}[Proof of Theorem~\ref{thm:disc_sol;subseq_conv}]
    By the Arzel\`a-Ascoli theorem (Proposition~\ref{prop:ArzelaAscoli-weak}), it suffices to show that there exists a narrowly compact set $K\subset\P_2(\R^{2d})$ such that $\mu_t^N\in K$ for all $t\in[0,T]$ and $N\in\N$, and the equicontinuity 
    \begin{equation}\label{eq:eqcts;discsol}
        \limsup_{N\rightarrow\infty}W_2(\mu_s^N,\mu_t^N)\leq C|s-t| \text{ for all } s,t\in[0,T] 
    \end{equation}
    for some constant $C>0$ independent of $N$.

    Letting $i(N,t)=\lceil Nt/T-1\rceil$, \eqref{eq:W2discrete;ubd} allows us to obtain
    \begin{align*}
    W_2(\mu_0^N,\mu_t^N)&\leq \sum_{j=1}^{i(N,t)}W_2(\mu_{(j-1)h_N}^N,\mu_{jh_N}^N)\leq \sum_{j=1}^N W_2(\mu_{(j-1)h_N}^N,\mu_{jh_N}^N) \\
    &\leq Nh_N (1+3Mh_N)^N\max\{\alpha^{-1},1\}(|\partial_v(\Lfun_v+\alpha\H)|(\mu_0)+\alpha \|\id_v\|_{L^2(\mu_0)}) \\
    &= T e^{2MT}\max\{\alpha^{-1},1\}(|\partial_v(\Lfun_v+\alpha\H)|(\mu_0^N)+\alpha\|\id_v\|_{L^2(\mu_0^N)}).
    \end{align*}
    By assumption \eqref{ass:mu0N} we can find $\mu_0^\ast\in\P_2(\R^{2d})$ and sufficiently large $R>0$ such that 
    \[W_2(\mu_0^\ast,\mu_0^N)\leq R \text{ for all } N\in\N.\]
    The assumption on the initial data further implies
    \[C_0:=\max\{\alpha^{-1},1\}\sup_{N\in\N}(|\partial_v(\Lfun_v+\alpha\H)|(\mu_0^N)+\alpha\|\id_v\|_{L^2(\mu_0^N)})<+\infty.\]
    Thus
    \[K:=\{\nu\in\P_2(\R^{2d}): W_2(\mu_0^*,\nu)\leq R+T e^{2MT}C_0\}\]
    which is narrowly compact; see for instance \cite[Proposition 7.1.5]{AGS}.

    On the other hand, using \eqref{eq:W2discrete;ubd} again observe that for any $s<t$
    \begin{align*}
        W_2(\mu_s^N,\mu_t^N)= W_2(\mu_{i(N,s)}^N,\mu_{i(N,t)}^N)
        \leq \sum_{j=i(N,s)+1}^{i(N,t)} W_2(\mu_{(j-1)h}^N,\mu_{jh}^N)
        \leq h_N(i(N,t)-i(N,s)) e^{2MT}C_0 \\
        \leq \frac{T}{N}\left(\frac{N(t-s)}{T}+1\right)e^{2MT}C_0
        \leq \left((t-s)+\frac{T}{N}\right)e^{3MTC_0}.
    \end{align*}
    Thus by taking $\limsup_{N\rightarrow\infty}$ on both sides we have \eqref{eq:eqcts;discsol} with $C=e^{3MTC_0}$. Moreover, lower semicontinuity of $W_2$ with respect to the narrow convergence ensures that the limiting curve is $e^{3MTC_0}$-Lipschitz.
\end{proof}

\end{appendix}

\end{document}